\newlength\myindent
\newcommand\bindent[1]{%
  \begingroup
  \setlength{\itemindent}{\myindent}
%  \add\etaength{\algorithmicindent}{\myindent}
}
\newcommand{\kwcomm}[1]{{}}
\newcommand{\kw}[1]{{}}
\newcommand{\dH}{\mathbf{\dot{H}}}
\newcommand{\comm}[1]{}
\newcommand{\pacomm}[1]{{}}
\newcommand{\delete}[1]{}
\definecolor{mygreen}{rgb}{0,.5,0}
\newcommand{\be}{\begin{equation}}
\newcommand{\ee}{\end{equation}}
\newcommand{\bee}{\begin{equation*}}
\newcommand{\eee}{\end{equation*}}
\newcommand{\bea}{\begin{eqnarray}}
\newcommand{\eea}{\end{eqnarray}}
\newcommand{\beaa}{\begin{eqnarray*}}
\newcommand{\eeaa}{\end{eqnarray*}}
\newcommand{\R}{\mathbb{R}}
\newcommand{\qb}{\mathbf{\dot{B}}}
\newcommand{\qG}{\mathbf{\dot{G}}}
\newcommand{\qd}{\mathbf{\dot{D}}}
\newcommand{\qs}{\mathbf{\dot{S}}}
\DeclareMathOperator*{\argmax}{arg\,max}
\newcommand{\iprod}[2]{ \left\langle #1, #2 \right\rangle}
\newcommand{\A}{\mathbf{\dot{A}}}
\newcommand{\La}{\mathbf{\dot{\Lambda}}}
\newcommand{\qL}{\mathbf{\dot{L}}}
\newcommand{\qR}{\mathbf{\dot{R}}}
\newcommand{\D}{\mathbf{\dot{D}}}
\newcommand{\W}{\mathbf{\dot{W}}}
\newcommand{\qT}{\mathbf{\dot{T}}}
\newcommand{\Z}{\mathbf{\dot{Z}}}
\newcommand{\Q}{\mathbf{\dot{Q}}}
\newcommand{\Y}{\mathbf{\dot{Y}}}
\newcommand{\diag}{\mathrm{diag}}
\newcommand{\qH}{\mathbb{H}}
\newcommand{\qi}{\mathbf{i}}
\newcommand{\qj}{\mathbf{j}}
\newcommand{\qk}{\mathbf{k}}
\newcommand{\qU}{\mathbf{\dot{U}}}
\newcommand{\qV}{\mathbf{\dot{V}}}
\newcommand{\ie}{\mathbf{\;i.e.\;}}
\newcommand{\st}{\mathrm{s.t.}}
\newcommand{\mT}{\mathrm{T}}
\newcommand{\mF}{\mathrm{F}}
\newcommand{\X}{{\mathbf{\dot{X}}}}
\newcommand{\argmin}{\mathop{\mathrm{arg\, min}}}
\newtheorem{theorem}{Theorem}[section]
\newtheorem{lemma}{Lemma}[section]
\newtheorem{corollary}{Corollary}[section]
\newtheorem{assumption}{Assumption}[section]
\newtheorem{remark}{Remark}[section]
\date{}
\begin{document}

% \linenumbers
\title{SLRQA: A Sparse Low-Rank Quaternion Model for Color Image Processing with Convergence Analysis

}

\author[]{Zhanwang Deng\thanks{Academy for Advanced Interdisciplinary Studies, Peking University, Beijing, China. (\url{dzw_opt2022@stu.pku.edu.cn})} }
\author[]{Yuqiu Su\thanks{School of Mathematical Sciences, Xiamen University, Xiamen, China. (\url{yuqiusu@stu.xmu.edu.cn})} } 
\author[]{Wen Huang\thanks{Corresponding author. School of Mathematical Sciences, Xiamen University, Xiamen, China. (\url{wen.huang@xmu.edu.cn}). This work was supported by the National Natural Science Foundation of China (No.~12371311), the National Natural Science Foundation of Fujian Province (No.~2023J06004), the Fundamental Research Funds for the Central Universities (No.~20720240151), and Xiaomi Young Talents Program.
%\today.
%\funding{}
} }

\affil[]{}
% \affil[2]{School of Mathematical Sciences, Xiamen University, Xiamen, China.\vspace{.15cm}}

\maketitle

\begin{abstract}

In this paper,  we propose a Sparse Low-rank Quaternion Approximation (SLRQA) model for color image processing problems with noisy observations. %Different from the existing color image processing models, 
The proposed SLRQA is a quaternion model that combines low-rankness and sparsity priors without an initial rank estimation. %Furthermore, it does not need an initial rank estimate. 
A proximal linearized ADMM (PL-ADMM) algorithm is proposed to solve SLRQA and the global convergence is guaranteed under standard assumptions. %where only one variable is linearized. 
When the observation is noise-free, a limiting case of the SLRQA, called SLRQA-NF, is proposed. Subsequently, a proximal linearized ADMM (PL-ADMM-NF) algorithm for SLRQA-NF is given. Since SLRQA-NF does not satisfy a widely-used assumption for global convergence of ADMM-type algorithms, we propose a novel assumption, under which the global convergence of PL-ADMM-NF is established.
% Furthermore, the global convergence analysis of the PL-ADMM and PL-ADMM-NF is presented under mild assumptions. 
% \revise{ 
% For the convergence analysis of PL-ADMM-NF, which addresses problems with two linear constraints, we introduce a practical assumption specific to SLRQA-NF and then establish the global convergence.
% % All existing convergence analysis of ADMM-type algorithms rely on such a ``range assumption" which may not be effective for problems with two blocks of linear constraints. 
% % We revise this assumption for problems with two blocks of linear constraints and  
% The new assumption differs from the one used in other ADMM-type algorithms employed in the convergence analysis if the two linear constraints are typically treated as one.}
In numerical experiments,
we verify the effectiveness of quaternion representation. Furthermore, for color image denoising and color image inpainting problems, SLRQA and SLRQA-NF demonstrate superior performance both quantitatively and visually when compared with some state-of-the-art methods.

 \textbf{Key words}: Color image denoising, Quaternion matrix completion, Nonconvex linearized ADMM.
\end{abstract}
\section{Introduction}

 Low-rankness plays an important role in image processing and has been used in
 image denoising~\cite{chen2019low}, image inpainting~\cite{guo2017patch}, image deblurring~\cite{ren2016image}, and image filtering problems~\cite{chen2014removing}.
A commonly encountered approach for exploiting the low-rankness is to find a matrix with the lowest rank such that certain matching errors are minimized. Due to the existence of the rank function, such problems are known as NP-hard~\cite{recht2010guaranteed}.
 To overcome this difficulty, Candès et al. have proved that the nuclear norm is the tightest convex relaxation of the NP-hard rank minimization function~\cite{candes2009exact}. In addition to the nuclear norm, other surrogates have also been proposed, such as weighted nuclear norm (WNNM)\cite{gu2014weighted}, Schatten $ p $-norm~\cite{nie2012low}, weighted Schatten $p$-norm~\cite{xie2016weighted}, and log-determinant penalty~\cite{kang2015logdet}, which have shown competitive effectiveness in various applications. %To promote the applicability of these 
In addition to low-rankness, sparsity is also a significant property of natural images that has been considered in image recovery~\cite{yang2022quaternion2,cai2016image}, video recovery~\cite{yang2022quaternion}, texture repairing~\cite{liang2012repairing}, and face recognition~\cite{zou2016quaternion}.
%many works including statistical learning~\cite{hastie2015statistical}, information theory~\cite{hurley2009comparing} and signal processing~\cite{candes2006robust}. 
  % For quaternion-based color image processing problem,
 % recently, the learning-based strategies for overcomplete dictionary learning are given in sparse signal representation \cite{mairal2007sparse,wen2017sparsity}.
The intuition behind sparsity is that images under certain transforms are usually sparse, such as wavelet or discrete cosine transform (DCT).
Moreover, quaternion representation has shown great potential for color image processing problems. A color image can be encoded as a pure quaternion matrix.
% \revise{} 
The main advantage of quaternion representation is that the three color
channels simultaneously are treated by quaternion representation,
which can well keep the connection between image color channels \cite{xu2023deep}.  %Under the framework of dictionary learning,
Moreover, compared to tensor-based methods, quaternion-based methods not only use the information among channels but also preserve the orthogonality of the coefficient matrix for the three channels, %leading to a more structured representation 
see detailed discussions under the framework of dictionary learning in \cite[Section~III]{xu2015vector}. 
Furthermore, it is empirically shown in \cite{xu2015vector,chen2019low} that quaternion representation consistently
provides a more accurate approximation than monochromatic representation using the
reconstructed matrix of the same rank.
% It is also empirically shown in~\cite{chen2019low} that methods using quaternion representation outperform methods with monochromatic techniques. 
%\revise{For an error bound analysis for color image inpainting between the reconstructed image and the original image via pure quaternion
% matrix completion under the clean case and corrupted case, we refer the readers to \cite{chen2022color}. }
% In the past two decades, quaternion has been increasingly used in color image processing problems \cite{song2021low,zou2016quaternion,chen2019low}.
 % Chang $\et$ gives the quaternion matrix singular value decomposition and its applications in image processing \cite{}. Le Bihan $\et$ extend the  principal component analysis  in quaternion field and present its appliciation in color processing without separating the color image into three channel images \cite{}.

The three priors, i.e. low-rankness, sparsity, and quaternion representation, all play important roles in image processing. However, models based on all these three priors are still limited. In the following subsection, we propose a quaternion-based model that combines low-rankness and sparsity properties to address problems arising in image processing.
% Wen $\et$ propose a general image restoration framework based on the property of both low-rank and sparsity \cite{wen2017sparsity}. 

 % Combined with low-rank and sparsity, quaternion-based models for images processing or video recovery have also been proposed \cite{yang2022quaternion2,yang2022quaternion}.

% However, these methods haven't consider the real part of quaternion \cite{jia2013new} or lack efficiency \cite{song2021low}. In the view of the Intrinsic property of fixed $r-$ rank manifold, we proposed PQLRA to obtain the pure low rank of a quaternion matrix, from which we can acquire the compressing of the given color image.

\subsection{A novel quaternion based model}
%  We next give a brief introduction to LRMA methods. 

 Let $ \Y \in \mathbb{H}^{m \times n}$ denote an observed color image satisfying $\Y = \mathcal{A}(\X) + \boldsymbol{\varepsilon}$, where $\X \in \mathbb{H}^{m \times n}$ is the desired noiseless color image, $ \qH^{m \times n}$ denotes the set of all $m \times n$ quaternion matrices, $\mathcal{A}$ denotes a linear operator, and $\boldsymbol{\varepsilon}$ is a random noise\footnote{The definition of quaternion matrices is clarified in Section~\ref{2}.}. % whose entries are drawn from the Gaussian distribution with mean zero and variance $\tau^2$  
% Our main idea is to recover the original image. 
% To recover the image $\X$, a
According to the principle of maximizing the posterior probability, an approach to recover the image $\X$ is to maximize likelihood estimation function, i.e., % solve the maximum 
$
    \X_* = \argmax_\X P(\X|\Y).
$
It follows from the Bayes rule that
$
P(\X|\Y) = \frac{P(\Y|\X)P(\X)}{P(\Y)} \propto P(\Y|\X)P(\X).
$
By taking the logarithm, an equivalent optimization problem is given as follows:  
\begin{align}
    \X_* &= \argmin_{\X} -\log P(\Y|\X) - \log P(\X) = \argmin_{\X} E(\Y;\X) + R(\X) \label{maxium},
\end{align}
where $E(\Y;\X)$ is the negative log-likelihood function and is also called the data fitting term, and $R(\X)$ is the prior term. It is assumed throughout this paper that the entries of the noise $\boldsymbol{\varepsilon}$ are drawn from the Gaussian distribution with mean zero and variance $\tau^2$. It follows that $E(\Y;\X) = \frac{1}{2\tau^2} \|\mathcal{A}(\X)-\Y\|_\mF^2$ if all the entries of $\Y$ are observed, and  $E(\Y;\X) = \frac{1}{2\tau^2}\|\mathcal{P}_{\Omega}(\mathcal{A}(\X)) - \mathcal{P}_{\Omega}(\Y)\|_{\mathrm{F}}^2$ if only partial entries of $\Y$ are observed, where $\Omega$ denotes the indices of the observed entries, that is, $(\mathcal{P}_{\Omega}(\Y))_{ij} = \Y_{ij} $ if $(i,j) \in \Omega$ and $(\mathcal{P}_{\Omega}(\Y))_{ij} = 0 $ if $(i,j) \notin \Omega$.

% By assuming that independence of low-rankness and  sparsity and Incorporating the three priors mentioned above 
By integrating the low-rankness and sparsity into the prior term, i.e., $R(\X) = \sum_i \phi(\sigma_i( \sqrt{ {\X}\strut^{*} \X + \varepsilon^2 \mathbf{I} }  ),\gamma) +\lambda p( \mathcal{W}(\X) )$, and letting $\W = \mathcal{W}(\X)$, we propose the following Sparse Low-rank
Quaternion Approximation (SLRQA) model for color image processing:
\be \label{model:new}
\begin{aligned}
    \min_{\X,\W \in \mathbb{H}^{m \times n} } \quad & \sum_i \phi(\sigma_i( \sqrt{ {\X}\strut^{*} \X + \varepsilon^2 \mathbf{I} }  ),\gamma) +\lambda p(\W) + \frac{1}{2\tau^2}\|\mathcal{P}_{\Omega}(\mathcal{A}(\X)) - \mathcal{P}_{\Omega}(\Y)\|_{\mathrm{F}}^2,\\
    \st \quad & \mathcal{W}(\X)=\W,
\end{aligned}
\ee
where $\X \in \mathbb{H}^{m \times n}$ denotes a color image, $\W \in \mathbb{H}^{m \times n}$ is the coefficient matrix under an orthogonal transform $\mathcal{W}$ such as quaternion wavelet transform or quaternion discrete cosine transform (QDCT), %$\mathcal{P}_{\Omega}(\Y)$ denotes the sampling images of $\Y$ on a given region $\Omega$, i.e.  $(\mathcal{P}_{\Omega}(\Y))_{ij} = \Y_{ij} $ if $(i,j) \in \Omega$ and $(\mathcal{P}_{\Omega}(\Y))_{ij} = 0 $ if $(i,j) \notin \Omega ,$
$\mathcal{A}$ is a known operator such as a blurring operator or the Radon transform,  $p(\W)$ is a function that %\whcomm{}{}
promotes sparsity and has Lipschitz continuously differentiable such as a Huber function, i.e., $(p(\W))_{ij} =\frac{1}{2\delta} |\W_{ij}|^2$ if $|\W_{ij}| < \delta$; and $(p(\W))_{ij} =|\W_{ij}| - \frac{1}{2} \delta$ otherwise, $\delta > 0$ is a prescribed constant, $\varepsilon > 0$ is a smoothing parameter, $\phi$ is a nonconvex surrogate function that substitutes the rank function, $\sigma_i(\X)$ is the $i$-th singular value of $\X$ defined in Section~\ref{2}. 
The function $\sum_i \phi(\sigma_i (\sqrt{ {\X}\strut^{*} \X + \varepsilon^2 \mathbf{I} } ),\gamma)$ promotes the low-rankness of $\X$. We note that if $\varepsilon =0$, then $\sum_i \phi(\sigma_i( \sqrt{ {\X}\strut^{*} \X + \varepsilon^2 \mathbf{I} }  ),\gamma)$ reduces to $\sum_i \phi(\sigma_i(  {\X}  ),\gamma) $. If we futher assumes that $\phi(x,\gamma) = x$, then $\sum_i \phi(\sigma_i( \sqrt{ {\X}\strut^{*} \X + \varepsilon^2 \mathbf{I} }  ),\gamma)$ reduces to the nuclear norm.  Some nonconvex surrogate functions for~$\phi$ have been given in~\cite{chen2019low,lu2014generalized} and listed in Table~\ref{nonconvex2}. %\revise{}

% \whcomm{[The functions in Table~\ref{nonconvex2} are mostly undifferentiable with respect to $\X$. Since we consider differentiable function, we must update the discussions.]}{}

\begin{center}
\begin{table}[h] 
\caption{Some surrogate functions $\phi(x,\gamma)$}
\label{nonconvex2}
\resizebox{\textwidth}{!}{
\begin{tabular}{|c|c|c|c|c|c|c|} 
\hline
Nonconvex function  & Log-determinant  &  Schatten-$\gamma$  & Logarithm  & Laplace \cite{chen2017denoising} & Weighted Schatten-$\gamma$ \cite{xie2016weighted} & ETP \\ \hline
$\phi(x,\gamma)$ & log$(1+x^2)$ & $x^\gamma$ & log($\gamma$+$x$) & $1 - e^{-{\frac{x}{\gamma}}}$ & $ w x^\gamma$ & $\frac{\lambda (1 - \exp(-\gamma x) )}{1 - \exp(-\gamma)} $\\ \hline
\end{tabular}}
\end{table}
\end{center}

The $\ell_1$ norm and nuclear norm have been widely used to promote sparsity and low rankness respectively. 
However, the Lipschitz continuous differentiability of $p(\W)$ is assumed in our convergence analysis\footnote{We use a variant of the ADMM method to solve SLRQA. Note that such an assumption has been made for all the ADMM methods with global convergence guaranteed for nonconvex problems as far as we know, see, e.g., a review in~\cite{liu2019linearized}.}. 
% However, it is not Lipschitz continuously differentiable. The 
% which is a necessity for the global convergence analysis \cite{liu2019linearized}. % 
Therefore, a smoothing function such as the Huber function can be used as a substitution for $p(\W)$. For more surrogate function of $\|\cdot\|_1$, we refer the readers to \cite{lu2014generalized}.
For the subsequent convergence analysis for Model \eqref{model:new2}, it is required that the term that promotes the low-rankness is also differentiable with respect to $\X$.  To satisfy this condition and ensure numerical stability, we also use a smoothed version of $\sum_i \phi(\sigma_i(\X),\gamma)$ by adding a small positive perturbation $\varepsilon$, i.e., replacing $\sum_i \phi(\sigma_i(\X),\gamma)$ with 
$\sum_i \phi(\sigma_i( \sqrt{ {\X}\strut^{*} \X + \varepsilon^2 \mathbf{I} }  ),\gamma),$ as discussed in \cite{zhang2023linear,zhang2019low}.
It is empirically shown in Section~\ref{6} that such a substitution yields satisfactory performance. This strategy has also been used in~\cite{zhang2023linear,zhang2019low} to guarantee convergence theoretically. 
% In the convergence analysis of this paper, it is required that $\sum_i \phi(\sigma_i(\X),\gamma)$ is differentiable with respect to $\X$.  To satisfy this condition and ensure the numerical stability, we use a smoothed version of $\sum_i \phi(\sigma_i(\X),\gamma)$ by adding a small positive perturbation $\varepsilon$, i.e., replacing $\sum_i \phi(\sigma_i(\X),\gamma)$ with 
% $\sum_i \phi(\sigma_i( \sqrt{ {\X}\strut^{*} \X + \varepsilon^2}  ),\gamma),$ as discussed in \cite{zhang2023linear,zhang2019low}. 

% \begin{center}
% \begin{table}[h] 
% \caption{nonconvex surrogate functions $\phi(x,\gamma)$}
% \label{nonconvex3}
% \resizebox{\textwidth}{!}{
% \begin{tabular}{|c|c|c|c|c|c|c|} 
% \hline
% Nonconvex function  & Log-determinant  &  Schatten-$\gamma$  & Logarithm  & Laplace \cite{chen2017denoising} & Weighted Schatten-$\gamma$ \cite{xie2016weighted} & ETP \\ \hline
% $\phi(x,\gamma)$ & log$(1+x^2)$ & $x^\gamma$ & log($\gamma$+$x$) & $1 - e^{-{\frac{x}{\gamma}}}$ & $ w x^\gamma$ & $\frac{\lambda (1 - \exp(-\gamma x) )}{1 - \exp(-\gamma)} $\\ \hline
% \end{tabular}}
% \end{table}
% \end{center}

%It follows that $\|\W\|_1$ to ensure this property promotes sparsity and accelerates convergence.

When the observed color image is noise-free, or the variance $\tau^2$ in the noise $\boldsymbol{\varepsilon}$ is zero, Model~\eqref{model:new} is undefined due to the existence of a zero in the denominator of the data fitting term. In this case, one can reformulate the data fitting term as a constraint. 
% In this limiting case, i.e. $\boldsymbol{\varepsilon} \rightarrow 0$ ,the data fitting term reduces to the constraint. 
It follows that a variant of SLRQA for the noise-free problem is given by
% We should note that, there are still situations where the sampled images are noise-free. To expand the practicability of the model, we also consider the model of SLRQA when the noise level $\tau \rightarrow  0 .$ Under the circumstances, the penalty term $\frac{1}{2\tau^2}\|\mathcal{P}_{\Omega}(\mathcal{A}(\X)) - \mathcal{P}_{\Omega}(\Y)\|_{\mathrm{F}}^2$ in model \eqref{model:new} is reduced to another constraint, and the model of SLRQA is reduced to 
\be \label{model:new22}
\begin{aligned}
    \min_{\X,\W \in \mathbb{H}^{m \times n} } \quad &  \sum_i \phi(\sigma_i( \sqrt{ {\X}\strut^{*} \X + \varepsilon^2 \mathbf{I} }  ),\gamma) +\lambda p(\W) ,\\
    \st \quad & \mathcal{W}(\X)=\W, \quad \mathcal{P}_{\Omega}(\mathcal{A}(\X)) = \mathcal{P}_{\Omega}(\Y).
\end{aligned}
\ee
% It follows from  that \cite[Theorem 17.1]{wright2006numerical} that if $\tau$ is small enough, every limiting point of \eqref{model:new} is a global solution of \eqref{model:new22}.
Since $\mathcal{W}$ is an orthogonal transform such as quaternion wavelet transform or quaternion discrete cosine transform (QDCT) \cite{QDCT}, we have  $\sum_i \phi(\sigma_i(\sqrt{ \X \strut^* \X + \varepsilon^2 \mathbf{I} }),\gamma) =\sum_i \phi(\sigma_i(\sqrt{ \W \strut^* \W + \varepsilon^2 \mathbf{I} }  ),\gamma)$, where $\Q_1,\Q_2$ are unitary matrix that defined later, the superscript $*$ denotes the conjugate transpose operator. 
Let $\Z = \mathcal{W}(\X)$. We propose a model named SLRQA-NF which is equivalent to~\eqref{model:new22} for noise-free problems:
\be \label{model:new2}
\begin{aligned}
    \min_{\Z,\W \in \mathbb{H}^{m \times n}} \quad & \sum_i \phi(\sigma_i( \sqrt{ {\Z}\strut^{*} \Z + \varepsilon^2 \mathbf{I} }  ),\gamma) +\lambda p(\W),\\
    \st \quad & \mathcal{P}_\Omega(\mathcal{A} \mathcal{W}^{\#}(\W))= \mathcal{P}_\Omega(\Y),\quad \Z=\W,
\end{aligned}
\ee
where the superscript $\#$ denotes the adjoint operator which is defined in Section \ref{2}. 
 \subsection{Related work} \label{1-2}
%  
% There have been 
  The derivations
 of SLRQA and SLRQA-NF are motivated by sparse low rank inpainting (SLRI)~\cite{liang2012repairing} and Low-Rank Quaternion
Approximation (LRQA)~\cite{chen2019low} models. To be specific,  SLRI is a convex model that uses the sparsity and low-rankness properties while quaternion representation and nonconvex surrogate rank function have not been considered. LRQA is a quaternion-based model that makes use of the low-rankness property by replacing the nuclear norm with nonconvex surrogate functions, but it has not taken advantage of sparsity. SLRQA and SLRQA-NF take advantage of SLRI and LRQA 
by using quaternion representation, low-rankness, and sparsity. 
% To clarify the difference between SLRQA, SLRQA-NF, and other existing models and demonstrate how the proposed models can handle the image processing problems, 
Next, we discuss some image processing models related to SLRQA and SLRQA-NF.

\textbf{Image denoising (noisy observation and identity linear operator $\mathcal{A}$):} 

If the low-rankness prior is omitted, i.e. $\phi \equiv 0$, the Lipschitz continuous gradients assumption is dropped, and $p(\W) = \|\W\|_1$, then  the classical analysis based approach~\cite{cai2016image} with an extension to quaternion representation is presented:
\be \label{model:denoising-analysis}
\begin{aligned}
    \min_{\X \in \mathbb{H}^{m \times n}} \quad & \lambda \|\mathcal{W}(\X)\|_1 + \frac{1}{2\tau^2}\|\X-\Y\|_{\mF}^2,
\end{aligned}
\ee
where the variable $\W$ is eliminated by $\W = \mathcal{W}(\X)$. % is translated into the objective function equivalently.
If the sparse term $p(\W)$ is not used, the low-rankness prior is used and $\varepsilon$ is set to 0, % in \eqref{model:denoising}, 
then SLRQA becomes LRQA denoising model in~\cite[Equation~(20)]{chen2019low}
$$
\min_{\X \in \mathbb{H}^{m \times n}} \sum_i \phi(\sigma_i(\X),\gamma) + \frac{1}{2\tau^2}\|\X-\Y\|_{\mF}^2.
$$
As noted in \cite{chen2019low}, LRQA is a generalization of other models such as the WNNM \cite{gu2014weighted}. %Hence, we omit them.

\textbf{Image deblurring (noisy observation and blurring kernel operator $\mathcal{A}$):} 

If the low-rankness prior is not used, i.e., $\phi \equiv 0$ and let $p(\W) = \|\W \|_1$, then the standard analysis based model for image deblurring in~\cite{cai2016image}  with extensions to quaternion representation is presented
$$
\min_{\X \in \mathbb{H}^{m \times n}} \lambda \|\mathcal{W}(\X)\|_1 + \frac{1}{2\tau^2}\| \mathcal{A}(\X)-\Y\|_{\mF}^2.
$$

\textbf{Image inpainting (partially observed images):} If the observed image is noise-free, the linear operator $\mathcal{A}$ is the identity, $p(\W) = \|\W\|_1$, $\phi$ is the nuclear norm and $\varepsilon =0$, then SLRQA-NF becomes SLRI in~\cite{liang2012repairing} with an extension to quaternion representation:
\be \label{model:inpainting-SLRI}
\begin{aligned}
    \min_{\W \in \mathbb{H}^{m \times n}} \quad & \|\W\|_* + \lambda \|\W\|_1, \quad    \st\,  \mathcal{P}_\Omega(\mathcal{W}^{\#}(\mathcal{\W}))= \mathcal{P}_\Omega(\Y).
\end{aligned}
\ee
Furthermore, if the sparsity term is dropped, a nonconvex surrogate $\phi$ is used and $\varepsilon$ is set to 0, then SLRQA-NF becomes LRQA~\cite[Equation~(22)]{chen2019low} given by
\be \label{model:inpainting-LRQA}
\begin{aligned}
    \min_{\X \in \mathbb{H}^{m \times n}} \quad & \sum_i \phi(\sigma_i(\X),\gamma), \quad
    \st \,  \mathcal{P}_\Omega(\mathcal{\mathcal{\X}})= \mathcal{P}_\Omega(\Y).
\end{aligned}
\ee

The above-reviewed models do not simultaneously use the low-rankness, sparsity, and quaternion representation. 
% The existing models mentioned above do not utilize the low-rankness, sparsity, and quaternion representation properties, which may lead to performance degradation. 
Recently, a quaternion based model combined with low-rankness and sparsity has been proposed in~\cite{han2022low}. 
Specifically, the model is given by: 
 \begin{equation} \label{model:other1}
 \begin{aligned}
  &\min_{\qL \in \mathbb{H}^{m \times r}, \D \in \mathbb{H}^{ r \times r}, \qR \in \mathbb{H}^{r \times n}, \X \in \mathbb{H}^{m \times n}, \W \in \mathbb{H}^{m \times n}}\,  \|\D\|_* + \lambda \|\W\|_1, \\
  \st\, & \qL^{*}\qL = \mathbf{I}_r ,\, \qR\qR^{*} = \mathbf{I}_r, ~~ \mathcal{W}(\X) = \W, ~~ \X = \qL\D\qR,~~ \mathcal{P}_{\Omega}(\Y - \qL\D\qR) = \mathbf{0}.
 \end{aligned}
 \end{equation}
 % \whcomm{[The notion of ``conjugate transpose operator'' has been introduced before.]}{}
 where $r$ is a prescribed integer  and the superscript $*$ denotes the conjugate transpose operator, see its definition in Section~\ref{2}.
Different from SLRQAs, Model~\eqref{model:other1} uses the nuclear norm to promote low-rank instead of nonconvex surrogate functions $\phi(\sigma_i(\X),\gamma)$. 
Furthermore, it needs to estimate the rank $r$ in the initialization phase while SLRQA does not. %which may highly affect the performance.  
Another model that uses the three priors is proposed in~\cite{yang2022quaternion,yang2022quaternion2} for color image and video recovery. 
The model is given by
 \begin{equation} \label{model:other2}
\begin{aligned}
\min_{\X \in \mathbb{R}^{m \times n} } \, & \|\X\|_* - \sum_{i=r}^{\min (m,n)} \sigma_i(\X) + \lambda \|\W\|_1, \\
 \st \;\; & \mathcal{P}_{\Omega}(\X -\Y) = 0, \, \mathcal{W}(\X) = \W.
\end{aligned}
 \end{equation}
 Note that Model~\eqref{model:other2} also needs to estimate the rank $r$ and the low-rankness is promoted by penalizing a truncated nuclear norm, which is nonconvex. Moreover, convergence analysis of the proposed algorithms for solving the existing models in~\cite{han2022low,yang2022quaternion,yang2022quaternion2} with three priors are not given. 

 In this paper, proximal linearized ADMM algorithms, i.e., PL-ADMM and PL-ADMM-NF are proposed to solve Models~\eqref{model:new} and~\eqref{model:new2} and the global convergence of these algorithms is established.  
%We consider a generalized model for SLRQA:
A generalization of SLRQA is formulated as
\be \label{prob:conve}
\begin{aligned}
\min_{\X,\W}\quad & f(\X) + g(\W) + \frac{1}{2\tau^2}\|\mathcal{P}_{\Omega}(\mathcal{A}(\X)) - \mathcal{P}_{\Omega}(\Y)\|_{\mathrm{F}}^2, \\
\st \quad &   \mathcal{C}(\X) + \mathcal{B}(\W) = \mathbf{\dot{B}},
\end{aligned}
\ee
where $f$ is semi-lowercontinuous, $g$ is continuously differentiable, $\mathcal{C},\mathcal{B} : \mathbb{H}^{m \times n} \rightarrow \mathbb{H}^{p \times q}$ are the constraint matrices and $\qb \in \mathbb{H}^{p \times q}$. Specifically, if $f(\X) = \sum_i \phi(\sigma_i( \sqrt{ {\X}\strut^{*} \X + \varepsilon^2 \mathbf{I} }  ),\gamma), g(\W) = p(\W)$,  $\mathcal{B} = - \mathcal{I}$, $\mathcal{C} = \mathcal{W}$, and $\qb = 0$, then Problem~\eqref{prob:conve} reduces to  the SLRQA in~\eqref{model:new}, where $ \mathcal{I}$ denotes the identity operator.
The convergence of nonconvex ADMM and its variants has been extensively studied in literatures and the global convergence of PL-ADMM is established.

However, SLRQA-NF does not satisfy a widely-used assumption in existing ADMM-type algorithms. Specifically,
% To clearly clarify the proposed assumption for PL-ADMM-NF for SLRQA-NF \eqref{model:new2}, we next present a generalized model for SLRQA-NF:
SLRQA-NF can be formulated as
\be \label{prob:conve2}
\begin{aligned}
\min_{\X,\W}\quad & f(\X) + g(\W) , \\
\st \quad &   \mathcal{C}_1(\X) + \mathcal{B}_1(\W) = \mathbf{\dot{B}}_1 , \\
& \qquad \quad ~ \mathcal{B}_2(\W) = \mathbf{\dot{B}}_2,
\end{aligned}
\ee
where $f$ and $g$ are both continuously differentiable, $\mathcal{B}_1,\mathcal{C}_1: \mathbb{H}^{m \times n} \rightarrow \mathbb{H}^{p_1 \times q_1}$, $\mathcal{B}_2: \mathbb{R}^{m \times n} \rightarrow \mathbb{R}^{p_2 \times q_2}$ are given linear maps, $\qb_1 \in \mathbb{H}^{p_1 \times q_1}$, and $\qb_2 \in \mathbb{H}^{p_2 \times q_2}$. Specifically, if $f(\X) = \sum_i \phi(\sigma_i( \sqrt{ {\X}\strut^{*} \X + \varepsilon^2 \mathbf{I} }  ),\gamma),$ $ g(\W) = p(\W)$,  $\mathcal{B}_1 = - \mathcal{I}$,  $\mathcal{C}_1 = \mathcal{I}$, $\mathcal{B}_2 = \mathcal{P}_{\Omega}\mathcal{A} \mathcal{W}^{\#}$, $\qb_1 = 0$, and $\qb_2 = \mathcal{P}_{\Omega}(\Y)$,  
then Problem~\eqref{prob:conve2} reduces to the SLRQA-NF in~\eqref{model:new2}. Note that the linear constraints in Problem \eqref{prob:conve2} can be viewed as those in \eqref{prob:conve} if the two constraints are considered as one, that is $\mathcal{C} = \begin{pmatrix} \mathcal{C}_1 \\ 0 \end{pmatrix}$, $\mathcal{B} = \begin{pmatrix} \mathcal{B}_1 \\ \mathcal{B}_2 \end{pmatrix}$, and $\mathbf{\dot{B}} = \begin{pmatrix} \mathbf{\dot{B}}_1 \\ \mathbf{\dot{B}}_2 \end{pmatrix}$. In the convergence analysis of existing ADMM-type algorithms for problems with constraints as in~\eqref{prob:conve}, the assumption 
\begin{equation} \label{eqn:03}
\text{range}(\mathcal{C}) \subseteq \text{range}(\mathcal{B}) \hbox{ and } \mathbf{\dot{B}} \in \text{range}(\mathcal{B}),
\end{equation}
which we refer to as ``range assumption", are required \cite{wang2019global,yashtini2022convergence,liu2019linearized,hong2016convergence,li2015global}. The authors in \cite{wang2019global} show the tightness of the ``range assumption'' in the sense that there exists a problem such that the ``range assumption'' does not hold while the ADMM algorithm for this problem diverges. This tightness does not prevent a weaker version of the ``range assumption'' that guarantees convergence of ADMM. Since the constraints in~\eqref{prob:conve2} do not satisfy the ``range assumption'', we propose an alternative weaker assumption that guarantees global convergence of an ADMM-type algorithm.
% by providing an example where divergence occurs when the ``range assumption " is not satisfied while other assumptions are satisfied.
% However, for problems with two blocks of linear constraints, the range assumption may not be valid, since it does not hold for SLRQA-NF \eqref{model:new2}. To overcome
% this difficulty, we propose a new assumption, i.e. $\qb_1 \in \text{range}(\mathcal{C}), \text{range}(\mathcal{B}) \subseteq \text{range}(\mathcal{C})$ and $\qb_2 \in \text{range}(\mathcal{B}_2)$ to establish global convergence. The new assumption holds for SLRQA-NF \eqref{model:new2}, see details in Section \ref{4-2}. 
% Compared with the convergence analysis of ADMM for Model \eqref{prob:conve2}, the previous works on the convergence of ADMM do not specifically deal  with problems with two blocks of linear constraints.

\subsection{Contribution}

% \qquad Based on the quaternion representation, Song $\et$ summarize and propose a series of general LRQA model to handel the image inpainting and denoising problems \cite{song2021low}. On the basis of their contributions, combined with the sparsity, we propose the Sparse Low-rank Matrix Quaternion Approximation(SLRQA) model. Motivated by the linearized ADMM for nonconvex case, we proposed  PL-ADMM and  PL-ADMM-NF to solve the noisy and noise free situation respectively. Global convergence and convergence speed result are also established for PL-ADMM algorithm. 
 The main contributions of this paper are listed as follows:

%Researches related to quaternion low rank approximation have been proposed and widely used.

\begin{itemize} \itemsep=0.3cm
\item 
We propose a novel model called SLRQA to address image processing problems with noise by maximizing the a posteriori probability estimate. Unlike the existing image processing models, the proposed SLRQA \eqref{model:new} is a quaternion model combining low rank and sparsity, providing flexibility to choose surrogate functions and not needing an initial rank estimate.  We propose a PL-ADMM algorithm to solve SLRQA and its global convergence is guaranteed.

% Under mild conditions, we prove that the iterates generated by the algorithm converge to a stationary point. 

\item  When the observations are noise-free, we propose a model named SLRQA-NF~\eqref{model:new2} and a corresponding algorithm named PL-ADMM-NF. The global convergence of PL-ADMM-NF is established under a newly proposed assumption. To the best of our knowledge, this is the first nonconvex ADMM-type algorithm without the ``range assumption'' in~\eqref{eqn:03} and still guarantees global convergence.

%with two linear constraints, without using the ``range assumption" that does not hold for SLRQA-NF.  The theoretical analysis provides a convergence guarantee for the proposed algorithms which is ignored by some previous ADMM algorithm for image processing problems \cite{chen2019low,huang2022quaternion}.

\item In the numerical experiments, we verify that the quaternion representation outperforms the RGB monochromatic representation. Moreover, compared with some representative state-of-the-art image processing models, the promising performance of SLRQA and SLRQA-NF demonstrates their superiority and robustness numerically and visually.
\end{itemize}

\subsection{Organization}
The rest of this paper is organized as follows. In Section~\ref{2}, the preliminaries and notations are introduced. The PL-ADMM algorithm for solving SLRQA and PL-ADMM-NF algorithm for solving SLRQA-NF are presented in Section~\ref{3}.   The global convergence of these two algorithms are established in~Section \ref{4}. In Section \ref{5}, extensive numerical experiments are conducted to demonstrate the performance of the models and algorithms by comparing them with other state-of-the-art methods. Finally, the conclusion is given in Section \ref{6}.

\section{Preliminaries} \label{2}

Throughout this paper,  scalars, real vectors, real matrices, and real tensors are respectively denoted by lowercase letters $x$, boldface lowercase letters $\mathbf{x}$, boldface capital letters $ \mathbf{X}$, and boldface capital letters with a tilde $\widetilde{\mathbf{X}}$ respectively.  $\qH$ denotes the quaternion algebra proposed in~\cite{hamilton1866elements}.  A dot above a variable $\dot{q}$ represents a quaternion number.  A quaternion number consists of one real part and three imaginary parts:
$
\dot{q}=x_s+x_i\qi+x_j \qj+x_k\qk.
$
where $x_s,x_i,x_j,x_k\in \R$, and $\qi,\qj,\qk$ are three imaginary units.
 The quaternion $\dot{q}$ is called a pure quaternion if and only if it has a zero real part, $\ie x_s=0$. The real part of $\dot{q}$ is $\text{Re}(\dot{q}) = x_s$. The conjugate  $\overline{\dot{q}}$ is defined as $\overline{\dot{q}}= x_s-x_i\qi-x_j\qj-x_k\qk$. The modulus $|\dot{q}|$ is defined as $|\dot{q}|=\sqrt{x_s^2+x_i^2+x_j^2+x_k^2}$. The inverse of $\dot{q}$ is given by $\dot{q}^{-1} = \dfrac{\overline{\dot{q}}}{|\dot{q}|^2}$. 
For a quaternion matrix $\X,$  $\X_{ij}$ denotes the $i$-th row, $j$-th coloum element of $\X_{ij}$ and $\X_{ijk}$ to denote the  $i$-th row, $j$-th and $k$-th imaginary units.
The Frobenius norm of $\X$ is defined as 
  $\|\X\|_{\mF} := \sqrt{\sum_{i=1}^{n}\sum_{j=1}^{m} |\X_{ij}|^2}$.  Accordingly, the conjugate operator $\overline{\A}$ , transpose operator $\A^\mT$ and the conjugate transpose operator $\A^* $ are defined as $\overline{\A}=(\overline{\dot{a}})_{ij} , \A^\mT=(\dot{a})_{ji} $ and $\A^*=(\overline{\dot{a}})_{ji}$, respectively. Furthermore, the multiplication rule of quaternion is given by:
 $\qi \cdot 1= \qi, \quad \qj \cdot 1= \qj, \quad \qk \cdot 1= \qk. \quad$
 $\qi^2 = \qj^2 =\qk^2=-1 ,\qi \qj=-\qj \qi=\qk,\,\qj \qk = -\qk \qj=\qi,\,\qk \qi=-\qi \qk=\qj$.
Note that the multiplication of two quaternions $\dot{q}_1$ and $\dot{q}_2$ is not commutative, i.e., $\dot{q}_1\dot{q}_2 \neq \dot{q}_2\dot{q}_1$ in general. $\Q$ is called a unitary matrix if and only if  $\Q\Q^* = \Q^*\Q = \mathbf{I},$ where $\mathbf{I}$ denotes a real identity matrix. The inner product of two quaternion matrices $\qG_1 , \qG_2 \in \mathbb{H}^{m \times n}$ is defined as $ \iprod{ \qG_1 }{\qG_2 } := \text{tr}(\qG_1^{*} \qG_2)$. The image of a given matrix $\qG_1$ is denoted by ${\rm range}(\qG_1)$.
Given an operator $\mathcal{W} : \mathbb{H}^{m \times n} \rightarrow \mathbb{H}^{m \times n}$,  the weighted norm is defined as $\|\X\|_{\mathcal{W}} := \iprod{\mathcal{W}(\X) }{\X}$. 
The operator norm is defined as $\|\mathcal{W}\| := \sup_{\X} \frac{\|\mathcal{W}( \X) \|_{\mathrm{F}} }{\|\X\|_{\mathrm{F}}}$. Let $\mathcal{W}^{\#}$ denote the adjoint operator of $\mathcal{W}$, i.e., $\iprod{\mathcal{W}(\qG_1)}{\qG_2} = \iprod{\qG_1}{\mathcal{W}^{\#}(\qG_2)}$ holds for any $\qG_1, \qG_2 \in \mathbb{H}^{m \times n}$. 
If $\mathcal{W}$ is given by a quaternion matrix, then $\mathcal{W}^{\#}$ is the conjugate transpose of the matrix.
% and $\|A\|_{\mathrm{F}}$ denotes the Frobenius norm.
For $\widetilde{\qG} := (\qG_1,\cdots,\qG_p) \in \mathbb{H}^{n_1 \times m_1} \times \cdots \mathbb{H}^{n_p \times m_p}, $
$|||\widetilde{\qG}\||_{\mathrm{F}}:= \sqrt{\sum_{i=1}^{p}\|\qG_i\|_{\mathrm{F}}^2 }$. Hence we have $\sum_{i=1}^p \frac{1}{\sqrt{p}} \| \qG_i \|_{\mathrm{F}}  \le \||\widetilde{\qG}\||_{\mathrm{F}} \le \sum_{i=1}^p \|\qG_i\|_{\mathrm{F}} ,$ where the first equality follows from the Cauchy-Schwarz inequality.

The quaternion singular value decomposition theorem will be used in the solving process of our algorithm. We state it as follows. 
\begin{theorem}[Quaternion singular value decomposition (QSVD)\cite{zhang1997quaternions}]
For any given quaternion matrix  $\A \in \qH^{m \times n},$ there exist two unitary quaternion matrices $\qU \in \qH ^{m \times m},\qV \in \qH^{n \times n}$ such that
$$
\A=\qU \left( \begin{matrix}
	\mathbf{\Sigma }&		\mathbf{0}\\
	\mathbf{0}&		\mathbf{0}\\
\end{matrix} \right)
 \qV^*,
$$
where $\mathbf{\Sigma} \in \mathbb{R}^{r \times r}$ is a diagonal matrix with $\mathbf{\Sigma}_{1,1} \geq \mathbf{\Sigma}_{2,2} \geq \cdots \geq  \mathbf{\Sigma}_{r,r} >0$ being the singular values of $\A.$ We denote $\sigma_i(\A)$ to be the $i$-th singular value of $\A$ and
$\boldsymbol{\sigma}_{\A} = (\mathbf{\Sigma}_{1,1},\cdots,\mathbf{\Sigma}_{r,r}, 0 ,\cdots,0) \in \mathbb{R}^{l}$ as the singular value vector	of $\A$, where $l= \min\{m,n\}$.
\end{theorem}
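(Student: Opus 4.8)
The statement to prove is the Quaternion Singular Value Decomposition (QSVD) theorem, which is attributed to \cite{zhang1997quaternions}. Since this is a classical result stated with a citation, the proof would follow the standard approach used for complex matrices, adapted via the complex adjoint (Cayley--Dickson) representation of quaternion matrices.

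\medskip

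\textbf{Proof proposal.} The plan is to reduce the quaternion problem to the complex case by means of the \emph{complex adjoint} map $\chi: \qH^{m\times n}\to \mathbb{C}^{2m\times 2n}$. Writing any quaternion matrix as $\A = \A_1 + \A_2\qj$ with $\A_1,\A_2\in\mathbb{C}^{m\times n}$ (using $\qk=\qi\qj$), one sets
\[
\chi(\A) = \begin{pmatrix} \A_1 & \A_2 \\ -\overline{\A_2} & \overline{\A_1}\end{pmatrix}.
\]
First I would record the algebraic properties of $\chi$: it is an injective $\mathbb{R}$-algebra homomorphism, it satisfies $\chi(\A^*) = \chi(\A)^*$, it sends unitary quaternion matrices to unitary complex matrices, and $\A$ is (quaternion) diagonal iff $\chi(\A)$ has the corresponding block structure. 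The key structural fact is that $\chi(\A)$ commutes with the fixed real-linear map $J = \begin{pmatrix} 0 & \I \\ -\I & 0\end{pmatrix}$ composed with complex conjugation; equivalently, a complex matrix $M\in\mathbb{C}^{2m\times 2n}$ lies in the image of $\chi$ iff $\overline{M} = J_m M J_n^{-1}$ (with $J_m,J_n$ the appropriate-size skew forms). This "symplectic-type" symmetry is precisely what forces the singular values of $\chi(\A)$ to occur in pairs and what lets us pull the complex SVD back to a quaternion one.

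\medskip

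Next I would apply the ordinary complex SVD to $\chi(\A)\in\mathbb{C}^{2m\times 2n}$: $\chi(\A) = P\,\Sigma'\,Q^*$ with $P\in\mathbb{C}^{2m\times 2m}$, $Q\in\mathbb{C}^{2n\times 2n}$ unitary and $\Sigma'$ the (rectangular) diagonal matrix of singular values in nonincreasing order. Using the symmetry relation above together with uniqueness properties of the SVD on each singular-value eigenspace (here one must handle repeated singular values by choosing orthonormal bases of the singular subspaces that are adapted to the involution $M\mapsto J M J^{-1}$), one shows the singular values of $\chi(\A)$ come in identical pairs $\sigma_1\ge\sigma_1\ge\sigma_2\ge\sigma_2\ge\cdots$ and that $P,Q$ can be chosen of the form $\chi(\qU)$, $\chi(\qV)$ for unitary quaternion matrices $\qU\in\qH^{m\times m}$, $\qV\in\qH^{n\times n}$, and $\Sigma'=\chi\!\begin{pmatrix}\boldsymbol{\Sigma}&\mathbf{0}\\\mathbf{0}&\mathbf{0}\end{pmatrix}$ with $\boldsymbol{\Sigma}\in\mathbb{R}^{r\times r}$ diagonal with strictly positive nonincreasing diagonal (the strictly-positive part has size $r=\tfrac12\,\mathrm{rank}_{\mathbb{C}}\chi(\A)$). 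Applying $\chi^{-1}$ to the identity $\chi(\A) = \chi(\qU)\,\chi\!\begin{pmatrix}\boldsymbol{\Sigma}&\mathbf{0}\\\mathbf{0}&\mathbf{0}\end{pmatrix}\chi(\qV)^*$ and using that $\chi$ is an injective homomorphism yields $\A = \qU\begin{pmatrix}\boldsymbol{\Sigma}&\mathbf{0}\\\mathbf{0}&\mathbf{0}\end{pmatrix}\qV^*$, which is the claimed decomposition. An alternative, more intrinsic route is to diagonalize the Hermitian positive semidefinite quaternion matrix $\A^*\A$ by a quaternion spectral theorem (which can itself be proved via $\chi$), take $\boldsymbol{\Sigma}^2$ to be its nonzero eigenvalues, build $\qV$ from its eigenvectors, and then define the first $r$ columns of $\qU$ by $\A\,\qV(:,i)/\sigma_i$ and complete to a unitary basis; I would likely present this version as it avoids the bookkeeping of pairing singular values.

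\medskip

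The main obstacle is the handling of \emph{repeated singular values}: the complex SVD is not unique on a multi-dimensional singular subspace, so one must show that a basis respecting the quaternionic (symplectic) structure can always be selected — concretely, that within each singular subspace the involution induced by $J$ and conjugation admits a basis of the form $\{v, Jv\}$-pairs (for the positive singular values) and that the null spaces are likewise $J$-invariant of even complex dimension. With the intrinsic $\A^*\A$ approach this difficulty is repackaged as the need for a quaternion spectral theorem for Hermitian matrices, whose proof again rests on $\chi$ and the same pairing argument, but there the positivity of the eigenvalues and the explicit construction of $\qU$ from $\qV$ keep the argument clean. Everything else — the homomorphism properties of $\chi$, the translation of unitarity and of the block-diagonal form, and the final application of $\chi^{-1}$ — is routine verification.
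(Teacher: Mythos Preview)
The paper does not give its own proof of this theorem: it is stated as a background result with a citation to \cite{zhang1997quaternions} and used without further justification. Your proposal via the complex adjoint map $\chi$ is precisely the standard route taken in that reference, so there is no discrepancy to flag; your sketch is correct and matches the cited source, including the observation about the paired singular values of $\chi(\A)$ and the alternative intrinsic argument through the spectral theorem for $\A^*\A$.
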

According to the QSVD, the rank of a quaternion matrix is defined as the number of nonzero singular values while the nuclear norm $\|\A\|_*$ is defined as the sum
of all nonzero singular values.

\section{Algorithm framework} \label{3}
In this section, two algorithms, i.e. PL-ADMM and PL-ADMM-NF, are proposed to solve Model \eqref{model:new}  and \eqref{model:new2} respectively. The details of PL-ADMM are clarified in Section \ref{3-1} and the PL-ADMM-NF algorithm is presented in Section \ref{3-2}.
\subsection{An Algorithm for SLRQA} \label{3-1}

The proposed proximal linearized alternating direction method of multipliers (PL-ADMM) is stated in Algorithm~\ref{algorithm:denoise}.

\begin{algorithm}[h]
\caption{Proixmal linearized-ADMM (PL-ADMM) for SLRQA in~\eqref{model:new}}
\label{algorithm:denoise}
\begin{algorithmic}[1]
\Require A noisy observed image $\Y$, an initial iterate $\X_0 := \Y$, two parameters $\mu \in (0,2)$ and $\beta > 0$, a tolerance $\eta > 0$, two sequences of regularization parameters $\{L_{k,1}\}$ and $\{L_{k,2}\}$ satisfying $\sup_{k \ge 0} { L_{i,k} } < \infty$ and $\inf_{k>0} {L_{i,k}} > 0$ for $i = 1, 2$.
\Ensure A recovered image $\X_*$.
\State Set $\W_0 = \boldsymbol{0}$ and $\La_0 = \boldsymbol{0}$;
\For{$k = 0,1,2,\ldots$}
    \State \label{denoise:X} $\X$-subproblem: 
    % \whcomm{[TODO: global minimizer or stationary point with a smaller argument Lag{\rm range} function value?]}{}
    \begin{align*}
       \X_{k+1} \in &\argmin_{\X} \; \sum_i \phi(\sigma_i( \sqrt{ {\X}\strut^{*} \X + \varepsilon^2 \mathbf{I} }  ),\gamma)  \notag \\
       &+ \frac{1}{2\tau^2}\|\mathcal{P}_{\Omega} (\Y) - \mathcal{P}_{\Omega} (\mathcal{A}(\X)) \|_{\mF}^2 + \frac{\beta}{2}\|\mathcal{W}(\X) - \W_k + \frac{\La_k}{\beta}\|_{\mF}^2 + \frac{L_{k,1}}{2}\|\X - \X_k \|_{\mF}^2;
    \end{align*}
    \State \label{denoise:W} $\W$-subproblem:
    \begin{align*}
         \W_{k+1} = \argmin_\W& \; \lambda \iprod{\nabla p(\W_k)}{\W} + \frac{\beta}{2}\|\W - (\mathcal{W}(\X_{k+1}) + \frac{\La_k}{\beta})\|_{\mF}^2 + \frac{L_{k,2}}{2}\|\W - \W_k \|_{\mF}^2; 
     \end{align*}
    \State \label{denoise:La} Update Lag{\rm range} multipliers:
    $
    \La_{k+1} = \La_{k} + \mu \beta (\mathcal{W}(\X_{k+1}) - \W_{k+1}); 
    $
    \If{$\|\X_{k+1} - \X_{k}\|_{\mF} + \|\W_{k+1} - \W_{k}\|_{\mF} + \|\mathcal{W}(\X_{k+1}) - \W_{k+1}\|_{\mF} < \eta$}
        \State Return $\X_* = \X_{k+1}$;
        % \State \textbf{break}
    \EndIf
\EndFor
\end{algorithmic}
\end{algorithm}

Each iteration of PL-ADMM minimizes the augmented Lagrangian function by using the alternating direction method. Specifically, the augmented Lagrangian function of SLRQA in~\eqref{model:new} is given by
\bee
\begin{aligned}
\mathcal{L}_\beta(\X,\W,\La) =\sum_i \phi(\sigma_i( \sqrt{ {\X}\strut^{*} \X + \varepsilon^2 \mathbf{I} }  ),\gamma)+&\lambda p(\W)+ \frac{1}{2\tau^2}\| \mathcal{P}_{\Omega}(\Y) -\mathcal{P}_{\Omega}(\mathcal{A}(\X)) \|_{\mF}^2 \\
&+ \iprod{\La}{\mathcal{W}(\X)-\W} +\frac{\beta}{2}\|
\mathcal{W}(\X)-\W\|^2_\mF,
\end{aligned}
\eee
where $\La \in \mathbb{H}^{m \times n}$ denotes the Lag{\rm range} multipliers and $\beta > 0$.

In Step~\ref{denoise:X}, $\X_{k+1}$ is obtained by minimizing $\mathcal{L}_{\beta}(\X,\W,\La) + \frac{L_{k,1}}{2}\|\X-\X_k\|_{\mathrm{F}}^2$ for a fixed $\W_k$ and $\La_k$.
This subproblem does not yield a closed-form solution in general and therefore may be solved by nonsmooth algorithms such as a subgradient method or the linearized Bregman algorithm in~\cite{cai2010singular}. %, linearized strategy~\cite{lin2011linearized}
%Due to the presence of non-convexity $\phi(\sigma_i(\X),\gamma),$ for general $\mathcal{A}$ and sampling operator $\mathcal{P}_{\Omega},$  the subproblem of $\X$ has no explicit solution and the subproblem can be solved by subgradient method, linearized strategy \cite{lin2011linearized} or linearized Bregman algorithm \cite{cai2010singular}. 
If the linear operator $\mathcal{A}$ in~\eqref{model:new} is the identity and all pixels of the noisy image $\Y$ are observed, i.e., $\Omega$ indicates the whole image, then the $\X$-subproblem can be solved efficiently. % by a 
Specifically, the subproblem can be reformulated as
\begin{equation}
\begin{aligned}
    \X_{k+1}   
     % & = \argmin_{\X} \sum_i \phi(\sigma_i( \sqrt{ {\X}\strut^{*} \X + \varepsilon^2 \mathbf{I} }  ),\gamma) + \frac{1}{2\tau^2}\| \X -\Y \|_{\mF}^2 + \frac{\sigma}{2}\|\X-\mathcal{W}^{\#}(\W_k-\frac{\La_k}{\sigma})\|_{\mF}^2 + \frac{L_{k,1}}{2}\|\X-\X_k \|_{\mF}^2 , \notag\\
    =& \argmin_{\X} \sum_i \phi(\sigma_i( \sqrt{ {\X}\strut^{*} \X + \varepsilon^2 \mathbf{I} }  ),\gamma) + \\
    & \frac{\tau^2(\beta + L_{k,1}) + 1}{2\tau^2}\left\|\X -\frac{\tau^2}{\tau^2(\beta + L_{k,1}) +1}\left(\frac{1}{\tau^2}\Y + \beta \mathcal{W}^{\#}(\W_k-\frac{\La_k}{\beta})+L_{k,1} \X_k \right)\right\|_{\mF}^2, %\label{denoise:X2}
\end{aligned}
\end{equation}
% we will give a detailed presentation on the solution process of the SLRQA \eqref{model:new}. Since the subproblem of $\X$ has no explicit solution when $\mathcal{A}$ has no specific structure,
If $\phi(\sigma_i(\X),\gamma) = \sigma_i(\X),$ the $\X$-subproblem  has a closed-form solution by using the soft thresholding method on the singular values~\cite{cai2010singular}. 
 % However, using nonconvex surrogate function $\phi(\sigma_i(\X),\gamma)$  to approximate the nuclear norm always achieves better performance.
 % In this case, 
When a nonconvex surrogate function $\phi(\sigma_i(\X),\gamma)$ is used,
the subproblem of $\X$ can be solved via a convex-concave procedure (CCP) in~\cite{lanckriet2009convergence}. For completeness, we summarize CCP in Algorithm~\ref{algorithmf:X}.  Lemma~\ref{thm3} is crucial since it transforms the matrix optimization problem of $\X$ into an optimization problem of singular values. We note that the case of $\varepsilon =0$ has been proven in \cite[Theorem 3]{chen2019low}. We extend the theorem into the case that $\varepsilon \ge 0$.

\begin{lemma} \label{thm3}
Let $\mathbf{\hat{\X}}=\qU \boldsymbol{\Sigma} \qV^*  \in \qH^{m \times n}$ be a singular value decomposition of $\mathbf{\hat{\X}}$. The solution of
\be \label{subproblem:X}
\argmin_{\X\in \qH^{m \times n} } \, \sum_i \phi(\sigma_i( \sqrt{ {\X}\strut^{*} \X + \varepsilon^2 \mathbf{I} }  ),\gamma)+\frac{\mu}{2}\|\mathbf{\hat{\X}}-\X\|_{\mF}^2
\ee
can be represented by $\X_*=\qU \boldsymbol{\Sigma_*} \qV^*,$ where $\boldsymbol{(\Sigma_*)}_{i,i}=(\boldsymbol{\sigma_*})_i,$ for $i = 1,\cdots ,\min\{m,n\}$ where $\boldsymbol{\sigma_*}$ is obtained by
\be \label{scheme:new}
\boldsymbol{\sigma_*}=\argmin_{\boldsymbol{\sigma}\, \geq 0} \varphi(\boldsymbol{\sigma},\varepsilon ;\gamma) +\frac{\mu}{2}\|\boldsymbol{\sigma}-\boldsymbol{\sigma_{\boldsymbol{\hat{\X}}}}\|_2^2,
\ee
where $\varphi(\boldsymbol{\sigma},\varepsilon ;\gamma) := \sum_i \phi( \sqrt{\boldsymbol{\sigma}_i^2  + \varepsilon^2} ,\gamma) $.
\end{lemma}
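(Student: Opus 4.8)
The plan is to reduce the matrix problem~\eqref{subproblem:X} to the singular-value problem~\eqref{scheme:new} by a unitary-invariance argument combined with a von Neumann-type trace inequality for quaternion matrices. First I would observe that the objective in~\eqref{subproblem:X} decomposes into two pieces: the term $\sum_i \phi(\sigma_i(\sqrt{\X^*\X + \varepsilon^2\I}),\gamma)$ depends on $\X$ only through its singular values $\boldsymbol{\sigma}_{\X}$ (since $\sigma_i(\sqrt{\X^*\X + \varepsilon^2\I}) = \sqrt{\sigma_i(\X)^2 + \varepsilon^2}$, using that $\X^*\X = \qV\boldsymbol{\Sigma}^2\qV^*$ from the QSVD), so this term equals $\varphi(\boldsymbol{\sigma}_{\X},\varepsilon;\gamma)$; meanwhile the quadratic term expands as $\frac{\mu}{2}\big(\|\mathbf{\hat{\X}}\|_{\mF}^2 + \|\X\|_{\mF}^2 - 2\,\re\,\tr(\mathbf{\hat{\X}}^*\X)\big)$, in which $\|\X\|_{\mF}^2 = \sum_i \sigma_i(\X)^2$ and $\|\mathbf{\hat{\X}}\|_{\mF}^2$ is a constant, so only the cross term $\re\,\tr(\mathbf{\hat{\X}}^*\X)$ couples the singular vectors of $\X$ to those of $\mathbf{\hat{\X}}$.

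The key step is then the inequality $\re\,\tr(\mathbf{\hat{\X}}^*\X) \le \sum_i \sigma_i(\mathbf{\hat{\X}})\,\sigma_i(\X) = \boldsymbol{\sigma}_{\mathbf{\hat{\X}}}^{\mathrm{T}}\boldsymbol{\sigma}_{\X}$, the quaternion analogue of the von Neumann trace inequality, with equality attained when $\X$ shares the singular vectors $\qU,\qV$ of $\mathbf{\hat{\X}}$, i.e.\ $\X = \qU\,\mathrm{Diag}(\boldsymbol{\sigma}_{\X})\,\qV^*$. Since $\phi(\cdot,\gamma)$ is nondecreasing on $[0,\infty)$ — it is a surrogate for the rank, so increasing each $\sigma_i$ cannot decrease $\varphi$ — and the rest of the objective is minimized (for fixed $\boldsymbol{\sigma}_{\X}$) exactly when the cross term is maximized, any minimizer may be taken of the form $\X_* = \qU\boldsymbol{\Sigma}_*\qV^*$; substituting this back collapses the matrix objective to $\varphi(\boldsymbol{\sigma},\varepsilon;\gamma) + \frac{\mu}{2}\|\boldsymbol{\sigma} - \boldsymbol{\sigma}_{\mathbf{\hat{\X}}}\|_2^2$ up to the additive constant $\frac{\mu}{2}(\|\mathbf{\hat{\X}}\|_{\mF}^2 - \|\boldsymbol{\sigma}_{\mathbf{\hat{\X}}}\|_2^2) = 0$, which is precisely~\eqref{scheme:new}. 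I would also note the argument for the reverse direction: given an optimal $\boldsymbol{\sigma}_*$ of~\eqref{scheme:new}, the matrix $\qU\boldsymbol{\Sigma}_*\qV^*$ achieves the same objective value, so it is optimal for~\eqref{subproblem:X}.

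The main obstacle is establishing the quaternion von Neumann trace inequality and the characterization of its equality case, since the classical proof uses properties of the complex (or real) matrix field and commutativity that fail for $\qH$. I expect this to be handled either by invoking it directly from~\cite{chen2019low} — their Theorem~3 treats the $\varepsilon=0$ case, whose proof already contains exactly this trace inequality over $\qH$ — or by passing through the standard complex adjoint representation $\chi:\qH^{m\times n}\to\mathbb{C}^{2m\times 2n}$, under which $\tr$ and the $\re$ of the trace transfer appropriately and singular values of $\A$ appear as doubled singular values of $\chi(\A)$, thereby reducing the inequality to the known complex case. Once the trace inequality is in hand, the only remaining point requiring care is that the perturbation $\varepsilon$ does not interfere with monotonicity: one checks that $s \mapsto \phi(\sqrt{s^2+\varepsilon^2},\gamma)$ is still nondecreasing in $s \ge 0$, which holds because $s\mapsto\sqrt{s^2+\varepsilon^2}$ is nondecreasing and $\phi(\cdot,\gamma)$ is nondecreasing — this is the one place where the extension beyond~\cite[Theorem~3]{chen2019low} must be verified, and it is routine.
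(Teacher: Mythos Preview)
Your proposal is correct and follows essentially the same strategy as the paper: the paper applies the Hoffman--Wielandt inequality to $\|\boldsymbol{\Sigma} - \qU^*\X\qV\|_{\mF}^2$ after the unitary change of variables $\A = \qU^*\X\qV$, whereas you expand the quadratic and invoke the von Neumann trace inequality on $\re\,\tr(\mathbf{\hat{\X}}^*\X)$; these two inequalities are equivalent, so the arguments coincide in substance.

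One remark: the monotonicity of $\phi$ (and of $s\mapsto\phi(\sqrt{s^2+\varepsilon^2},\gamma)$) that you flag as ``the one place where the extension must be verified'' is in fact unnecessary. The spectral term depends on $\X$ only through $\boldsymbol{\sigma}_{\X}$ and is therefore constant once $\boldsymbol{\sigma}_{\X}$ is fixed; the trace inequality then bounds the quadratic alone, and no ordering property of $\phi$ is used to pass from~\eqref{subproblem:X} to~\eqref{scheme:new}. The paper's proof makes no use of monotonicity either, so the extension from $\varepsilon=0$ to $\varepsilon>0$ is automatic once one notes $\sigma_i(\sqrt{\X^*\X+\varepsilon^2\mathbf{I}}) = \sqrt{\sigma_i(\X)^2+\varepsilon^2}$.
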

\begin{proof}
According to the definition of QSVD, the following inequality holds.
\begin{equation} \label{eqn:proof}
\frac{\mu}{2}\|\mathbf{\hat{\X}}-\X\|_{\mF}^2  + \sum_i \phi(\sigma_i( \sqrt{ {\X}\strut^{*} \X + \varepsilon^2 \mathbf{I} }  ),\gamma) 
= \frac{\mu}{2}\|  \boldsymbol{\Sigma} -\qU^*\X \qV \|_{\mF}^2  +  \sum_i \phi(\sigma_i( \sqrt{ {\X}\strut^{*} \X + \varepsilon^2 \mathbf{I} }  ),\gamma), 
\end{equation}
where the equality is based on the fact that the Frobenius norm is unitarily variant \cite[Equation (37)]{chen2019low}. Suppose \( \A = \qU^*\X \qV \),  thus \eqref{eqn:proof} is equal to
\begin{equation}
\begin{aligned}
&= \frac{\mu}{2}\|  \boldsymbol{\Sigma} -\qU^*\X \qV \|_{\mF}^2  +  \sum_i \phi(\sigma_i( \sqrt{ {\A}\strut^{*} \A + \varepsilon^2 \mathbf{I} }  ),\gamma)   \\
&\geq \frac{\mu}{2} \left\| \boldsymbol{\Sigma} - \boldsymbol{\Sigma}_{\A} \right\|_{\mathrm{F}}^2 +  \sum_i \phi(\sigma_i( \sqrt{ {\A}\strut^{*} \A + \varepsilon^2 \mathbf{I} }  ),\gamma) \\
&= \frac{\mu}{2} \left\| \boldsymbol{\Sigma} - \boldsymbol{\Sigma}_{\X} \right\|_{\mathrm{F}}^2 +  \sum_i \phi(\sigma_i( \sqrt{ {\X}\strut^{*} \X + \varepsilon^2 \mathbf{I} }  ),\gamma) \\
&= \frac{\mu}{2} \left\| \boldsymbol{\sigma} - \boldsymbol{\sigma}(\X) \right\|_2^2 +  \varphi(\boldsymbol{\sigma}(\X),\varepsilon ;\gamma) \\
&\geq \frac{\mu}{2} \left\| \boldsymbol{\sigma} - \boldsymbol{\sigma}_* \right\|_2^2 +  \varphi(\boldsymbol{\sigma}_*,\varepsilon ;\gamma), 
\end{aligned}
\end{equation}
where \( \boldsymbol{\sigma}(\X) \)  is the singular value vectors of \( \X \), the first inequality follows from Hoffman-Wielandt inequality.  According to the definition of $\X_*$,  $\sum_i \phi(\sigma_i( \sqrt{ {\X_*}\strut^{*} \X_* + \varepsilon^2 \mathbf{I} }  ),\gamma)+\frac{\mu}{2}\|\mathbf{\hat{\X}}-\X_*\|_{\mF}^2 = \frac{\mu}{2} \left\| \boldsymbol{\sigma} - \boldsymbol{\sigma}_* \right\|_2^2 +  \varphi(\boldsymbol{\sigma}_*,\varepsilon ;\gamma) $. The proof is completed.
\end{proof}
% The CCP for solving~\eqref{scheme:new} is stated in Algorithm~\ref{algorithmf:X}.

 In the $k$-th iteration of Algorithm~\ref{algorithmf:X}, the function $\varphi(\boldsymbol{\sigma},\varepsilon;\gamma)$ is approximated by a linearized function $\varphi(\boldsymbol{\sigma}_k,\varepsilon;\gamma) +\nabla_{\boldsymbol{\sigma}} \varphi(\boldsymbol{\sigma}_k,\varepsilon;\gamma )^\mT\boldsymbol{\sigma} $. 
It follows that the update of $\boldsymbol{\sigma}$ in Problem~\eqref{scheme:new} is given by
\be \label{scheme:upsig}
\boldsymbol{\sigma}_{k+1}=\max\left\{\boldsymbol{\sigma}_{\boldsymbol{\hat{\X}}}-\frac{\nabla_{\boldsymbol{\sigma}} \varphi(\boldsymbol{\sigma}_k,\varepsilon;\gamma)}{\mu},\boldsymbol{0}\right\}.
\ee
% According to Theorem \ref{thm3}, problem \eqref{subproblem:X} can be solved by carrying out the following iteration:
% \be \label{scheme:upsig}
% \boldsymbol{\sigma}_{k+1}=\max\left\{\boldsymbol{\sigma}_{k}-\frac{\nabla_{\boldsymbol{\sigma}} \phi(\boldsymbol{\sigma}_k,\gamma)}{\mu},\boldsymbol{0}\right\},
% \ee
% with $\boldsymbol{\sigma}_0 = \boldsymbol{\sigma_{\hat{\X}}}.$
% where $\mu$ is the penalty parameter.  
% The CCP technique for subproblem \eqref{subproblem:X} is given in Algorithm \ref{algorithmf:X}. 
It has been shown in~\cite[Theorem 4]{lanckriet2009convergence} that any limit points of $\{\boldsymbol{\sigma}_k \}_{k=1}^\infty$ are stationary points of \eqref{scheme:new}.

\begin{algorithm}[h]
\caption{CCP to solve Problem~\eqref{subproblem:X}}
\label{algorithmf:X}
\begin{algorithmic}[1]
  \Require A quaternion matrix $\boldsymbol{\hat{\X}} \in \mathbb{H}^{m \times n}$, a nonconvex function $\phi$, a constant $\mu > 0$, a tolerance $\eta_C$, and an initial iterate $\boldsymbol{\sigma}_0 = \mathbf{0}$.
  \Ensure $\boldsymbol{\X}_*$.
  \State Compute QSVD $\mathbf{\hat{\X}}=\qU \boldsymbol{\Sigma} \qV^*$ and let $\boldsymbol{\sigma_{\hat{\X}}}$ satisfy $\boldsymbol{\Sigma}=\diag(\boldsymbol{\sigma_{\hat{\X}}})$;
  \For{$k = 1,2,\ldots$}
    % \State Compute $\nabla_{\boldsymbol{\sigma}} \phi(\boldsymbol{\sigma}_{k-1},\gamma)$
    \State Compute $\boldsymbol{\sigma}_{k}$ by~\eqref{scheme:upsig};
    \If{$\|\boldsymbol{\sigma}_{k-1} - \boldsymbol{\sigma}_{k} \|_2 < \eta_C$}
      \State Set $\boldsymbol{\sigma}_* = \boldsymbol{\sigma}_k$ and return $\X_* = \qU \boldsymbol{\Sigma_*} \qV^*$, where $\boldsymbol{\Sigma_*} = \diag(\boldsymbol{\sigma_*})$;
      % \State \textbf{break}
    \EndIf
  \EndFor
\end{algorithmic}
\end{algorithm}

In Step~\ref{denoise:W},  the $\W$-subproblem approximately minimizes $\mathcal{L}_{\beta}(\X,\W,\La) + \frac{L_{k,2}}{2}\|\W-\W_k\|_{\mathrm{F}}^2$ for a fixed $\X_{k+1}$ and $\La_k$. Specifically, the objective function of the $\W$-subproblem is obtained by using a linearized strategy for $p(\W)$ in~$\mathcal{L}_{\beta}(\X,\W,\La) + \frac{L_{k,2}}{2}\|\W-\W_k\|_{\mathrm{F}}^2$. It follows that $\W_{k+1}$ has a closed form solution
\be
\W_{k+1}= \frac{1}{\beta + L_{k,2}}\left( \beta \mathcal{W}(\X_{k+1}) + \La_k +L_{k,2}\W_k \right) - \frac{\lambda}{\beta + L_{k,2} } \nabla p(\W_k).
\ee

Step \ref{denoise:La} is used to update the Lag{\rm range} multiplier. The difference between the classic ADMM for convex function and PL-ADMM is that $\mu \in (0,2)$ in PL-ADMM rather than $\mu \in (0, \frac{\sqrt{5}+1}{2})$ \cite{chen2017note}, which means that a larger stepsize can be chosen with convergence guarantee.
% As a consequence, all the step in 

\subsection{An Algorithm for SLRQA-NF} \label{3-2}
% When applying SLRQA to handel color image inpainting problems, the task is to fullfill the whole color images with partial obseration of $\X$ on region $\varGamma$.

% we focus on the algorithm to solve the limiting case of the SLRQA\eqref{model:new}, i.e. SLRQA-NF. Speifically, 

% In this section,  we propose the PL-ADMM-NF algorithm to solve SLRQA-NF \eqref{model:new2} which is summarized in Algorithm \ref{algorithm:inpaint}. It is noted that SLRQA-NF is a model of the limiting case of SLRQA.  The linearization technique is applied to the update of $\W$. 
The proposed algorithm PL-ADMM-NF for solving SLRQA-NF is stated in Algorithm \ref{algorithm:inpaint}. %Each iteration of PL-ADMM-NF consists of minimizing the augmented Lag{\rm range} function with respect to $\W$ variable after linearization, minimizing the augmented Lag{\rm range} function with respect to $\X$ variable, and updating the multiplier.
\begin{algorithm}[h]
\caption{PL-ADMM-NF algorithm for SLRQA-NF in \eqref{model:new2}}
\label{algorithm:inpaint}
\begin{algorithmic}[1]
    \Require Corrupted images $\Y$, two parameters $\mu \in (0,2)$ and $\beta > 0$, a tolerance $\eta > 0$, a sequence of regularization parameters $\{L_{k,1}\}, \{L_{k,2}\}$ satisfying $\sup_{k \ge 0} { L_{k,1} } < \infty$, $\sup_{k \ge 0} { L_{k,2} } < \infty$ and $\inf_{k>0} {L_{k,1}} > 0$, $\inf_{k>0} {L_{k,2}} > 0$.
    \Ensure Recovered $\Z_*$.
    \State Set $\Z_0 = \boldsymbol{0}$, $\W_0 =  \boldsymbol{0}$, $\La_{1,0} = \boldsymbol{0}$, $\La_{2,0} = \boldsymbol{0}$.
    \For{$k = 0,1,2,\ldots$}
        \State $\Z$ subproblem:
        $$
        \Z_{k+1} \in \argmin_\Z \sum_i \phi(\sigma_i( \sqrt{ {\Z}\strut^{*} \Z + \varepsilon^2 \mathbf{I} }  ),\gamma) + \iprod{\La_{k,1}}{\Z - \W_k} + \frac{\beta_1}{2} \|\Z - \W_k\|_{\mF}^2 + \frac{L_{k,1}}{2} \|\Z - \Z_k\|_{\mF}^2;
        $$ \label{inpaint:subX}
        \State $\W$ subproblem:
        $$
        \begin{aligned}
        \W_{k+1} &= \argmin_\W \lambda \iprod{\nabla p(\W_k)}{\W - \W_k} + \frac{\beta_1}{2} \|\W - \Z_{k+1} + \frac{1}{\beta_1} \La_{k,1} \|_{\mathrm{F}}^2 \\
        & + \frac{\beta_2}{2} \left\| (\mathcal{P}_\Omega(\mathcal{A} \mathcal{W}^{\#}(\W ) - \Y) + \frac{1}{\beta_2} \La_{k,2} \right\|_{\mathrm{F}}^2  + \frac{L_{k,2}}{2} \|\W - \W_k\|_{\mathrm{F}}^2;
            \end{aligned}
        $$
 \label{inpaint:subW}
        \State $\La_{k+1,1} = \La_{k,1} + \mu\beta  (\Z_{k+1} - \W_{k+1})$; \label{inpaint:subY1}
        \State $\La_{k+1,2} = \La_{k,2} + \mu\beta  \mathcal{P}_\Omega(\mathcal{A} \mathcal{W}^{\#}(\W_{k+1}) - \Y)$; \label{inpaint:subY2}
        \If{$\|\Z_{k+1} - \W_{k+1}\|_{\mF} + \|\mathcal{P}_\Omega(\mathcal{A} \mathcal{W}^{\#}(\W_{k+1})) - \mathcal{P}_\Omega(\Y)\|_{\mF} < \eta$}
            \State Return $\Z_* = \Z_{k+1}$; \textbf{break}
        \EndIf
    \EndFor
\end{algorithmic}
\end{algorithm}
The augmented Lagrangian function of SLRQA-NF \eqref{model:new2} is given by:
\be
\begin{aligned}
& \tilde{\mathcal{L}}_{\beta_1,\beta_2} (\Z,\W,\La_1,\La_2)= \sum_i \phi(\sigma_i( \sqrt{ {\Z}\strut^{*} \Z + \varepsilon^2 \mathbf{I} }  ),\gamma) +\lambda p(\W)+\iprod{\La_1}{\Z-\W}\\
& +\frac{\beta_1}{2}\|
\Z-\W\|^2_\mF+\iprod{\La_2}{\mathcal{P}_\Omega( \mathcal{A} \mathcal{W}^{\#}(\W))-\mathcal{P}_\Omega(\Y) }+\frac{\beta_2}{2}\|\mathcal{P}_\Omega( \mathcal{A} \mathcal{W}^{\#}(\W))-\mathcal{P}_\Omega(\Y)\|_{\mF}^2 \\
& =  \sum_i \phi(\sigma_i( \sqrt{ {\Z}\strut^{*} \Z + \varepsilon^2 \mathbf{I} }  ),\gamma) +\lambda p(\W)+\frac{\beta_1}{2} \| \Z -\W + \frac{1}{\beta_1}\La_1 \|_{\mathrm{F}}^2 \\
&+ \frac{\beta_2}{2}\|\mathcal{P}_\Omega( \mathcal{A} \mathcal{W}^{\#}(\W))-\mathcal{P}_\Omega(\Y) +\frac{1}{\beta_2} \La_2 \|_{\mathrm{F}}^2 - \frac{1}{2\beta_1}\|\La_1\|_{\mathrm{F}}^2 - \frac{1}{2\beta_2}\|\La_2\|^2_{\mathrm{F}},
\end{aligned}
\ee
where $\beta_1,\beta_2 >0$ are the penalty parameters, $\La_1,\La_2 \in \qH^{m \times n}$ are the Lag{\rm range} multipliers.

% The nonconvex PL-ADMM-NF algorithm  framework is presented as follows:
% % \be \label{slrqa:inpaint}
% %\be \label{main}
% \begin{align} 
% & \Z_{k+1} =\argmin_\Z \sum_i \phi(\sigma_i(\Z),\gamma)+\iprod{\La_{k,1}}{\Z-\W}+\frac{\beta}{2}\|\Z-\W\|_{\mF}^2 + \frac{L_{k,1}}{2}\|\Z-\Z_k\|_{\mF}^2  \notag \\
%  &=\argmin_\Z \sum_i \phi(\sigma_i(\Z),\gamma)+\frac{\beta}{2}\|\Z-\W+\frac{1}{\beta}\La_{k,1}\|_{\mF}^2+ \frac{L_{k,1}}{2}\|\Z-\Z_k\|_{\mF}^2 , \notag \\
%  &=\argmin_\Z \sum_{i} \phi(\sigma_i(\Z),\gamma)+\frac{\beta + L_{k,1}}{2}\|\Z - \frac{1}{\beta + L_{k,1}}(\beta(\W-\frac{1}{\beta}\La_{k,1})+L_{k,1}\Z_k) \|_{\mF}^2,
%   \label{inpaint:subX} \\
%   &\W_{k+1} = \argmin_\W \lambda \iprod{\nabla p(\W_k)}{\W -\W_k} +\iprod{\La_{k,1}}{\Z_{k+1}-\W}+\frac{\beta}{2}\|\Z_{k+1}-\W\|_{\mF}^2 + \frac{L_{k,2}}{2}\|\W-\W_k \|_{\mF}^2 \notag  \\
%     &+\iprod{\La_2}{\mathcal{P}_\Omega( \mathcal{W}^{\#}(\W))-\mathcal{P}_\Omega(\Y) }+\frac{\beta}{2}\|\mathcal{P}_\Omega(\mathcal{W}^{\#}(\W))-\mathcal{P}_\Omega(\Y)\|_{\mF}^2, \label{inpaint:subW} \\
%     &\La_{k+1,1}  =  \La_{k,1} + \beta \cdot (\Z_{k+1}-\W_{k+1}), \label{inpaint:subY1} \\
%     &\La_{k+1,2}  = \La_{k,2} + \beta \cdot \mathcal{P}_\Omega(\mathcal{W}^{\#}(\W_{k+1})-\Y),   \label{inpaint:subY2}
%     % &\sigma_{k+1}  =\min(\rho \cdot \beta,\sigma_{\max} ), \label{inpaint:submu}
% \end{align}
%\ee
% \ee
In Step 3 of Algorithm \ref{algorithm:inpaint}, 
% the subproblem of $\Z$ in Step~\ref{inpaint:subX} is a proximal ADMM step, i.e. the minimization of $\tilde{\mathcal{L}}_{\beta_1,\beta_2}(\Z,\W,\La_1,\La_2)$ for fixed $\W_k,\La_{k,1},\La_{k,2}$ plus a proximal term $\frac{L_{k,1}}{2}\|\Z - \Z_k\|_{\mathrm{F}}^2.$ 
since the subproblem of $\Z$ is equivalent to
$$
\min_{\Z}\, \sum_i \phi(\sigma_i( \sqrt{ {\Z}\strut^{*} \Z + \varepsilon^2 \mathbf{I} }  ),\gamma) +  \frac{\beta+L_{k,1}}{2}\left\|\Z-\frac{1}{\beta+L_{k,1}} \left(\beta\W_k+\La_{k,1} + L_{k,1}\Z_k \right) \right\|_{\mathrm{F}}^2,
$$
 Algorithm \ref{algorithmf:X} can be used to obtain $\Z_{k+1}$.  
% the subproblem of $\W$ in Step~\ref{inpaint:subW} is a proximal linearized ADMM step with linearization to $p(\W)$, i.e. the minimization of $\tilde{\mathcal{L}}_{\beta_1,\beta_2}(\Z,\W,\La_1,\La_2)$ for fixed $\Z_{k+1},\La_{k,1},\La_{k,2}$ plus a proximal term $\frac{L_{k,2}}{2}\|\W - \W_k\|_{\mathrm{F}}^2.$ 
In Step \ref{inpaint:subW} of Algorithm \ref{algorithm:inpaint}, the solution of $\W$ has an explicit solution: 

$$
\W_{k+1} = \mathcal{G} ^{-1}\left(   \beta_1 \Z_{k+1} - \La_{k,1}+ \mathcal{W} \mathcal{A}^{\#} \mathcal{P}_{\Omega}^{\#}\left(\beta_2 \mathcal{P}_{\Omega}(\Y) - \La_{k,2}   \right) + L_{k,2} \W_k- \lambda \nabla p(\W_k)  \right),
$$
where $\mathcal{G} = (\beta_1 + L_{k,2})\mathcal{I} + \beta_2 \mathcal{W}\mathcal{A}^{\#}\mathcal{P}_{\Omega}^{\#} \mathcal{P}_{\Omega} \mathcal{A} \mathcal{W}^{\#}$.
% according to the linearization strategy in \cite{liang2012repairing}, a proximal gradient method is used in Step~\ref{inpaint:subW} to linearize the second order term. The penalty parameter of the second order term is denoted by $\beta\eta.$  To be specific, the quadratic term $\frac{\beta}{2} \| \W -\Z - \frac{1}{\beta}\La_1 \|_{\mathrm{F}}^2 + \frac{\beta}{2}\|\mathcal{P}_\Omega( \mathcal{A} \mathcal{W}^{\#}(\W))-\mathcal{P}_\Omega(\Y) \|_{\mathrm{F}}^2$ in $\tilde{\mathcal{L}}_{\beta}(\Z,\W,\La_1,\La_2)$ is replaced by the linearized term plus a regularized term $\beta \iprod{\W_k - \Z_{k+1} - \frac{1}{\beta} \La_{k,1}}{\W - \W_k}
%          + \beta \iprod{\mathcal{W} \mathcal{A}^{\#}(\mathcal{P}_\Omega(\mathcal{A} \mathcal{W}^{\#}(\W_{k}) - \Y) + \frac{1}{\beta} \La_{k,2}}{\W - \W_k}$. 
%          Hence, the  update of $\W_{k+1}$ is given by
% % $\beta \iprod{\W_k-\Z_{k+1}-\frac{1}{\beta}\La_{k,1}}{\W -\W_k} + \beta \iprod{ \mathcal{W} \mathcal{A}^{\#}(\mathcal{P}_\Omega(\mathcal{A}\mathcal{W}^{\#}(\W_{k})-\Y)+\frac{1}{\beta}\La_{k,2}}{\W-\W_k} + \frac{\beta \eta}{2}\|\W - \W_k\|_{\mathrm{F}}^2.$
% \be \label{model:W}
% \begin{aligned}
% \W_{k+1} =\W_k-\frac{ \mathcal{W} \mathcal{A}^{\#}(\mathcal{P}_\Omega(\mathcal{A}\mathcal{W}^{\#}(\W_{k})-\Y)+\frac{1}{\beta}\La_{k,2})+\W_k-\Z_{k+1}-\frac{1}{\beta}\La_{k,1}}{\eta} - \frac{\lambda}{\beta \eta} \nabla p(\W_k).
% \end{aligned}
% \ee
Steps \ref{inpaint:subY1} and \ref{inpaint:subY2} are used to update the Lag{\rm range} multipliers.  Finally, the recovered image $\X_* $ is obtained by $\X_* = \mathcal{W}^{\#}(\Z_*)$. 

% When $\Z_*$ is obtained in
% Algorithm \ref{algorithm:inpaint}, the desired image $\X_*$ can be recovered by $\X_* = \mathcal{W}^{\#}\Z_*$.  
% where $\boldsymbol{T}$ is the element-wise soft thresholding function.
% , i.e.
% $$
% (\boldsymbol{T}_{\boldsymbol{\varepsilon}}(\X))_{ijk} =\sign((\X)_{ijk}) \max\{(\X)_{ijk} - (\boldsymbol{\varepsilon})_{ijk}  ,0\}.
% $$
% The detailed procedure to solve optimization model \eqref{model:new2} is listed in Algorithm \ref{algorithm:inpaint}.

\section{Convergence analysis} \label{4}

In this section, we present the global convergence analysis for Algorithm \ref{algorithm:denoise} and \ref{algorithm:inpaint} respectively. The convergence analysis for \ref{algorithm:denoise} is similar to that in \cite{yashtini2022convergence}. However, the algorithm we analysis is slightly different from \cite{yashtini2022convergence}. Furthermore, we present more flexible assumptions. The convergence analysis for Algorithm \ref{algorithm:inpaint} is different from the previous works since two linear constraints are considered. To the best of our knowledge, this is the first convergence analysis for ADMM with two linear constrants.
 \subsection{Global Convergence for Algorithm \ref{algorithm:denoise}}
The global convergence is established for a generalization of SLRQA in~\eqref{model:new}:
\be \label{prob:conve}
\begin{aligned}
\min_{\X,\W}\quad & f(\X) + g(\W) + \frac{1}{2\tau^2}\|\mathcal{P}_{\Omega}(\mathcal{A}(\X)) - \mathcal{P}_{\Omega}(\Y)\|_{\mathrm{F}}^2, \\
\st \quad &   \mathcal{C}(\X) + \mathcal{B}(\W) = \mathbf{\dot{B}},
\end{aligned}
\ee
where $f$ is semi-lowercontinuous and $g$ is continuously differentiable, $\mathcal{C},\mathcal{B} : \mathbb{H}^{m \times n} \rightarrow \mathbb{H}^{m \times n}$ are given linear maps and $\qb \in \mathbb{H}^{m \times n}$. Specifically, if $f(\X) = \sum_i \phi(\sigma_i(\X),\gamma), g(\W) = p(\W)$,  $\mathcal{B} = - \mathbf{I}$, $\mathcal{C} = \mathcal{W}$, and $\qb = 0$, then Problem~\eqref{prob:conve} becomes the SLRQA in~\eqref{model:new}.
The augmented Lagrangian function of \eqref{prob:conve} is
\be \label{Lag:general}
\begin{aligned}
& \mathcal{L}_{\beta}(\X,\W,\La)  =  f(\X) + g(\W) + \frac{1}{2\tau^2}\|\mathcal{P}_{\Omega}(\mathcal{A}(\X)) - \mathcal{P}_{\Omega}(\Y)\|_{\mathrm{F}}^2 + \iprod{\La}{\mathcal{C}(\X) + \mathcal{B}(\W) - \mathbf{\dot{B}} } \\
& + \frac{\beta}{2} \|\mathcal{C}(\X) + \mathcal{B}(\W) - \mathbf{\dot{B}} \|^2_{\mathrm{F}}\\
&= f(\X) + g(\W) + \frac{1}{2\tau^2}\|\mathcal{P}_{\Omega}(\mathcal{A}(\X)) - \mathcal{P}_{\Omega}(\Y)\|_{\mathrm{F}}^2 + \frac{\beta}{2}\left\|\mathcal{C}(\X) + \mathcal{B}(\W) - \mathbf{\dot{B}} + \frac{\La}{\beta} \right\|_\mF^2 - \frac{1}{2\beta}\left\|\La\right\|_\mF^2 .\\
\end{aligned}
\ee
The iteration scheme for the proposed PL-ADMM for~\eqref{prob:conve} is
\be \label{scheme2}
\begin{aligned}
\X_{k+1} & \in \argmin_{\X} f(\X) + \frac{1}{2\tau^2}\|\mathcal{P}_{\Omega}(\mathcal{A}(\X)) - \mathcal{P}_{\Omega}(\Y)\|_{\mathrm{F}}^2 \\
& + \frac{\beta}{2}\left\|\mathcal{C}(\X) + \mathcal{B}(\W_k) - \mathbf{\dot{B}} + \frac{\La_k}{\beta} \right\|_\mF^2 + \frac{L_{k,1}}{2}\|\X-\X_{k} \|_{\mathrm{F}}^2 ,\\
\W_{k+1} & \in \argmin_{\W} \iprod{\nabla g(\W_k)}{\W} + \frac{\beta}{2}\left\|\mathcal{C}(\X_{k+1}) + \mathcal{B}(\W) - \mathbf{\dot{B}} + \frac{\La_k}{\beta} \right\|_\mF^2  + \frac{L_{k,2}}{2}\left\|\W-\W_k\right\|^2_{\mathrm{F}},\\
\La_{k+1} & = \La_k + \mu \beta \cdot (\mathcal{C}(\X_{k+1}) + \mathcal{B}(\W_{k+1}) - \mathbf{\dot{B}} ),
\end{aligned}
\ee
which is a generalization of Algorithm~\ref{algorithm:denoise}.

The choices of $\beta, L_{k,1}$ and $L_{k,2}$ are crucial for the convergence of \eqref{scheme2}. Generally speaking, $\beta$ should be chosen large enough, and $L_{k,1}$, $L_{k,2}$ should be chosen in a suitable {\rm range}.
In order to give the conditions of $\beta$ and $L_{k,1}, L_{k,2}$, % need to be satisfied and establish the global convergence of \eqref{scheme2}, 
we introduce the following notations.
Let $q_i := \sup_{k \ge 0} { L_{i,k} } < \infty$ and $q_i^{-}:= \inf_{k>0} {L_{i,k}} > 0$ for $i = 1, 2$.
Define $\lambda_+^{\mathcal{B}^{\#} \mathcal{B}}$ as the smallest positive eigenvalue of $\mathcal{B}^{\#}\mathcal{B}$, and
$\rho(\mu) := 1 - |1-\mu|$. For any sequence $\{ \mathbf{\dot{U}}_k \}_{k \ge 0}$, define $\Delta$ operator by $\Delta \mathbf{\dot{U}}_{k} :=  \mathbf{\dot{U}}_{k}  - \mathbf{\dot{U}}_{k-1}.$ 

The assumptions for the convergence analysis are given in Assumption~\ref{assum1}, including the conditions of $\beta$, $L_{k,1}$, and $ L_{k,2}$.
\setlist[enumerate]{leftmargin=2em}
\begin{assumption} \label{assum1}
\,
  \begin{enumerate}[label=A\arabic*]
    \item \label{assum1a} The continuous function $f(\X)$ is coercive, i.e. $f(\X) \rightarrow \infty$ if $\|\X\|_{\mathrm{F}} \rightarrow \infty$, and $g(\W)$ is bounded from below.
    \item \label{assum1b} $g(\W)$ has $L_g$ Lipschitz continuous gradient, i.e., for every $\W_1 ,\W_2$ , it holds that 
    \begin{equation} \label{eqn:02}
    \|\nabla g(\W_1) - \nabla g(\W_2) \|_{\mathrm{F}} \le L_g \| \W_1 - \W_2 \|_{\mathrm{F}}. 
    \end{equation}
    It has been shown in~\cite[Lemma 5.7]{beck2017first} that~\eqref{eqn:02} implies that for every $\W_1$ and $\W_2$, it holds that
    \begin{equation} \label{eqn:sufficient}
    g(\W_1) \le g(\W_2) + \iprod{\nabla g(\W_2)}{\W_1 - \W_2} + \frac{L_g}{2}\|\W_1 - \W_2\|_{\mathrm{F}}^2.
    \end{equation}
    \item \label{assum1c}  The matrix $\mathcal{B}^{\#}\mathcal{B}$ is full rank or $g(\W)$ is  coercive, ${\rm range}(\mathcal{C}) \subseteq {\rm range}(\mathcal{B})$, and $\mathbf{\dot{B}} \in {\rm range}(\mathcal{B})$.
    \item \label{assum1d} The  parameters $\beta > 0$, $\mu \in (0,2)$, and   there exist three constants $a_1 > 0, a_2 > 0$, and $r > 1$ such that
    \begin{equation} \label{cond:A4}
q_1^- \mathbf{I}  \succeq a_1 \mathbf{I}\,\quad \mbox{and}\,\quad q_{2}^- \mathbf{I} + \beta \mathcal{B}^{\#}\mathcal{B} - (r\varsigma_0 + r\varsigma_1 + L_g ) \mathbf{I} \succeq a_2 \mathbf{I},
    \end{equation}
where $\mathbf{I}$ is the identity matrix with correct size, 
\begin{equation} \label{theta123}
\begin{aligned}
 \varsigma_0 := \frac{2\mu (q_2 + L_g)^2}{\beta \lambda_+^{\mathcal{B}^{\#}\mathcal{B} } (\rho(\mu))^2 },\, \hbox{ and } \varsigma_1 := \frac{2 \mu q_2^2}{\beta \lambda_+^{\mathcal{B}^{\#}\mathcal{B}} (\rho(\mu))^2}.   
\end{aligned}    
\end{equation}
    \item \label{assum1e} The parameter $\beta$ satisfies 
    $
    \beta >  \frac{2L_g} {\kappa \lambda_+^{\mathcal{B}^{\#}\mathcal{B} }\rho(\mu)}$, where $\kappa \in (0,1)$ is a given constant. 
  \end{enumerate}
\end{assumption}
One can verify that~\ref{assum1a}, \ref{assum1b}, and~\ref{assum1c} in Assumption~\ref{assum1} are satisfied for SLRQA in~\eqref{model:new}. If $\beta$ is chosen sufficiently large and the sequences $\{L_{k,1}\}$ and $\{L_{k,2}\}$ are chosen such that $q_i^-,q_i$, $i=1, 2$ in suitable {\rm range}, then~\ref{assum1d}
and~\ref{assum1e} in Assumption~\ref{assum1} are also satisfied. For example, if set $L_{k,1} = L_{k,2} = 1$ for all $k$ and let
\[
\beta > \max\left\{\frac{2L_g} {\lambda_+^{\mathcal{B}^{\#}\mathcal{B} }\rho(\mu)},  \frac{r (2\mu (1 + L_g)^2 + 2\mu)}{ (\lambda_+^{\mathcal{B}^{\#}\mathcal{B} })^2 (\rho(\mu))^2  } +   \frac{L_g  }{\lambda_+^{\mathcal{B}^{\#}\mathcal{B} } }  ,1   \right\},
\]
then \ref{assum1d} and \ref{assum1e} in Assumption \ref{assum1} hold. Therefore, Assumption~\ref{assum1} is reasonable.

%We should note that the main idea of our proof procedure is similar to.

Lemmas~\ref{lem:X} and \ref{lem:W} show that the augmented Lagrangian function $\mathcal{L}_{\beta}$ is sufficiently descent when updating $\W$ and $\X$. 
They are used in Lemma~\ref{lem:Y} for bounding $\mathcal{L}_{\beta}(\X_{k+1},\W_{k+1},\La_{k+1}) -\mathcal{L}_{\beta}(\X_{k},\W_{k},\La_{k})$.

% We first give the sufficient descent result of $\mathcal{L}_{\beta}$ for variables $\X,\W,\La$ respectively which will be useful in the construction of the merit function.  
\begin{lemma}[Sufficient descent of $\mathcal{L}_\beta$ for $\X$ update] \label{lem:X}
 The sequence $\{ (\X_k,\W_k,\La_k)\}_{k \ge 0}$ generated by   \eqref{scheme2} satisfies:
\[
\mathcal{L}_{\beta}(\X_{k},\W_{k},\La_k) - \mathcal{L}_{\beta}(\X_{k+1},\W_{k},\La_k) \geq \frac{L_{k,1}}{2} \|\Delta \X_{k+1}\|^2_{\mathrm{F}}. 
\]
% where $A_k := .$ 
\end{lemma}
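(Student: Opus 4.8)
The plan is to use nothing more than the fact that $\X_{k+1}$ is a \emph{global} minimizer of the $\X$-subproblem in~\eqref{scheme2}, which is exactly $\mathcal{L}_\beta(\cdot,\W_k,\La_k)$ plus the proximal term. First I would rewrite the $\X$-subproblem objective in terms of the augmented Lagrangian using the completed-square form in~\eqref{Lag:general}: for fixed $\W_k,\La_k$,
\[
\mathcal{L}_\beta(\X,\W_k,\La_k) = f(\X) + g(\W_k) + \frac{1}{2\tau^2}\|\mathcal{P}_\Omega(\mathcal{A}(\X)) - \mathcal{P}_\Omega(\Y)\|_\mF^2 + \frac{\beta}{2}\Bigl\|\mathcal{C}(\X) + \mathcal{B}(\W_k) - \qb + \tfrac{\La_k}{\beta}\Bigr\|_\mF^2 - \frac{1}{2\beta}\|\La_k\|_\mF^2 ,
\]
where the terms $g(\W_k)$ and $-\tfrac{1}{2\beta}\|\La_k\|_\mF^2$ are constants with respect to $\X$. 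Consequently, minimizing $\mathcal{L}_\beta(\X,\W_k,\La_k) + \tfrac{L_{k,1}}{2}\|\X - \X_k\|_\mF^2$ over $\X$ is equivalent to the $\X$-subproblem in~\eqref{scheme2}, so $\X_{k+1} \in \argmin_\X \bigl[\mathcal{L}_\beta(\X,\W_k,\La_k) + \tfrac{L_{k,1}}{2}\|\X - \X_k\|_\mF^2\bigr]$.

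Next I would invoke the minimality of $\X_{k+1}$ by testing against the competitor $\X = \X_k$, for which the proximal term vanishes:
\[
\mathcal{L}_\beta(\X_{k+1},\W_k,\La_k) + \frac{L_{k,1}}{2}\|\X_{k+1} - \X_k\|_\mF^2 \le \mathcal{L}_\beta(\X_k,\W_k,\La_k) + \frac{L_{k,1}}{2}\|\X_k - \X_k\|_\mF^2 = \mathcal{L}_\beta(\X_k,\W_k,\La_k) .
\]
Rearranging and using $\Delta\X_{k+1} = \X_{k+1} - \X_k$ gives
\[
\mathcal{L}_\beta(\X_k,\W_k,\La_k) - \mathcal{L}_\beta(\X_{k+1},\W_k,\La_k) \ge \frac{L_{k,1}}{2}\|\Delta\X_{k+1}\|_\mF^2 ,
\]
which is the assertion.

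The argument uses no convexity of $f$ or of the data-fitting term, and it does not use Assumption~\ref{assum1}; it relies solely on $\X_{k+1}$ being a global minimizer of the subproblem, as guaranteed by the notation $\X_{k+1}\in\argmin$. There is essentially no obstacle here: the only bookkeeping point is that the inner-product form and the completed-square form of $\mathcal{L}_\beta$ in~\eqref{Lag:general} differ by the $\X$-independent constant $-\tfrac{1}{2\beta}\|\La_k\|_\mF^2$, so the subproblem in~\eqref{scheme2} is genuinely a proximal minimization of $\mathcal{L}_\beta(\cdot,\W_k,\La_k)$. By contrast, I expect the nontrivial work to come in the companion descent estimate for the $\W$-update (Lemma~\ref{lem:W}), where linearizing $g$ forces use of the Lipschitz-type bound~\eqref{eqn:sufficient}, and in controlling the dual-variable change, where the positive-definiteness conditions of~\ref{assum1d} are needed.
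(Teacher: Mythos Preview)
Your proposal is correct and is essentially the same argument as the paper's: both compare the subproblem value at $\X_{k+1}$ against the competitor $\X_k$, using only that $\X_{k+1}$ is a global minimizer, and then rearrange. The paper writes out the individual $f$-, data-fitting-, and penalty-terms explicitly before recombining them into $\mathcal{L}_\beta$, whereas you more directly observe that the subproblem objective equals $\mathcal{L}_\beta(\cdot,\W_k,\La_k)$ plus the proximal term up to $\X$-independent constants; the substance is identical.
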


\begin{proof}
According to the $\X_{k+1}$ update in  \eqref{scheme2}, the following inequality holds:
\be \label{Wup2}
\begin{aligned}
&f(\X_{k+1}) - f(\X_k) + \frac{\beta}{2}\|\mathcal{C}(\X_{k+1})+\mathcal{B}(\W_{k}) - \mathbf{\dot{B}} + \La_k/\beta \|_\mF^2 + \frac{L_{k,1}}{2}\| \Delta \X_{k+1} \|_{\mathrm{F}}^2 \\
&+ \frac{1}{2\tau^2}\|\mathcal{P}_{\Omega}((\mathcal{A}(\X_{k+1}) - \Y)) \|_{\mathrm{F}}^2\leq  \frac{\beta}{2}\left\|\mathcal{C}(\X_k) + \mathcal{B}(\W_k)- \mathbf{\dot{B}} + \La_k/\beta\right\|_\mF^2 + \frac{1}{2\tau^2}\|\mathcal{P}_{\Omega}((\mathcal{A}(\X_{k}) - \Y)) \|_{\mathrm{F}}^2.
\end{aligned}
\ee
% Consider the optimal condition of $\X_{k+1}$ update in \eqref{scheme2}, we have
% \[
% \partial f(\X_{k+1}) + \frac{2}{2\tau^2} \mathcal{A}^{\#}(\mathcal{P}_{\Omega}((\mathcal{A}\X_{k+1} - \Y))) + \mathcal{C}^{\#}(\mathcal{C}\X_{k+1} + \mathcal{B}\W_{k+1} - \qb + \La_k/\beta) + \L_{k,2} \Delta \X_{k+1} = 0.
% \]
% Multiply this equation by $\Delta \X_{k+1},$ then we have
% \begin{equation} \label{sub:X}
% \iprod{\partial f(\X_{k+1})}{\Delta \X_{k+1}} + \|\Delta \X_{k+1}\|^2_{\mathrm{F}} = -  \iprod{\mathcal{A}^{\#}(\mathcal{P}_{\Omega}((\mathcal{A}\X_{k+1} - \Y))) + \beta(\mathcal{C}^{\#}(\mathcal{C}\X_{k+1} + \mathcal{B}\W_{k+1} - \qb + \La_k/\beta)) }{\Delta \X_{k+1}}.
% \end{equation} 
It follows that
\[
\begin{aligned}
&\mathcal{L}_{\beta}(\X_{k},\W_{k},\La_k) - \mathcal{L}_{\beta}(\X_{k+1},\W_{k},\La_k)  \\
&=  f(\X_k) - f(\X_{k+1}) + \frac{1}{2\tau^2}\|\mathcal{P}_{\Omega}((\mathcal{A}(\X_{k}) - \Y)) \|_{\mathrm{F}}^2 - \frac{1}{2\tau^2}\|\mathcal{P}_{\Omega}((\mathcal{A}(\X_{k+1}) - \Y))\|_{\mathrm{F}}^2  \\
& - \frac{\beta}{2}\|\mathcal{C}(\X_{k+1}) +\mathcal{B}(\W_{k}) - \qb +\La_k/\beta \|_{\mathrm{F}}^2  + \frac{\beta}{2} \|\mathcal{C}(\X_{k}) + \mathcal{B}(\W_{k}) - \qb + \La_k/\beta \|_{\mathrm{F}}^2 \ge \frac{L_{k,1}}{2} \|\Delta \X_{k+1}\|^2,
\end{aligned}
\]
which completes the proof.
\end{proof}

\begin{lemma}[Sufficient descent of $\mathcal{L}_\beta$ for $\W$ update] \label{lem:W}
Suppose \ref{assum1b} in Assumption \ref{assum1}  holds.  The sequence $\{ (\X_k,\W_k,\La_k)\}_{k \ge 0}$ generated by~\eqref{scheme2} satisfies:
\[
\mathcal{L}_{\beta}(\X_{k+1},\W_{k},\La_k) - \mathcal{L}_{\beta}(\X_{k+1},\W_{k+1},\La_k) \geq \|\Delta \W_{k+1}\|^2_{B_k},  
\]
where $B_{k} :=L_{k,2}\mathbf{I} - L_g \mathbf{I} + \frac{\beta}{2} \mathcal{B}\mathcal{B}^{\#}$.
\end{lemma}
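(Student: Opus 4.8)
The plan is to read the decrease of $\mathcal{L}_\beta$ over the $\W$-step directly off the optimality of the $\W$-subproblem, which minimizes a strongly convex quadratic, and then to turn the resulting ``linearized gain'' into a genuine gain in $g$ via the descent inequality~\eqref{eqn:sufficient}. Write
\[
\Phi_k(\W) := \iprod{\nabla g(\W_k)}{\W} + \frac{\beta}{2}\left\|\mathcal{C}(\X_{k+1}) + \mathcal{B}(\W) - \qb + \frac{\La_k}{\beta}\right\|_{\mathrm{F}}^2 + \frac{L_{k,2}}{2}\|\W - \W_k\|_{\mathrm{F}}^2
\]
for the objective of the $\W$-subproblem in~\eqref{scheme2}, so $\W_{k+1} = \argmin_{\W}\Phi_k(\W)$. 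Since $L_{k,2} > 0$, $\Phi_k$ is a strictly convex quadratic with Hessian $\beta\mathcal{B}^{\#}\mathcal{B} + L_{k,2}\mathbf{I}$, and its stationarity condition at $\W_{k+1}$ reads
\[
\nabla g(\W_k) + \beta\mathcal{B}^{\#}\!\left(\mathcal{C}(\X_{k+1}) + \mathcal{B}(\W_{k+1}) - \qb + \frac{\La_k}{\beta}\right) + L_{k,2}\,\Delta\W_{k+1} = \mathbf{0}.
\]

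First I would isolate the $\W$-dependence of $\mathcal{L}_\beta$: in~\eqref{Lag:general} only $g(\W)$ and the penalty term involve $\W$, so, setting $u := \mathcal{C}(\X_{k+1}) - \qb + \La_k/\beta$,
\[
\mathcal{L}_{\beta}(\X_{k+1},\W_k,\La_k) - \mathcal{L}_{\beta}(\X_{k+1},\W_{k+1},\La_k) = \big(g(\W_k) - g(\W_{k+1})\big) + \frac{\beta}{2}\big(\|u + \mathcal{B}(\W_k)\|_{\mathrm{F}}^2 - \|u + \mathcal{B}(\W_{k+1})\|_{\mathrm{F}}^2\big).
\]
I would then expand the penalty difference via $\|a\|_{\mathrm{F}}^2 - \|b\|_{\mathrm{F}}^2 = 2\iprod{b}{a-b} + \|a-b\|_{\mathrm{F}}^2$ with $a - b = -\mathcal{B}(\Delta\W_{k+1})$, move $\mathcal{B}^{\#}$ onto the other factor, and substitute the stationarity condition to replace $\beta\mathcal{B}^{\#}(u + \mathcal{B}(\W_{k+1}))$ by $-\nabla g(\W_k) - L_{k,2}\Delta\W_{k+1}$. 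This yields
\[
\frac{\beta}{2}\big(\|u + \mathcal{B}(\W_k)\|_{\mathrm{F}}^2 - \|u + \mathcal{B}(\W_{k+1})\|_{\mathrm{F}}^2\big) = \iprod{\nabla g(\W_k)}{\Delta\W_{k+1}} + L_{k,2}\|\Delta\W_{k+1}\|_{\mathrm{F}}^2 + \frac{\beta}{2}\|\mathcal{B}(\Delta\W_{k+1})\|_{\mathrm{F}}^2,
\]
and $\frac{\beta}{2}\|\mathcal{B}(\Delta\W_{k+1})\|_{\mathrm{F}}^2 = \|\Delta\W_{k+1}\|^2_{\frac{\beta}{2}\mathcal{B}^{\#}\mathcal{B}}$, which coincides with $\|\Delta\W_{k+1}\|^2_{\frac{\beta}{2}\mathcal{B}\mathcal{B}^{\#}}$ for the instance $\mathcal{B} = -\mathcal{I}$ relevant to SLRQA.

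Combining the two displays, the decrease of $\mathcal{L}_\beta$ equals
\[
\big(g(\W_k) - g(\W_{k+1}) + \iprod{\nabla g(\W_k)}{\Delta\W_{k+1}}\big) + L_{k,2}\|\Delta\W_{k+1}\|_{\mathrm{F}}^2 + \frac{\beta}{2}\|\mathcal{B}(\Delta\W_{k+1})\|_{\mathrm{F}}^2 .
\]
Applying~\eqref{eqn:sufficient} (valid under~\ref{assum1b}) with $\W_1 = \W_{k+1}$, $\W_2 = \W_k$ gives $g(\W_k) - g(\W_{k+1}) + \iprod{\nabla g(\W_k)}{\Delta\W_{k+1}} \ge -\tfrac{L_g}{2}\|\Delta\W_{k+1}\|_{\mathrm{F}}^2 \ge -L_g\|\Delta\W_{k+1}\|_{\mathrm{F}}^2$, so the decrease is at least $(L_{k,2} - L_g)\|\Delta\W_{k+1}\|_{\mathrm{F}}^2 + \frac{\beta}{2}\|\mathcal{B}(\Delta\W_{k+1})\|_{\mathrm{F}}^2 = \|\Delta\W_{k+1}\|^2_{B_k}$, which is the claim.

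The penalty-difference algebra is routine. The step that needs care is reconciling the \emph{linearized} term $\iprod{\nabla g(\W_k)}{\W}$ in $\Phi_k$ with the \emph{true} value $g(\W)$ in $\mathcal{L}_\beta$; this mismatch is exactly what forces assumption~\ref{assum1b}, and absorbing it is what costs the $-L_g\mathbf{I}$ summand in $B_k$. The only quaternion-specific subtlety is bookkeeping: throughout, $\iprod{\cdot}{\cdot}$ must be read as the real trace inner product and $\mathcal{B}^{\#}$ as the adjoint with respect to it, after which every completing-the-square identity above goes through verbatim.
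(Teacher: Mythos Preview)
Your proposal is correct and follows essentially the same route as the paper: write the optimality condition of the $\W$-subproblem, expand the quadratic penalty via a square-difference identity, substitute the optimality condition to eliminate $\beta\mathcal{B}^{\#}(\cdot)$, and finish with the descent inequality~\eqref{eqn:sufficient}. You are also right to flag that the computation naturally produces $\frac{\beta}{2}\|\mathcal{B}(\Delta\W_{k+1})\|_{\mathrm{F}}^2 = \|\Delta\W_{k+1}\|^2_{\frac{\beta}{2}\mathcal{B}^{\#}\mathcal{B}}$, whereas $B_k$ is stated with $\mathcal{B}\mathcal{B}^{\#}$; the paper's own derivation lands on $\|\mathcal{B}\Delta\W_{k+1}\|_{\mathrm{F}}^2$ as well, so the discrepancy is a harmless notational slip that vanishes for $\mathcal{B}=-\mathcal{I}$.
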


\begin{proof}
According to the optimality condition of $\W_{k+1},$ the following equality holds:
\begin{equation} \label{upW:1}
 \nabla g(\W_k) + \beta \mathcal{B}^{\#}(\mathcal{C}(\X_{k+1}) + \mathcal{B}(\W_{k+1}) - \qb + \La_{k}/\beta ) + L_{k,2} \Delta \W_{k+1} = 0.   
\end{equation}
It follows that
\[
\begin{aligned}
&\mathcal{L}_{\beta}(\X_{k+1},\W_{k},\La_k) - \mathcal{L}_{\beta}(\X_{k+1},\W_{k+1},\La_k)  \\
& = g(\W_k) - g(\W_{k+1}) + \frac{\beta}{2} \|\mathcal{C}(\X_{k+1}) + \mathcal{B}(\W_k)-\qb + \frac{\La_k}{\beta} \|_{\mathrm{F}}^2 - \frac{\beta}{2} \|\mathcal{C}(\X_{k+1}) + \mathcal{B}(\W_{k+1})-\qb + \frac{\La_k}{\beta} \|_{\mathrm{F}}^2 \\
& = g(\W_k) - g(\W_{k+1}) + \frac{\beta}{2}\|\mathcal{B}(\Delta \W_{k+1})\|_{\mathrm{F}}^2 - \beta \iprod{\mathcal{B}^{\#}( \mathcal{C}(\X_{k+1}) + \mathcal{B}(\W_{k+1}) -\qb + \frac{\La_k}{\beta} ) }{\Delta \W_{k+1}}  \\
& = g(\W_k) - g(\W_{k+1}) +  \iprod{\nabla g(\W_k)}{\Delta \W_{k+1}}  + L_{k,2} \|\Delta \W_{k+1}\|_{\mathrm{F}}^2  + \frac{\beta}{2}\|\mathcal{B}\Delta \W_{k+1}\|_{\mathrm{F}}^2\, \quad (\text{by~\eqref{upW:1}} ) \\
& \ge  \|\Delta \W_{k+1} \|_{B_{k}}^2, \qquad (\text{by Assumption~\ref{assum1b}} )
\end{aligned}
\]
which completes the proof.
%  From the $\W$ iterate of \eqref{scheme2} we have the following inequality:
% \be \label{Wup1}
% \begin{aligned}
% &g(\W_{k+1}) - g(\W_k) + \frac{\beta}{2}\|\mathcal{C}(\X_k)+\mathcal{B}(\W_{k+1}) - \mathbf{\dot{B}} - \Y/\beta \|_\mF^2 + \frac{1}{2}\| \Delta \W_{k+1} \|_{L_{k,1}}^2 \\
% \leq &   \frac{\beta}{2}\left\|\mathcal{C}(\X_k) + \mathcal{B}(\W_k)- \mathbf{\dot{B}} -  \Y/\beta\right\|_\mF^2
% \end{aligned}
% \ee
% According to the definition of $\mathcal{L}_{\beta}(\X,\W;\Y)$ and \eqref{Wup1} we have
% \be
% \begin{aligned}
% &\mathcal{L}_{\beta}(\X_k,\W_k,\La_k) - \mathcal{L}_{\beta}(\X_k,\W_{k+1},\La_k)
% =   g(\W_{k}) - g(\W_{k+1})\\
% & + \frac{\beta}{2}\|\mathcal{C}(\X_k)+\mathcal{B}(\W_{k})
%  - \mathbf{\dot{B}} + \La_k/\beta \|^2_{\mathrm{F}} - \frac{\beta}{2}\|\mathcal{C}(\X_k) + \mathcal{B}(\W_{k+1})- \mathbf{\dot{B}} + \La_k/\beta\|^2_{\mathrm{F}} \\
% \geq & \frac{1}{2} \|\Delta \W_{k+1}\|_{L_{k,1}}^2.
% \end{aligned}
% \ee
% Then the proof is completed.
\end{proof}

Lemma~\ref{lem:Y} is used in Lemma~\ref{lem:mon}, which is crucial for constructing a merit function.
\begin{lemma} \label{lem:Y}
Suppose \ref{assum1b} in Assumption \ref{assum1}  holds. The sequence $\{ (\X_k,\W_k,\La_k)\}_{k \ge 0}$ generated by~\eqref{scheme2} satisfies:
\[
\mathcal{L}_{\beta}(\X_{k+1},\W_{k+1},\La_{k+1})  \leq \mathcal{L}_{\beta}(\X_{k},\W_{k},\La_{k}) - \|\Delta \W_{k+1}\|^2_{B_k} - \frac{L_{k,1}}{2} \|\Delta \X_{k+1} \|^2_{\mathrm{F}} + \frac{1}{\beta \mu} \| \Delta \La_{k+1} \|^2_{\mathrm{F}}.
\]
\end{lemma}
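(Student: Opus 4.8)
The plan is to combine Lemmas~\ref{lem:X} and~\ref{lem:W} to control the primal descent of $\mathcal{L}_\beta$ across the $\X$ and $\W$ updates, then account for the ascent caused by the multiplier update in Step~\ref{denoise:La}. First I would write the telescoping decomposition
\[
\mathcal{L}_{\beta}(\X_{k+1},\W_{k+1},\La_{k+1}) - \mathcal{L}_{\beta}(\X_{k},\W_{k},\La_{k}) = \underbrace{\mathcal{L}_{\beta}(\X_{k+1},\W_{k},\La_k) - \mathcal{L}_{\beta}(\X_{k},\W_{k},\La_k)}_{(\mathrm{I})} + \underbrace{\mathcal{L}_{\beta}(\X_{k+1},\W_{k+1},\La_k) - \mathcal{L}_{\beta}(\X_{k+1},\W_{k},\La_k)}_{(\mathrm{II})} + \underbrace{\mathcal{L}_{\beta}(\X_{k+1},\W_{k+1},\La_{k+1}) - \mathcal{L}_{\beta}(\X_{k+1},\W_{k+1},\La_k)}_{(\mathrm{III})}.
\]
By Lemma~\ref{lem:X}, $(\mathrm{I}) \le -\frac{L_{k,1}}{2}\|\Delta\X_{k+1}\|_{\mathrm{F}}^2$; by Lemma~\ref{lem:W}, $(\mathrm{II}) \le -\|\Delta\W_{k+1}\|_{B_k}^2$.

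The third term is the only one requiring new computation. Since $\mathcal{L}_\beta$ depends on $\La$ only through the affine terms $\iprod{\La}{\mathcal{C}(\X)+\mathcal{B}(\W)-\qb} - \frac{1}{2\beta}\|\La\|_{\mathrm{F}}^2$ (using the completed-square form in~\eqref{Lag:general}), and since the multiplier update gives $\mathcal{C}(\X_{k+1})+\mathcal{B}(\W_{k+1})-\qb = \frac{1}{\mu\beta}\Delta\La_{k+1}$, a direct substitution yields
\[
(\mathrm{III}) = \iprod{\Delta\La_{k+1}}{\mathcal{C}(\X_{k+1})+\mathcal{B}(\W_{k+1})-\qb} = \frac{1}{\mu\beta}\|\Delta\La_{k+1}\|_{\mathrm{F}}^2.
\]
Adding the three bounds gives exactly the claimed inequality. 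I would present this as the clean ``three-point'' splitting, citing the two preceding lemmas for $(\mathrm{I})$ and $(\mathrm{II})$ and doing the short explicit calculation for $(\mathrm{III})$ using the closed form of $\mathcal{L}_\beta$ in the second line of~\eqref{Lag:general} together with the $\La$-update in~\eqref{scheme2}.

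The main obstacle — really the only subtlety — is bookkeeping the $\La$-dependence correctly: one must be careful that in $(\mathrm{III})$ only the linear-in-$\La$ part of $\mathcal{L}_\beta$ changes, so that $\mathcal{L}_\beta(\X_{k+1},\W_{k+1},\La_{k+1}) - \mathcal{L}_\beta(\X_{k+1},\W_{k+1},\La_k) = \iprod{\La_{k+1}-\La_k}{\mathcal{C}(\X_{k+1})+\mathcal{B}(\W_{k+1})-\qb}$, with no extra quadratic residue because the penalty term $\frac{\beta}{2}\|\mathcal{C}(\X_{k+1})+\mathcal{B}(\W_{k+1})-\qb\|_{\mathrm{F}}^2$ is independent of $\La$. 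Then substituting $\mathcal{C}(\X_{k+1})+\mathcal{B}(\W_{k+1})-\qb = \frac{1}{\mu\beta}(\La_{k+1}-\La_k)$ converts the inner product into $\frac{1}{\mu\beta}\|\Delta\La_{k+1}\|_{\mathrm{F}}^2$. Everything else is routine, and Assumption~\ref{assum1b} is invoked only through Lemma~\ref{lem:W}.
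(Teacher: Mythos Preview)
Your proposal is correct and follows essentially the same approach as the paper: a three-term telescoping split, with Lemmas~\ref{lem:X} and~\ref{lem:W} handling the primal descent and the multiplier update giving $(\mathrm{III}) = \frac{1}{\mu\beta}\|\Delta\La_{k+1}\|_{\mathrm{F}}^2$. The paper's proof is even terser, but the argument is identical.
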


\begin{proof}
According to the update of $\La$ in \eqref{scheme2}, Lemma \ref{lem:X}, and Lemma \ref{lem:W}, we have
\[
\begin{aligned}
\mathcal{L}_{\beta}(\X_{k+1},\W_{k+1},\La_{k+1}) & = \mathcal{L}_{\beta}(\X_{k+1},\W_{k+1},\La_{k}) + \frac{1}{\beta \mu} \| \Delta \La_{k+1} \|^2_{\mathrm{F}} \\
% & \leq \mathcal{L}_{\beta}(\X_{k+1},\W_{k},\La_{k}) - \|\Delta \W_{k+1} \|_{B_k}^2 + \frac{1}{\beta \mu} \| \Delta \La_{k+1} \|^2_{\mathrm{F}} \\
& \leq \mathcal{L}_{\beta}(\X_{k},\W_{k},\La_{k}) -\|\Delta \W_{k+1} \|_{B_k}^2 - \frac{L_{k,1}}{2} \|\Delta \X_{k+1} \|_{\mathrm{F}}^2 + \frac{1}{\beta \mu} \| \Delta \La_{k+1} \|^2_{\mathrm{F}}. \\
\end{aligned}
\]
\end{proof}
% Under the sufficient descent conditions for $\X,\W$ and $\La$ variables,
% we next give the following lemma that plays a vital role in the construction of a merit function.
\begin{lemma} \label{lem:mon}
Suppose \ref{assum1b},\ref{assum1c} in Assumption \ref{assum1}  hold, the sequence $\{(\X_{k},\W_{k},\La_{k})\}_{k \ge 0}$ generated by  \eqref{scheme2} satisfies:
\begin{align}
& \frac{1}{\beta \mu}\|\Delta \La_{k+1}\|^2_{\mathrm{F}} \le \varsigma_0 \|\Delta \W_k \|^2_{\mathrm{F}} + \varsigma_1 \|\Delta \W_{k+1} \|^2_{\mathrm{F}} + \varsigma_2 \|\mathcal{B}^{\#} (\Delta \La_k) \|^2_{\mathrm{F}} - \varsigma_2 \|\mathcal{B}^{\#} (\Delta \La_{k+1}) \|^2_{\mathrm{F}}, \label{eqn:La1} \\
& \mathcal{L}_{\beta}(\X_{k+1},\W_{k+1},\La_{k+1}) + \|\Delta \W_{k+1}\|_{B_{k} - r \varsigma_1 \mathbf{I}}^2 + \frac{L_{k,1}}{2}\| \Delta \X_{k+1} \|^2_{\mathrm{F}} + \frac{r-1}{\beta \mu} \|\Delta \La_{k+1}\|^2_{\mathrm{F}} \notag \\
& + r \varsigma_2 \| \mathcal{B}^{\#} \Delta \La_{k+1} \|^2_{\mathrm{F}} \leq \mathcal{L}_{\beta}(\X_k,\W_k,\La_k) + r \varsigma_0 \| \Delta \W_k\|^2_{\mathrm{F}} + r \varsigma_2 \| \mathcal{B}^{\#} (\Delta \La_k) \|^2_{\mathrm{F}}, \label{eqn:La2}
\end{align}
where $r > 1, \varsigma_0, \varsigma_1$ are defined in \eqref{theta123}, and 
$
  \varsigma_2 := \frac{|1-\mu|}{\beta \mu \lambda_+^{\mathcal{B}^{\#}\mathcal{B}}\rho(\mu)}.  
$

\end{lemma}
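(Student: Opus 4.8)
\textbf{Proof plan for Lemma~\ref{lem:mon}.}
The plan is to first establish the bound~\eqref{eqn:La1} on $\|\Delta \La_{k+1}\|_{\mathrm{F}}^2$ and then feed it into Lemma~\ref{lem:Y} to obtain the descent inequality~\eqref{eqn:La2}. For~\eqref{eqn:La1}, the starting point is the optimality condition~\eqref{upW:1} for $\W_{k+1}$, namely $\nabla g(\W_k) + \beta\mathcal{B}^{\#}(\mathcal{C}(\X_{k+1}) + \mathcal{B}(\W_{k+1}) - \qb + \La_k/\beta) + L_{k,2}\Delta\W_{k+1} = 0$. Using the $\La$-update $\La_{k+1} = \La_k + \mu\beta(\mathcal{C}(\X_{k+1}) + \mathcal{B}(\W_{k+1}) - \qb)$, I can rewrite $\beta(\mathcal{C}(\X_{k+1}) + \mathcal{B}(\W_{k+1}) - \qb) = \frac{1}{\mu}\Delta\La_{k+1}$, so the optimality condition becomes $\nabla g(\W_k) + \mathcal{B}^{\#}\La_k + \frac{1}{\mu}\mathcal{B}^{\#}\Delta\La_{k+1} + L_{k,2}\Delta\W_{k+1} = 0$; equivalently $\mathcal{B}^{\#}\La_{k+1} = -\nabla g(\W_k) - L_{k,2}\Delta\W_{k+1} + \frac{1-\mu}{\mu}\mathcal{B}^{\#}\Delta\La_{k+1}$. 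Taking the difference of this relation at consecutive iterates $k$ and $k-1$ gives an expression for $\mathcal{B}^{\#}\Delta\La_{k+1}$ in terms of $\nabla g(\W_k) - \nabla g(\W_{k-1})$, $L_{k,2}\Delta\W_{k+1} - L_{k-1,2}\Delta\W_k$, and $\frac{1-\mu}{\mu}(\mathcal{B}^{\#}\Delta\La_{k+1} - \mathcal{B}^{\#}\Delta\La_k)$.

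The key quantitative step is then to pass from a bound on $\mathcal{B}^{\#}\Delta\La_{k+1}$ to a bound on $\Delta\La_{k+1}$ itself. Because $\text{range}(\mathcal{C})\subseteq\text{range}(\mathcal{B})$ and $\qb\in\text{range}(\mathcal{B})$ (Assumption~\ref{assum1c}), the $\La$-update keeps $\Delta\La_{k+1}$ inside $\text{range}(\mathcal{B}) = \text{range}(\mathcal{B}\mathcal{B}^{\#})$, hence orthogonal to $\ker(\mathcal{B}^{\#})$. On that subspace $\mathcal{B}^{\#}$ is bounded below by $\sqrt{\lambda_+^{\mathcal{B}^{\#}\mathcal{B}}}$, so $\|\Delta\La_{k+1}\|_{\mathrm{F}}^2 \le \frac{1}{\lambda_+^{\mathcal{B}^{\#}\mathcal{B}}}\|\mathcal{B}^{\#}\Delta\La_{k+1}\|_{\mathrm{F}}^2$. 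Combining this with the rearranged optimality condition, using the Lipschitz bound~\eqref{eqn:02} $\|\nabla g(\W_k) - \nabla g(\W_{k-1})\|_{\mathrm{F}} \le L_g\|\Delta\W_k\|_{\mathrm{F}}$, the bounds $L_{k,2}\le q_2$, and Young's inequality (with the split that produces the factor $2$ and the $\rho(\mu)^2$ in the denominators), yields exactly~\eqref{eqn:La1} with the stated $\varsigma_0,\varsigma_1,\varsigma_2$; the terms $\varsigma_2\|\mathcal{B}^{\#}\Delta\La_k\|_{\mathrm{F}}^2 - \varsigma_2\|\mathcal{B}^{\#}\Delta\La_{k+1}\|_{\mathrm{F}}^2$ arise from moving the $\frac{1-\mu}{\mu}\mathcal{B}^{\#}\Delta\La_{k+1}$ term to the left and absorbing a telescoping remainder.

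For~\eqref{eqn:La2}, I would start from Lemma~\ref{lem:Y}, which gives $\mathcal{L}_\beta(\X_{k+1},\W_{k+1},\La_{k+1}) \le \mathcal{L}_\beta(\X_k,\W_k,\La_k) - \|\Delta\W_{k+1}\|_{B_k}^2 - \frac{L_{k,1}}{2}\|\Delta\X_{k+1}\|_{\mathrm{F}}^2 + \frac{1}{\beta\mu}\|\Delta\La_{k+1}\|_{\mathrm{F}}^2$, and then substitute the bound~\eqref{eqn:La1} for $\frac{1}{\beta\mu}\|\Delta\La_{k+1}\|_{\mathrm{F}}^2$. To set up a genuine Lyapunov/merit function one wants a $\frac{r-1}{\beta\mu}\|\Delta\La_{k+1}\|_{\mathrm{F}}^2$ term with $r>1$ on the left; this is obtained by writing $\frac{1}{\beta\mu}\|\Delta\La_{k+1}\|_{\mathrm{F}}^2 = \frac{r}{\beta\mu}\|\Delta\La_{k+1}\|_{\mathrm{F}}^2 - \frac{r-1}{\beta\mu}\|\Delta\La_{k+1}\|_{\mathrm{F}}^2$, bounding only the first piece via~\eqref{eqn:La1} (which scales the right side by $r$), and keeping the second on the left. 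Rearranging terms — grouping $\|\Delta\W_{k+1}\|_{B_k}^2 - r\varsigma_1\|\Delta\W_{k+1}\|_{\mathrm{F}}^2 = \|\Delta\W_{k+1}\|_{B_k - r\varsigma_1\mathbf{I}}^2$, moving the $r\varsigma_2\|\mathcal{B}^{\#}\Delta\La_{k+1}\|_{\mathrm{F}}^2$ to the left and leaving $r\varsigma_0\|\Delta\W_k\|_{\mathrm{F}}^2$ and $r\varsigma_2\|\mathcal{B}^{\#}\Delta\La_k\|_{\mathrm{F}}^2$ on the right — gives~\eqref{eqn:La2}. The main obstacle is the first part: carefully tracking the subspace argument that lets $\mathcal{B}^{\#}$ be inverted on $\text{range}(\mathcal{B})$ (this is where Assumption~\ref{assum1c} is essential), and choosing the Young's-inequality weights so that the constants come out exactly as the $\varsigma_i$ in~\eqref{theta123}; the rest is bookkeeping.
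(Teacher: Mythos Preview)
Your proposal is correct and follows essentially the same route as the paper: introduce $\qG_{k+1}=-L_{k,2}\Delta\W_{k+1}-\nabla g(\W_k)$, derive the recursion $\mathcal{B}^{\#}\La_{k+1}=\mu\qG_{k+1}+(1-\mu)\mathcal{B}^{\#}\La_k$ from the $\W$-optimality condition and the $\La$-update, difference it, use $\Delta\La_{k+1}\in\text{range}(\mathcal{B})$ (from \ref{assum1c}) to pass from $\|\mathcal{B}^{\#}\Delta\La_{k+1}\|_{\mathrm{F}}$ to $\|\Delta\La_{k+1}\|_{\mathrm{F}}$, and apply Jensen/Young to the convex combination with weights $\rho(\mu)$ and $|1-\mu|$; then multiply \eqref{eqn:La1} by $r$ and combine with Lemma~\ref{lem:Y} exactly as you describe. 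One small slip: in your intermediate identity the coefficient should be $\tfrac{\mu-1}{\mu}$, not $\tfrac{1-\mu}{\mu}$, but since only $|1-\mu|$ enters the final bound this is harmless.
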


\begin{proof}
 Let $k \ge 1$ be fixed and define the matrix
\begin{equation} \label{def:qw}
    \qG_{k+1} := - L_{k,2} \Delta \W_{k+1} - \nabla g(\W_k).
\end{equation}
It follows that $\Delta \qG_{k+1} = L_{2,k-1} \Delta \W_k - L_{k,2} \Delta \W_{k+1} + \nabla g(\W_{k-1}) - \nabla g(\W_k).$  By the  triangle inequality, the following inequality holds:
\[
\|\Delta \qG_{k+1}\|_{\mathrm{F}} \le \|\nabla g(\W_{k-1}) - \nabla g(\W_k) \|_{\mathrm{F}} + L_{k,2} \|\Delta \W_{k+1}\|_{\mathrm{F}} + L_{2,k-1} \|\Delta \W_k\|_{\mathrm{F}}.
\]
By \ref{assum1b} in Assumption \ref{assum1}, $\nabla g(\W)$ is $L_g$ Lipschitz continuous and $q_2 = \sup_{k \ge 0} L_{k,2} < \infty,$ we have
$
\|\Delta \qG_{k+1}\|_{\mathrm{F}} \le (L_g + q_2)\|\Delta \W_k\|_{\mathrm{F}} + q_2 \|\Delta\W_{k+1}\|_{\mathrm{F}}.
$
Hence it follows that:
\begin{equation} \label{eqn:qw}
\|\Delta \qG_{k+1}\|^2_{\mathrm{F}} \le 2(L_g + q_2)^2 \|\Delta \W_k\|^2_{\mathrm{F}} + 2q_2^2 \|\Delta \W_{k+1}\|^2_{\mathrm{F}}.
\end{equation}
Expressing the optimality condition of $\W$ subproblem using $\qG_{k+1}$, we have
$
\qG_{k+1} = \beta \mathcal{B}^{\#}(\mathcal{C}(\X_{k+1}) + \mathcal{B}(\W_{k+1})- \qb + \La_k/\beta ).$
Combining this with the $\La$ update, we obtain
\begin{equation} \label{eqn:La}
    \mathcal{B}^{\#} \La_{k+1} = \mu \qG_{k+1} + (1-\mu) \mathcal{B}^{\#}\La_k.
\end{equation}
It follows that $\mathcal{B}^{\#} \Delta \La_{k+1} = \mu \Delta \qG_{k+1} + (1-\mu) \mathcal{B}^{\#} \Delta \La_k.$ Since $\mu \in (0,2),$ we have
\[
\mathcal{B}^{\#} \Delta \La_{k+1} = \rho(\mu) \cdot \frac{\mu \Delta \qG_{k+1}}{\rho(\mu)} + |1-\mu|\cdot (\text{sign}(1-\mu)\mathcal{B}^{\#}\Delta \La_k),
\]
By the  convexity of $\|\cdot\|^2_{\mathrm{F}}$, the update of $\La$ in \eqref{scheme2} and \ref{assum1c} in Assumption \ref{assum1}, it follows that $\Delta \La_{k+1} \in \text{{\rm range}}(\mathcal{B})$,  Hence, the following inequalities hold
\begin{align} 
\lambda_+^{\mathcal{B}^{\#}\mathcal{B}}\rho(\mu) \|\Delta\La_{k+1}\|^2_{\mathrm{F}} \leq& \rho(\mu)\|\mathcal{B}^{\#} \Delta \La_{k+1}\|_{\mathrm{F}}^2 = \|\mathcal{B}^{\#} \Delta \La_{k+1}\|_{\mathrm{F}}^2 - |1-\mu| \|\mathcal{B}^{\#} \Delta \La_{k+1}\|^2_{\mathrm{F}} \nonumber \\
\le& \frac{\mu^2}{\rho(\mu)} \|\Delta \qG_{k+1}\|^2_{\mathrm{F}} + |1-\mu| \|\mathcal{B}^{\#}\Delta \La_k\|^2_{\mathrm{F}} - |1-\mu| \|\mathcal{B}^{\#} \Delta \La_{k+1}\|^2_{\mathrm{F}}, \label{eqn:4-6-1}
\end{align}
where the first inequality follows from~\ref{assum1c} in Assumption~\ref{assum1} and the definition of $\lambda_+^{\mathcal{B}^{\#}\mathcal{B}}$.
Dividing both sides of \eqref{eqn:4-6-1} by $\beta \mu \lambda_+^{\mathcal{B}^{\#}\mathcal{B}} \rho(\mu)$ and using~\eqref{eqn:qw} yield~\eqref{eqn:La1}. 
Finally, Inequality~\eqref{eqn:La2} follows by multiplying the inequality \eqref{eqn:La1} by $r >1$ and combining it with Lemma \ref{lem:Y}.
 \end{proof}

Let $\mathcal{R}(\X,\W,\La,\W',\La'):= \mathcal{L}_{\beta}(\X,\W,\La) + r\varsigma_0 \|\W-\W'\|^2_{\mathrm{F}} + r\varsigma_2\|\mathcal{B}^{\#}(\La-\La') \|^2_{\mathrm{F}}$. The merit function $\mathcal{R}_k$ is defined by
\begin{equation} \label{definition:Rk}
\begin{aligned}
&\mathcal{R}_k :=\mathcal{R}(\X_k,\W_k,\La_k,\W_{k-1},\La_{k-1}) = \mathcal{L}_{\beta}(\X_k,\W_k,\La_k)+r\varsigma_0\|\Delta \W_k\|^2_{\mathrm{F}} + r\varsigma_2 \|\mathcal{B}^{\#}\Delta\La_k\|^2_{\mathrm{F}}.
\end{aligned}
\end{equation}
If Assumption~\ref{assum1} holds, then it can be shown that the merit function is descent. Specifically, we have
% According to Equation \eqref{eqn:La2} in Lemma \ref{lem:mon}, it follows that
\be \label{ieqn:RK}
\begin{aligned}
& \mathcal{R}_{k+1} + a\left(\|\Delta \X_{k+1} \|_{\mathrm{F}}^2 + \|\Delta \W_{k+1} \|_{\mathrm{F}}^2 + \|\Delta \La_{k+1}\|^2_{\mathrm{F}} \right) \\
& \leq \mathcal{R}_{k+1} + \|\Delta \W_{k+1}\|_{B_{k} - r (\varsigma_1+\varsigma_0) \mathbf{I}}^2 + \frac{L_{k,1}}{2}\| \Delta \X_{k+1} \|^2_{\mathrm{F}} + \frac{r-1}{\beta \mu} \|\Delta \La_{k+1}\|^2_{\mathrm{F}}  
\le \mathcal{R}_k \le \mathcal{R}_{k_0},
\end{aligned}
\ee
where $a = \min \{a_1,a_2,\frac{r-1}{\beta\mu} \}$ and $a_1,a_2$ are defined in Assumption~\ref{assum1}, the first inequality follows from \ref{assum1d}, \ref{assum1e} in  Assumption \ref{assum1}, the second inequality follows from Lemma \ref{lem:mon}, and the third inequality is due to induction of $\mathcal{R}_k \leq \mathcal{R}_{k+1}$ for any $k \ge k_0$.

\begin{theorem}[Bounded sequence of $\{(\X_k,\W_k,\La_k) \}_{k \ge 0}$] \label{thm:Xbound}
Assume Assumption \ref{assum1} holds.  The sequence $\{(\X_k,\W_k,\La_k)\}_{k \ge 0}$ generated by  \eqref{scheme2} is bounded.
\end{theorem}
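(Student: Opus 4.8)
The plan is to exploit the monotonicity of the merit function established just above in~\eqref{ieqn:RK}. First I would observe that~\eqref{ieqn:RK} yields $\mathcal{R}_{k+1}\le\mathcal{R}_k$ for every $k\ge k_0$, so $\mathcal{R}_k\le\mathcal{R}_{k_0}$ for $k\ge k_0$, and since only finitely many indices precede $k_0$ there is a constant $M$ with $\mathcal{R}_k\le M$ for all $k$. Unfolding~\eqref{definition:Rk} and~\eqref{Lag:general} and discarding the nonnegative summands $\frac{1}{2\tau^2}\|\mathcal{P}_{\Omega}(\mathcal{A}(\X_k))-\mathcal{P}_{\Omega}(\Y)\|_\mF^2$, $\frac{\beta}{2}\|\mathcal{C}(\X_k)+\mathcal{B}(\W_k)-\qb+\La_k/\beta\|_\mF^2$, $r\varsigma_0\|\Delta\W_k\|_\mF^2$ and $r\varsigma_2\|\mathcal{B}^{\#}\Delta\La_k\|_\mF^2$ on the left, this says $f(\X_k)+g(\W_k)\le M+\frac{1}{2\beta}\|\La_k\|_\mF^2$. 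The left-hand side is coercive in $(\X_k,\W_k)$ by~\ref{assum1a}, so everything reduces to showing that the single indefinite term $-\frac{1}{2\beta}\|\La_k\|_\mF^2$ in $\mathcal{L}_\beta$ cannot grow faster than $f(\X_k)+g(\W_k)$ along the iterates.

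To control $\|\La_k\|_\mF$ I would use the dual relation~\eqref{eqn:La}, $\mathcal{B}^{\#}\La_{k+1}=\mu\qG_{k+1}+(1-\mu)\mathcal{B}^{\#}\La_k$ with $\qG_{k+1}=-L_{k,2}\Delta\W_{k+1}-\nabla g(\W_k)$ (from~\eqref{upW:1}, the $\La$ update and~\eqref{def:qw}). Since $\La_0=\mathbf{0}$ and every increment $\Delta\La_j$ lies in $\mathrm{range}(\mathcal{B})$ by~\ref{assum1c}, we get $\La_k\in\mathrm{range}(\mathcal{B})$, hence $\|\La_k\|_\mF^2\le(\lambda_+^{\mathcal{B}^{\#}\mathcal{B}})^{-1}\|\mathcal{B}^{\#}\La_k\|_\mF^2$. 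Iterating~\eqref{eqn:La} (using $\La_0=\mathbf{0}$ and $|1-\mu|<1$), applying a weighted Cauchy--Schwarz inequality, bounding $\|\qG_j\|_\mF^2\le 2q_2^2\|\Delta\W_j\|_\mF^2+2\|\nabla g(\W_{j-1})\|_\mF^2$ and invoking the descent-lemma consequence of~\ref{assum1b}, namely $\|\nabla g(w)\|_\mF^2\le 2L_g(g(w)-\inf g)$, should give an estimate of the form
\[
\tfrac{1}{2\beta}\|\La_k\|_\mF^2\ \le\ \tfrac{\mu^2}{2\beta\,\lambda_+^{\mathcal{B}^{\#}\mathcal{B}}\rho(\mu)}\sum_{j=1}^{k}|1-\mu|^{k-j}\Bigl(2q_2^2\|\Delta\W_j\|_\mF^2+4L_g\bigl(g(\W_{j-1})-\inf g\bigr)\Bigr).
\]
Because~\ref{assum1e} forces $2L_g/(\beta\lambda_+^{\mathcal{B}^{\#}\mathcal{B}}\rho(\mu))$ to be small, the total coefficient multiplying the $g(\W_{j-1})-\inf g$ terms, after summing the geometric weights $\sum_j|1-\mu|^{k-j}\le 1/\rho(\mu)$, is strictly less than $1$; moreover $\mathcal{R}_k\le M$ lets one bound $\|\Delta\W_j\|_\mF^2$ by $C'+\frac{1}{2\beta r\varsigma_0}\|\La_j\|_\mF^2$, and $\|\La_j\|_\mF^2$ in turn by the same recursion.

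These bounds couple the four nonnegative quantities $h_k:=g(\W_k)-\inf g$, $f(\X_k)-\inf f$, $\|\La_k\|_\mF^2$ and $\|\Delta\W_k\|_\mF^2$ to one another, and the key point is that, thanks to $\beta$ being large, the coupling is contractive, so a simultaneous induction on $k$ gives a uniform bound for all four. In particular $f(\X_k)$ is bounded above, hence $\{\X_k\}$ is bounded since $f$ is continuous and coercive (\ref{assum1a}); $\{\La_k\}$ is bounded; and $\{\W_k\}$ is bounded either because $g$ is coercive (first alternative of~\ref{assum1c}), or, in the second alternative, because $\mathcal{B}^{\#}\mathcal{B}$ full rank makes $\mathcal{B}$ injective while $\mathcal{R}_k\le M$ together with the boundedness of $\{\X_k\}$ and $\{\La_k\}$ keeps $\frac{\beta}{2}\|\mathcal{C}(\X_k)+\mathcal{B}(\W_k)-\qb+\La_k/\beta\|_\mF^2$ bounded and hence controls $\|\mathcal{B}(\W_k)\|_\mF$.

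The hard part will be precisely the taming of $-\frac{1}{2\beta}\|\La_k\|_\mF^2$: one has to use simultaneously the inclusion $\La_k\in\mathrm{range}(\mathcal{B})$, the dual relation~\eqref{eqn:La}, the descent-lemma bound on $\|\nabla g\|_\mF$, and the largeness of $\beta$ from~\ref{assum1e}, and the bookkeeping of the geometric-weight sums together with the case split in~\ref{assum1c} must be carried out carefully so that the $g(\W_k)$ contributions are genuinely absorbed rather than merely shifted by one index in $k$.
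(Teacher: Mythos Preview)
Your ingredients are the right ones—the dual relation~\eqref{eqn:La}, $\La_k\in\mathrm{range}(\mathcal{B})$, the descent-lemma bound on $\|\nabla g\|_\mF$, and the largeness of $\beta$—but the paper's proof sidesteps the history-dependent geometric sums and the ``simultaneous induction'' you propose by bounding $\|\La_{k+1}\|_\mF^2$ in a \emph{single} step. Instead of iterating~\eqref{eqn:La} back to $\La_0$, the paper rewrites it as $\mu\,\mathcal{B}^{\#}\La_{k+1}=\mu\,\qG_{k+1}+(1-\mu)\,\mathcal{B}^{\#}(\La_k-\La_{k+1})$ and applies convexity of $\|\cdot\|_\mF^2$ once, yielding
\[
-\tfrac{1}{2\beta}\|\La_{k+1}\|_\mF^2\ \ge\ -\varsigma_3\|\nabla g(\W_{k+1})\|_\mF^2-\varsigma_4\|\Delta\W_{k+1}\|_\mF^2-\varsigma_5\|\mathcal{B}^{\#}\Delta\La_{k+1}\|_\mF^2,
\]
with $\varsigma_3=1/(\beta\rho(\mu)\lambda_+^{\mathcal{B}^{\#}\mathcal{B}})$. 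The key observation is that $\|\Delta\W_{k+1}\|_\mF^2$ and $\|\mathcal{B}^{\#}\Delta\La_{k+1}\|_\mF^2$ already sit on the left of the expanded version of~\eqref{ieqn:RK} with coefficients $r\varsigma_0+a$ and $r\varsigma_2$, and one checks $r\varsigma_0+a>\varsigma_4$ and $r\varsigma_2>\varsigma_5$, so those two terms are absorbed \emph{at the same index} $k{+}1$. Only $\varsigma_3\|\nabla g(\W_{k+1})\|_\mF^2$ survives, and it is traded against $\kappa\,g(\W_{k+1})$ exactly via the descent-lemma bound you cite; \ref{assum1e} is precisely the statement $\varsigma_3<\kappa/(2L_g)$. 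The conclusion then falls out: $f$ coercive gives $\{\X_k\}$ bounded; the $\La$-update gives $\mathcal{B}\W_{k+1}=\tfrac{1}{\beta\mu}\Delta\La_{k+1}-\mathcal{C}\X_{k+1}+\qb$ bounded, hence $\{\W_k\}$ bounded by either alternative of~\ref{assum1c}; and the bounded penalty term then controls $\{\La_k\}$.

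Your geometric-sum route risks not closing under Assumption~\ref{assum1} as stated. After summing the weights $\sum_j|1-\mu|^{k-j}\le 1/\rho(\mu)$, the effective coefficient multiplying the $g$-terms is of order $2L_g\mu^2/(\beta\lambda_+^{\mathcal{B}^{\#}\mathcal{B}}\rho(\mu)^2)$, which equals $\mu^2/\rho(\mu)$ times the quantity that~\ref{assum1e} bounds by $\kappa$; for $\mu$ away from $1$ the factor $\mu^2/\rho(\mu)$ is unbounded, so nothing in Assumption~\ref{assum1} forces your contraction constant below $1$. On top of that, the circular coupling you flag (bounding $\|\Delta\W_j\|_\mF^2$ via $\|\La_j\|_\mF^2$ and vice versa) brings in yet more constants that you have not checked against~\ref{assum1d}. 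The paper's one-step bound avoids both issues and is what makes~\ref{assum1e} the exactly sufficient condition.
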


\begin{proof}
% Let $\{(\X_k,\W_k,\La_k)\}_{k \ge 0}$ be the sequence generated by Algorithm \ref{algorithm:denoise}. 
According to \eqref{ieqn:RK}, there exists $k_0 \ge 1$ such that $\mathcal{R}_{k+1} \le \mathcal{R}_{k_0}$ for all $k \ge k_0.$ Hence the following inequality holds:
\begin{equation}  \label{ieqn:RK2}
\begin{aligned}
& f(\X_{k+1}) + g(\W_{k+1}) + \frac{1}{2\tau^2}\|\mathcal{P}_{\Omega}(\mathcal{A}(\X_k)) - \mathcal{P}_{\Omega}(\Y)\|_{\mathrm{F}}^2 + \frac{\beta}{2}\left\|\mathcal{C}(\X_{k+1}) + \mathcal{B}(\W_{k+1}) - \mathbf{\dot{B}} + \frac{\La_{k+1}}{\beta} \right\|_\mF^2 \\ &- \frac{1}{2\beta}\left\|\La_{k+1}\right\|_\mF^2  +(r \varsigma_0 + a) \|\Delta \W_{k+1}\|_{\mathrm{F}}^2 + a (\|\Delta \X_{k+1}\|_{\mathrm{F}}^2 + \|\Delta \La_{k+1} \|_{\mathrm{F}}^2  )+ r \varsigma_2 \|\mathcal{B}^{\#} \Delta \La_{k+1}\|_{\mathrm{F}}^2 \le \mathcal{R}_{k_0}. 
\end{aligned}
\end{equation}
  According to \eqref{eqn:La}, we have
\begin{equation} \label{eqn:muLa1}
 \mu \mathcal{B}^{\#} \La_{k+1} = \mu \qG_{k+1} + (1-\mu) \mathcal{B}^{\#}(\La_k - \La_{k+1}).
\end{equation}
Since $\mu \in (0,2),$  Equation \eqref{eqn:muLa1} can be rewritten as:
$$
\mu \mathcal{B}^{\#} \La_{k+1} = \rho(\mu) \frac{ \mu \qG_{k+1}}{\rho(\mu)} + |1-\mu|\left(\text{sign}(1-\mu) \mathcal{B}^{\#}(\La_k - \La_{k+1}) \right).
$$
It follows from the  convexity of $\|\cdot\|^2$ that
$
\lambda_+^{\mathcal{B}^{\#}\mathcal{B}} \mu^2 \|\La_{k+1}\|_{\mathrm{F}}^2 \le \frac{\mu^2}{\rho(\mu)} \|\qG_{k+1}\|_{\mathrm{F}}^2 + |1-\mu| \|\mathcal{B}^{\#}\Delta \La_{k+1}\|_{\mathrm{F}}^2.
$
According to definition \eqref{def:qw}, we have 
$
\|\qG_{k+1}\|_{\mathrm{F}}^2 \le 2 q_2^2 \|\Delta \W_{k+1}\|_{\mathrm{F}}^2 + 2\|\nabla g(\W_k)\|_{\mathrm{F}}^2 \le 2(q_2 + L_g)^2 \|\Delta \W_{k+1}\|_{\mathrm{F}}^2 + 2\|\nabla g(\W_{k+1})\|_{\mathrm{F}}^2.
$
Hence it follows that:
\begin{equation} \label{eqn:La3}
-\frac{1}{2\beta} \|\La_{k+1}\|_{\mathrm{F}}^2 \ge -\varsigma_3 \|\nabla g(\W_{k+1})\|_{\mathrm{F}}^2 - \varsigma_4 \|\Delta \W_{k+1}\|_{\mathrm{F}}^2 - \varsigma_5 \|\mathcal{B}^{\#} \Delta \La_{k+1}\|_{\mathrm{F}}^2,
\end{equation}
where 
\[
\varsigma_3 := \frac{1}{\beta \rho(\mu) \lambda_+^{\mathcal{B}^{\#}\mathcal{B}}},\quad \varsigma_4 := \frac{(q_2+L_g)^2}{\beta \rho(\mu) \lambda_+^{\mathcal{B}^{\#}\mathcal{B}}}, \quad \varsigma_5 := \frac{|1-\mu|}{2\beta \mu^2 \lambda_+^{\mathcal{B}^{\#}\mathcal{B}} }.
\]
Using  Inequalities \eqref{eqn:La3} and \eqref{ieqn:RK2}, we obtain
\begin{equation} \label{eqn:ieqn:RK3}
\begin{aligned}
&  f(\X_{k+1}) + (1-\kappa)g(\W_{k+1}) + \frac{1}{2\tau^2}\|\mathcal{P}_{\Omega}(\mathcal{A}(\X_k)) - \mathcal{P}_{\Omega}(\Y)\|_{\mathrm{F}}^2 + \frac{\beta}{2}\left\|\mathcal{C}(\X_{k+1}) + \mathcal{B}(\W_{k+1}) - \mathbf{\dot{B}} + \frac{\La_{k+1}}{\beta} \right\|_\mF^2\\
&+ a (\|\Delta \X_{k+1}\|_{\mathrm{F}}^2 + \|\Delta \La_{k+1}\|_{\mathrm{F}}^2) + (r \varsigma_0 + a -\varsigma_4) \|\Delta \W_{k+1}\|_{\mathrm{F}}^2 + (r \varsigma_2 - \varsigma_5) \|\mathcal{B}^{\#} \Delta \La_{k+1}\|_{\mathrm{F}}^2 \\
&\le \mathcal{R}_{k_0} - \inf_{\W} \left\{ \kappa g(\W) -  \varsigma_3 \|\nabla g(\W)\|_{\mathrm{F}}^2 \right\},
\end{aligned}
\end{equation}
where $\kappa \in (0,1).$ According to \eqref{eqn:sufficient} in Assumption \ref{assum1b},
% $\nabla g$ is $L_g$ Lipschitz continous, then for any $k \ge k_0, k_0 \ge 1$ and $\W$, it holds that $g(\W) \le g(\W_k) + \iprod{\nabla g (\W_k)}{\W -\W_k} + \frac{L_g}{2}\|\W-\W_k\|^2.$
 setting $\W_1 = \W - \delta \nabla g(\W)$ and $\W_2 = \W$, it follows that $\kappa g(\W_k - \delta \nabla g(\W_k)) \le \kappa g(\W_k) - \kappa(\delta - \frac{L_g \delta^2}{2})\|\nabla g(\W_k)\|_{\mathrm{F}}^2.$ Since $g$ is bounded from below, there exist $M$ such that
\begin{equation} \label{ineq:delta1}
    -M < \inf\{\kappa g(\W) - \kappa (\delta - \frac{L_g \delta^2}{2})\|\nabla g(\W)\|_{\mathrm{F}}^2: \W \in \mathbb{H}^{m \times n} \}.
\end{equation}
We choose $\delta = \frac{1}{L_g}$. According to Assumption \ref{assum1e}, we have $\varsigma_3 < \frac{\kappa}{2  L_g} = \kappa (\delta - \frac{L_g \delta^2}{2}).$  Since $r > 1$ and $\mu \in (0,2),$ according to the denifition of $\varsigma_0,\varsigma_2,\varsigma_4$ and $\varsigma_5$, it holds that $r\varsigma_2 - \varsigma_5 >0$ and $r \varsigma_0 + a - \varsigma_4 >0$. It follows from \eqref{eqn:ieqn:RK3} that
\begin{equation} \label{L:bound}
\begin{aligned}
& f(\X_{k+1}) + (1-\kappa)g(\W_{k+1}) + \frac{1}{2\tau^2}\|\mathcal{P}_{\Omega}(\mathcal{A}(\X_k)) - \mathcal{P}_{\Omega}(\Y)\|_{\mathrm{F}}^2 \\
& + \frac{\beta}{2}\left\|\mathcal{C}(\X_{k+1}) + \mathcal{B}(\W_{k+1}) - \mathbf{\dot{B}} + \frac{\La_{k+1}}{\beta} \right\|_\mF^2  + a(\|\Delta \X_{k+1}\|_{\mathrm{F}}^2 + \|\Delta \La_{k+1}\|_{\mathrm{F}}^2)< \mathcal{R}_{k_0} + M.
\end{aligned}
\end{equation}
Since $f$ is coercive, the sequence $\{\X_k\}_{k \ge k_0 }$  is  bounded and hence $\{\mathcal{C}\X_k\}_{k \ge k_0}$ is bounded. According to \eqref{L:bound}, $\Delta \La_{k+1}$ is bounded. By the $\La$ update in \eqref{scheme2}, we have $\mathcal{B}\W_{k+1} =  \frac{1}{\beta \mu}\Delta \La_{k+1} - \mathcal{C}\X_{k+1}+\qb.$ 
 Since $\{\mathcal{C}\X_k\}_{k \ge k_0}$ is bounded, if $\mathcal{B}^{\#}\mathcal{B}$ is full rank or $g$ is coercive,  $\{ \W_k \}_{k \ge k_0}$ is bounded. 
From the fact that $ \frac{\beta}{2}\left\|\mathcal{C}(\X_{k+1}) + \mathcal{B}(\W_{k+1}) - \mathbf{\dot{B}} + \frac{\La_{k+1}}{\beta} \right\|_\mF^2$ is bounded, it follows that $\{\La_k\}_{k \ge k_0}$ is bounded. As a consequence, $\{\X_k\}_{k \ge 1}, \{\W_k\}_{k \ge 1}, \{\La_k\}_{k \ge 1}$ is bounded. 
\end{proof}

%\begin{proof}
%Let $\{(\X_k,\W_k,\La_k)\}_{k \ge 0}$ be a sequence generated by  \eqref{scheme1} and \eqref{scheme2}. According to assumption \ref{assum2} there exist $k_0 \ge 1$ such that $\mathcal{R}_{k+1} \le \mathcal{R}_{k_0}$ for all $k \ge k_0$. Hence we have
%\[
%\begin{aligned}
%&f(\X_{k+1})+g(\W_{k+1}) + \frac{\beta}{2}\|\mathcal{C}\X_{k+1} + \mathcal{B}\W_{k+1} -\mathbf{\dot{B}} - \Y/\beta\|^2 - \frac{1}{2\beta}\|\Y \|^2 \\
%&+ a\|\W_{k+1} \|^2 + (a + r\varsigma_0)\|\Delta \X_{k+1}\|^2 +a \|\Delta \La_{k+1}\|^2 + r\varsigma_2 \|\mathcal{C}^{\#}\Delta \La_{k+1} \|^2 \le \mathcal{R}_{k_0}.
%\end{aligned}
%\]
%Define $\La_{k+1}$ as in, we have
%\[
%\mu \mathcal{C}^{\La_{k+1}} = \mu \La_{k+1} + (1-\mu)\mathcal{C}^{\#}(\La_k - \La_{k+1}).
%\]
% As $\mu \in (0,2),$ then we have
% \[
% \mu \mathcal{C}^{\#} \La_{k+1} = \rho(\mu)\left( \frac{\mu \La_{k+1}}{\rho(\mu)} \right) +(|1-\mu|)\left(\mbox{sign}(1-\mu)\mathcal{C}^{\#}(\La_k+\La_{k-1}) \right).
% \]
%\end{proof}
Using Theorem \ref{thm:Xbound}, we have the following convergence result of  $\mathcal{R}_k$. 
\begin{lemma} \label{lem:bound}
Suppose Assumption \ref{assum1} holds. The sequence $\{\mathcal{R}_k\}_{k \ge 1}$ is bounded from below and converges.
\end{lemma}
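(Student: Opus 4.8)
The plan is to combine two facts that are essentially already established: the merit sequence $\{\mathcal{R}_k\}$ is eventually non-increasing, and it is uniformly bounded from below; the monotone convergence theorem then yields the claim. For the monotonicity, Inequality~\eqref{ieqn:RK} (its first inequality uses~\ref{assum1d} and~\ref{assum1e}, its second uses Lemma~\ref{lem:mon}) shows that there is an index $k_0 \ge 1$ such that
\[
\mathcal{R}_{k+1} + a\bigl(\|\Delta \X_{k+1}\|_{\mathrm{F}}^2 + \|\Delta \W_{k+1}\|_{\mathrm{F}}^2 + \|\Delta \La_{k+1}\|_{\mathrm{F}}^2\bigr) \le \mathcal{R}_k, \qquad k \ge k_0,
\]
with $a = \min\{a_1,a_2,(r-1)/(\beta\mu)\} > 0$; in particular $\mathcal{R}_{k+1} \le \mathcal{R}_k$ for every $k \ge k_0$, so $\{\mathcal{R}_k\}_{k \ge k_0}$ is non-increasing.

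For the lower bound, I would unfold~\eqref{definition:Rk} using the second expression for $\mathcal{L}_\beta$ in~\eqref{Lag:general}:
\[
\mathcal{R}_k = f(\X_k) + g(\W_k) + \frac{1}{2\tau^2}\|\mathcal{P}_{\Omega}(\mathcal{A}(\X_k)) - \mathcal{P}_{\Omega}(\Y)\|_{\mathrm{F}}^2 + \frac{\beta}{2}\Bigl\|\mathcal{C}(\X_k) + \mathcal{B}(\W_k) - \mathbf{\dot{B}} + \frac{\La_k}{\beta}\Bigr\|_{\mathrm{F}}^2 - \frac{1}{2\beta}\|\La_k\|_{\mathrm{F}}^2 + r\varsigma_0\|\Delta \W_k\|_{\mathrm{F}}^2 + r\varsigma_2\|\mathcal{B}^{\#}\Delta\La_k\|_{\mathrm{F}}^2,
\]
and bound each term from below: $f$ is bounded below (being continuous and coercive by~\ref{assum1a}, it cannot approach $-\infty$), $g$ is bounded below by~\ref{assum1a}, the data-fidelity term and the augmented-penalty term are nonnegative, the terms $r\varsigma_0\|\Delta \W_k\|_{\mathrm{F}}^2$ and $r\varsigma_2\|\mathcal{B}^{\#}\Delta\La_k\|_{\mathrm{F}}^2$ are nonnegative since $\varsigma_0,\varsigma_2 \ge 0$, and $-\frac{1}{2\beta}\|\La_k\|_{\mathrm{F}}^2$ is bounded below because $\{\La_k\}$ is bounded by Theorem~\ref{thm:Xbound}. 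Adding these estimates yields a finite constant $c$ with $\mathcal{R}_k \ge c$ for all $k \ge 1$.

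Since $\{\mathcal{R}_k\}_{k \ge k_0}$ is non-increasing and bounded below it converges, and since changing finitely many initial terms does not affect convergence, $\{\mathcal{R}_k\}_{k \ge 1}$ converges as well. I do not foresee a genuine obstacle: the argument is essentially bookkeeping, the only substantive inputs being the boundedness of $\{\La_k\}$ from Theorem~\ref{thm:Xbound} (so that the $-\frac{1}{2\beta}\|\La_k\|_{\mathrm{F}}^2$ term stays controlled) and the elementary fact that a continuous coercive function is bounded below.
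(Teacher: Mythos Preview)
Your proposal is correct and follows essentially the same approach as the paper: eventual monotonicity from~\eqref{ieqn:RK} plus a lower bound obtained via Theorem~\ref{thm:Xbound}. The only cosmetic difference is in the lower-bound step: the paper simply notes that $\{(\X_k,\W_k,\La_k)\}$ is bounded and $\mathcal{L}_\beta$ is continuous, hence $\mathcal{L}_\beta(\X_k,\W_k,\La_k)$ (and therefore $\mathcal{R}_k$) is bounded below, whereas you unpack $\mathcal{R}_k$ term by term; both arguments are valid and equivalent in spirit.
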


\begin{proof}
%According to the definition of $\mathcal{R}_k$, we can only prove $\mathcal{L}_{\beta}(\X_k,\W_k,\La_k)$ is bounded from below.
It follows from Theorem \ref{thm:Xbound} that $\{(\X_k,\W_k,\La_k)\}_{k \ge 1}$ is bounded. 
% Then it follows that $\iprod{\La_k}{\mathcal{C}(\X) + \mathcal{B}(\W) - \mathbf{\dot{B}}}$ and $\|\mathcal{C}(\X) + \mathcal{B}(\W) - \mathbf{\dot{B}}\|_{\mathrm{F}}^2 $ are bounded.
% Since $g$ is bounded from below, $f$ is coercive and $\{ \X_k\}_{k\ge 0}$ is bounded, then $\{ f(\X_k) \}$ is bounded. 
Since $\mathcal{L}_{\beta}$ is continous respect to $\X_k,\W_k,\La_k$, it follows that $\mathcal{L}_{\beta}(\X_k,\W_k,\La_k)$ is bounded from below. Hence $\mathcal{R}_k$ is bounded from below. According to Assumption \ref{assum1} and \eqref{ieqn:RK}, $\{ \mathcal{R}_k \}_{k \ge 1}$ is monotonically decreasing. As a consequence, $\{\mathcal{R}_k\}_{k \ge 1}$ is bounded from below and converges.
\end{proof}

The following convergence result of sequence $\{\X_{k},\W_{k},\La_{k}\}$ is given by  Lemma \ref{lem:bound}. 
\begin{theorem} \label{thm:Xcov}
Suppose Assumption  \ref{assum1}  holds. It follows that
\[
\lim_{k \rightarrow \infty} \|\Delta \W_k \|_{\mathrm{F}} = 0, \lim_{k \rightarrow \infty} \|\Delta \X_k \|_{\mathrm{F}} =0, \lim_{k \rightarrow \infty}\|\Delta \La_k \|_{\mathrm{F}} = 0.
\]

\end{theorem}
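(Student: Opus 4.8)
The plan is to run a standard telescoping/summability argument built on the descent of the merit function $\mathcal{R}_k$ defined in~\eqref{definition:Rk}. The key inequality is already in hand: by~\eqref{ieqn:RK}, for all $k \ge k_0$,
\[
\mathcal{R}_{k+1} + a\left(\|\Delta \X_{k+1}\|_{\mathrm{F}}^2 + \|\Delta \W_{k+1}\|_{\mathrm{F}}^2 + \|\Delta \La_{k+1}\|_{\mathrm{F}}^2\right) \le \mathcal{R}_k,
\]
where $a = \min\{a_1, a_2, \tfrac{r-1}{\beta\mu}\}$. The first thing I would do is observe that $a > 0$: indeed $a_1 > 0$ and $a_2 > 0$ by~\ref{assum1d} in Assumption~\ref{assum1}, and $r > 1$ with $\beta, \mu > 0$ forces $\tfrac{r-1}{\beta\mu} > 0$, so $a > 0$.

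Next I would sum the displayed inequality over $k$ from $k_0$ to $N$, which telescopes on the $\mathcal{R}$-terms to give
\[
a \sum_{k=k_0}^{N} \left(\|\Delta \X_{k+1}\|_{\mathrm{F}}^2 + \|\Delta \W_{k+1}\|_{\mathrm{F}}^2 + \|\Delta \La_{k+1}\|_{\mathrm{F}}^2\right) \le \mathcal{R}_{k_0} - \mathcal{R}_{N+1}.
\]
By Lemma~\ref{lem:bound}, $\{\mathcal{R}_k\}_{k \ge 1}$ is bounded from below, say by a constant $\underline{\mathcal{R}}$; hence $\mathcal{R}_{k_0} - \mathcal{R}_{N+1} \le \mathcal{R}_{k_0} - \underline{\mathcal{R}}$ uniformly in $N$. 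Letting $N \to \infty$ and using $a > 0$ yields
\[
\sum_{k=k_0}^{\infty} \left(\|\Delta \X_{k+1}\|_{\mathrm{F}}^2 + \|\Delta \W_{k+1}\|_{\mathrm{F}}^2 + \|\Delta \La_{k+1}\|_{\mathrm{F}}^2\right) \le \frac{\mathcal{R}_{k_0} - \underline{\mathcal{R}}}{a} < \infty.
\]
Since each summand is nonnegative and the series converges, its general term tends to zero, which forces $\|\Delta \X_k\|_{\mathrm{F}} \to 0$, $\|\Delta \W_k\|_{\mathrm{F}} \to 0$, and $\|\Delta \La_k\|_{\mathrm{F}} \to 0$ as $k \to \infty$, completing the argument.

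This proof is essentially routine once~\eqref{ieqn:RK} and Lemma~\ref{lem:bound} are established, so there is no serious obstacle at this stage; the only points that need care are (i) confirming $a > 0$ from the parameter conditions in Assumption~\ref{assum1}, and (ii) correctly handling the tail of the summation (i.e.\ only claiming the bound for indices $k \ge k_0$, which is harmless since convergence of the differences is an asymptotic statement). All the heavy lifting—the sufficient-decrease estimates of Lemmas~\ref{lem:X}--\ref{lem:mon}, the construction of the merit function, and the lower bound on $\mathcal{R}_k$—has already been carried out in the preceding results.
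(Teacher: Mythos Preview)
Your proposal is correct and follows essentially the same route as the paper: both telescope the descent inequality~\eqref{ieqn:RK}, invoke Lemma~\ref{lem:bound} to bound $\mathcal{R}_k$ from below, deduce summability of the squared increments, and conclude that each increment tends to zero. The only cosmetic difference is that the paper also cites Theorem~\ref{thm:Xbound} to ensure $\mathcal{R}_{k_0}$ is finite, which you implicitly assume.
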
 

\begin{proof}
According to Theorem \ref{thm:Xbound}, summing up \eqref{ieqn:RK} from $k_0$ to $K \ge k_0$, the following inequality holds
\[
\sum_{k=k_0}^{K}\|\Delta \W_{k+1}\|_{\mathrm{F}}^2 + \|\Delta \X_{k+1}\|^2_{\mathrm{F}} + \|\Delta \La_{k+1}\|^2_{\mathrm{F}} < \frac{1}{a}(\mathcal{R}_{k_0}-\inf_{k \ge 1}\mathcal{R}_k).
\]
According to Lemma \ref{lem:bound}, it follows that $-\infty < \inf_{k \ge 1} \mathcal{R}_k$.  Let $K \rightarrow +\infty$, since $\{(\X_k,\W_k,\La_k)\}_{k \ge 0}$ is bounded, then $\mathcal{R}_{k_0}$ is bounded and hence
$
\sum_{k \ge 0} \|\Delta \W_{k+1} \|^2_{\mathrm{F}} + \|\Delta \X_{k+1}\|^2_{\mathrm{F}} + \|\Delta \La_{k+1}\|^2_{\mathrm{F}} < \infty,
$
which completes the proof.
\end{proof}

Theorem \ref{thm:Xcov} has proven the limiting behavior of $\|\Delta \W_k\|_{\mathrm{F}}, \|\Delta \X_k\|_{\mathrm{F}}$ and $\|\Delta \La_k\|_{\mathrm{F}}$, which will be used to prove the limiting behavior  of $\mathcal{R}_{k}.$ We next prove that the subgradient of $\mathcal{L}_{\beta}(\X,\W,\La)$ can be bounded by $\Delta \X_{k+1},\Delta \W_{k+1}$ and $\Delta \La_{k+1}.$

\begin{lemma}[Subgradient bound] \label{lem:sub:bound}
Suppose that Assumption \ref{assum1b} holds.
 Let $\{(\X_k,\W_k,\La_k)\}_{k \ge 0}$ be a sequence generated by~\eqref{scheme2}. Then $\widetilde{\qd}_{k+1}:= (\qd_{\X_{k+1}},\qd_{\W_{k+1}},\qd_{\La_{k+1}}) \in \partial \mathcal{L}_{\beta}(\X_{k+1},\W_{k+1},\La_{k+1})$,  where
\[
\begin{aligned}
\qd_{\X_{k+1}} &:=  \mathcal{C}^{\#} \Delta \La_{k+1} - L_{k,1} \Delta \X_{k+1}  - \beta \mathcal{C}^{\#}\mathcal{B} \Delta \W_{k+1} ,\quad \qd_{\La_{k+1}} := \frac{1}{\beta \mu} \Delta \La_{k+1} ,\, \\
\qd_{\W_{k+1}} &:= \nabla g(\W_{k+1}) - \nabla g(\W_k) + \mathcal{B}^{\#}\Delta \La_{k+1}  - L_{k,2} \Delta \W_{k+1}.
\end{aligned}
\]
Furthermore, there exists $\pi >0$ such that
\[
|\|\widetilde{\qd}_{k+1}\|| \leq \pi \left( \| \Delta \W_{k+1}\|_{\mathrm{F}} + \| \Delta \X_{k+1}\|_{\mathrm{F}}+ \| \Delta \La_{k+1}\|_{\mathrm{F}} \right),
\]
where 
\begin{equation} \label{def:rho}
 \pi := \max \left\{q_1, \beta \|\mathcal{C}\| \|\mathcal{B}\| + q_2 + L_g, \| \mathcal{C}\| + \|\mathcal{B}\| + \frac{1}{\beta\mu} \right\}.   
\end{equation}

\end{lemma}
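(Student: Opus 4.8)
The plan is to establish the membership $\widetilde{\qd}_{k+1}\in\partial\mathcal{L}_{\beta}(\X_{k+1},\W_{k+1},\La_{k+1})$ by combining the first-order optimality conditions of the two subproblems in~\eqref{scheme2} with the multiplier update, and then to obtain the quantitative estimate from the triangle inequality together with the Lipschitz property in~\ref{assum1b} and the uniform bounds $L_{k,i}\le q_i$.

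For the membership I would first differentiate the augmented Lagrangian~\eqref{Lag:general}. Since $g$, the data-fitting quadratic and the penalty quadratic are continuously differentiable and $f$ is the only nonsmooth summand (depending on $\X$ alone), the sum rule for the limiting subdifferential gives $\partial\mathcal{L}_{\beta}$ as the product of $\partial_{\X}\mathcal{L}_{\beta}$, $\{\nabla_{\W}\mathcal{L}_{\beta}\}$ and $\{\nabla_{\La}\mathcal{L}_{\beta}\}$, where $\partial_{\X}\mathcal{L}_{\beta}(\X_{k+1},\W_{k+1},\La_{k+1})=\partial f(\X_{k+1})+\tfrac{1}{\tau^2}\mathcal{A}^{\#}\mathcal{P}_{\Omega}^{\#}(\mathcal{P}_{\Omega}(\mathcal{A}(\X_{k+1}))-\mathcal{P}_{\Omega}(\Y))+\mathcal{C}^{\#}\La_{k+1}+\beta\mathcal{C}^{\#}(\mathcal{C}(\X_{k+1})+\mathcal{B}(\W_{k+1})-\mathbf{\dot{B}})$, $\nabla_{\W}\mathcal{L}_{\beta}=\nabla g(\W_{k+1})+\mathcal{B}^{\#}\La_{k+1}+\beta\mathcal{B}^{\#}(\mathcal{C}(\X_{k+1})+\mathcal{B}(\W_{k+1})-\mathbf{\dot{B}})$ and $\nabla_{\La}\mathcal{L}_{\beta}=\mathcal{C}(\X_{k+1})+\mathcal{B}(\W_{k+1})-\mathbf{\dot{B}}$. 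Then I would invoke the subproblem optimality: the $\X$-update in~\eqref{scheme2} produces some $s_{k+1}\in\partial f(\X_{k+1})$ with $s_{k+1}+\tfrac{1}{\tau^2}\mathcal{A}^{\#}\mathcal{P}_{\Omega}^{\#}(\mathcal{P}_{\Omega}(\mathcal{A}(\X_{k+1}))-\mathcal{P}_{\Omega}(\Y))+\beta\mathcal{C}^{\#}(\mathcal{C}(\X_{k+1})+\mathcal{B}(\W_{k})-\mathbf{\dot{B}})+\mathcal{C}^{\#}\La_{k}+L_{k,1}\Delta\X_{k+1}=0$, and substituting this $s_{k+1}$ into $\partial_{\X}\mathcal{L}_{\beta}$ and collecting the differences $\mathcal{B}(\W_{k+1})-\mathcal{B}(\W_{k})=\mathcal{B}\Delta\W_{k+1}$ and $\La_{k+1}-\La_{k}=\Delta\La_{k+1}$ makes the remaining terms telescope into the stated $\qd_{\X_{k+1}}$. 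Similarly, the optimality identity~\eqref{upW:1} of the $\W$-subproblem cancels $\nabla g(\W_{k})$, $\mathcal{B}^{\#}\La_{k}$ and the penalty term against $\nabla_{\W}\mathcal{L}_{\beta}$, leaving $\qd_{\W_{k+1}}$, and the multiplier update $\La_{k+1}=\La_{k}+\mu\beta(\mathcal{C}(\X_{k+1})+\mathcal{B}(\W_{k+1})-\mathbf{\dot{B}})$ rewrites $\nabla_{\La}\mathcal{L}_{\beta}$ as $\tfrac{1}{\beta\mu}\Delta\La_{k+1}=\qd_{\La_{k+1}}$.

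For the bound I would apply the triangle inequality to each component, using $\|\mathcal{C}^{\#}\|=\|\mathcal{C}\|$, $\|\mathcal{B}^{\#}\|=\|\mathcal{B}\|$, submultiplicativity of the operator norm, the estimate $\|\nabla g(\W_{k+1})-\nabla g(\W_{k})\|_{\mathrm{F}}\le L_g\|\Delta\W_{k+1}\|_{\mathrm{F}}$ from~\ref{assum1b}, and $L_{k,1}\le q_1$, $L_{k,2}\le q_2$, to obtain $\|\qd_{\X_{k+1}}\|_{\mathrm{F}}\le q_1\|\Delta\X_{k+1}\|_{\mathrm{F}}+\beta\|\mathcal{C}\|\|\mathcal{B}\|\|\Delta\W_{k+1}\|_{\mathrm{F}}+\|\mathcal{C}\|\|\Delta\La_{k+1}\|_{\mathrm{F}}$, $\|\qd_{\W_{k+1}}\|_{\mathrm{F}}\le(q_2+L_g)\|\Delta\W_{k+1}\|_{\mathrm{F}}+\|\mathcal{B}\|\|\Delta\La_{k+1}\|_{\mathrm{F}}$ and $\|\qd_{\La_{k+1}}\|_{\mathrm{F}}=\tfrac{1}{\beta\mu}\|\Delta\La_{k+1}\|_{\mathrm{F}}$. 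Summing the three, using $|\|\widetilde{\qd}_{k+1}\||\le\|\qd_{\X_{k+1}}\|_{\mathrm{F}}+\|\qd_{\W_{k+1}}\|_{\mathrm{F}}+\|\qd_{\La_{k+1}}\|_{\mathrm{F}}$ (the upper estimate recorded in Section~\ref{2}), and collecting the coefficients of $\|\Delta\X_{k+1}\|_{\mathrm{F}}$, $\|\Delta\W_{k+1}\|_{\mathrm{F}}$ and $\|\Delta\La_{k+1}\|_{\mathrm{F}}$, each coefficient is at most the corresponding term inside the maximum defining $\pi$ in~\eqref{def:rho}, so factoring out $\pi$ finishes the proof.

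The main obstacle is the bookkeeping in the membership part: one must check that the subdifferential sum rule really applies (it does, since $f$ is the only nonsmooth summand and the other terms are $C^1$) and track the signs carefully when substituting the two subproblem optimality identities, so that the $\La_{k}$, $\W_{k}$ and penalty contributions telescope exactly into the claimed $\Delta$-expressions. Once the formulas for $\qd_{\X_{k+1}},\qd_{\W_{k+1}},\qd_{\La_{k+1}}$ are established, the remaining estimate is a routine application of the triangle and operator-norm inequalities.
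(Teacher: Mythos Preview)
Your proposal is correct and follows essentially the same approach as the paper: derive the three components of $\widetilde{\qd}_{k+1}$ by combining the optimality conditions of the $\X$- and $\W$-subproblems with the multiplier update, then bound each component via the triangle inequality, operator-norm submultiplicativity, the Lipschitz estimate from~\ref{assum1b}, and $L_{k,i}\le q_i$. The paper's proof is just a terse version of exactly this argument.
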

\begin{proof}
According to the optimality condition of the update of $\X_{k+1},$ it follows that
\[
  -\frac{1}{\tau^2}\mathcal{A}^{\#}\mathcal{P}^{\#}_{\Omega}(\mathcal{P}_{\Omega}(\mathcal{A}\X_{k+1} - \Y)) - \beta \mathcal{C}^{\#}(\mathcal{C}(\X_{k+1}) + \mathcal{B}(\W_{k}) - \mathbf{\dot{B}} + \La_k/\beta) - L_{k,1} \Delta \X_{k+1} \in \partial f(\X_{k+1}),
\]
which yields
\begin{equation} \label{eqn:dX}
\qd_{\X_{k+1}} =  - \mathcal{C}^{\#} \Delta \La_{k+1} - L_{k,1} \Delta \X_{k+1} + \beta \mathcal{C}^{\#}\mathcal{B} \Delta \W_{k+1} \in \partial_{\X} \mathcal{L}_{\beta}(\X_{k+1},\W_{k+1},\La_{k+1}).
\end{equation}
 According to the optimality condition of the update of $\W_{k+1},$ the following equality holds:
 \begin{equation} \label{eqn:01}
 \nabla g(\W_{k}) + \beta\mathcal{B}^{\#}(\mathcal{C}\X_{k+1} - \mathcal{B}\W_{k+1}-\qb + \La_k/\beta) + L_{k,2} \Delta \W_{k+1}   = 0.
 \end{equation}
% By the subproblem of $\W_{k+1}$ and $\La_{k+1}$ in \eqref{scheme2},  
It follows from~\eqref{eqn:01} and the update of $\La_{k+1}$ that
\begin{equation} \label{eqn:dW}
\begin{aligned}
 \qd_{\W_{k+1}} &= \nabla g(\W_{k+1}) - \nabla g(\W_k) + \mathcal{B}^{\#}\Delta \La_{k+1}  - L_{k,2} \Delta \W_{k+1} \in \partial_{\W} \mathcal{L}_{\beta}(\X_{k+1},\W_{k+1},\La_{k+1}). \\
\qd_{\La_{k+1}} &= \frac{1}{\beta \mu} \Delta \La_{k+1} \in \partial_{\La} \mathcal{L}_{\beta}(\X_{k+1},\W_{k+1},\La_{k+1}).
\end{aligned}
\end{equation}
Combining \eqref{eqn:dX}, \eqref{eqn:dW}, we obtain
\[
\begin{aligned}
\| \qd_{\X_{k+1}}\| & \le \|\mathcal{C}\| \|\Delta \La_{k+1}\|_{\mathrm{F}} + q_1 \|\Delta \X_{k+1} \|_{\mathrm{F}} + \beta \|\mathcal{C}\| \|\mathcal{B}\| \|\Delta \W_{k+1} \|_{\mathrm{F}}, \\
\| \qd_{\W_{k+1}}\| & \le \|\mathcal{B}\| \|\Delta \La_{k+1}\|_{\mathrm{F}}   + (q_2 + L_g) \|\Delta \W_{k+1}\|_{\mathrm{F}},\quad 
\| \qd_{\La_{k+1}}\|  = \frac{1}{\beta \mu} \|\Delta \La_{k+1}\|_{\mathrm{F}}. 
\end{aligned}
\]
Therefore, the following inequality holds:
\begin{equation} \label{eqn:bound4}
| \|\widetilde{\qd}_{k+1}\| | \le \|\qd_{\W_{k+1}}\|_{\mathrm{F}} + \|\qd_{\X_{k+1}} \|_{\mathrm{F}} + \|\qd_{\La_{k+1}}\|_{\mathrm{F}} \le \rho (\|\Delta \X_{k+1} \|_{\mathrm{F}} + \|\Delta \W_{k+1} \|_{\mathrm{F}}   + \|\Delta \La_{k+1} \|_{\mathrm{F}} ),
\end{equation}
which completes the proof.
\end{proof}

%\begin{lemma}[Limit point continuity]
%Suppose that the assumption \ref{assum1} holds. If $(\X_*,\W_*,\La_*)$ is the limiting point converging subsequence $\{(\X_{k_j},\W_{k_j},\La_{k_j})\}_{j \ge 0}$ generated by  \ref{scheme1} and \eqref{scheme2}, then we have $\mathcal{L}_{\beta}(\X_*,\W_*,\La_*)= \lim_{j \rightarrow \infty}(\X_{k_j},\W_{k_j},\La_{k_j})$.
%\end{lemma}
%
%\begin{proof}
%Let $\{(\X_{k_j},\W_{k_j},\La_{k_j})\}_{j \ge 0}$ be subsequence of the sequence generated by scheme \ref{scheme1} and \eqref{scheme2} such that $\lim_{j \rightarrow \infty}(\X_{k_j},\W_{k_j},\La_{k_j}) = (\X_*,\W_*,\La_*).$ T
%\end{proof}
% We next show that every limiting point of the sequence $\{(\X_{k},\W_{k},\La_{k})\}_{k \ge 0}$ is a critical point.
\begin{lemma} \label{crit}
%Let $\{(\X_k,\W_k,\La_k)\}_{k \ge 0}$ be a sequence generated by~\eqref{scheme2}.
Suppose that Assumption \ref{assum1} holds.
Any limit point $(\X_*,\W_*,\La_*)$ of the sequence $\{(\X_{k},\W_{k},\La_{k})\}_{j \ge 0}$ generated by~\eqref{scheme2} is a stationary
point of $\mathcal{L}_{\beta}(\X, \W; \La)$, i.e.
\[
0 =  \partial f(\X_*) + \frac{1}{\tau^2}\mathcal{A}^{\#}\mathcal{P}^{\#}_{\Omega}(\mathcal{P}_{\Omega}(\mathcal{A}\X_{k+1} - \Y))+ \mathcal{C}^{\#}\La_*,\quad 0 \in \partial  g(\W_*) + \mathcal{B}^{\#}\La_*,\quad  \mathcal{C}\X_* + \mathcal{B}\W_* = \mathbf{\dot{B}}.
\]
\end{lemma}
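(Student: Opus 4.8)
The plan is to pass to the limit along a convergent subsequence in the optimality conditions of the $\X$- and $\W$-subproblems and in the multiplier update, exploiting the asymptotic regularity already proved. By Theorem~\ref{thm:Xbound} the sequence $\{(\X_k,\W_k,\La_k)\}_{k\ge 0}$ is bounded, hence it has a limit point $(\X_*,\W_*,\La_*)$; fix a subsequence $\{(\X_{k_j},\W_{k_j},\La_{k_j})\}$ converging to it. Since Theorem~\ref{thm:Xcov} gives $\|\Delta\X_k\|_\mF\to 0$, $\|\Delta\W_k\|_\mF\to 0$, $\|\Delta\La_k\|_\mF\to 0$ along the whole sequence, the shifted subsequences $\{(\X_{k_j+1},\W_{k_j+1},\La_{k_j+1})\}$ converge to the same limit, and every term of the form (bounded operator) composed with $\Delta(\cdot)_{k_j+1}$ vanishes in the limit; in particular $L_{k_j,1}\Delta\X_{k_j+1}$, $L_{k_j,2}\Delta\W_{k_j+1}$ (bounded because $\sup_k L_{k,i}<\infty$), $\mathcal{B}^\#\Delta\La_{k_j+1}$, $\mathcal{C}^\#\Delta\La_{k_j+1}$ and $\beta\mathcal{C}^\#\mathcal{B}\Delta\W_{k_j+1}$ all tend to $0$.

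The feasibility relation comes for free: the $\La$-update in~\eqref{scheme2} reads $\mathcal{C}(\X_{k+1})+\mathcal{B}(\W_{k+1})-\qb=\frac{1}{\mu\beta}\Delta\La_{k+1}$, so letting $j\to\infty$ yields $\mathcal{C}(\X_*)+\mathcal{B}(\W_*)=\qb$. For the $\W$-stationarity I would start from the optimality condition~\eqref{upW:1} of the $\W$-subproblem and substitute $\beta\bigl(\mathcal{C}(\X_{k+1})+\mathcal{B}(\W_{k+1})-\qb\bigr)=\frac{1}{\mu}\Delta\La_{k+1}$, turning it into $\nabla g(\W_k)+\mathcal{B}^\#\La_k+\frac{1}{\mu}\mathcal{B}^\#\Delta\La_{k+1}+L_{k,2}\Delta\W_{k+1}=0$. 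Passing to the limit along the subsequence, using continuity of $\nabla g$ (Assumption~\ref{assum1b}) and the vanishing residual terms, gives $\nabla g(\W_*)+\mathcal{B}^\#\La_*=0$, i.e., $0\in\partial g(\W_*)+\mathcal{B}^\#\La_*$ since $g$ is smooth.

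For the $\X$-stationarity I would use the inclusion established in the proof of Lemma~\ref{lem:sub:bound}: the optimality condition of the $\X$-subproblem gives
\[ -\frac{1}{\tau^2}\mathcal{A}^\#\mathcal{P}^\#_{\Omega}\bigl(\mathcal{P}_{\Omega}(\mathcal{A}(\X_{k+1})-\Y)\bigr)-\beta\mathcal{C}^\#\bigl(\mathcal{C}(\X_{k+1})+\mathcal{B}(\W_k)-\qb+\La_k/\beta\bigr)-L_{k,1}\Delta\X_{k+1}\in\partial f(\X_{k+1}). \]
Writing $\mathcal{B}(\W_k)=\mathcal{B}(\W_{k+1})-\mathcal{B}(\Delta\W_{k+1})$ and using $\beta(\mathcal{C}(\X_{k+1})+\mathcal{B}(\W_{k+1})-\qb)=\frac{1}{\mu}\Delta\La_{k+1}$, the left-hand side equals $-\frac{1}{\tau^2}\mathcal{A}^\#\mathcal{P}^\#_{\Omega}(\mathcal{P}_{\Omega}(\mathcal{A}(\X_{k+1})-\Y))-\mathcal{C}^\#\La_k$ plus terms proportional to $\Delta\W_{k+1}$, $\Delta\La_{k+1}$, $\Delta\X_{k+1}$; hence along the subsequence it converges to $\qa_*:=-\frac{1}{\tau^2}\mathcal{A}^\#\mathcal{P}^\#_{\Omega}(\mathcal{P}_{\Omega}(\mathcal{A}(\X_*)-\Y))-\mathcal{C}^\#\La_*$, by continuity of $\X\mapsto\mathcal{A}^\#\mathcal{P}^\#_{\Omega}(\mathcal{P}_{\Omega}(\mathcal{A}(\X)-\Y))$.

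The one genuinely delicate point — the main obstacle — is then to conclude $\qa_*\in\partial f(\X_*)$: this uses the outer semicontinuity (closed graph) of the limiting subdifferential $\partial f$, i.e., if $\X_{k_j+1}\to\X_*$, $f(\X_{k_j+1})\to f(\X_*)$, $\qa_{k_j+1}\in\partial f(\X_{k_j+1})$ and $\qa_{k_j+1}\to\qa_*$, then $\qa_*\in\partial f(\X_*)$. The function-value convergence $f(\X_{k_j+1})\to f(\X_*)$ is automatic here because $f$ is continuous (Assumption~\ref{assum1a}) — this is where continuity rather than mere lower semicontinuity is convenient — and the quaternion matrices are identified with a finite-dimensional real Euclidean space, so the standard variational-analysis calculus applies. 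Granting this, $\qa_*\in\partial f(\X_*)$ rearranges to $0\in\partial f(\X_*)+\frac{1}{\tau^2}\mathcal{A}^\#\mathcal{P}^\#_{\Omega}(\mathcal{P}_{\Omega}(\mathcal{A}(\X_*)-\Y))+\mathcal{C}^\#\La_*$, which together with the $\W$- and feasibility relations above is exactly the asserted stationarity of $\mathcal{L}_{\beta}$.
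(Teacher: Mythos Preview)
Your proof is correct and follows essentially the same approach as the paper. The only difference is packaging: the paper invokes Lemma~\ref{lem:sub:bound} directly, which already bounds a specific element $\widetilde{\qd}_{k+1}\in\partial\mathcal L_\beta(\X_{k+1},\W_{k+1},\La_{k+1})$ by $\pi(\|\Delta\X_{k+1}\|_\mF+\|\Delta\W_{k+1}\|_\mF+\|\Delta\La_{k+1}\|_\mF)$, and then applies the closedness of $\partial\mathcal L_\beta$ in one stroke; you instead unpack the three KKT relations separately, passing to the limit in the $\La$-update, the $\W$-optimality condition, and the $\X$-optimality condition, and apply the closed-graph property to $\partial f$ alone. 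Both routes rest on the same ingredients (Theorem~\ref{thm:Xcov}, continuity of $f$ and $\nabla g$, outer semicontinuity of the limiting subdifferential), and your explicit identification of where continuity of $f$ is used to secure $f(\X_{k_j+1})\to f(\X_*)$ is a nice clarification that the paper leaves implicit.
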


\begin{proof}
Let $\{(\X_{k_j},\W_{k_j},\La_{k_j})\}_{j \ge 0}$ be a subsequence of $\{(\X_{k},\W_{k},\La_{k})\}_{k \ge 0}$ such that $(\X_*,\W_*,\La_*) = \lim_{j \rightarrow \infty}(\X_{k_j},\W_{k_j},\La_{k_j})$. By the continuity of $\mathcal{L}_{\beta}$, $\mathcal{L}_{\beta} (\X_{k_j},\W_{k_j},\La_{k_j}) \rightarrow \mathcal{L}_{\beta} (\X_*,\W_*,\La_*)$ as $j \rightarrow \infty$. Let $\qd_{k_j} \in \partial \mathcal{L}_{\beta}(\X_{k_j},\W_{k_j},\La_{k_j})$ and according to Lemma \ref{lem:sub:bound}, it follows that
$
\|\qd_{k_j}\|_{\mathrm{F}} \leq \rho \left( \| \Delta \W_{k_j}\|_{\mathrm{F}} + \| \Delta \X_{k_j}\|_{\mathrm{F}}+ \| \Delta \La_{k_j}\|_{\mathrm{F}} \right).
$
%Since $\|\widetilde{\qd}_{k+1}\|_{\mathrm{F}} \rightarrow 0$ as $j \rightarrow \infty,$
According to Theorem~\ref{thm:Xcov}, it follows that $\qd_{k_j} \rightarrow 0.$ By the closeness
criterion of the limiting sub-differential, $(\X_*,\W_*,\La_*) \in \text{crit}\, \mathcal{L}_{\beta}(\X,\W,\La)$. The proof is completed.
\end{proof}

The following Lemma states the behavior of limit points of $\{\X_{k},\W_{k},\La_{k} \}.$
\begin{lemma} \label{lem:8}
Suppose Assumption \ref{assum1} holds. If $(\X_*,\W_*,\La_*)$ is a limit point of a converging subsequence $\{\X_{k_j},\W_{k_j},\La_{k_j}\}$, then it follows that
\[
\lim_{j \rightarrow \infty} \mathcal{R}_{k_j} = \mathcal{R}(\X_*,\W_*,\La_*,\W_*,\La_*) = \mathcal{L}_{\beta}(\X_*,\W_*,\La_*)= g(\W_*)+f(\X_*) +  \frac{1}{2\tau^2}\|\mathcal{P}_{\Omega}(\mathcal{A}(\X_*)) - \mathcal{P}_{\Omega}(\Y)\|_{\mathrm{F}}^2.
\]
\end{lemma}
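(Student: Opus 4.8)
The plan is to decompose $\mathcal{R}_{k_j}$ according to its definition \eqref{definition:Rk}, show that the two correction terms vanish along the subsequence, pass to the limit in the augmented Lagrangian by continuity, and then evaluate $\mathcal{L}_{\beta}(\X_*,\W_*,\La_*)$ explicitly using asymptotic feasibility of the limit point. The middle equality is essentially a tautology: from the definition of $\mathcal{R}$ one has $\mathcal{R}(\X_*,\W_*,\La_*,\W_*,\La_*) = \mathcal{L}_{\beta}(\X_*,\W_*,\La_*) + r\varsigma_0\|\W_*-\W_*\|_{\mathrm{F}}^2 + r\varsigma_2\|\mathcal{B}^{\#}(\La_*-\La_*)\|_{\mathrm{F}}^2 = \mathcal{L}_{\beta}(\X_*,\W_*,\La_*)$, so it suffices to establish the first and third equalities.

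For the first equality, recall $\mathcal{R}_{k_j} = \mathcal{L}_{\beta}(\X_{k_j},\W_{k_j},\La_{k_j}) + r\varsigma_0\|\Delta\W_{k_j}\|_{\mathrm{F}}^2 + r\varsigma_2\|\mathcal{B}^{\#}\Delta\La_{k_j}\|_{\mathrm{F}}^2$. By Theorem~\ref{thm:Xcov}, $\|\Delta\W_{k_j}\|_{\mathrm{F}}\to 0$ and $\|\Delta\La_{k_j}\|_{\mathrm{F}}\to 0$, and since $\|\mathcal{B}^{\#}\Delta\La_{k_j}\|_{\mathrm{F}}\le\|\mathcal{B}\|\,\|\Delta\La_{k_j}\|_{\mathrm{F}}$, both correction terms tend to zero. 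Moreover $\mathcal{L}_{\beta}$ is continuous: $f$ is continuous by \ref{assum1a} in Assumption~\ref{assum1}, $g$ is continuous by \ref{assum1b}, and the remaining data-fitting, inner-product, and penalty terms in \eqref{Lag:general} are continuous. Hence, since $(\X_{k_j},\W_{k_j},\La_{k_j})\to(\X_*,\W_*,\La_*)$, we obtain $\mathcal{L}_{\beta}(\X_{k_j},\W_{k_j},\La_{k_j})\to\mathcal{L}_{\beta}(\X_*,\W_*,\La_*)$, and therefore $\mathcal{R}_{k_j}\to\mathcal{L}_{\beta}(\X_*,\W_*,\La_*)$.

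For the third equality I would use asymptotic feasibility. The $\La$-update in \eqref{scheme2} gives $\mathcal{C}(\X_{k_j}) + \mathcal{B}(\W_{k_j}) - \qb = \frac{1}{\mu\beta}\Delta\La_{k_j}\to 0$ by Theorem~\ref{thm:Xcov}; letting $j\to\infty$ and using continuity (indeed linearity) of $\mathcal{C}$ and $\mathcal{B}$ yields $\mathcal{C}(\X_*) + \mathcal{B}(\W_*) = \qb$, which is also contained in Lemma~\ref{crit}. Substituting this into \eqref{Lag:general}, the term $\iprod{\La_*}{\mathcal{C}(\X_*)+\mathcal{B}(\W_*)-\qb}$ and the penalty term $\frac{\beta}{2}\|\mathcal{C}(\X_*)+\mathcal{B}(\W_*)-\qb\|_{\mathrm{F}}^2$ both vanish, leaving $\mathcal{L}_{\beta}(\X_*,\W_*,\La_*) = f(\X_*) + g(\W_*) + \frac{1}{2\tau^2}\|\mathcal{P}_{\Omega}(\mathcal{A}(\X_*)) - \mathcal{P}_{\Omega}(\Y)\|_{\mathrm{F}}^2$, closing the chain. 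There is no serious obstacle here; the only point requiring care is passing to the limit inside $f$, which relies on genuine continuity of $f$ as stated in \ref{assum1a} (valid for SLRQA, since the smoothed surrogate $\sum_i\phi(\sigma_i(\sqrt{\X^{*}\X+\varepsilon^2\mathbf{I}}),\gamma)$ is continuous) rather than mere lower semicontinuity — no KL-type machinery or $\X$-subproblem estimates are needed for this lemma.
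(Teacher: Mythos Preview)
Your proposal is correct and follows essentially the same approach as the paper: vanish the two correction terms in $\mathcal{R}_{k_j}$ via Theorem~\ref{thm:Xcov}, pass to the limit in $\mathcal{L}_{\beta}$ by continuity of $f$ and $g$, and use the $\La$-update to obtain asymptotic feasibility at the limit, which kills the inner-product and penalty terms. If anything, your write-up is slightly cleaner than the paper's (which contains minor typos, writing $\Delta\X_{k_j}$ and $\mathcal{C}^{\#}\Delta\La_{k_j}$ where $\Delta\W_{k_j}$ and $\mathcal{B}^{\#}\Delta\La_{k_j}$ are meant).
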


\begin{proof}
Let $\{(\X_{k_j},\W_{k_j},\La_{k_j})\}_{j \ge 0}$ be a subsequence generated by   \eqref{scheme2} such that $(\X_{k_j},\W_{k_j},\La_{k_j}) \rightarrow (\X_*,\W_*,\La_*)$ as $j \rightarrow \infty$. According to Lemma \ref{thm:Xcov},  $\|\Delta \X_{k_j}\|_{\mathrm{F}} \rightarrow 0$ and $\|\mathcal{C}^{\#} \Delta \La_{k_j} \|_{\mathrm{F}} \le \|\mathcal{C}\| \|\Delta \La_{k_j}\|_{\mathrm{F}} \rightarrow 0 $ when $j \rightarrow \infty$. According to the definition of $\mathcal{R}_k$, it follows that
\[
\begin{aligned}
\lim_{j \rightarrow \infty} \mathcal{R}_{k_j} = \lim_{j \rightarrow \infty} \mathcal{L}_{\beta}(\X_{k_j},\W_{k_j},\La_{k_j}).
\end{aligned}
\]
Since $\|\Delta \La_{k_j+1}  \|_{\mathrm{F}} \rightarrow 0$ as $j \rightarrow \infty$, $\|\mathcal{C}\X_{k_j} + \mathcal{B}\W_{k_j} -\qb\|_{\mathrm{F}} \rightarrow 0$.
According to the definition of $\mathcal{L}_{\beta}$ and the fact that $\{ \La_{k_j}\}_{j \ge 0}$ is a bounded sequence we also have $\iprod{\La_{k_j}}{\mathcal{C}\X_{k_j}+\mathcal{B}\W_{k_j}-b }\rightarrow 0$ as $j \rightarrow \infty$. According to the fact that $f$ is continuous, it follows that:
\[
\begin{aligned}
&\lim_{j \rightarrow \infty} \mathcal{R}(\X_{k_j},\W_{k_j},\La_{k_j},\W_{k_j},\La_{k_j}) = \lim_{j \rightarrow \infty} \mathcal{L}_{\beta}(\X_{k_j},\W_{k_j},\La_{k_j}) \\
=& f(\X_*) + g(\W_*) +  \frac{1}{2\tau^2}\|\mathcal{P}_{\Omega}(\mathcal{A}(\X_*)) - \mathcal{P}_{\Omega}(\Y)\|_{\mathrm{F}}^2.
\end{aligned}
\]
\end{proof}

% \begin{remark}
% Lemma \ref{lem:8} states that the function value of every convergent  subsequence is the same. Based on the KL property, we can also prove that the sequence $\{(\X_k,\W_k,\La_k )\}_{k \ge 0}$  converges to a stationary point.
% \end{remark}

% Based on Lemma \ref{lem:8}, the limiting point of $\{(\X_k,\W_k,\La_k)\}_{k \ge 0}$ is a stationary point.

\begin{theorem}[Properties of limit point set]\label{lem:limpoint}
Suppose Assumption \ref{assum1} holds. Sequence $\{ (\X_k,\W_k,\La_k)\}_{k \ge 0}$ generated by \eqref{scheme2} satisfies the following properties:
\begin{enumerate}
  \item The limit point set of the sequence $\{(\X_k,\W_k,\La_k)\}$, denoted by $\omega \left(\{ (\X_k,\W_k,\La_k) \}_{k \ge 0} \right)$ is nonempty, and compact.
  \item $\lim_{k \rightarrow \infty} \text{dist} \left[(\X_k,\W_k,\La_k),\omega \left(\{ (\X_k,\W_k,\La_k) \}_{k \ge 0} \right) \right] = 0 . $
  \item $ \omega \left(\{ (\X_k,\W_k,\La_k) \}_{k \ge 0} \right) \subseteq  \text{crit}\, \mathcal{L}_{\beta}$.
\end{enumerate}
\end{theorem}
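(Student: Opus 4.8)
The plan is to regard this theorem as a packaging of three facts that are already in hand: boundedness of the iterates (Theorem~\ref{thm:Xbound}), vanishing of the successive differences (Theorem~\ref{thm:Xcov}), and stationarity of every limit point (Lemma~\ref{crit}). Throughout I will use that $\mathbb{H}^{m\times n}\times\mathbb{H}^{m\times n}\times\mathbb{H}^{m\times n}$ is a finite-dimensional real vector space, so that bounded sequences have convergent subsequences and closed bounded sets are compact.

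\textbf{Nonemptiness and compactness.} By Theorem~\ref{thm:Xbound} the sequence $\{(\X_k,\W_k,\La_k)\}_{k\ge0}$ is bounded, so by the Bolzano--Weierstrass theorem it admits a convergent subsequence whose limit belongs to $\omega\left(\{(\X_k,\W_k,\La_k)\}_{k\ge0}\right)$; hence the limit point set is nonempty. For closedness I would invoke the characterization $\omega\left(\{(\X_k,\W_k,\La_k)\}_{k\ge0}\right)=\bigcap_{n\ge0}\overline{\bigcup_{k\ge n}\{(\X_k,\W_k,\La_k)\}}$, an intersection of closed sets; being contained in any closed ball that contains the (bounded) sequence, it is also bounded, hence compact in this finite-dimensional space.

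\textbf{Vanishing distance to the limit set.} I would argue by contradiction. If the claimed limit were not $0$, there would exist $\varepsilon>0$ and a subsequence with $\dist\!\left[(\X_{k_j},\W_{k_j},\La_{k_j}),\,\omega\left(\{(\X_k,\W_k,\La_k)\}_{k\ge0}\right)\right]\ge\varepsilon$. Boundedness yields a further convergent sub-subsequence whose limit $p$ lies in $\omega\left(\{(\X_k,\W_k,\La_k)\}_{k\ge0}\right)$ by definition; continuity of $\dist(\cdot,\omega(\cdot))$ then forces $\dist(p,\omega(\cdot))\ge\varepsilon>0$, contradicting $p\in\omega\left(\{(\X_k,\W_k,\La_k)\}_{k\ge0}\right)$. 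I would stress that only boundedness of the iterates is used here; Theorem~\ref{thm:Xcov} is not needed for this item, although it is precisely what makes Lemma~\ref{crit} applicable in the next one.

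\textbf{Inclusion in $\text{crit}\,\mathcal{L}_\beta$, and the main obstacle.} The third item is immediate from Lemma~\ref{crit}: under Assumption~\ref{assum1}, every limit point of $\{(\X_k,\W_k,\La_k)\}_{k\ge0}$ is a stationary point of $\mathcal{L}_\beta$, i.e.\ $\omega\left(\{(\X_k,\W_k,\La_k)\}_{k\ge0}\right)\subseteq\text{crit}\,\mathcal{L}_\beta$. There is no genuine obstacle in this theorem once Theorems~\ref{thm:Xbound} and~\ref{thm:Xcov} and Lemma~\ref{crit} are available; all three claims are then standard topological consequences. The only points deserving care are (i) phrasing the distance argument so that it visibly uses only boundedness, (ii) writing the closedness of $\omega(\cdot)$ via the tail-closure intersection rather than leaving it implicit, and (iii) verifying that Lemma~\ref{crit}'s hypotheses coincide with those assumed here, which they do.
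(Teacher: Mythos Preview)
Your proposal is correct and follows essentially the same approach as the paper: both use Theorem~\ref{thm:Xbound} for nonemptiness and boundedness of the limit set, argue closedness topologically (the paper cites a Rudin exercise where you write out the tail-closure intersection), deduce the vanishing-distance property from boundedness alone, and invoke Lemma~\ref{crit} for the inclusion in $\text{crit}\,\mathcal{L}_\beta$. Your write-up is in fact more explicit than the paper's, which simply says ``As a consequence'' for item~2 without spelling out the contradiction argument.
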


\begin{proof}
Since $\{ (\X_k,\W_k,\La_k)\}_{k \ge 0}$ is bounded by Theorem \ref{thm:Xbound}. There exists at least one limit point of $\{ (\X_k,\W_k,\La_k)\}_{k \ge 0}$ and $\omega \left(\{ (\X_k,\W_k,\La_k) \}_{k \ge 0} \right)$ is bounded. According to \cite[Chapter 2, Exercise 6]{rudin1964principles}, it follows that $\omega \left(\{ (\X_k,\W_k,\La_k) \}_{k \ge 0} \right)$ is closed. Hence $\omega \left(\{ (\X_k,\W_k,\La_k) \}_{k \ge 0} \right)$ is nonempty and compact. As a consequence, $\lim_{k \rightarrow \infty} \text{dist} \left[(\X_k,\W_k,\La_k),\omega \left(\{ (\X_k,\W_k,\La_k) \}_{k \ge 0} \right) \right] = 0 . $  It follows from Lemma \ref{crit} that $ \omega \left(\{ (\X_k,\W_k,\La_k) \}_{k \ge 0} \right) \subseteq  \text{crit}\, \mathcal{L}_{\beta}$, which completes the proof. 

% To show the ness, by contradiction, there exist non-intersect closed set $U,V$ such that $\omega \left(\{ (\X_k,\W_k,\La_k) \}_{k \ge 0} \right) = U \cup V.$ Since $U,V$ is closed, it follows that $\text{dist}(U,V) =: c > 0,$ which contradicts to Theorem \ref{thm:Xcov}. As a consequence,  $\omega \left(\{ (\X_k,\W_k,\La_k) \}_{k \ge 0} \right)$ is .
% The results follow directly from \ref{crit} and \ref{thm:Xcov}.
\end{proof}

% We note that the above theorem can only guarantee that the sequence $\{(\X_k,\W_k,\La_k) \}$ is bounded and the limiting point of the sequence generated by Algorithm \ref{algorithm:denoise} is a stationary point. The following lemma will be used in Theorem \ref{thm:global-con} to prove the global convergence of the sequence $\{(\X_k,\W_k,\La_k) \}$

\begin{lemma}\label{lma-D}
Suppose that the Assumption \ref{assum1} holds. Let $\{(\X_k,\W_k,\La_k)\}_{k\ge 0}$ be a sequence generated by \eqref{scheme2}. Define
\begin{eqnarray*}
&\qs_{\X_{k}}:=\qd_{\X_k},\quad
\qs_{\W_{k}}:=\qd_{\W_k}+2r\varsigma_0 \Delta \W_{k},\quad
\qs_{\La_{k}}:=\qd_{\La_k}+2r\varsigma_2\mathcal{B}\mathcal{B}^{\#}\Delta \La_{k},&\\
&\qs_{{\W'}_{k}}:=-2r\varsigma_0 \Delta \W_{k},\quad
\qs_{{\La'}_k}:=-2\varsigma_2\mathcal{B}\mathcal{B}^{\#}\Delta \La_{k}&
\end{eqnarray*}
where $(\qd_{\X_k},\qd_{\W_k},\qd_{\La_k})\in\partial \mathcal L_\beta(\X_k,\W_k,\La_k)$.
Then
\[
\widetilde{\qs}_{k}:=(\qs_{\X_k}, \qs_{\W_k}, \qs_{\La_k}, \qs_{{\W'}_k},\qs_{{\La'}_k} )\in
\partial \mathcal R(\X_{k},\W_{k},\La_{k},\W_{k-1},\La_{k-1})
\] for $k\ge 1$, and it holds that
\begin{eqnarray}\label{nD}
|||\widetilde{\qs}_{k}|||\le \tilde \pi \Big(\|\Delta \X_{k}\|_{\mathrm{F}} +\|\Delta \W_{k}\|_{\mathrm{F}}+\|\Delta \La_{k}\|_{\mathrm{F}}\Big),
\end{eqnarray}
where
\begin{eqnarray}\label{trho}
\tilde \pi=\sqrt{3}\pi+4r\max\{\varsigma_0, \varsigma_2\|\mathcal{B}\|^2\},
\end{eqnarray}
$r>1$, $\pi$ is given in \eqref{def:rho}.
\end{lemma}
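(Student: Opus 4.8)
The plan is to read off an explicit element of $\partial\mathcal{R}$ via the subdifferential sum rule and then estimate its norm with Lemma~\ref{lem:sub:bound}. Write $\mathcal{R}(\X,\W,\La,\W',\La') = \mathcal{L}_\beta(\X,\W,\La) + h(\W,\W',\La,\La')$, where $h(\W,\W',\La,\La') := r\varsigma_0\|\W-\W'\|_{\mathrm F}^2 + r\varsigma_2\|\mathcal{B}^{\#}(\La-\La')\|_{\mathrm F}^2$. On the five-block real inner-product space, $h$ is continuously differentiable, while the first summand is lower semicontinuous and does not depend on $(\W',\La')$. Invoking the exact sum rule for the limiting subdifferential with a $C^1$ summand, together with the fact that the limiting subdifferential of a function constant in certain blocks is the subdifferential in the remaining blocks times $\{0\}$, one gets
\[
\partial\mathcal{R}(\X_k,\W_k,\La_k,\W_{k-1},\La_{k-1}) = \big(\partial\mathcal{L}_\beta(\X_k,\W_k,\La_k)\times\{0\}\times\{0\}\big) + \nabla h(\W_k,\W_{k-1},\La_k,\La_{k-1}).
\]

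The second step is to differentiate $h$. Since $\La\mapsto\|\mathcal{B}^{\#}(\La-\La')\|_{\mathrm F}^2$ has gradient $2\mathcal{B}\mathcal{B}^{\#}(\La-\La')$ (because $\mathcal{B}\mathcal{B}^{\#}$ is self-adjoint), the partial gradients of $h$ at $(\W_k,\W_{k-1},\La_k,\La_{k-1})$ are $\nabla_{\X}h = 0$, $\nabla_{\W}h = 2r\varsigma_0\Delta\W_k$, $\nabla_{\W'}h = -2r\varsigma_0\Delta\W_k$, $\nabla_{\La}h = 2r\varsigma_2\mathcal{B}\mathcal{B}^{\#}\Delta\La_k$, and $\nabla_{\La'}h = -2r\varsigma_2\mathcal{B}\mathcal{B}^{\#}\Delta\La_k$. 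Adding a subgradient $(\qd_{\X_k},\qd_{\W_k},\qd_{\La_k})\in\partial\mathcal{L}_\beta(\X_k,\W_k,\La_k)$ in the first three blocks reproduces precisely the tuple $\widetilde{\qs}_k$ of the statement, which proves $\widetilde{\qs}_k\in\partial\mathcal{R}(\X_k,\W_k,\La_k,\W_{k-1},\La_{k-1})$ for every $k\ge1$ (so that $(\W_{k-1},\La_{k-1})$ is defined).

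For the quantitative bound \eqref{nD}, I would apply $|||\widetilde{\qG}|||_{\mathrm F}\le\sum_i\|\qG_i\|_{\mathrm F}$, the triangle inequality blockwise, and $\|\mathcal{B}\mathcal{B}^{\#}\|\le\|\mathcal{B}\|^2$, to get
\[
|||\widetilde{\qs}_k||| \le \big(\|\qd_{\X_k}\|_{\mathrm F}+\|\qd_{\W_k}\|_{\mathrm F}+\|\qd_{\La_k}\|_{\mathrm F}\big) + 4r\varsigma_0\|\Delta\W_k\|_{\mathrm F} + 4r\varsigma_2\|\mathcal{B}\|^2\|\Delta\La_k\|_{\mathrm F}.
\]
By Cauchy--Schwarz the first parenthesis is at most $\sqrt3\,|||\widetilde{\qd}_k|||$, and Lemma~\ref{lem:sub:bound} applied at index $k$ (where $(\qd_{\X_k},\qd_{\W_k},\qd_{\La_k})$ is the canonical subgradient it constructs) gives $|||\widetilde{\qd}_k|||\le\pi(\|\Delta\X_k\|_{\mathrm F}+\|\Delta\W_k\|_{\mathrm F}+\|\Delta\La_k\|_{\mathrm F})$; the remaining two terms are at most $4r\max\{\varsigma_0,\varsigma_2\|\mathcal{B}\|^2\}(\|\Delta\W_k\|_{\mathrm F}+\|\Delta\La_k\|_{\mathrm F})$. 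Summing the pieces yields \eqref{nD} with $\tilde\pi$ as in \eqref{trho}.

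The only non-routine point is the first step: one must justify carefully that for limiting subdifferentials (i) adding the $C^1$ function $h$ gives an \emph{exact} sum rule and not just an inclusion, and (ii) the subdifferential of $\mathcal{L}_\beta$ regarded as a function on the five-block space (constant in $(\W',\La')$) factors with a trivial $\{0\}\times\{0\}$ in those two blocks, so the nonsmoothness coming from $f(\X)$ stays confined to the $\X$-block. Both are standard facts about limiting subdifferentials, and once the membership is in place the estimate is elementary norm bookkeeping essentially mirroring the proof of Lemma~\ref{lem:sub:bound}.
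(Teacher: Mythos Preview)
Your proposal is correct and follows essentially the same route as the paper: both compute $\partial\mathcal{R}$ by taking $\partial\mathcal{L}_\beta$ in the first three blocks and adding the gradient of the smooth quadratic terms $r\varsigma_0\|\W-\W'\|_{\mathrm F}^2 + r\varsigma_2\|\mathcal{B}^{\#}(\La-\La')\|_{\mathrm F}^2$, then bound $|||\widetilde{\qs}_k|||$ via the triangle inequality, the estimate $\sum_i\|\qd_i\|_{\mathrm F}\le\sqrt3\,|||\widetilde{\qd}_k|||$, and Lemma~\ref{lem:sub:bound}. Your treatment is in fact slightly more explicit about the subdifferential calculus (exact sum rule with a $C^1$ summand, trivial blocks for variables on which $\mathcal{L}_\beta$ does not depend), and you have implicitly caught a typo in the statement: the definition of $\qs_{\La'_k}$ should carry the factor $r$, i.e.\ $-2r\varsigma_2\mathcal{B}\mathcal{B}^{\#}\Delta\La_k$, as the paper's own proof confirms.
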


\begin{proof}
Let $k\ge 1$ fixed, and $(\qd_{\X_{k}}, \qd_{\W_{k}}, \qd_{\La_{k}})\in \partial \mathcal L_\beta(\X_k,\W_k,\La_k)$.
By taking partial derivatives of $\mathcal R_k$ with respect to $\X,\W,\La,\W',\La'$ we obtain
\begin{eqnarray*}
\qs_{\X_{k}}&:=& \partial_\X \mathcal R(\X_{k},\W_{k},\La_{k},\W_{k-1},\La_{k-1})
=\partial_\X \mathcal L_\beta(\X_{k},\W_{k},\La_{k})=\qd_{\X_{k}},\\
\qs_{\W_{k}}&:=&\nabla_\W \mathcal R (\X_{k},\W_{k},\La_{k},\W_{k-1},\La_{k-1})
=\nabla_\W \mathcal L_\beta(\X_{k},\W_{k},\La_{k})+2r\varsigma_0 \Delta \W_k=
\qd_{\W_{k}}+2r\varsigma_0 \Delta \W_k,\\
\qs_{\La_{k}}&:=&\nabla_\La \mathcal R(\X_{k},\W_{k},\La_{k},\W_{k-1},\La_{k-1})
=\nabla_{\La} \mathcal L_\beta(\X_{k},\W_{k},\La_{k})
+2r\varsigma_2 \mathcal{B}\mathcal{B}^{\#}\Delta \La_{k}=\qd_{\La_{k}}+2r\varsigma_2 \mathcal{B}\mathcal{B}^{\#}\Delta \La_{k},\\
\qs_{{\W'}_{k}}&:=&\nabla_{\W'} \mathcal R(\X_{k},\W_{k},\La_{k},\W_{k-1},\La_{k-1})
=-2r\varsigma_0\Delta \W_k,\\
\qs_{{\La'}_k}&:=&\nabla_{\La'} \mathcal R(\X_{k},\W_{k},\La_{k},\W_{k-1},\La_{k-1})
=-2r\varsigma_2 \mathcal{B}\mathcal{B}^{\#} \Delta \La_k.
\end{eqnarray*}
By the triangle inequality, we obtain
\begin{eqnarray*}
&\|\qs_{\X_{k}}\|_{\mathrm{F}}=\|\qd_{\X_k}\|_{\mathrm{F}},\quad
\|\qs_{\W_{k}}\|_{\mathrm{F}}\le \|\qd_{\W_k}\|_{\mathrm{F}}+2r\varsigma_0\|\Delta \W_k\|_{\mathrm{F}},\\
&\|\qs_{\La_{k}}\|_{\mathrm{F}}\le\|\qd_{\La_k}\|_{\mathrm{F}}+2r\varsigma_2\|\mathcal{B}\|^2 \|\Delta \La_k\|_{\mathrm{F}},\quad
\|\qs_{{\W'}_{k}}\|_{\mathrm{F}}= 2r\varsigma_0\|\Delta \W_k\|_{\mathrm{F}},\quad
\|\qs_{{\La'}_k}\|_{\mathrm{F}}=2r\varsigma_2 \|\mathcal{B}\|^2 \|\Delta \La_k\|_{\mathrm{F}}.
\end{eqnarray*}
By Lemma \ref{lem:sub:bound}, it follows that
\begin{eqnarray*}
|||\widetilde{\qs}_{k}|||&\le&  \|\qs_{\X_{k}}\|_{\mathrm{F}}+\|\qs_{\W_{k}}\|_{\mathrm{F}}+\|\qs_{\La_{k}}\|_{\mathrm{F}}+ \|\qs_{{\W'}_{k}}\|_{\mathrm{F}}+\|\qs_{{\La'}_k}\|_{\mathrm{F}}\\
&\le& \|\qd_{\X_k}\|_{\mathrm{F}}+  \|\qd_{\W_k}\|_{\mathrm{F}}+\|\qd^{k}_{\La}\|_{\mathrm{F}}+
4r\varsigma_0\|\Delta \W_{k}\|_{\mathrm{F}}+
4r\varsigma_2 \|\mathcal{B}\|^2 \|\Delta \La_k\|_{\mathrm{F}}
\\
&\le& \sqrt{3} |||\qd_k|||+4r\varsigma_0\|\Delta \W_{k}\|_{\mathrm{F}}+
4r\varsigma_2 \|\mathcal{B}\|^2 \|\Delta \La_k\|_{\mathrm{F}}\\
&\le& \sqrt{3}\rho \|\Delta \X_{k}\|_{\mathrm{F}}
+( \sqrt{3}\rho+4r\varsigma_0)\|\Delta \W_{k}\|_{\mathrm{F}}
+( \sqrt{3}\rho+4r\varsigma_2\|\mathcal{B}\|^2)\|\Delta \La_{k}\|_{\mathrm{F}},
\end{eqnarray*}
which completes the proof.
\end{proof}

%%%%%%%%%%%%%%%%%%%%%%%%%%%%%%%%Lemma
We next prove the limiting behavior of $\mathcal{R}_k$, the detailed proof is similar to Theorem \ref{lem:limpoint} and we omit it. 
\begin{lemma}\label{lma-cluster2}
Suppose the Assumption \ref{assum1} holds.
If $\{(\X_k,\W_k,\La_k)\}_{k\ge 0}$ is a sequence generated by Algorithm \ref{algorithm:denoise},
then the following statements hold.
\begin{itemize}
\item [(i)] The set
$\varGamma :=  \omega \Big(\{(\X_k,\W_k,\La_k,\W_{k-1},\La_{k-1})\}_{k\ge 1}\Big)$
is nonempty, and compact.
\item [(ii)] $\varGamma \subseteq \{(\X, \W, \La,  \W, \La)
\in\mathbb R^n\times\mathbb R^m\times\mathbb R^p\times\mathbb R^m\times\mathbb R^p:
\;(\X, \W, \La)\in {\rm crit} (\mathcal L_\beta)\Big\}.$
\item [(iii)] $\lim_{k\to\infty} {\rm dist} \Big[ (\X_k, \W_k, \La_k, \W_{k-1}, \X_{k-1}), \varGamma \Big]=0$.
\item [(iv)] The sequences $\{\mathcal R_k\}_{k\ge 0}$, $\{\mathcal L_{\beta}(\X_k,\W_k,\La_k)\}_{k\ge 0}$ approach to the same limit and if $(\X_*, \W_*, \La_*,  \W_*, \La_*) \in\varGamma$,
then
\[
\mathcal R (\X_*, \W_*, \La_*,  \W_*, \La_*)=
 \mathcal L_{\beta}(\X_*, \W_*, \La_*)
= g(\W_*)+f(\X_*) +  \frac{1}{2\tau^2}\|\mathcal{P}_{\Omega}(\mathcal{A}(\X_*)) - \mathcal{P}_{\Omega}(\Y)\|_{\mathrm{F}}^2.
\]

\end{itemize}
\end{lemma}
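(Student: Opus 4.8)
The plan is to follow the proof of Theorem~\ref{lem:limpoint} almost verbatim, replacing the three-block iterate $(\X_k,\W_k,\La_k)$ by the five-block iterate $(\X_k,\W_k,\La_k,\W_{k-1},\La_{k-1})$ and exploiting that the last two blocks are ``delayed copies'' of the first ones. First I would establish (i): by Theorem~\ref{thm:Xbound} the sequence $\{(\X_k,\W_k,\La_k)\}_{k\ge0}$ is bounded, hence so is the augmented sequence $\{(\X_k,\W_k,\La_k,\W_{k-1},\La_{k-1})\}_{k\ge1}$; Bolzano--Weierstrass yields a convergent subsequence, so $\varGamma\ne\emptyset$, and $\varGamma$ is closed by \cite[Chapter~2, Exercise~6]{rudin1964principles} and bounded, hence compact.

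For (ii), let $(\X_*,\W_*,\La_*,\W'_*,\La'_*)$ be the limit of a subsequence $(\X_{k_j},\W_{k_j},\La_{k_j},\W_{k_j-1},\La_{k_j-1})$. Since Theorem~\ref{thm:Xcov} gives $\|\Delta\W_{k_j}\|_{\mathrm F}\to0$ and $\|\Delta\La_{k_j}\|_{\mathrm F}\to0$, I would conclude $\W'_*=\lim_j\W_{k_j-1}=\lim_j\W_{k_j}=\W_*$ and $\La'_*=\La_*$, so every point of $\varGamma$ has the diagonal form $(\X_*,\W_*,\La_*,\W_*,\La_*)$; then $(\X_*,\W_*,\La_*)$ is a limit point of $\{(\X_k,\W_k,\La_k)\}$, which lies in $\mathrm{crit}(\mathcal L_\beta)$ by the third assertion of Theorem~\ref{lem:limpoint} (equivalently Lemma~\ref{crit}), giving the claimed inclusion. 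Part (iii) is the standard argument that the distance of a bounded sequence to its limit-point set vanishes: otherwise a subsequence would stay bounded away from $\varGamma$, and by boundedness a further subsequence of it would converge to a point of $\varGamma$, a contradiction.

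For (iv), I would first note that $\{\mathcal R_k\}_{k\ge k_0}$ is nonincreasing by \eqref{ieqn:RK} and converges by Lemma~\ref{lem:bound}; call its limit $\mathcal R_*$. From the definition~\eqref{definition:Rk}, $\mathcal R_k-\mathcal L_\beta(\X_k,\W_k,\La_k)=r\varsigma_0\|\Delta\W_k\|_{\mathrm F}^2+r\varsigma_2\|\mathcal B^{\#}\Delta\La_k\|_{\mathrm F}^2$, which tends to $0$ by Theorem~\ref{thm:Xcov}, so $\mathcal L_\beta(\X_k,\W_k,\La_k)\to\mathcal R_*$ as well; this shows the two sequences share the same limit. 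For the value at a cluster point $(\X_*,\W_*,\La_*,\W_*,\La_*)\in\varGamma$, I would take the subsequence realizing it and apply Lemma~\ref{lem:8}, which identifies $\lim_j\mathcal R_{k_j}$ with $\mathcal L_\beta(\X_*,\W_*,\La_*)=g(\W_*)+f(\X_*)+\frac{1}{2\tau^2}\|\mathcal P_\Omega(\mathcal A(\X_*))-\mathcal P_\Omega(\Y)\|_{\mathrm F}^2$; since the full sequence $\{\mathcal R_k\}$ converges, this common value is exactly $\mathcal R_*$. Finally $\mathcal R(\X_*,\W_*,\La_*,\W_*,\La_*)=\mathcal L_\beta(\X_*,\W_*,\La_*)$ follows directly from the definition of $\mathcal R$, since its two penalty terms evaluate to zero on the diagonal.

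\textbf{The main obstacle.} Everything above is bookkeeping on top of Theorems~\ref{thm:Xbound} and~\ref{thm:Xcov} and Lemmas~\ref{lem:bound}, \ref{crit}, \ref{lem:8}; I expect the only genuinely delicate point to be (ii), namely verifying that the two trailing blocks collapse onto the leading ones so that every limit point of the augmented sequence has the diagonal structure with repeated entries. This is precisely where $\|\Delta\W_k\|_{\mathrm F}\to0$ and $\|\Delta\La_k\|_{\mathrm F}\to0$ are essential; without them the inclusion into $\mathrm{crit}(\mathcal L_\beta)$ in the stated form would fail, and the identification in (iv) of the subsequential limit with $\mathcal R_*$ would also break down.
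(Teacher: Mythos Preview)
Your proposal is correct and matches the paper's approach: the paper's own proof is a single sentence stating that the results follow immediately from Theorems~\ref{thm:Xbound} and~\ref{thm:Xcov} together with Lemma~\ref{lma-D}, and your outline is precisely a detailed unpacking of that claim. The only cosmetic difference is that you route part~(ii) through Lemma~\ref{crit} (criticality for $\mathcal L_\beta$) after collapsing the trailing blocks, whereas the paper cites Lemma~\ref{lma-D} (the subgradient bound for $\mathcal R$); both lead to the same conclusion once $\|\Delta\W_k\|_{\mathrm F},\|\Delta\La_k\|_{\mathrm F}\to0$.
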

\begin{proof}
The results follow immediately from Theorem \ref{thm:Xbound}, \ref{thm:Xcov} and Lemma \ref{lma-D}. 
\end{proof}

The following convergence result of the whole sequence, established under the Kurdyka-Łojasiewicz(KL) property, follows from \cite[Theorem 2]{yashtini2022convergence}. The difference lies in the construction of $\mathcal{R}_{k}$. The proofs are given for completeness. Furthermore, the rationality of the KL property \eqref{eqn:KL} assumption for Model \eqref{model:new} is clarified in Corollary \ref{corollary-1}. 

% which will not affect the proof and we omit it. 

% We  give the following theorem.

\begin{theorem}[Global Convergence] \label{thm:global-con}
Suppose that Assumption \ref{assum1} holds and  $\mathcal{R}$ defined in \eqref{definition:Rk} satisfies the KL property on the limit point set $\varGamma$, 
i.e.  for every $\mathbf{\dot{V}}_* =(\X_*,\W_*,\La_*,\W_*,\La_*) \in \varGamma $, there exists $\varepsilon >0$ and desingularizing function  $\psi: [0,\eta] \rightarrow [0,\infty),$ for some $\eta \in [0,\infty )$ such that for all $\mathbf{\dot{V}} = (\X,\W,\La,\W',\La')$ in the following set:
\begin{equation} \label{eqn:def:S}
\mathcal{S}:= \{\mathbf{\dot{V}}:
\text{dist}(\mathbf{\dot{V}},\varGamma)< \varepsilon\,\, \text{and}\,\, \mathcal{R}(\mathbf{\dot{V}}^*)< \mathcal{R}(\mathbf{\dot{V}}) < \mathcal{R}(\mathbf{\dot{V}}^*) + \eta \},
\end{equation}
the inequality 
\begin{equation} \label{eqn:KL}
\psi'(\mathcal{R}(\mathbf{\dot{V}}) - \mathcal{R}(\mathbf{\dot{V}}^*) )\text{dist}(0, \partial \mathcal{R}(\mathbf{\dot{V}}) ) \ge 1    
\end{equation}
 holds, then $ \{ \qT_k \}_{k \ge 0} := \{(\X_k,\W_k,\La_k)\}_{k \ge 0}$ satisfies the finite length property:
$
\sum_{k=0}^{\infty} \| \X_k\|_{\mathrm{F}} + \|\W_k\|_{\mathrm{F}} + \|\La_k\|_{\mathrm{F}} < \infty,
$
and consequently converges to a stationary point of \eqref{prob:conve}.
\end{theorem}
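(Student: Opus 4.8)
The plan is to run the now-standard Kurdyka--{\L}ojasiewicz descent argument (cf. the proof of \cite[Theorem~2]{yashtini2022convergence}) on the merit function $\mathcal{R}$, assembling the three ingredients already established: the sufficient-decrease estimate \eqref{ieqn:RK}, the subgradient bound of Lemma~\ref{lma-D}, and the limit-point structure of Lemma~\ref{lma-cluster2}. Throughout, write $\mathbf{\dot{V}}_k := (\X_k,\W_k,\La_k,\W_{k-1},\La_{k-1})$, $\Delta_k := \|\Delta \X_k\|_{\mathrm{F}}+\|\Delta \W_k\|_{\mathrm{F}}+\|\Delta \La_k\|_{\mathrm{F}}$, $\psi_k := \psi(\mathcal{R}_k-\mathcal{R}_*)$, with $\mathcal{R}_* := \lim_k \mathcal{R}_k$ (which exists by Lemma~\ref{lem:bound} and equals $\mathcal{R}(\mathbf{\dot{V}})$ for every $\mathbf{\dot{V}}\in\varGamma$ by Lemma~\ref{lma-cluster2}(iv)). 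First I would dispose of the degenerate case: if $\mathcal{R}_{k_1}=\mathcal{R}_*$ for some $k_1$, monotonicity of $\{\mathcal{R}_k\}$ forces $\mathcal{R}_k=\mathcal{R}_*$ for all $k\ge k_1$, so \eqref{ieqn:RK} gives $\Delta_k=0$ for $k>k_1$ and the finite-length property is immediate; similarly any index with $\Delta_k=0$ is harmless. Hence one may assume $\mathcal{R}_k>\mathcal{R}_*$ and $\Delta_k>0$ for all large $k$.

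Next I would upgrade the pointwise KL hypothesis \eqref{eqn:KL} to a uniform version on $\varGamma$. Since $\varGamma$ is nonempty and compact (Lemma~\ref{lma-cluster2}(i)) and $\mathcal{R}\equiv\mathcal{R}_*$ on $\varGamma$, a standard finite-covering argument yields a single radius $\varepsilon>0$, a single $\eta>0$, and a single concave desingularizing function $\psi$ for which \eqref{eqn:KL} holds at every $\mathbf{\dot{V}}\in\mathcal{S}$. By Lemma~\ref{lma-cluster2}(iii) together with $\mathcal{R}_k\to\mathcal{R}_*$ from above, there is $k_0$ such that $\dist(\mathbf{\dot{V}}_k,\varGamma)<\varepsilon$ and $\mathcal{R}_*<\mathcal{R}_k<\mathcal{R}_*+\eta$, hence $\mathbf{\dot{V}}_k\in\mathcal{S}$, for all $k\ge k_0$. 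For such $k$, combining the KL inequality with the subgradient bound $\dist(0,\partial\mathcal{R}(\mathbf{\dot{V}}_k))\le |||\widetilde{\qs}_k|||\le \tilde\pi\,\Delta_k$ of Lemma~\ref{lma-D} gives $\psi'(\mathcal{R}_k-\mathcal{R}_*)\,\tilde\pi\,\Delta_k\ge 1$.

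Then I would close the loop using concavity of $\psi$ and the sufficient decrease. Concavity yields $\psi_k-\psi_{k+1}\ge \psi'(\mathcal{R}_k-\mathcal{R}_*)(\mathcal{R}_k-\mathcal{R}_{k+1})$, while \eqref{ieqn:RK} together with $\|u\|^2+\|v\|^2+\|w\|^2\ge \tfrac13(\|u\|+\|v\|+\|w\|)^2$ gives $\mathcal{R}_k-\mathcal{R}_{k+1}\ge \tfrac{a}{3}\Delta_{k+1}^2$; multiplying the two and inserting $\psi'(\mathcal{R}_k-\mathcal{R}_*)\,\tilde\pi\,\Delta_k\ge 1$ produces $\Delta_{k+1}^2\le \tfrac{3\tilde\pi}{a}\,\Delta_k\,(\psi_k-\psi_{k+1})$. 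Young's inequality $\sqrt{xy}\le\tfrac12(x+y)$ then gives $\Delta_{k+1}\le \tfrac12\Delta_k+\tfrac{3\tilde\pi}{2a}(\psi_k-\psi_{k+1})$; summing from $k_0$ onward, the $\psi_k$ terms telescope and are nonnegative, so $\sum_{k\ge k_0}\Delta_k<\infty$. This is exactly the finite-length property $\sum_k(\|\Delta\X_k\|_{\mathrm{F}}+\|\Delta\W_k\|_{\mathrm{F}}+\|\Delta\La_k\|_{\mathrm{F}})<\infty$, so $\{\qT_k\}=\{(\X_k,\W_k,\La_k)\}$ is Cauchy and converges to some $(\X_*,\W_*,\La_*)$; by Lemma~\ref{crit} this limit lies in $\text{crit}\,\mathcal{L}_\beta$, which is precisely a stationary point of \eqref{prob:conve} after eliminating $\La_*$.

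The step I expect to be the main obstacle is the uniformization of the KL property over the whole compact limit set $\varGamma$, together with the bookkeeping that $\dist(0,\partial\mathcal{R}(\mathbf{\dot{V}}_k))$ — with $\mathcal{R}$ a function of the lifted five-tuple $(\X,\W,\La,\W',\La')$ — is controlled by the single scalar $\Delta_k$. This is where the specific construction of $\mathcal{R}$ in \eqref{definition:Rk} is essential: $\mathcal{L}_\beta$ itself is not monotone along the iterates, and $\partial\mathcal{L}_\beta$ alone would not admit a bound of the required form. Everything downstream of Lemma~\ref{lma-D} — the concavity/Young's-inequality manipulation and the telescoping sum — is routine.
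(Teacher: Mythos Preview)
Your proposal is correct and follows essentially the same KL-descent route as the paper: dispose of the case $\mathcal{R}_{k_1}=\mathcal{R}_*$, locate the iterates in the set $\mathcal{S}$ for large $k$, combine the KL inequality with the subgradient bound of Lemma~\ref{lma-D} and the sufficient-decrease estimate \eqref{ieqn:RK}, apply an AM--GM/Young step, and telescope. The only cosmetic difference is that the paper keeps a free parameter $\gamma$ in the AM--GM step and chooses it afterwards so that $\sqrt{3}\tilde\pi/(2\gamma)<1$, whereas you fix the split at $\tfrac12$ from the outset; both yield the same summability. Your remark about uniformizing the KL data over the compact set $\varGamma$ is a fair caveat---the paper's hypothesis is already phrased with $\mathrm{dist}(\cdot,\varGamma)$ and a single $\psi$, so in its proof no additional covering argument is carried out.
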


\begin{proof}
According to Lemma \ref{lem:bound}, $\lim_{k \rightarrow \infty} \mathcal{R}_k := \mathcal{R}_{\infty}$. It follows that the sequence $\{\mathcal{E}_k \}_{ k \ge k_0}$ defined by 
\[
\mathcal{E}_k := \mathcal{R}_k - \mathcal{R}_{\infty},
\]
is non-negative,  monotonically decreasing, and converges to 0. We consider the following two cases:

{\it Case 1.}  There exists $k_1\ge k_0$ such that $\mathcal E_{k_1}=0$.
Hence $\mathcal E_k=0$ for all $k\ge k_1$
and, it follows that 
$ \|\Delta  \X_{k+1}\|_{\mathrm{F}}^2+\|\Delta \W_{k+1}\|_{\mathrm{F}}^2+\|\Delta \La_{k+1}\|_{\mathrm{F}}^2 \le
 \frac{1}{a}(\mathcal E_k-\mathcal E_{k+1})=0,\quad \forall k\ge k_1$ by (\ref{ieqn:RK}).
This gives rise to
\begin{eqnarray*}
 \sum_{k\ge 0}\Big(\|\Delta  \X_{k+1}\|_{\mathrm{F}}
+\|\Delta \W_{k+1}\|_{\mathrm{F}}
+\|\Delta \La_{k+1}\|_{\mathrm{F}}\Big)
\le
\sum_{k=1}^{k_1} \Big(\|\Delta \X_{k}\|_{\mathrm{F}}
+\|\Delta \W_{k}\|_{\mathrm{F}}
+\|\Delta \La_{k}\|_{\mathrm{F}}\Big)<+\infty.
\end{eqnarray*}
The latter conclusion is due to the fact that the sequence is bounded from Theorem \ref{thm:Xbound}.

{\it Case 2.} The error sequence $\mathcal E_k=\mathcal R_k-\mathcal R_{\infty}>0$ for all $k\ge k_0$.
Then by (\ref{ieqn:RK}), it follows that
\begin{eqnarray}\label{s1}
|||\Delta \qT_{k+1}\||_{\mathrm{F}}^2 =\|\Delta  \X_{k+1}\|^2_{\mathrm{F}}+\|\Delta \W_{k+1}\|^2_{\mathrm{F}}+\|\Delta \La_{k+1}\|^2_{\mathrm{F}} \le
 \frac{1}{a}(\mathcal E_k-\mathcal E_{k+1}),\quad \forall k\ge k_0.
\end{eqnarray}
By Lemma \ref{lma-cluster2}, $\varGamma$ is nonempty, compact.
Furthermore, $\mathcal R_k$ takes on a
constant value $\mathcal R_{\infty}$ on $\varGamma$.

Since the sequence $\{\mathcal R_{k}\}_{k\ge k_0}$ is monotonically decreasing to $\mathcal R_{\infty}$, there exists $ k_1\ge k_0\ge 1$ such that
\[
\mathcal R_{\infty} <\mathcal R_k< \mathcal R_{\infty}+\eta,\quad \forall k\ge k_1.
\]
By Theorem \ref{lma-cluster2}, it follows that
$\lim_{k\to\infty} {\rm dist} \Big[ (\X_k,\W_k,\La_k,\W_{k-1},\X_{k-1}), \varGamma\Big]=0.$
Thus  there exists $k_2\ge 1$ such that
\[
{\rm dist} \Big[ (\X_k,\W_k,\La_k,\W_{k-1},\X_{k-1}), \varGamma\Big]<\varepsilon, \quad  \forall k\ge k_2.
\]
Let $\tilde k =\max\{ k_1,k_2,3 \}$. It follows that
 $(\X_k,\W_k,\La_k,\W_{k-1},\X_{k-1})\in\mathcal S$
for $k\ge \tilde k$, where $\mathcal S$ defined in (\ref{eqn:def:S}).
Thus, we have
\begin{eqnarray}\label{KL}
\psi'( \mathcal E_k)\cdot {\rm dist}\Big(0,
\partial \mathcal R_k \Big)
%=
%\psi' (\mathcal E_k) \cdot {\rm dist}\Big({\bf 0}, \partial \mathcal R_k \Big)
 \ge1.
\end{eqnarray}
Since $\psi$ is concave, it holds that
$
\psi(\mathcal E_{k}) -\psi(\mathcal E_{k+1})\ge
 \psi'(\mathcal E_{k}) (\mathcal E_{k} - \mathcal E_{k+1}).
$
Together with  (\ref{s1}) and (\ref{KL}), it follows that
\begin{eqnarray*}
|||\Delta \qT_{k+1}|||^2_{\mathrm{F}} &\le&
\psi'( \mathcal E_k) |||\Delta \qT_{k+1}|||^2_{\mathrm{F}} \cdot {\rm dist}( 0, \partial \mathcal R_k)\\
&\le&
\frac{1}{a} \psi'( \mathcal E_k)  (\mathcal E_{k} - \mathcal E_{k+1}) \cdot {\rm dist}(0, \partial \mathcal R_k ) \\
&\le& \frac{1}{a} \Big(\psi(\mathcal E_{k}) -\psi(\mathcal E_{k+1})\Big)
 \cdot {\rm dist}(0, \partial \mathcal R_k ).
\end{eqnarray*}
By the arithmetic mean-geometric mean inequality, for any $\gamma>0$, we  have
\[
|\|\Delta \qT_{k+1}\||_{\mathrm{F}}
\le \frac{\gamma}{2a}  \Big(\psi(\mathcal E_{k}) -\psi(\mathcal E_{k+1})\Big)
+\frac{1}{2\gamma}{\rm dist}(0, \partial \mathcal R_k ).
\]
It follows that
\begin{eqnarray}\label{main}
\|\Delta  \X_{k+1}\|_{\mathrm{F}}
+\|\Delta \W_{k+1}\|_{\mathrm{F}}
+\|\Delta \La_{k+1}\|_{\mathrm{F}}
\le \frac{\sqrt{3}\gamma}{2a}  \Big(\psi(\mathcal E_{k}) -\psi(\mathcal E_{k+1})\Big)
+\frac{\sqrt{3}}{2\gamma}{\rm dist}(0, \partial \mathcal R_k ).
\end{eqnarray}
Then according to Lemma \ref{lma-D}, we obtain
\begin{eqnarray}\label{import1}
\|\Delta  \X_{k+1}\|_{\mathrm{F}}
+\|\Delta \W_{k+1}\|_{\mathrm{F}}
+\|\Delta \La_{k+1}\|_{\mathrm{F}}
\le
\frac{\sqrt{3}\gamma}{2a}  \Big(\psi(\mathcal E_{k}) -\psi(\mathcal E_{k+1})\Big)
+
\frac{\sqrt{3}\tilde \rho}{2\gamma}\Big(\|\Delta \X_{k}\|_{\mathrm{F}}
+\|\Delta \W_{k}\|_{\mathrm{F}}
+\|\Delta \La_{k}\|_{\mathrm{F}}\Big).
\end{eqnarray}
According to the equality
$
\sum_{k=\underline k}^K\|\Delta \X_{k}\|_{\mathrm{F}} =
\sum_{k=\underline k}^{K}
\|\Delta  \X_{k+1}\|_{\mathrm{F}}
+\|\Delta \X_{\underline k}\|-\|\Delta \X_K\|_{\mathrm{F}}
%\|\Delta  \X_{k+1}\| +\|\Delta \X_{\undeline{k}-1}\|
$, we choose $\gamma>0$ large enough such that $1>\sqrt{3}\tilde \rho/2\gamma$, and
let $\delta_0=1-\frac{\sqrt{3}\tilde \rho}{2\gamma}$.
Summing up (\ref{import1}) from $k=\underline k\ge \tilde k $ to $K\ge \underline k$ gives
\begin{eqnarray*}
&\delta_0 (\sum_{k=\underline k}^K\|\Delta  \X_{k+1}\|_{\mathrm{F}}
+\|\Delta \W_{k+1}\|_{\mathrm{F}}
+\|\Delta \La_{k+1}\|_{\mathrm{F}})
\le
\frac{\sqrt{3}\gamma}{2a}
\Big(\psi(\mathcal E_{\underline k}) -\psi(\mathcal E_{K+1})\Big)
&\\
&+\frac{\sqrt{3}\tilde \rho}{2\gamma}
\Big(\|\Delta \X_{\underline k}\|_{\mathrm{F}}
+\|\Delta \W_{\underline k}\|_{\mathrm{F}}
+\|\Delta \La_{\underline k}\|_{\mathrm{F}}\Big)
-\frac{\sqrt{3}\tilde \rho}{2\gamma}
\Big(\|\Delta \X_{K}\|_{\mathrm{F}}
+\|\Delta \W_{K}\|_{\mathrm{F}}
+\|\Delta \La_{K}\|_{\mathrm{F}}\Big).&
\end{eqnarray*}
Recall that $\mathcal E_k$ is monotonically decreasing and
$\psi(\mathcal E_k)\ge \psi(\mathcal E_{k+1})>0$,  it follows that%
\begin{eqnarray*}
&\sum_{k=\underline k}^K\|\Delta  \X_{k+1}\|_{\mathrm{F}}
+\|\Delta \W_{k+1}\|_{\mathrm{F}}
+\|\Delta \La_{k+1}\|_{\mathrm{F}}
\le
\frac{\sqrt{3}\gamma}{2a\delta_0}
\psi(\mathcal E_{\underline k})+\frac{\sqrt{3}\tilde \rho}{2\gamma\delta_0}
\big(\|\Delta \X_{\underline k}\|_{\mathrm{F}}
+\|\Delta \W_{\underline k}\|_{\mathrm{F}}
+\|\Delta \La_{\underline k}\|_{\mathrm{F}}\big).
\end{eqnarray*}\label{finitelength}
The right hand side of this inequality is bounded for any $K\ge \underline k$. Let $K\to\infty$ and we obtain
\begin{eqnarray}\label{lambda0}
\sum_{k\ge\underline k}\|\Delta  \X_{k+1}\|_{\mathrm{F}}
+\|\Delta \W_{k+1}\|_{\mathrm{F}}
+\|\Delta \La_{k+1}\|_{\mathrm{F}}
\le
\frac{\sqrt{3}\gamma}{2a\delta_0}
\psi(\mathcal E_{\underline k})+\frac{\sqrt{3}\tilde \rho}{2\gamma\delta_0}
\Big(\|\Delta \X_{\underline k}\|_{\mathrm{F}}
+\|\Delta \W_{\underline k}\|_{\mathrm{F}}
+\|\Delta \La_{\underline k}\|_{\mathrm{F}}\Big).
\end{eqnarray}
Since $\{(\X_k,\W_k,\La_k)\}_{k\ge 0}$ is a bounded sequence,
for any $\underline k\in\mathbb N_+$, it follows that
\begin{eqnarray}\label{lambda}
\lambda({\underline k}):=\sum_{k=1}^{\underline k}
 \|\Delta \X_{k}\|_{\mathrm{F}}
+\|\Delta \W_{k}\|_{\mathrm{F}}
+\|\Delta \La_{k}\|_{\mathrm{F}}<+\infty.
\end{eqnarray}
 By combining (\ref{lambda0}) and (\ref{lambda}),
we conclude that $\displaystyle{\sum_{k\ge 0} \|\Delta  \X_{k+1}\|_{\mathrm{F}}
+\|\Delta \W_{k+1}\|_{\mathrm{F}}
+\|\Delta \La_{k+1}\|_{\mathrm{F}}}$ is finite.

Note that for any $p,q,K\in\mathbb N_+$ where $q\ge p>0$, it follows that:
\begin{eqnarray*}
|||\qT_q-\qT_p|||_{\mathrm{F}} &=&||| \sum_{k=p}^{q-1} \Delta \qT_{k+1}|||_{\mathrm{F}}
\le \sum_{k=p}^{q-1} |||\Delta \qT_{k+1}|||_{\mathrm{F}}\\
&\le&   \sum_{k=p}^{q-1}
\Big(\| \Delta  \X_{k+1}\|_{\mathrm{F}}
+ \| \Delta \W_{k+1}\|_{\mathrm{F}}
+ \| \Delta \La_{k+1}\|_{\mathrm{F}}\Big)\\
&\le&\sum_{k\ge 0} \|\Delta  \X_{k+1}\|_{\mathrm{F}}
+\|\Delta \W_{k+1}\|_{\mathrm{F}}
+\|\Delta \La_{k+1}\|_{\mathrm{F}} < \infty.
\end{eqnarray*}
This implies that $\{\qT_k\}_{k\ge 0}=\{(\X_k,\W_k,\La_k)\}_{k\ge 0}$ is a Cauchy sequence
and hence converges. Moreover, by Theorem \ref{lem:limpoint}, it converges to a stationary point. 
% Scenario 1. There exist $k_1 \ge k_0$ such that $\mathcal{E}_{k_1} =0.$ This gives rise to $\mathcal{E}_k =0$ for all $k \ge k_1.$ By the definition of $|||\cdot\||$ and \eqref{ieqn:Rk}, we then have
% \begin{equation}
% \begin{aligned}
% & \sum_{k\ge 0}(\|\Delta \X_{k+1} \| + \|\Delta \W_{k+1} + \|\Delta \La_{k+1} \|_{\mathrm{F}}  \|) \le \\
% & \sum_{k=1}^{k_1}()
% \end{aligned}
% \end{equation}
\end{proof}
% \begin{proof}
% The proof is similar to the proof of Theorem 2 in \cite{yashtini2022convergence} and we omit it.
% \end{proof}
\begin{corollary} \label{corollary-1}
Suppose the penalty parameter $\beta$ is large enough, and $L_{k,1}, L_{k,2}$ are set as given constants for all $k$ such that \eqref{cond:A4} and Assumption \eqref{assum1e} hold, then the sequence  $\{(\X_k,\W_k,\La_k)\}_{k\ge 0}$ generated by Algorithm \ref{algorithm:denoise} converges to a  stationary point of SLRQA \eqref{model:new}. 
\end{corollary}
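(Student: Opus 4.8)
The plan is to obtain Corollary~\ref{corollary-1} as a direct consequence of the Global Convergence Theorem~\ref{thm:global-con}, by checking its two hypotheses — the full validity of Assumption~\ref{assum1} and the KL property of the merit function $\mathcal{R}$ in \eqref{definition:Rk} — for the specific data defining SLRQA \eqref{model:new}, namely $f(\X)=\sum_i\phi(\sigma_i(\sqrt{\X^{*}\X+\varepsilon^2\mathbf I}),\gamma)$, $g(\W)=\lambda p(\W)$, $\mathcal C=\mathcal W$, $\mathcal B=-\mathcal I$ and $\qb=0$, and then translating a stationary point of $\mathcal L_\beta$ into a stationary point of SLRQA via Lemma~\ref{crit}.

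First I would verify Assumption~\ref{assum1}. Conditions \ref{assum1a}--\ref{assum1c} hold for SLRQA, as already remarked after Assumption~\ref{assum1}: $f$ is coercive because each surrogate in Table~\ref{nonconvex2} is nondecreasing and $\sigma_i(\sqrt{\X^{*}\X+\varepsilon^2\mathbf I})=\sqrt{\sigma_i(\X)^2+\varepsilon^2}$ is coercive in $\X$ (for the bounded surrogates one instead invokes the $g$-coercive alternative in \ref{assum1c} together with coercivity of the data-fitting term in the full-observation denoising case); the Huber function $p$ is bounded below by $0$ and has $1/\delta$-Lipschitz gradient, giving \ref{assum1a} and \ref{assum1b} with $L_g=\lambda/\delta$; and since $\mathcal B=-\mathcal I$, the operator $\mathcal B^{\#}\mathcal B=\mathcal I$ is full rank with $\mathrm{range}(\mathcal C)\subseteq\mathrm{range}(\mathcal B)=\mathbb H^{m\times n}$ and $\qb=0\in\mathrm{range}(\mathcal B)$, giving \ref{assum1c}. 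The hypothesis of the corollary is precisely that $\beta,L_{k,1},L_{k,2}$ are chosen so that \eqref{cond:A4} and \ref{assum1e} hold, i.e.\ \ref{assum1d} and \ref{assum1e} hold; thus Assumption~\ref{assum1} holds in full.

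Next I would establish the KL property of $\mathcal R(\X,\W,\La,\W',\La')=\mathcal L_\beta(\X,\W,\La)+r\varsigma_0\|\W-\W'\|_{\mathrm F}^2+r\varsigma_2\|\mathcal B^{\#}(\La-\La')\|_{\mathrm F}^2$ on the compact limit set $\varGamma$ produced by Lemma~\ref{lma-cluster2}. Identifying $\mathbb H^{m\times n}$ with a Euclidean space via a real (or complex-adjoint) matrix representation, every summand of $\mathcal R$ other than $f$ — the Frobenius norms, the quadratic penalties, the inner-product terms, and the piecewise-polynomial Huber function $p$ — is semialgebraic. For $f$, the map $\X\mapsto(\sqrt{\sigma_1(\X)^2+\varepsilon^2},\dots)$ is obtained by composing the semialgebraic passage to the characteristic-polynomial coefficients of (the real representation of) $\X^{*}\X+\varepsilon^2\mathbf I$ with the semialgebraic ordered-root map and the semialgebraic square root; composing further with $\phi(\cdot,\gamma)$ — which is semialgebraic for the (weighted) Schatten surrogates with rational $\gamma$ and definable in the o-minimal structure $\mathbb R_{\exp}$ for the log-determinant, logarithm, Laplace and ETP surrogates — shows that $f$, hence $\mathcal R$, is definable in an o-minimal structure. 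Every function definable in an o-minimal structure satisfies the KL inequality \eqref{eqn:KL} at each point, so the KL hypothesis of Theorem~\ref{thm:global-con} holds on $\varGamma$.

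Having checked both hypotheses, Theorem~\ref{thm:global-con} gives that $\{(\X_k,\W_k,\La_k)\}_{k\ge 0}$ has finite length and converges to some $(\X_*,\W_*,\La_*)$; by Lemma~\ref{crit} this limit is a stationary point of $\mathcal L_\beta$, which (using $\mathcal C=\mathcal W$, $\mathcal B=-\mathcal I$, $\qb=0$) unwinds to $\mathcal W(\X_*)=\W_*$ together with the first-order optimality relations of \eqref{model:new}, i.e.\ a stationary point of SLRQA, completing the argument. The main obstacle I anticipate is the o-minimal definability claim for $f$: it requires care in passing to a real matrix representation of quaternion matrices so that the quaternion singular values become definable functions of the entries, and a case-by-case check over the surrogate functions in Table~\ref{nonconvex2}, some of which are only $\mathbb R_{\exp}$-definable rather than semialgebraic.
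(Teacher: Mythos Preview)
Your proposal follows essentially the same route as the paper: verify Assumption~\ref{assum1} for the SLRQA data, establish the KL property of $\mathcal R$, and invoke Theorem~\ref{thm:global-con} (the paper also cites Lemma~\ref{crit} implicitly through the conclusion of that theorem).

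The one substantive difference is in how you justify the KL property. The paper simply asserts that $\phi$ is semi-algebraic, cites that the nuclear norm is semi-algebraic, and concludes $\mathcal R$ is semi-algebraic. Your treatment is more careful: you observe that several surrogates in Table~\ref{nonconvex2} (log-determinant, logarithm, Laplace, ETP) involve $\log$ or $\exp$ and are therefore not semi-algebraic, and you instead argue definability in an o-minimal structure such as $\mathbb R_{\exp}$, which still yields the KL inequality. This is a genuine refinement of the paper's argument, which as written only covers the (weighted) Schatten-$\gamma$ surrogates with rational $\gamma$. Similarly, your remark that some surrogates are bounded (so $f$ is not coercive and one must fall back on the ``$g$ coercive'' alternative in \ref{assum1c} or on the data-fitting term) is a point the paper does not address. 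Both approaches lead to the same conclusion; yours is the more rigorous version of the same argument.
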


\begin{proof}
It follows directly from the definition of SLRQA \eqref{model:new} that Assumption \ref{assum1a},\ref{assum1b} and \ref{assum1c} are all satisfied. Since \eqref{cond:A4} and Assumption \eqref{assum1e} hold, Assumption  \ref{assum1d}, \ref{assum1e} are also satisfied. Furthermore, the Huber function and Frobenius norm are semi-algebraic \cite[Example 2]{bolte2014proximal}. From \cite[Appendix, Lemma 4]{shang2016scalable}, it follows that the nuclear norm is also a semi-algebraic function. Since $\phi$ is a semi-algebraic function and the fact that the composition and sum of semi-algebraic functions are also semi-algebraic, we can conclude that $\mathcal{R}$ is semi-algebraic and the KL-inequality \ref{eqn:KL} holds since any proper closed semi-algebraic function satisfies the KL-inequality \cite[Theorem 3]{bolte2014proximal}. It follows from Theorem \ref{thm:global-con} that the sequence  $\{(\X_k,\W_k,\La_k)\}_{k\ge 0}$ generated by Algorithm \ref{algorithm:denoise} converges to a stationary point of SLRQA \eqref{model:new}, which completes the proof.
\end{proof}

\subsection{Global Convergence for Algorithm \ref{algorithm:inpaint}} \label{4-2}

The global convergence is established for a generalization of SLRQA-NF, i.e. Model \eqref{prob:conve2}.
The augmented Lagrangian function of \eqref{prob:conve2} is
\begin{align*}
& \mathcal{L}_{\beta_1,\beta_2}(\X,\W,\La_1,\La_2)  =  f(\X) + g(\W)  + \iprod{\La_1}{\mathcal{C}_1(\X) + \mathcal{B}_1(\W) - \mathbf{\dot{B}}_1 } + \iprod{\La_2}{\mathcal{B}_2(\W) -  \mathbf{\dot{B}}_2}  \\
& \qquad \qquad \qquad \qquad \qquad \qquad \quad  + \frac{\beta_1}{2} \|\mathcal{C}_1(\X) + \mathcal{B}_1(\W) - \mathbf{\dot{B}}_1 \|^2_{\mathrm{F}}+ \frac{\beta_2}{2}\|\mathcal{B}_2(\W) - \qb_2\|_{\mathrm{F}}^2\\
& \qquad \qquad = f(\X) + g(\W)    + \frac{\beta_1}{2}\left\|\mathcal{C}_1(\X) + \mathcal{B}_1(\W) - \mathbf{\dot{B}}_1 + \frac{\La_1}{\beta_1} \right\|_\mF^2  + \frac{\beta_2}{2}\left\|\mathcal{B}_2(\W) - \qb_2 + \frac{\La_2}{\beta_2} \right\|^2 \\
& \qquad \qquad \qquad \quad - \frac{1}{2\beta_1}\left\|\La_1\right\|_\mF^2 - \frac{1}{2\beta_2}\left\|\La_2\right\|_\mF^2 .
\end{align*}
The iteration scheme for the proposed PL-ADMM-NF is
\be \label{scheme2}
\begin{aligned}
\X_{k+1} & \in \argmin_{\X} f(\X)   + \frac{\beta_1}{2}\left\|\mathcal{C}_1(\X) + \mathcal{B}_1(\W_k) - \mathbf{\dot{B}}_1 + \frac{\La_{k,1}}{\beta_1} \right\|_\mF^2 + \frac{L_{k,1}}{2}\|\X-\X_{k} \|_{\mathrm{F}}^2 ,\\
\W_{k+1} & \in \argmin_{\W} \iprod{\nabla g(\W_k)}{\W} + \frac{\beta_1}{2}\left\|\mathcal{C}_1(\X_{k+1}) + \mathcal{B}_1(\W) - \mathbf{\dot{B}}_1 + \frac{\La_{k,1} }{\beta_1} \right\|_\mF^2 \\
& \qquad\qquad\qquad + \frac{\beta_2}{2}\| \mathcal{B}_2(\W) - \qb_2 + \frac{ \La_{k,2} }{\beta_2}\|_{\mathrm{F}}^2 + \frac{L_{k,2}}{2}\left\|\W-\W_k\right\|^2_{\mathrm{F}},\\
\La_{k+1,1} & = \La_{k,1} + \mu \beta_1(\mathcal{C}_1(\X_{k+1}) + \mathcal{B}_1(\W_{k+1}) - \mathbf{\dot{B}}_1 ), \\
\La_{k+1,2} & = \La_{k,2}+ \mu \beta_2  (\mathcal{B}_2(\W_{k+1} ) - \qb_2).
\end{aligned}
\ee
The following assumptions are made for the convergence of PL-ADMM-NF. 
\begin{assumption} \label{assum2}
\,
  \begin{enumerate}[label=B\arabic*]
    \item \label{assum2b} $f(\X)$ and $ g(\W)$ are bounded from below and have $L_f$ and $L_g$ Lipschitz continuous gradients respectively, i.e., for every $\X_1,\X_2,\W_1$, and $\W_2$, it holds that 
    \begin{equation} \label{eqn:Lip}
    \begin{aligned}
    &\|\nabla g(\W_1) - \nabla g(\W_2) \|_{\mathrm{F}} \le L_g \| \W_1 - \W_2 \|_{\mathrm{F}},  \\
    &\|\nabla f(\X_1) - \nabla f(\X_2) \|_{\mathrm{F}} \le L_f \|\X_1 - \X_2\|_{\mathrm{F}}.
    \end{aligned}
    \end{equation}
    % It has been shown in~\cite[Lemma 5.7]{beck2017first} that~\eqref{eqn:02} implies that for every $\W_1$ and $\W_2$, it holds that
    % \begin{equation} \label{eqn:sufficient}
    % g(\W_1) \le g(\W_2) + \iprod{\nabla g(\W_2)}{\W_1 - \W_2} + \frac{L_g}{2}\|\W_1 - \W_2\|_{\mathrm{F}}^2.
    % \end{equation}
    \item \label{assum2c} The matrix $\mathcal{C}_1^{\#}\mathcal{C}_1 $ is full rank or $f(\X)$ is coercive,
    $ \mathcal{B}_2^{\#}\mathcal{B}_2$ is full rank, ${\rm range}(\mathcal{B}_1) \subseteq {\rm range}(\mathcal{C}_1)$, $\mathbf{\dot{B}}_1 \in {\rm range}(\mathcal{C}_1)$, and $\qb_2 \in {\rm range}(\mathcal{B}_2)$.
    \item \label{assum2d} The  parameters $\beta_1,\beta_2 > 0$, $\mu \in (0,2)$, and   there exist three constants $a_1 > 0, a_2 > 0$, and $r > 1$ such that
    \begin{equation} \label{cond2:A4}
(q_1^- - 3r\theta_{1}) \mathbf{I}  \succeq a_1 \mathbf{I}\,\quad \mbox{and}\,\quad q_{2}^- \mathbf{I} + \beta_2 \mathcal{B}_2^{\#}\mathcal{B}_2 + \beta_1 \mathcal{B}_1^{\#}\mathcal{B}_1 - (3r\theta_{2} + L_g ) \mathbf{I} \succeq a_2 \mathbf{I},
    \end{equation}
where $\mathbf{I}$ is the identity matrix with correct size and $\theta_1 = \theta_{1,1} + \theta_{2,1}, \theta_2 = \theta_{1,2} + \theta_{2,2}$,
\begin{equation} \label{theta1232}
\begin{aligned}
    & \theta_{1,1} =  \frac{4(q_1 + L_f)^2\mu}{\rho(\mu)^2 \beta_1  \lambda_+^{\mathcal{C}_1^{\#}\mathcal{C}_1}  }, \quad  \theta_{1,2} = \frac{4\beta_1 \|\mathcal{B}_1 \|^2 \|\mathcal{C}_1 \|^2 }{\rho(\mu) \lambda_+^{\mathcal{C}_1^{\#}\mathcal{C}_1} },  \\
    & \theta_{2,1} = 
   \frac{3\mu\|\mathcal{B}_1\|^2(4q_1^2 + |1-\mu|^2(4(q_1 + L_f)^2 ))}{\beta_2 \rho(\mu)^4  \lambda_+^{\mathcal{C}_1^{\#}\mathcal{C}_1}\lambda_+^{\mathcal{B}_2^{\#}\mathcal{B}_2 }
   } ,  \\
   & \theta_{2,2} =  \frac{6(L_g + q_2)^2 \mu }{\beta_2\rho(\mu)^2 \lambda_+^{\mathcal{B}_2^{\#}\mathcal{B}_2} } +  \frac{3\mu\|\mathcal{B}_1\|^2(|1-\mu|^2+1)(4\beta_1^2 \|\mathcal{B}_1 \|^2 \|\mathcal{C}_1 \|^2   )}{\beta_2\rho(\mu)^4  \lambda_+^{\mathcal{C}_1^{\#}\mathcal{C}_1}\lambda_+^{\mathcal{B}_2^{\#}\mathcal{B}_2 }
   }.
\end{aligned}
\end{equation}
    \item \label{assum2e} The parameters $\beta_1,\beta_2$ satisfies 
    $
    \frac{\kappa}{2L_f}  >  \theta_{3,0} + \theta_{4,0}, \frac{1}{2L_g}  >  \theta_{4,0} ,$ where $\kappa \in (0,1)$ is a given constant and 
\begin{equation} \label{theta340}
    \theta_{3,0} = \frac{3}{2\beta_1 \rho(\mu)\lambda_+^{\mathcal{C}_1^{\#}\mathcal{C}_1 } },\quad \theta_{4,0} = \max \left\{ \frac{3}{2\beta_2 \rho(\mu)\lambda_+^{\mathcal{B}_2^{\#}\mathcal{B}_2 } } , \frac{ 3 \|\mathcal{B}_1 \|^2 }{2\beta_2\rho(\mu)^2 \lambda_+^{\mathcal{B}_2^{\#}\mathcal{B}_2 } \lambda_+^{\mathcal{C}_1^{\#}\mathcal{C}_1 }  } \right\}.
\end{equation}
  \end{enumerate}
\end{assumption}

\begin{remark}
For \eqref{assum2b}, the differentiability of $f(\X)$ and Lipschitz continuity of $\nabla f(\X)$ in SLRQA-NF \eqref{model:new2} comes from the differentiability of $\phi$ and Lipschitz differentiability of $\nabla \phi$. For more discussions on the Lipschitz continuity for spectral function, we refer the readers to \cite[Corollary 2.5]{lewis1995convex} and \cite[Theorem 3.3]{ding2020spectral}.

It is straightforward to verify that \eqref{assum2c} holds for SLRQA-NF \eqref{model:new2}. As mentioned in Section \ref{1-2}, the combination of two constraints in \eqref{model:new2} with respect to $\W$ and $\Z$ can be represented by $(\mathbf{I};\mathbf{0} )$ and $(\mathbf{I} ;\mathbf{T} ) $, where $\mathbf{T}$ is the matrix form of $\mathcal{P}_{\Omega}(\mathcal{A}\mathcal{W}^{\#})$. In this case, \ref{assum1c} in Assumption \ref{assum1} does not hold for SLRQA-NF. However, this assumption is required for all known proofs for the global convergence of nonconvex ADMM and its variants \cite{wang2019global,yashtini2022convergence,liu2019linearized}.  Hence, existing techniques have difficulties proving the convergence of Algorithm \ref{algorithm:inpaint}. 
% It is challenging to prove the global convergence result of variants of nonconvex ADMM without Assumption \ref{assum1c}. 

For \eqref{assum2d} and \eqref{assum2e}, 
% if $\beta_1$ is set to satisfy \ref{assum2e} and $\beta_2$ is large enough, then the second condition in 
% \eqref{cond2:A4} in Assumption \ref{assum2} is satisfied. 
% Consequently, Assumption \ref{assum2} is reasonable. For example,
if $q_1^-$ is large enough, then the first condition in \eqref{cond2:A4} is satisfied. Since 
$\mathcal{B}_2^{\#}\mathcal{B}_2$ is full rank, if $\beta_1$ and $\beta_2$ is large enough such that

\begin{equation}
\begin{aligned}
    & \beta_1 > \max\left\{ \frac{3L_f}{\kappa\rho(\mu) \lambda_+^{\mathcal{C}_1^{\#}\mathcal{C}_1 }} ,1 \right\} + \max \left\{ \frac{3L_f}{ \rho(\mu)\lambda_+^{\mathcal{B}_2^{\#}\mathcal{B}_2 } } , \frac{ 3L_f \|\mathcal{B}_1 \|^2 }{\rho(\mu)^2 \lambda_+^{\mathcal{B}_2^{\#}\mathcal{B}_2 } \lambda_+^{\mathcal{C}_1^{\#}\mathcal{C}_1 }  }\right\},  \\
    & \beta_2 >   \max \left\{ \frac{3L_g}{ \rho(\mu)(\lambda_+^{\mathcal{B}_2^{\#}\mathcal{B}_2 })^2 } , \frac{ 3L_g \|\mathcal{B}_1 \|^2 }{\rho(\mu)^2 (\lambda_+^{\mathcal{B}_2^{\#}\mathcal{B}_2 })^2 \lambda_+^{\mathcal{C}_1^{\#}\mathcal{C}_1 }  },1\right\} +  
   \frac{6L_g\mu\|\mathcal{B}_1\|^2(4q_1^2 + |1-\mu|(4(q_1 + L_f)^2 ))}{\beta_2 \rho(\mu)^4  \lambda_+^{\mathcal{C}_1^{\#}\mathcal{C}_1}(\lambda_+^{\mathcal{B}_2^{\#}\mathcal{B}_2 })^2
   } \\
   & \qquad +   \frac{12L_g(L_g + q_2)^2 \mu }{\beta_2\rho(\mu)^2 (\lambda_+^{\mathcal{B}_2^{\#}\mathcal{B}_2})^2 } +  \frac{6L_g\mu\|\mathcal{B}_1\|^2(|1-\mu|+1)(4\beta_1^2 \|\mathcal{B}_1 \|^2 \|\mathcal{C}_1 \|^2   )}{\beta_2\rho(\mu)^4  \lambda_+^{\mathcal{C}_1^{\#}\mathcal{C}_1}(\lambda_+^{\mathcal{B}_2^{\#}\mathcal{B}_2 })^2
   }, 
\end{aligned}
\end{equation}
hold, then \eqref{assum2d} and \eqref{assum2e} are satisfied. %Consequently, Assumption \ref{assum2} is reasonable.
 % Proving the concergence of ADMM for \eqref{prob:conve2} with Assumption \ref{assum2c} is different from the analysis framework before. 
\end{remark}

% The proposed PL-ADMM-NF for~\eqref{prob:conve2} can be presented as
% \be \label{scheme2}
% \begin{aligned}
% \X_{k+1} & \in \argmin_{\X} f(\X)  + \frac{\beta_1}{2}\left\|\mathcal{C}_1(\X) + \mathcal{B}_1(\W_k) - \mathbf{\dot{B}} + \frac{\La_{k,1}}{\beta_1} \right\|_\mF^2 + \frac{L_{k,1}}{2}\|\X-\X_{k} \|_{\mathrm{F}}^2 ,\\
% \W_{k+1} & \in \argmin_{\W} \iprod{\nabla g(\W_k)}{\W} + \frac{\beta_1}{2}\left\|\mathcal{C}(\X_{k+1}) + \mathcal{B}_1(\W) - \mathbf{\dot{B}}_1 + \frac{\La_{k,1}}{\beta_1} \right\|_\mF^2 \\
% & + \frac{\beta_2}{2}\|\mathcal{B}_2(\W) - \qb_2 + \frac{\La_{k,2}}{\beta_2} \|_{\mathrm{F}}^2  + \frac{L_{k,2}}{2}\left\|\W-\W_k\right\|^2_{\mathrm{F}},\\
% \La_{k+1,1} & = \La_{k,1} + \mu \beta_1  (\mathcal{C}_1(\X_{k+1}) + \mathcal{B}_1(\W_{k+1}) - \mathbf{\dot{B}}_1 ), \\
% \La_{k+1,2} & = \La_{k,2} + \mu \beta_2  (\mathcal{B}_2(\W_{k+1}) - \qb_2 ).
% \end{aligned}
% \ee

Lemmas \ref{lem2:X} and \ref{lem2:W} present the sufficient decrease property of $\mathcal{L}_{\beta_1,\beta_2}$ for $\X$ and $\W$ update respectively and are used in Lemma~\ref{lem2:Y}. The proofs follow the same spirit in the existing analysis~\cite{yashtini2022convergence,wang2019global}.
% Similar to most convergence analysis of ADMM-type algorithms \cite{yashtini2022convergence,wang2019global}, we first introduce Lemma \ref{lem2:X} and \ref{lem2:W}, which present the sufficient decrease property of $\mathcal{L}_{\beta_1,\beta_2}$ for $\X$ and $\W$ update respectively.
\begin{lemma}[Sufficient descent of $\mathcal{L}_{\beta_1,\beta_2}$ for $\X$ update] \label{lem2:X}
 The sequence $\{ (\X_k,\W_k,\La_{k,1},\La_{k,2})\}_{k \ge 0}$ generated by   \eqref{scheme2} satisfies:
$$
\mathcal{L}_{\beta_1,\beta_2}(\X_{k},\W_{k},\La_{k,1},\La_{k,2}) - \mathcal{L}_{\beta_1,\beta_2}(\X_{k+1},\W_{k},\La_{k,1},\La_{k,2}) \geq \frac{L_{k,1}}{2} \|\Delta \X_{k+1}\|^2_{\mathrm{F}}. 
$$
% where $A_k := .$ 
\end{lemma}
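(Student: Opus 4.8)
The plan is to follow the proof of Lemma~\ref{lem:X} almost verbatim, since the $\X$-subproblem in the PL-ADMM-NF scheme~\eqref{scheme2} is obtained from the one in Algorithm~\ref{algorithm:denoise} merely by deleting the data-fitting term $\frac{1}{2\tau^2}\|\mathcal{P}_\Omega(\mathcal{A}(\X))-\mathcal{P}_\Omega(\Y)\|_\mF^2$. First I would record the consequence of $\X_{k+1}$ being a minimizer of $f(\X)+\frac{\beta_1}{2}\|\mathcal{C}_1(\X)+\mathcal{B}_1(\W_k)-\mathbf{\dot{B}}_1+\La_{k,1}/\beta_1\|_\mF^2+\frac{L_{k,1}}{2}\|\X-\X_k\|_\mF^2$: evaluating this objective at the point $\X=\X_k$, where the proximal term vanishes, yields
\[
f(\X_{k+1})+\frac{\beta_1}{2}\Bigl\|\mathcal{C}_1(\X_{k+1})+\mathcal{B}_1(\W_k)-\mathbf{\dot{B}}_1+\tfrac{\La_{k,1}}{\beta_1}\Bigr\|_\mF^2+\frac{L_{k,1}}{2}\|\Delta\X_{k+1}\|_\mF^2 \le f(\X_k)+\frac{\beta_1}{2}\Bigl\|\mathcal{C}_1(\X_k)+\mathcal{B}_1(\W_k)-\mathbf{\dot{B}}_1+\tfrac{\La_{k,1}}{\beta_1}\Bigr\|_\mF^2 .
\]

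Next I would compute the left-hand side of the asserted inequality directly from the completed-square form of $\mathcal{L}_{\beta_1,\beta_2}$. Because $\W$, $\La_1$, and $\La_2$ are held fixed at $\W_k$, $\La_{k,1}$, $\La_{k,2}$, the terms $g(\W_k)$, $\frac{\beta_2}{2}\|\mathcal{B}_2(\W_k)-\qb_2+\La_{k,2}/\beta_2\|_\mF^2$, $-\frac{1}{2\beta_1}\|\La_{k,1}\|_\mF^2$, and $-\frac{1}{2\beta_2}\|\La_{k,2}\|_\mF^2$ all cancel, so
\[
\mathcal{L}_{\beta_1,\beta_2}(\X_k,\W_k,\La_{k,1},\La_{k,2})-\mathcal{L}_{\beta_1,\beta_2}(\X_{k+1},\W_k,\La_{k,1},\La_{k,2}) = f(\X_k)-f(\X_{k+1})+\frac{\beta_1}{2}\Bigl\|\mathcal{C}_1(\X_k)+\mathcal{B}_1(\W_k)-\mathbf{\dot{B}}_1+\tfrac{\La_{k,1}}{\beta_1}\Bigr\|_\mF^2-\frac{\beta_1}{2}\Bigl\|\mathcal{C}_1(\X_{k+1})+\mathcal{B}_1(\W_k)-\mathbf{\dot{B}}_1+\tfrac{\La_{k,1}}{\beta_1}\Bigr\|_\mF^2 .
\]
Rearranging the optimality inequality of the previous paragraph shows that this right-hand side is at least $\frac{L_{k,1}}{2}\|\Delta\X_{k+1}\|_\mF^2$, which is exactly the claim.

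I do not expect any obstacle here. The argument invokes only the definition of $\X_{k+1}$ as a minimizer and an algebraic cancellation; none of the items of Assumption~\ref{assum2} are needed (not even the Lipschitz continuity of $\nabla f$), just as Lemma~\ref{lem:X} used nothing from Assumption~\ref{assum1}. The one point to state carefully is that when $f$ is nonconvex the $\argmin$ set may contain several points, so $\X_{k+1}$ is only guaranteed to belong to it; but since the proof uses solely the inequality ``objective value at $\X_{k+1}$ $\le$ objective value at $\X_k$'', the possible non-uniqueness is harmless.
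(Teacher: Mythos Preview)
Your proposal is correct and follows essentially the same argument as the paper: both use that $\X_{k+1}$ minimizes the $\X$-subproblem (hence the objective value at $\X_{k+1}$ is at most that at $\X_k$), then identify the Lagrangian difference with the corresponding terms and read off the $\frac{L_{k,1}}{2}\|\Delta\X_{k+1}\|_\mF^2$ bound. Your additional remarks on the non-uniqueness of the minimizer and the absence of any hypotheses from Assumption~\ref{assum2} are accurate and do not depart from the paper's approach.
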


\begin{proof}
According to the $\X_{k+1}$ update in  \eqref{scheme2}, the following equality holds:
% \begin{equation} \label{Wup22}
%     \nabla f(\X_{k+1}) + \beta_1 \mathcal{C}_1^{\#}(\mathcal{C}_1(\X_{k+1}) + \mathcal{B}_1(\W_{k+1}) - \qb + \La_{k}/\beta_1  ) + L_{k,1} \Delta \X_{k+1} = 0.
% \end{equation}
\be \label{Wup22}
\begin{aligned}
&f(\X_{k+1}) - f(\X_k) + \frac{\beta_1}{2}\|\mathcal{C}_1(\X_{k+1})+\mathcal{B}_1(\W_{k}) - \mathbf{\dot{B}}_1 + \La_{k,1}/\beta_1 \|_\mF^2 + \frac{L_{k,1}}{2}\| \Delta \X_{k+1} \|_{\mathrm{F}}^2 \\
\leq & \frac{\beta_1}{2}\left\|\mathcal{C}_1(\X_k) + \mathcal{B}_1(\W_k)- \mathbf{\dot{B}}_1 + \La_{k,1}/\beta_1\right\|_\mF^2.
\end{aligned}
\ee
% Consider the optimal condition of $\X_{k+1}$ update in \eqref{scheme2}, we have
% $$
% \partial f(\X_{k+1}) + \frac{2}{2\tau^2} \mathcal{A}^{\#}(\mathcal{P}_{\Omega}((\mathcal{A}\X_{k+1} - \Y))) + \mathcal{C}_1^{\#}(\mathcal{C}_1\X_{k+1} + \mathcal{B}_1\W_{k+1} - \qb + \La_k/\beta_1) + \L_{k,2} \Delta \X_{k+1} = 0.
% $$
% Multiply this equation by $\Delta \X_{k+1},$ then we have
% \begin{equation} \label{sub:X}
% \iprod{\partial f(\X_{k+1})}{\Delta \X_{k+1}} + \|\Delta \X_{k+1}\|^2_{\mathrm{F}} = -  \iprod{\mathcal{A}^{\#}(\mathcal{P}_{\Omega}((\mathcal{A}\X_{k+1} - \Y))) + \beta_1(\mathcal{C}_1^{\#}(\mathcal{C}_1\X_{k+1} + \mathcal{B}_1\W_{k+1} - \qb + \La_k/\beta_1)) }{\Delta \X_{k+1}}.
% \end{equation} 
It follows that
$$
\begin{aligned}
&\mathcal{L}_{\beta_1,\beta_2}(\X_{k},\W_{k},\La_{k,1},\La_{k,2} ) - \mathcal{L}_{\beta_1,\beta_2}(\X_{k+1},\W_{k},\La_{k,1},\La_{k,2}) \\
=&  f(\X_k) - f(\X_{k+1})   - \frac{\beta_1}{2}\|\mathcal{C}_1(\X_{k+1}) +\mathcal{B}_1(\W_{k}) - \qb_1 +\La_{k,1}/\beta_1 \|_{\mathrm{F}}^2  \\
& + \frac{\beta_1}{2} \|\mathcal{C}_1(\X_{k}) + \mathcal{B}_1(\W_{k}) - \qb_1 + \La_{k,1}/\beta_1 \|_{\mathrm{F}}^2 \ge \frac{L_{k,1}}{2} \|\Delta \X_{k+1}\|_{\mathrm{F}}^2,
\end{aligned}
$$
which completes the proof.
\end{proof}

\begin{lemma}[Sufficient descent of $\mathcal{L}_{\beta_1,\beta_2}$ for $\W$ update] \label{lem2:W}
Suppose \ref{assum2b} in Assumption \ref{assum2}  holds.  The sequence $\{ (\X_k,\W_k,\La_{k,1}, \La_{k,2})\}_{k \ge 0}$ generated by~\eqref{scheme2} satisfies:
$$
\mathcal{L}_{\beta_1,\beta_2}(\X_{k+1},\W_{k},\La_{k,1},\La_{k,2}) - \mathcal{L}_{\beta_1,\beta_2}(\X_{k+1},\W_{k+1},\La_{k,1},\La_{k,2}) \geq \|\Delta \W_{k+1}\|^2_{B_k},  
$$
where $B_{k} :=L_{k,2}\mathbf{I} - L_g \mathbf{I} + \frac{\beta_1}{2} \mathcal{B}_1\mathcal{B}_1^{\#} +  \frac{\beta_2}{2} \mathcal{B}_2\mathcal{B}_2^{\#} $.
\end{lemma}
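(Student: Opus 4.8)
The plan is to follow the proof of Lemma~\ref{lem:W} almost verbatim, the only difference being that $\mathcal{L}_{\beta_1,\beta_2}$ now carries two quadratic penalty terms instead of one. First I would record the first-order optimality condition of the $\W_{k+1}$-subproblem in~\eqref{scheme2}: since $p$ (equivalently $g$) is linearized at $\W_k$ and the remaining terms are smooth and convex in $\W$, the minimizer $\W_{k+1}$ satisfies
\[
\nabla g(\W_k) + \beta_1 \mathcal{B}_1^{\#}\!\left(\mathcal{C}_1(\X_{k+1}) + \mathcal{B}_1(\W_{k+1}) - \qb_1 + \tfrac{\La_{k,1}}{\beta_1}\right) + \beta_2 \mathcal{B}_2^{\#}\!\left(\mathcal{B}_2(\W_{k+1}) - \qb_2 + \tfrac{\La_{k,2}}{\beta_2}\right) + L_{k,2}\,\Delta\W_{k+1} = 0 .
\]

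Next I would expand $\mathcal{L}_{\beta_1,\beta_2}(\X_{k+1},\W_{k},\La_{k,1},\La_{k,2}) - \mathcal{L}_{\beta_1,\beta_2}(\X_{k+1},\W_{k+1},\La_{k,1},\La_{k,2})$, in which only $g$ and the two squared terms change. Writing $A_j := \mathcal{C}_1(\X_{k+1}) + \mathcal{B}_1(\W_j) - \qb_1 + \La_{k,1}/\beta_1$ and $A'_j := \mathcal{B}_2(\W_j) - \qb_2 + \La_{k,2}/\beta_2$, so that $A_k - A_{k+1} = -\mathcal{B}_1(\Delta\W_{k+1})$ and $A'_k - A'_{k+1} = -\mathcal{B}_2(\Delta\W_{k+1})$, I would apply the elementary identity $\|a\|_{\mathrm{F}}^2 - \|b\|_{\mathrm{F}}^2 = \|a-b\|_{\mathrm{F}}^2 + 2\iprod{a-b}{b}$ to each of the two penalty quadratics, together with $\iprod{\mathcal{B}_i(\Delta\W_{k+1})}{\,\cdot\,} = \iprod{\Delta\W_{k+1}}{\mathcal{B}_i^{\#}(\cdot)}$. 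This rewrites the difference as
\[
g(\W_k) - g(\W_{k+1}) + \tfrac{\beta_1}{2}\|\mathcal{B}_1\Delta\W_{k+1}\|_{\mathrm{F}}^2 + \tfrac{\beta_2}{2}\|\mathcal{B}_2\Delta\W_{k+1}\|_{\mathrm{F}}^2 - \iprod{\Delta\W_{k+1}}{\beta_1\mathcal{B}_1^{\#}A_{k+1} + \beta_2\mathcal{B}_2^{\#}A'_{k+1}} .
\]
By the optimality condition above, $\beta_1\mathcal{B}_1^{\#}A_{k+1} + \beta_2\mathcal{B}_2^{\#}A'_{k+1} = -\nabla g(\W_k) - L_{k,2}\Delta\W_{k+1}$, so the trailing inner product equals $\iprod{\nabla g(\W_k)}{\Delta\W_{k+1}} + L_{k,2}\|\Delta\W_{k+1}\|_{\mathrm{F}}^2$.

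Finally I would invoke the descent lemma implied by~\ref{assum2b} (the analogue of~\eqref{eqn:sufficient} for $g$, with $\W_1 = \W_{k+1}$ and $\W_2 = \W_k$), which gives $g(\W_k) - g(\W_{k+1}) + \iprod{\nabla g(\W_k)}{\Delta\W_{k+1}} \ge -\tfrac{L_g}{2}\|\Delta\W_{k+1}\|_{\mathrm{F}}^2 \ge -L_g\|\Delta\W_{k+1}\|_{\mathrm{F}}^2$, and then collect terms: the difference is bounded below by $(L_{k,2} - L_g)\|\Delta\W_{k+1}\|_{\mathrm{F}}^2 + \tfrac{\beta_1}{2}\|\mathcal{B}_1\Delta\W_{k+1}\|_{\mathrm{F}}^2 + \tfrac{\beta_2}{2}\|\mathcal{B}_2\Delta\W_{k+1}\|_{\mathrm{F}}^2 = \|\Delta\W_{k+1}\|_{B_k}^2$ with $B_k = L_{k,2}\mathbf{I} - L_g\mathbf{I} + \tfrac{\beta_1}{2}\mathcal{B}_1\mathcal{B}_1^{\#} + \tfrac{\beta_2}{2}\mathcal{B}_2\mathcal{B}_2^{\#}$, as claimed. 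There is no essential obstacle here; the only subtlety is that the optimality condition couples the $\mathcal{B}_1$- and $\mathcal{B}_2$-contributions, so the two cross-terms produced by expanding the two penalty quadratics must be grouped together \emph{before} the substitution, which is precisely why the definition of $B_k$ in the statement contains both $\mathcal{B}_1\mathcal{B}_1^{\#}$ and $\mathcal{B}_2\mathcal{B}_2^{\#}$.
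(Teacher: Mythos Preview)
Your proposal is correct and follows essentially the same route as the paper's proof: write the first-order optimality condition for the $\W$-subproblem, expand the difference of the two penalty quadratics via $\|a\|_{\mathrm{F}}^2-\|b\|_{\mathrm{F}}^2=\|a-b\|_{\mathrm{F}}^2+2\iprod{a-b}{b}$, substitute the optimality condition to replace the combined cross-term by $\iprod{\nabla g(\W_k)}{\Delta\W_{k+1}}+L_{k,2}\|\Delta\W_{k+1}\|_{\mathrm{F}}^2$, and then apply the descent lemma from~\ref{assum2b}. Your explicit remark that the weakening $-\tfrac{L_g}{2}\ge -L_g$ is used to match the stated $B_k$ is a point the paper leaves implicit.
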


\begin{proof}
According to the optimality condition of $\W_{k+1},$ the following equality holds:
\begin{equation} \label{upW:2}
\begin{aligned}
 & \nabla g(\W_k) + \beta_1 \mathcal{C}_1^{\#}(\mathcal{C}_1(\X_{k+1}) + \mathcal{B}_1(\W_{k+1}) - \qb_1 + \La_{k,1}/\beta_1 ) \\
 &\qquad \qquad \qquad \qquad + \beta_2 \mathcal{B}_2^{\#}(\mathcal{B}_2(\W_{k+1}) - \qb_2 + \La_{k,2}/\beta_2 ) + L_{k,2} \Delta \W_{k+1} = 0.   
 \end{aligned}
\end{equation}
It follows that
$$
\begin{aligned}
&\mathcal{L}_{\beta_1,\beta_2}(\X_{k+1},\W_{k},\La_{k,1},\La_{k,2}) - \mathcal{L}_{\beta_1,\beta_2}(\X_{k+1},\W_{k+1},\La_{k,1},\La_{k,2})  \\
=&   \frac{\beta_1}{2} \|\mathcal{C}_1(\X_{k+1}) + \mathcal{B}_1(\W_k)-\qb_1 + \frac{\La_k}{\beta_1} \|_{\mathrm{F}}^2 - \frac{\beta_1}{2} \|\mathcal{C}_1(\X_{k+1}) + \mathcal{B}_1(\W_{k+1})-\qb_1 + \frac{\La_k}{\beta_1} \|_{\mathrm{F}}^2 \\
& + g(\W_k) - g(\W_{k+1}) + \frac{\beta_2}{2} \left\|\mathcal{B}_2(\W_{k}) - \qb_2 + \frac{\La_{k,2}}{\beta_2}  \right\|_{\mathrm{F}}^2  - \frac{\beta_2}{2} \left\|\mathcal{B}_2(\W_{k+1}) - \qb_2 + \frac{\La_{k,2}}{\beta_2}  \right\|_{\mathrm{F}}^2 \\
=& g(\W_k) - g(\W_{k+1})  - \beta_1 \iprod{\mathcal{C}_1^{\#}( \mathcal{C}_1(\X_{k+1}) + \mathcal{B}_1(\W_{k+1}) -\qb_1 + \frac{\La_{k,1}}{\beta_1} ) }{\Delta \W_{k+1}}  \\
& -\beta_2 \iprod{\mathcal{B}_2^{\#}(\mathcal{B}_2(\W_{k+1}) - \qb_2 + \frac{\La_{k,2}}{\beta_2} ) }{\Delta \W_{k+1}} + \frac{\beta_1}{2}\|\mathcal{B}_1(\Delta \W_{k+1})\|_{\mathrm{F}}^2 + \frac{\beta_2}{2} \|\mathcal{B}_2(\Delta \W_{k+1}) \|_{\mathrm{F}}^2 \\
\overset{\eqref{upW:2}}{=}& g(\W_k) - g(\W_{k+1}) +  \iprod{\nabla g(\W_k)}{\Delta \W_{k+1}}  + L_{k,2} \|\Delta \W_{k+1}\|_{\mathrm{F}}^2 \\
&\quad\quad\quad\quad\quad\quad\quad\quad\quad\quad\quad\quad\quad\quad\quad\quad\quad + \frac{\beta_1}{2}\|\mathcal{B}_1\Delta \W_{k+1}\|_{\mathrm{F}}^2 + \frac{\beta_2}{2}\|\mathcal{B}_2 \Delta \W_{k+1} \|_{\mathrm{F}}^2 \, \\
\ge & \|\Delta \W_{k+1} \|_{B_{k}}^2, \qquad (\text{by Assumption~\ref{assum2b}} )
\end{aligned}
$$
which completes the proof.
%  From the $\W$ iterate of \eqref{scheme2} we have the following inequality:
% \be \label{Wup1}
% \begin{aligned}
% &g(\W_{k+1}) - g(\W_k) + \frac{\beta_1}{2}\|\mathcal{C}_1(\X_k)+\mathcal{B}_1(\W_{k+1}) - \mathbf{\dot{B}} - \Y/\beta_1 \|_\mF^2 + \frac{1}{2}\| \Delta \W_{k+1} \|_{L_{k,1}}^2 \\
% \leq &   \frac{\beta_1}{2}\left\|\mathcal{C}_1(\X_k) + \mathcal{B}_1(\W_k)- \mathbf{\dot{B}} -  \Y/\beta_1\right\|_\mF^2
% \end{aligned}
% \ee
% According to the definition of $\mathcal{L}_{\beta_1,\beta_2}(\X,\W;\Y)$ and \eqref{Wup1} we have
% \be
% \begin{aligned}
% &\mathcal{L}_{\beta_1,\beta_2}(\X_k,\W_k,\La_{k,1}, \La_{k,2}) - \mathcal{L}_{\beta_1,\beta_2}(\X_k,\W_{k+1},\La_{k,1},\La_{k,2})
% =   g(\W_{k}) - g(\W_{k+1})\\
% & + \frac{\beta_1}{2}\|\mathcal{C}_1(\X_k)+\mathcal{B}_1(\W_{k})
%  - \mathbf{\dot{B}} + \La_k/\beta_1 \|^2_{\mathrm{F}} - \frac{\beta_1}{2}\|\mathcal{C}_1(\X_k) + \mathcal{B}_1(\W_{k+1})- \mathbf{\dot{B}} + \La_k/\beta_1\|^2_{\mathrm{F}} \\
% \geq & \frac{1}{2} \|\Delta \W_{k+1}\|_{L_{k,1}}^2.
% \end{aligned}
% \ee
% Then the proof is completed.
\end{proof}

Lemma~\ref{lem2:Y} is used in Lemma~\ref{lem2:mon} for constructing a merit function.
\begin{lemma} \label{lem2:Y}
Suppose \ref{assum2b} in Assumption \ref{assum2}  holds. The sequence $\{ (\X_k,\W_k,\La_{k,1}, \La_{k,2})\}_{k \ge 0}$ generated by~\eqref{scheme2} satisfies:

$$
\begin{aligned}
&\mathcal{L}_{\beta_1,\beta_2}(\X_{k+1},\W_{k+1},\La_{k+1,1},\La_{k+1,2} ) \leq \mathcal{L}_{\beta_1,\beta_2}(\X_{k},\W_{k},\La_{k,1},\La_{k,2}) \\
& \qquad\qquad\qquad\qquad - \|\Delta \W_{k+1}\|^2_{B_k} - \frac{L_{k,1}}{2} \|\Delta \X_{k+1} \|^2_{\mathrm{F}} + \frac{1}{\beta_1 \mu} \| \Delta \La_{k+1,1} \|^2_{\mathrm{F}} +  \frac{1}{\beta_2 \mu} \| \Delta \La_{k+1,2} \|^2_{\mathrm{F}}.
\end{aligned}
$$
\end{lemma}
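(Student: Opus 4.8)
The plan is to mimic the structure of Lemma~\ref{lem:Y}: combine the two sufficient-descent estimates for the $\X$ update (Lemma~\ref{lem2:X}) and the $\W$ update (Lemma~\ref{lem2:W}) with an exact accounting of how the augmented Lagrangian changes when the two multipliers $\La_1$ and $\La_2$ are updated. First I would write
\[
\mathcal{L}_{\beta_1,\beta_2}(\X_{k+1},\W_{k+1},\La_{k+1,1},\La_{k+1,2}) - \mathcal{L}_{\beta_1,\beta_2}(\X_{k+1},\W_{k+1},\La_{k,1},\La_{k,2})
\]
and evaluate it directly from the definition of $\mathcal{L}_{\beta_1,\beta_2}$. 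Only the two inner-product terms $\iprod{\La_1}{\mathcal{C}_1(\X_{k+1})+\mathcal{B}_1(\W_{k+1})-\qb_1}$ and $\iprod{\La_2}{\mathcal{B}_2(\W_{k+1})-\qb_2}$ depend on the multipliers, so this difference equals $\iprod{\Delta\La_{k+1,1}}{\mathcal{C}_1(\X_{k+1})+\mathcal{B}_1(\W_{k+1})-\qb_1} + \iprod{\Delta\La_{k+1,2}}{\mathcal{B}_2(\W_{k+1})-\qb_2}$. Using the multiplier updates $\La_{k+1,1}=\La_{k,1}+\mu\beta_1(\mathcal{C}_1(\X_{k+1})+\mathcal{B}_1(\W_{k+1})-\qb_1)$ and $\La_{k+1,2}=\La_{k,2}+\mu\beta_2(\mathcal{B}_2(\W_{k+1})-\qb_2)$, each residual is $\frac{1}{\mu\beta_i}\Delta\La_{k+1,i}$, so the two inner products become exactly $\frac{1}{\mu\beta_1}\|\Delta\La_{k+1,1}\|_{\mathrm{F}}^2$ and $\frac{1}{\mu\beta_2}\|\Delta\La_{k+1,2}\|_{\mathrm{F}}^2$; this is the same computation as in the proof of Lemma~\ref{lem:Y} but carried out separately for each of the two constraints.

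Next I would chain the three pieces. By Lemma~\ref{lem2:X} we have $\mathcal{L}_{\beta_1,\beta_2}(\X_{k+1},\W_k,\La_{k,1},\La_{k,2}) \le \mathcal{L}_{\beta_1,\beta_2}(\X_k,\W_k,\La_{k,1},\La_{k,2}) - \frac{L_{k,1}}{2}\|\Delta\X_{k+1}\|_{\mathrm{F}}^2$, and by Lemma~\ref{lem2:W} (which requires \ref{assum2b}) we have $\mathcal{L}_{\beta_1,\beta_2}(\X_{k+1},\W_{k+1},\La_{k,1},\La_{k,2}) \le \mathcal{L}_{\beta_1,\beta_2}(\X_{k+1},\W_k,\La_{k,1},\La_{k,2}) - \|\Delta\W_{k+1}\|_{B_k}^2$. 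Adding the multiplier-update increment computed in the first paragraph gives
\[
\mathcal{L}_{\beta_1,\beta_2}(\X_{k+1},\W_{k+1},\La_{k+1,1},\La_{k+1,2}) \le \mathcal{L}_{\beta_1,\beta_2}(\X_k,\W_k,\La_{k,1},\La_{k,2}) - \|\Delta\W_{k+1}\|_{B_k}^2 - \frac{L_{k,1}}{2}\|\Delta\X_{k+1}\|_{\mathrm{F}}^2 + \frac{1}{\mu\beta_1}\|\Delta\La_{k+1,1}\|_{\mathrm{F}}^2 + \frac{1}{\mu\beta_2}\|\Delta\La_{k+1,2}\|_{\mathrm{F}}^2,
\]
which is precisely the claimed inequality. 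I would present this as a short three-line display exactly analogous to the proof of Lemma~\ref{lem:Y}.

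There is essentially no serious obstacle here: this lemma is the two-constraint bookkeeping analogue of Lemma~\ref{lem:Y}, and all the real work (the sufficient-descent estimates and, later, bounding $\|\Delta\La_{k+1,i}\|_{\mathrm{F}}^2$ by $\|\Delta\W\|_{\mathrm{F}}^2$-type quantities) lives in Lemmas~\ref{lem2:X}, \ref{lem2:W} and the subsequent Lemma~\ref{lem2:mon}. The only point requiring a little care is making sure the augmented-Lagrangian increment from the multiplier step is split cleanly into the two $\frac{1}{\mu\beta_i}\|\Delta\La_{k+1,i}\|_{\mathrm{F}}^2$ terms — i.e. that the cross terms vanish — which they do because the two constraints enter $\mathcal{L}_{\beta_1,\beta_2}$ additively and each multiplier is coupled only to its own residual. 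So the proof is just: invoke Lemma~\ref{lem2:X}, invoke Lemma~\ref{lem2:W}, compute the multiplier increment, and add.
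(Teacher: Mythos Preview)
Your proposal is correct and follows essentially the same approach as the paper: compute the exact increment of $\mathcal{L}_{\beta_1,\beta_2}$ under the two multiplier updates to get $\frac{1}{\mu\beta_1}\|\Delta\La_{k+1,1}\|_{\mathrm{F}}^2 + \frac{1}{\mu\beta_2}\|\Delta\La_{k+1,2}\|_{\mathrm{F}}^2$, then invoke Lemmas~\ref{lem2:X} and~\ref{lem2:W} and add. The paper's proof is exactly the short three-line display you anticipate.
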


\begin{proof}
According to the update of $\La_1$ and $\La_2$ in \eqref{scheme2}, Lemma \ref{lem2:X}, and Lemma \ref{lem2:W}, we have
$$
\begin{aligned}
& \mathcal{L}_{\beta_1,\beta_2}(\X_{k+1},\W_{k+1},\La_{k+1,1},\La_{k+1,2}) \\
=& \mathcal{L}_{ \beta_1,\beta_2}(\X_{k+1},\W_{k+1},\La_{k,1},\La_{k,2}) + \frac{1}{\beta_1 \mu} \| \Delta \La_{k+1,1}  \|^2_{\mathrm{F}}  + \frac{1}{\beta_2 \mu}\|\Delta \La_{k+1,2} \|_{\mathrm{F}}^2\\
% & \leq \mathcal{L}_{\beta_1,\beta_2}(\X_{k+1},\W_{k},\La_{k,1},\La_{k,2}) - \|\Delta \W_{k+1} \|_{B_k}^2 + \frac{1}{\beta_1 \mu} \| \Delta \La_{k+1} \|^2_{\mathrm{F}} \\
\leq& \mathcal{L}_{\beta_1,\beta_2}(\X_{k},\W_{k},\La_{k,1},\La_{k,2}) -\|\Delta \W_{k+1} \|_{B_k}^2 - \frac{L_{k,1}}{2} \|\Delta \X_{k+1} \|_{\mathrm{F}}^2 \\
& + \frac{1}{\beta_1 \mu} \| \Delta \La_{k+1,1} \|^2_{\mathrm{F}}+ \frac{1}{\beta_2 \mu} \| \Delta \La_{k+1,2} \|^2_{\mathrm{F}}. \\
\end{aligned}
$$
\end{proof}
% Under the sufficient descent conditions for $\X,\W$ and $\La$ variables,
% we next give the following lemma that plays a vital role in the construction of a merit function.
% \revise{ Using the optimality condition of $\X$ and $\W$, we bound $\|\Delta \La_{k+1,1}\|$ and $\|\Delta \La_{k+1,2}\|$ and then construct the merit function in the following lemma.  Different from the proof of PL-ADMM, we bound $\|\Delta \La_{k+1,1} \|$ and $\| \Delta \La_{k+1,2} \|$ using ${\rm range}(\mathcal{B}_1) \subseteq {\rm range}(\mathcal{C}_1)$, $\mathbf{\dot{B}}_1 \in {\rm range}(\mathcal{C}_1)$, and $\qb_2 \in {\rm range}(\mathcal{B}_2)$ respectively in Assumption \ref{assum2c} instead of the ``range assumption".
% % The convergence analysis of PL-ADMM-NF requires bounding $\|\Delta \La_{k,i}\|_{\mathrm{F}} $, $\|\Delta \La_{k,2}\|_{\mathrm{F}}$, $\| \La_{k,1} \|_{\mathrm{F}}$ and $\| \La_{k,2} \|_{\mathrm{F}}$ using other difference metrics such as $\|\Delta \X_{k+1}\|$ and $\|\Delta \W_{k+1} \|$ while the the convergence analysis of PL-ADMM only needs to bound $\| \Delta \La_{k} \|$. Using the optimality condition of $\X$ and $\W$, we bound $\|\Delta \La_{k,i}\|$ and $\|\Delta \La_{k,2}\|$ and then construct the merit function under Assumption \ref{assum2c} instead of Assumption \ref{assum1c}, see Lemma \ref{lem2:mon}. 
% }
The proofs in Lemma~\ref{lem2:mon} and Theorem~\ref{thm2:Xbound} are the key differences to the existing convergence analyses of ADMM-type algorithms. Lemma~\ref{lem2:mon} establishes upper bounds of $\|\Delta \La_{k+1,1} \|$ and $\| \Delta \La_{k+1,2} \|$ for constructing a merit function and Theorem~\ref{thm2:Xbound} proves the boundedness of the sequence generated by~\eqref{scheme2}.
Lemma~\ref{lem2:mon} and Theorem~\ref{thm2:Xbound} are proven without using the widely-used ``range assumption'' but the newly proposed assumption in \ref{assum2c} of Assumption \ref{assum2}.

\begin{lemma} \label{lem2:mon}
Suppose \ref{assum2b} and \ref{assum2c} in Assumption \ref{assum2}  hold, the sequence $\{(\X_{k},\W_{k},\La_{k,1},\La_{k,2})\}_{k \ge 0}$ generated by  \eqref{scheme2} satisfies:
\begin{equation} \label{eqn2:La1}
\begin{aligned}
\frac{1}{\beta_1 \mu}\|\Delta \La_{k+1,1}\|^2_{\mathrm{F}} \le& \theta_{1,1} (\|\Delta \X_{k+1} \|^2_{\mathrm{F}} +  \|\Delta \X_{k} \|^2_{\mathrm{F}})+ \theta_{1,2}(\|\Delta \W_{k+1} \|_{\mathrm{F}}^2 +  \|\Delta \W_{k} \|_{\mathrm{F}}^2 )  \\
& + \theta_{1,3} (\|\mathcal{C}_1^{\#} (\Delta \La_{k,1}) \|^2_{\mathrm{F}} - \|\mathcal{C}_1^{\#} (\Delta \La_{k+1,1}) \|^2_{\mathrm{F}}),  \\
\frac{1}{\beta_2 \mu} \|\Delta \La_{k+1,2}\|^2_{\mathrm{F}} \le&  \theta_{2,1} (\|\Delta \X_{k+1} \|_{\mathrm{F}}^2 +  \|\Delta \X_{k} \|_{\mathrm{F}}^2 + \|\Delta \X_{k-1} \|_{\mathrm{F}}^2 ) \\
+ \theta_{2,2}( \|\Delta \W_k& \|_{\mathrm{F}}^2 + \|\Delta \W_{k+1} \|_{\mathrm{F}}^2  + \|\Delta \W_{k-1} \|_{\mathrm{F}}^2)  - \theta_{2,3}( \| \mathcal{C}_1^{\#} \Delta \La_{k+1,1}\|_{\mathrm{F}}^2 - \| \mathcal{C}_1^{\#} \Delta \La_{k,1}\|_{\mathrm{F}}^2) \\
 - \theta_{2,3}( \| \mathcal{C}_1^{\#} & \Delta \La_{k,1}\|_{\mathrm{F}}^2 - \| \mathcal{C}_1^{\#} \Delta \La_{k-1,1}\|_{\mathrm{F}}^2)  - \theta_{2,4}(\|\mathcal{B}_2^{\#}\Delta \La_{k+1,2} \|_{\mathrm{F}}^2 - \|\mathcal{B}_2^{\#}\Delta \La_{k,2} \|_{\mathrm{F}}^2 ) .
\end{aligned}
\end{equation}
Furthermore, the following inequality holds:
\begin{equation}
\begin{aligned}
& \mathcal{L}_{\beta_1,\beta_2}(\X_{k+1},\W_{k+1},\La_{k+1,1},\La_{k+1,2} )  + \| \Delta \X_{k+1} \|^2_{ \frac{L_{k,1}}{2} - r\theta_{1} \mathbf{I} }  + r\theta_1 \|\Delta \X_k \|_{\mathrm{F}}^2 + \|\Delta \W_{k+1}\|_{B_{k} - r \theta_{2} \mathbf{I}}^2  \\
& + r\theta_{2} \|\Delta \W_k \|_{\mathrm{F}}^2   + r \theta_3 \| \mathcal{C}_1^{\#} \Delta \La_{k+1,1} \|^2_{\mathrm{F}} + r \theta_4 \| \mathcal{C}_1^{\#} \Delta \La_{k,1} \|^2_{\mathrm{F}} + r\theta_5 \|\mathcal{B}_2^{\#}\Delta \La_{k+1,2} \|_{\mathrm{F}}^2 \\
& + \frac{r-1}{\beta_1 \mu} \|\Delta \La_{k+1,1}\|^2_{\mathrm{F}} + \frac{r-1}{\beta_2 \mu} \|\Delta \La_{k+1,2}\|^2_{\mathrm{F}} \\
\leq & \mathcal{L}_{\beta_1,\beta_2}(\X_k,\W_k,\La_{k,1}, \La_{k,2}) + 2r\theta_1\|\Delta \X_{k} \|_{\mathrm{F}}^2 + r\theta_1 \|\Delta \X_{k-1} \|_{\mathrm{F}}^2 + 2r \theta_2 \| \Delta \W_k\|^2_{\mathrm{F}} \\
& +   r \theta_2 \| \Delta \W_{k-1}\|^2_{\mathrm{F}}   + r \theta_3 \| \mathcal{C}_1^{\#} \Delta \La_{k,1}  \|^2_{\mathrm{F}} + r \theta_4 \| \mathcal{C}_1^{\#} \Delta \La_{k-1,1}  \|^2_{\mathrm{F}} + r \theta_5 \| \mathcal{B}_2^{\#} \Delta \La_{k,2}  \|^2_{\mathrm{F}}  , \label{eqn:L}
\end{aligned}
\end{equation}
where $r > 1,   \theta_3 = \theta_{1,3} + \theta_{2,3}, \theta_4 = \theta_{2,4}, \theta_5 = \theta_{2,5}  ,\theta_{1}, \theta_{2}$ are defined in \eqref{theta1232}, and 
\begin{equation} \label{thete11}
\begin{aligned}
   %  & \theta_{1,1} :=  \frac{4(q_1 + L_f)^2\mu}{\rho(\mu)^2 \beta_1  \lambda_+^{\mathcal{C}^{\#}\mathcal{C}}  },  \theta_{1,2} := \frac{4\beta_1 \mu\|\mathcal{B}_1 \|^2 \|\mathcal{C} \|^2 }{\rho(\mu)^2 \lambda_+^{\mathcal{C}^{\#}\mathcal{C}} },  \\
   % & \theta_{2,1} := 
   % % \frac{12\mu^2\|\mathcal{B}_1\|^2(q_1 + L_f)^2}{\rho(\mu)^3 \lambda_+^{\mathcal{C}^{\#}\mathcal{C}}},  
   % \frac{12\mu\|\mathcal{B}_1\|^2(q_1^2 + |1-\mu|^2((q_1 + L_f)^2 ))}{\beta_2 \rho(\mu)^4  \lambda_+^{\mathcal{C}_1^{\#}\mathcal{C}_1}\lambda_+^{\mathcal{B}_2^{\#}\mathcal{B}_2 }
   % } ,  \\
   % & \theta_{2,2} :=  \frac{6(L_g + q_2)^2 \mu }{\beta_2\rho(\mu)^2 \lambda_+^{\mathcal{B}_2^{\#}\mathcal{B}_2} } +  \frac{3\mu\|\mathcal{B}_1\|^2(|1-\mu|^2+1)(4\beta_1^2 \|\mathcal{B}_1 \|^2 \|\mathcal{C}_1 \|^2   )}{\beta_2\rho(\mu)^4  \lambda_+^{\mathcal{C}_1^{\#}\mathcal{C}_1}\lambda_+^{\mathcal{B}_2^{\#}\mathcal{B}_2 }
   % }  , \\
& \theta_{1,3} = \frac{|1-\mu| }{\beta_1 \mu \lambda_+^{\mathcal{C}_1^{\#}\mathcal{C}_1}\rho(\mu)},   \theta_{2,3} = \frac{3|1-\mu| \|\mathcal{B}_1 \|^2 }{\beta_2\rho(\mu)^3 \mu \lambda_+^{\mathcal{B}_2^{\#}\mathcal{B}_2} \lambda_+^{\mathcal{C}_1^{\#}\mathcal{C}_1} },  \\
& \theta_{2,4}= \frac{3|1-\mu|^3 \|\mathcal{B}_1 \|^2 }{\beta_2\rho(\mu)^3 \mu \lambda_+^{\mathcal{B}_2^{\#}\mathcal{B}_2} \lambda_+^{\mathcal{C}_1^{\#}\mathcal{C}_1} }
    ,\theta_{2,5} = \frac{|1-\mu| }{\beta_2 \rho(\mu) \mu \lambda_+^{\mathcal{B}_2^{\#}\mathcal{B}_2 } }. 
\end{aligned}
\end{equation}
\end{lemma}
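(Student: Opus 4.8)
The plan is to follow the template of Lemma~\ref{lem:mon}, but to treat the two multipliers $\La_1$ and $\La_2$ separately and to replace the ``range assumption'' by the weaker hypotheses of~\ref{assum2c}. First I would derive two multiplier recursions. From the optimality condition of the $\X$-subproblem in~\eqref{scheme2} together with the $\La_1$-update, after inserting the correction $\beta_1\mathcal{C}_1^{\#}\mathcal{B}_1\Delta\W_{k+1}$ that converts $\mathcal{B}_1\W_k$ into $\mathcal{B}_1\W_{k+1}$, one obtains $\mathcal{C}_1^{\#}\La_{k+1,1}=(1-\mu)\mathcal{C}_1^{\#}\La_{k,1}+\mu\qG_{k+1}^{1}$ with $\qG_{k+1}^{1}:=-\nabla f(\X_{k+1})-L_{k,1}\Delta\X_{k+1}+\beta_1\mathcal{C}_1^{\#}\mathcal{B}_1\Delta\W_{k+1}$; feeding the identity $\beta_1\mathcal{C}_1^{\#}(\mathcal{C}_1\X_{k+1}+\mathcal{B}_1\W_{k+1}-\qb_1)+\mathcal{C}_1^{\#}\La_{k,1}=\qG_{k+1}^1$ into the $\W$-optimality condition~\eqref{upW:2} and combining with the $\La_2$-update gives $\mathcal{B}_2^{\#}\La_{k+1,2}=(1-\mu)\mathcal{B}_2^{\#}\La_{k,2}+\mu\qG_{k+1}^{2}$ with $\qG_{k+1}^{2}:=-\nabla g(\W_k)-L_{k,2}\Delta\W_{k+1}-\qG_{k+1}^1$. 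The new assumption enters here: since $\La_{0,1}=\mathbf{0}$ and every increment $\La_{k+1,1}-\La_{k,1}=\mu\beta_1(\mathcal{C}_1\X_{k+1}+\mathcal{B}_1\W_{k+1}-\qb_1)$ lies in $\mathrm{range}(\mathcal{C}_1)$ (by $\mathrm{range}(\mathcal{B}_1)\subseteq\mathrm{range}(\mathcal{C}_1)$ and $\qb_1\in\mathrm{range}(\mathcal{C}_1)$ in~\ref{assum2c}), an induction yields $\Delta\La_{k+1,1}\in\mathrm{range}(\mathcal{C}_1)$ and hence $\|\Delta\La_{k+1,1}\|_{\mathrm{F}}^2\le(\lambda_+^{\mathcal{C}_1^{\#}\mathcal{C}_1})^{-1}\|\mathcal{C}_1^{\#}\Delta\La_{k+1,1}\|_{\mathrm{F}}^2$, whereas full rank of $\mathcal{B}_2^{\#}\mathcal{B}_2$ gives $\|\Delta\La_{k+1,2}\|_{\mathrm{F}}^2\le(\lambda_+^{\mathcal{B}_2^{\#}\mathcal{B}_2})^{-1}\|\mathcal{B}_2^{\#}\Delta\La_{k+1,2}\|_{\mathrm{F}}^2$ for free.

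For the first inequality of~\eqref{eqn2:La1} I would argue exactly as in Lemma~\ref{lem:mon}: write $\mathcal{C}_1^{\#}\Delta\La_{k+1,1}=\rho(\mu)\cdot\frac{\mu\,\Delta\qG_{k+1}^1}{\rho(\mu)}+|1-\mu|\cdot\big(\mathrm{sign}(1-\mu)\,\mathcal{C}_1^{\#}\Delta\La_{k,1}\big)$, and use convexity of $\|\cdot\|_{\mathrm{F}}^2$ to create the telescoping difference $\|\mathcal{C}_1^{\#}\Delta\La_{k,1}\|_{\mathrm{F}}^2-\|\mathcal{C}_1^{\#}\Delta\La_{k+1,1}\|_{\mathrm{F}}^2$; since $\Delta\qG_{k+1}^1$ depends only on $\Delta\X_{k+1},\Delta\X_k,\Delta\W_{k+1},\Delta\W_k$, it is bounded using the triangle inequality, the $L_f$-Lipschitz continuity of $\nabla f$ from~\ref{assum2b}, the uniform bound $q_1$ on $\{L_{k,1}\}$, and $(\sum_{i=1}^4 a_i)^2\le 4\sum a_i^2$. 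Dividing by $\beta_1\mu\lambda_+^{\mathcal{C}_1^{\#}\mathcal{C}_1}\rho(\mu)$ then gives the first line of~\eqref{eqn2:La1} with $\theta_{1,1},\theta_{1,2},\theta_{1,3}$ exactly as stated.

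The second inequality of~\eqref{eqn2:La1} is the delicate part and is where the analysis leaves the usual ADMM framework. Because $\qG_{k+1}^2$ carries the term $\qG_{k+1}^1=\mu^{-1}\mathcal{C}_1^{\#}\Delta\La_{k+1,1}+\mathcal{C}_1^{\#}\La_{k,1}$, forming $\Delta\qG_{k+1}^2$ telescopes $\mathcal{C}_1^{\#}\La_{k,1}$ into $\mathcal{C}_1^{\#}\Delta\La_{k,1}$, so $\Delta\qG_{k+1}^2$ ends up depending on $\mathcal{C}_1^{\#}\Delta\La_{k+1,1}$, $\mathcal{C}_1^{\#}\Delta\La_{k,1}$, $\Delta\W_{k+1}$ and $\Delta\W_k$ (with $L_g,q_2$ contributions from $\nabla g(\W_k)-\nabla g(\W_{k-1})$ and $L_{k,2}\Delta\W_{k+1}-L_{k-1,2}\Delta\W_k$). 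To turn the two $\mathcal{C}_1^{\#}\Delta\La_1$-norms into quantities already under control I would substitute $\mathcal{C}_1^{\#}\Delta\La_{j+1,1}=\mu\Delta\qG_{j+1}^1+(1-\mu)\mathcal{C}_1^{\#}\Delta\La_{j,1}$ once more, for $j=k$ and $j=k-1$; this is exactly what pulls in the one-step-further-back quantities $\Delta\X_{k-1},\Delta\W_{k-1}$ and produces the two-step telescoping in $\|\mathcal{C}_1^{\#}\Delta\La_{\cdot,1}\|_{\mathrm{F}}^2$, while the convexity split of $\mathcal{B}_2^{\#}\Delta\La_{k+1,2}$ supplies the one-step telescoping in $\|\mathcal{B}_2^{\#}\Delta\La_{\cdot,2}\|_{\mathrm{F}}^2$. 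Collecting coefficients and dividing by $\beta_2\mu\lambda_+^{\mathcal{B}_2^{\#}\mathcal{B}_2}\rho(\mu)$ yields the second line of~\eqref{eqn2:La1}, with $\theta_{2,1},\dots,\theta_{2,5}$ as in~\eqref{theta1232}--\eqref{thete11}.

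Finally, to reach the merit inequality~\eqref{eqn:L} I would multiply both lines of~\eqref{eqn2:La1} by $r>1$ and add them to the sufficient-decrease estimate of Lemma~\ref{lem2:Y}: the two terms $\frac{1}{\beta_i\mu}\|\Delta\La_{k+1,i}\|_{\mathrm{F}}^2$ that appear with a plus sign in Lemma~\ref{lem2:Y} are split, keeping $\frac{r-1}{\beta_i\mu}\|\Delta\La_{k+1,i}\|_{\mathrm{F}}^2$ on the left and absorbing the remaining $\frac{1}{\beta_i\mu}\|\Delta\La_{k+1,i}\|_{\mathrm{F}}^2$ through the (scaled) right-hand sides of~\eqref{eqn2:La1}; regrouping the lagged $\|\Delta\X_\cdot\|_{\mathrm{F}}^2$, $\|\Delta\W_\cdot\|_{\mathrm{F}}^2$, $\|\mathcal{C}_1^{\#}\Delta\La_{\cdot,1}\|_{\mathrm{F}}^2$ and $\|\mathcal{B}_2^{\#}\Delta\La_{\cdot,2}\|_{\mathrm{F}}^2$ into the ``current'' and ``previous'' blocks appearing on the two sides of~\eqref{eqn:L} finishes the proof. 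I expect the main obstacle to be the bookkeeping forced by the coupling of the two constraints: unlike in the one-constraint case, the estimate for $\La_2$ must reach one extra step back, and one must verify that the explicit constants in~\eqref{theta1232}--\eqref{thete11} are chosen \emph{precisely} large enough to dominate every coefficient generated by the repeated triangle inequalities, the expansions $(\sum_i a_i)^2\le N\sum_i a_i^2$, and the convexity splits --- which is what ultimately dictates the (rather heavy) formulas for the $\theta$'s and, downstream, the lower bounds on $\beta_1,\beta_2$ recorded in~\ref{assum2d}--\ref{assum2e}.
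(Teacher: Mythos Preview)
Your overall architecture --- derive the $\La_1$-bound from the $\X$-optimality condition and the $\La_1$-update, then derive the $\La_2$-bound from the $\W$-optimality condition together with a shifted copy of the $\La_1$-bound, and finally multiply by $r$ and add to Lemma~\ref{lem2:Y} --- is exactly the paper's plan, and your treatment of the first inequality in~\eqref{eqn2:La1} matches the paper's.

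There is, however, a genuine gap in your derivation of the $\La_2$-recursion. The optimality condition for the $\W$-subproblem involves $\mathcal{B}_1^{\#}$, not $\mathcal{C}_1^{\#}$: differentiating $\frac{\beta_1}{2}\|\mathcal{C}_1\X_{k+1}+\mathcal{B}_1\W-\qb_1+\La_{k,1}/\beta_1\|_{\mathrm{F}}^2$ in $\W$ produces $\beta_1\mathcal{B}_1^{\#}(\,\cdot\,)$. (The display~\eqref{upW:2} carries a typo on this point; the paper's own proof of Lemma~\ref{lem2:mon} uses the correct $\mathcal{B}_1^{\#}$.) Hence the $\W$-optimality actually reads
\[
-\nabla g(\W_k)-L_{k,2}\Delta\W_{k+1}
=\mathcal{B}_1^{\#}\La_{k,1}+\tfrac{1}{\mu}\mathcal{B}_1^{\#}\Delta\La_{k+1,1}
+\mathcal{B}_2^{\#}\La_{k,2}+\tfrac{1}{\mu}\mathcal{B}_2^{\#}\Delta\La_{k+1,2},
\]
and you \emph{cannot} substitute $\qG_{k+1}^{1}=\mathcal{C}_1^{\#}\La_{k,1}+\tfrac{1}{\mu}\mathcal{C}_1^{\#}\Delta\La_{k+1,1}$ for the first two terms. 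Your claimed recursion $\mathcal{B}_2^{\#}\La_{k+1,2}=(1-\mu)\mathcal{B}_2^{\#}\La_{k,2}+\mu\qG_{k+1}^{2}$ is therefore false unless $\mathcal{B}_1^{\#}=\mathcal{C}_1^{\#}$, which fails even for SLRQA-NF ($\mathcal{C}_1=\mathcal{I}$, $\mathcal{B}_1=-\mathcal{I}$).

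The paper instead sets $\dH_{k+1,2}:=-\nabla g(\W_k)-L_{k,2}\Delta\W_{k+1}$ and keeps the $\mathcal{B}_1^{\#}$ contributions explicit, obtaining
\[
\mathcal{B}_2^{\#}\Delta\La_{k+1,2}
=\mu\,\Delta\dH_{k+1,2}+(1-\mu)\mathcal{B}_2^{\#}\Delta\La_{k,2}
-\mathcal{B}_1^{\#}\Delta\La_{k+1,1}+(1-\mu)\mathcal{B}_1^{\#}\Delta\La_{k,1}.
\]
It then applies the convexity split to all four summands, and controls the resulting $\|\mathcal{B}_1^{\#}\Delta\La_{j,1}\|_{\mathrm{F}}^2$ terms via
$\|\mathcal{B}_1^{\#}\Delta\La_{j,1}\|_{\mathrm{F}}^2\le\|\mathcal{B}_1\|^2\|\Delta\La_{j,1}\|_{\mathrm{F}}^2\le\|\mathcal{B}_1\|^2(\lambda_+^{\mathcal{C}_1^{\#}\mathcal{C}_1})^{-1}\|\mathcal{C}_1^{\#}\Delta\La_{j,1}\|_{\mathrm{F}}^2$
(using $\Delta\La_{j,1}\in\mathrm{range}(\mathcal{C}_1)$ from~\ref{assum2c}), after which the already-proved $\La_1$-bound is substituted at levels $j=k+1$ and $j=k$. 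This extra $\mathcal{B}_1^{\#}\!\to\!\mathcal{C}_1^{\#}$ conversion is precisely the origin of the $\|\mathcal{B}_1\|^2$ factors in $\theta_{2,1},\theta_{2,2},\theta_{2,3},\theta_{2,4}$, which your route would not produce. With this correction, your final step (multiply by $r$ and add to Lemma~\ref{lem2:Y}) goes through as you describe.
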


\begin{proof}
 Let $k \ge 1$ be fixed and define the matrix
\begin{equation} \label{def2:qw}
    \dH_{k+1,1} := -\nabla f(\X_{k+1}) - L_{k,1}\Delta \X_{k+1} + \beta_1 \mathcal{C}_1^{\#}\mathcal{B}_1\Delta \W_{k+1}, \quad \dH_{k+1,2} := - L_{k,2} \Delta \W_{k+1} - \nabla g(\W_k) .   
\end{equation}
Hence the following inequality holds:
\begin{align*}
& \Delta \dH_{k+1,1} = L_{k-1,1} \Delta \X_{k} - L_{k,1} \Delta \X_{k+1} + \nabla f(\X_{k}) - \nabla f(\X_{k+1}) + \beta_1  \mathcal{C}_1^{\#}\mathcal{B}_1 (\Delta \W_{k+1} - \Delta \W_{k}) , \\
& \Delta \dH_{k+1,2} = L_{k-1,2} \Delta \W_k - L_{k,2} \Delta \W_{k+1} + \nabla g(\W_{k-1}) - \nabla g(\W_k).
\end{align*}
It follows from the triangle inequality that:
% \|\nabla f(\X_{k+1}) - \nabla f(\X_k)\|_{\mathrm{F}} +
$$
\begin{aligned}
& \|\Delta \dH_{k+1,1}\|_{\mathrm{F}} \le  (L_{k,1} + L_f)\| \Delta \X_{k+1} \|_{\mathrm{F}} + L_{k-1,1}\| \Delta \X_{k} \|_{\mathrm{F}} + \beta_1  \|\mathcal{B}_1\| \|\mathcal{C}_1\|(\| \Delta \W_{k+1}\|_{\mathrm{F}} + \| \Delta \W_k\|_{\mathrm{F}} ) , \\
& \|\Delta \dH_{k+1,2}\|_{\mathrm{F}} \le \|\nabla g(\W_{k-1}) - \nabla g(\W_k) \|_{\mathrm{F}} + L_{k,2} \|\Delta \W_{k+1}\|_{\mathrm{F}} + L_{k-1,2} \|\Delta \W_k\|_{\mathrm{F}}.
\end{aligned}
$$
By \ref{assum2b} in Assumption \ref{assum2}, $\nabla g(\W)$ and $\nabla f(\X)$ are $L_g$ and $L_f$ Lipschitz continuous respectively. According to $q_1 = \sup_{k \ge 0} L_{k,1} < \infty, q_2 = \sup_{k \ge 0} L_{k,2} < \infty$, we have
$
\|\Delta \dH_{k+1,2}\|_{\mathrm{F}} \le (L_g + q_2)\|\Delta \W_k\|_{\mathrm{F}} + q_2 \|\Delta\W_{k+1}\|_{\mathrm{F}}.
$
Hence it follows that
\begin{equation} \label{eqn2:qw}
\begin{aligned}
&\|\Delta \dH_{k+1,1}\|^2_{\mathrm{F}} \le 4(q_1 + L_f)^2\|\Delta \X_{k+1} \|_{\mathrm{F}}^2 + 4q_1^2 \|\Delta \X_k \|_{\mathrm{F}}^2 \\
&\qquad\qquad\qquad\qquad\qquad\qquad\qquad+ 4\beta_1^2 \|\mathcal{B}_1\|^2 \|\mathcal{C}_1 \|^2(\| \Delta \W_{k+1}\|_{\mathrm{F}}^2 + \|\Delta \W_k \|_{\mathrm{F}}^2 ), \\
&\|\Delta \dH_{k+1,2}\|^2_{\mathrm{F}} \le 2(L_g + q_2)^2 \|\Delta \W_k\|^2_{\mathrm{F}} + 2q_2^2 \|\Delta \W_{k+1}\|^2_{\mathrm{F}}.
\end{aligned}
\end{equation}
Expressing the optimality condition of $\X$ and $\W$ subproblems using $\dH_{k+1,1}$ and $\dH_{k+1,2}$ respectively, we have
$$
\begin{aligned}
& \dH_{k+1,1} = \beta_1 \mathcal{C}_1^{\#}(\mathcal{C}_1(\X_{k+1}) + \mathcal{B}_1(\W_{k+1})- \qb_1 + \La_{k,1}/\beta_1 ) , \\
& \dH_{k+1,2} = \beta_1 \mathcal{B}_1^{\#}(\mathcal{C}_1(\X_{k+1}) + \mathcal{B}_1(\W_{k+1})- \qb_1 + \La_{k,1}/\beta_1 ) + \beta_2 \mathcal{B}_2^{\#}( \mathcal{B}_2(\W_{k+1}) - \qb_2 + \La_{k,2}/\beta_2 ).
\end{aligned}
$$
Combining this with the $\La_1$ and $\La_2$ updates, it follows that
\begin{equation} \label{eqn2:La}
\begin{aligned}
    \mathcal{C}_1^{\#} \La_{k+1,1} &= \mu \dH_{k+1,1} + (1-\mu) \mathcal{C}_1^{\#}\La_{k,1}, \\
    \mathcal{B}_2^{\#} \La_{k+1,2} &= \mu \dH_{k+1,2} + (1-\mu)\mathcal{B}_2^{\#}\La_{k,2} - \mathcal{B}_1^{\#}(\Delta \La_{k+1,1}) - \mu\mathcal{B}_1^{\#} ( \La_{k,1}).
    \end{aligned}
\end{equation}
% It follows that $\mathcal{C}_1^{\#} \Delta \La_{k+1} = \mu \Delta \dH_{k+1,2} + (1-\mu) \mathcal{C}_1^{\#} \Delta \La_k.$ 
Since $\mu \in (0,2),$ we have
\begin{equation} \label{eqn:CLa}
\begin{aligned}
\mathcal{C}_1^{\#} \Delta \La_{k+1,1} &=   \rho(\mu)\frac{\mu}{\rho(\mu)} \Delta \dH_{k+1,1} + |1-\mu| (\text{sign}(1-\mu)\mathcal{C}_1^{\#}\Delta \La_{k,1})   , \\
\mathcal{B}_2^{\#} \Delta \La_{k+1,2} &= \frac{\rho(\mu)}{3}\frac{3\mu}{\rho(\mu)} \Delta \dH_{k+1,2} + |1-\mu|\text{sign}(1-\mu) \mathcal{B}_2^{\#} \Delta \La_{k,2} \\
& - \frac{\rho(\mu)}{3}\frac{3}{\rho(\mu)}\mathcal{B}_1^{\#}(\Delta \La_{k+1,1}) +\frac{\rho(\mu)}{3}\frac{3|1-\mu|\text{sign}(1-\mu) }{\rho(\mu)} (\mathcal{B}_1^{\#}(\Delta \La_{k,1})  ).
\end{aligned}
\end{equation}
By the  convexity of $\|\cdot\|^2_{\mathrm{F}}$, the update of $\La_1$ and $\La_2$ in \eqref{scheme2} and \ref{assum2c} in Assumption \ref{assum2}, it follows that $\Delta \La_{k+1,1} \in \text{{\rm range}}(\mathcal{C}_1)$, $\Delta \La_{k+1,2} \in \text{{\rm range}}(\mathcal{B}_2)$.   The following inequalities hold:
\begin{equation} \label{eqn2:4-6-1}
\begin{aligned}
&\lambda_+^{\mathcal{C}_1^{\#}\mathcal{C}_1}\rho(\mu) \|\Delta\La_{k+1,1}\|^2_{\mathrm{F}} \overset{\eqref{assum2c}}{\leq} \rho(\mu)\|\mathcal{C}_1^{\#} \Delta \La_{k+1,1}\|_{\mathrm{F}}^2 = \|\mathcal{C}_1^{\#} \Delta \La_{k+1,1}\|_{\mathrm{F}}^2 - |1-\mu| \|\mathcal{C}_1^{\#} \Delta \La_{k+1,1}\|^2_{\mathrm{F}} \nonumber \\
\overset{\eqref{eqn:CLa}}{\le}& \frac{\mu^2}{\rho(\mu)} \|\Delta \dH_{k+1,1}\|^2_{\mathrm{F}} + |1-\mu| \|\mathcal{C}_1^{\#}\Delta \La_{k,1}\|^2_{\mathrm{F}} - |1-\mu| \|\mathcal{C}_1^{\#} \Delta \La_{k+1,1}\|^2_{\mathrm{F}},
\end{aligned}
\end{equation}
and 
\begin{equation} \label{eqn2:4-6-2}
\begin{aligned}
& \lambda_+^{\mathcal{B}_2^{\#}\mathcal{B}_2}\rho(\mu) \|\Delta \La_{k+1,2} \|_{\mathrm{F}}^2 \overset{\eqref{assum2c} }{\le} \rho(\mu)\|\mathcal{B}_2^{\#} \La_{k+1,2} \|_{\mathrm{F}}^2 
\overset{\eqref{eqn2:4-6-1}}{\le} \frac{3\mu^2}{\rho(\mu)} \|\Delta \dH_{k+1,2}\|^2_{\mathrm{F}} + |1-\mu| \|\mathcal{B}_2^{\#}\Delta \La_{k,2}\|^2_{\mathrm{F}}  \\
& +  \frac{3}{\rho(\mu)} \|\mathcal{B}_1^{\#}\Delta \La_{k+1,1} \|^2 + \frac{3|1-\mu|^2}{\rho(\mu)}  \|\mathcal{B}_1^{\#} \Delta \La_{k,1} \|_{\mathrm{F}}^2 - |1-\mu| \|\mathcal{B}_2^{\#} \Delta \La_{k+1,2}\|^2_{\mathrm{F}} \\
 \overset{\eqref{eqn:CLa}}{\le} & \frac{3\mu^2}{\rho(\mu)} \|\Delta \dH_{k+1,2}\|^2_{\mathrm{F}} + |1-\mu| \|\mathcal{B}_2^{\#}\Delta \La_{k,2}\|^2_{\mathrm{F}} - |1-\mu| \|\mathcal{B}_2^{\#} \Delta \La_{k+1,2}\|^2_{\mathrm{F}}\\
 & + \frac{3\mu^2}{\rho(\mu)^3 \lambda_+^{\mathcal{C}_1^{\#}\mathcal{C}_1}}\|\mathcal{B}_1\|^2\|\Delta \dH_{k+1,1} \|_{\mathrm{F}}^2 + \frac{3|1-\mu| \|\mathcal{B}_1 \|^2 }{\rho(\mu)^2 \lambda_+^{\mathcal{C}_1^{\#}\mathcal{C}_1 } } \|\mathcal{C}_1^{\#}\Delta \La_{k,1} \|_{\mathrm{F}}^2  -  \frac{3|1-\mu|\|\mathcal{B}_1\|^2 }{\rho(\mu)^2\lambda_+^{\mathcal{C}_1^{\#}\mathcal{C}_1 } } \|\mathcal{C}_1^{\#}\Delta \La_{k+1,1} \|_{\mathrm{F}}^2 \\
 & + \frac{3\mu^2|1-\mu|^2\|\mathcal{B}_1\|^2}{\rho(\mu)^3 \lambda_+^{\mathcal{C}_1^{\#}\mathcal{C}_1 } } \|\Delta \dH_{k,1} \|^2_{\mathrm{F}} + \frac{3|1-\mu|^3 \|\mathcal{B}_1\|^2 }{\rho(\mu)^2\lambda_+^{\mathcal{C}_1^{\#}\mathcal{C}_1 } }\|\mathcal{C}_1^{\#} \Delta \La_{k-1,1}\|_{\mathrm{F}}^2 -  \frac{3|1-\mu|^3 \|\mathcal{B}_1\|^2 }{\rho(\mu)^2 \lambda_+^{\mathcal{C}_1^{\#}\mathcal{C}_1 } }\|\mathcal{C}_1^{\#} \Delta \La_{k,1}\|^2 \\
  \overset{\eqref{eqn2:qw}}{\le} & \frac{3\mu^2}{\rho(\mu)} \left( 2(L_g + q_2)^2 \|\Delta  \W_k\|_{\mathrm{F}}^2 + 2q_2^2 \|\Delta \W_{k+1} \|_{\mathrm{F}}^2  \right) \\
  & + \frac{3\mu^2\|\mathcal{B}_1\|^2}{\rho(\mu)^3 \lambda_+^{\mathcal{C}_1^{\#}\mathcal{C}_1}}( 4(q_1 + L_f)^2\|\Delta \X_{k+1} \|_{\mathrm{F}}^2 + 4q_1^2 \|\Delta \X_k \|_{\mathrm{F}}^2 + 4\beta_1^2 \|\mathcal{B}_1\|^2 \|\mathcal{C}_1 \|^2(\| \Delta \W_{k+1}\|_{\mathrm{F}}^2 + \|\Delta \W_k \|_{\mathrm{F}}^2 )) \\
  & + \frac{3\mu^2|1-\mu|^2\|\mathcal{B}_1\|^2}{\rho(\mu)^3 \lambda_+^{\mathcal{C}_1^{\#}\mathcal{C}_1 } } ( 4(q_1 + L_f)^2\|\Delta \X_{k} \|_{\mathrm{F}}^2 + 4q_1^2 \|\Delta \X_{k-1} \|_{\mathrm{F}}^2) \\
  &+ 4\beta_1^2 \|\mathcal{B}_1\|^2 \|\mathcal{C}_1 \|^2(\| \Delta \W_{k}\|_{\mathrm{F}}^2  + \|\Delta \W_{k-1} \|_{\mathrm{F}}^2 )  +   \frac{3|1-\mu| \|\mathcal{B}_1\|^2 }{\rho(\mu)^2\lambda_+^{\mathcal{C}_1^{\#}\mathcal{C}_1 } } (\|\mathcal{C}_1^{\#}\Delta \La_{k,1} \|_{\mathrm{F}}^2 - \|\mathcal{C}_1^{\#}\Delta \La_{k+1,1} \|_{\mathrm{F}}^2 )  \\
   & +|1-\mu| \left(\|\mathcal{B}_2^{\#}\Delta \La_{k,2}\|^2_{\mathrm{F}} -  \|\mathcal{B}_2^{\#} \Delta \La_{k+1,2}\|^2_{\mathrm{F}} \right)  + \frac{3|1-\mu|^3 \|\mathcal{B}_1\|^2 }{\rho(\mu)^2\lambda_+^{\mathcal{C}_1^{\#}\mathcal{C}_1 } } \left(\|\mathcal{C}_1^{\#} \Delta \La_{k-1,1}\|_{\mathrm{F}}^2 -  \|\mathcal{C}_1^{\#} \Delta \La_{k,1}\|_{\mathrm{F}}^2 \right).
\end{aligned}
\end{equation}
Consequently, Inequality \eqref{eqn:L}  follows from \eqref{eqn2:qw},~\ref{assum2c} in Assumption~\ref{assum2}, and the definition of $\lambda_+^{\mathcal{C}_1^{\#}\mathcal{C}_1}$ and $\lambda_+^{\mathcal{B}_2^{\#}\mathcal{B}_2}$.
% Dividing both sides of \eqref{eqn2:4-6-1} by $\beta_1 \mu \lambda_+^{\mathcal{C}_1^{\#}\mathcal{C}_1} \rho(\mu)$ , $\beta_2 \mu \lambda_+^{\mathcal{B}_2^{\#}\mathcal{B}_2} \rho(\mu)$ respectively and using~\eqref{eqn2:qw} yield~\eqref{eqn2:La1}. 
Finally, Inequality~\eqref{eqn:L} follows from multiplying Inequality \eqref{eqn2:La1} by $r >1$ and combining it with Lemma \ref{lem2:Y}.
 \end{proof}

Let $\mathcal{T}(\X,\W,\La_1,\La_2,\X',\W',\La_1',\La_2',\X'',\W'',\La_1''):= \mathcal{L}_{\beta_1,\beta_2}(\X,\W,\La_1,\La_2)  + 2r\theta_1 \|\X-\X' \|_{\mathrm{F}}^2+ r\theta_1 \|\X'-\X'' \|_{\mathrm{F}}^2 + 2r\theta_2 \|\W-\W'\|^2_{\mathrm{F}} + r\theta_2 \|\W' -\W'' \|_{\mathrm{F}}^2  + r\theta_3\|\mathcal{C}_1^{\#}(\La_1-\La_1') \|^2_{\mathrm{F}} + r\theta_4\|\mathcal{C}_1^{\#}(\La_1'-\La_1'') \|^2_{\mathrm{F}} + r\theta_5\|\mathcal{B}_2^{\#}(\La_2-\La_2') \|^2_{\mathrm{F}} $. The merit function $\mathcal{T}_k$ is defined by
\begin{equation} \label{definition2:Rk}
\begin{aligned}
&\mathcal{T}_k :=\mathcal{T}(\X_k,\W_k,\La_{k,1},\La_{k,2},\X_{k-1}, \W_{k-1},\La_{k-1,1},\La_{k-1,2},\X_{k-2},\W_{k-2},\La_{k-2,1} )  \\
& = \mathcal{L}_{\beta_1,\beta_2}(\X_k,\W_k,\La_{k,1}, \La_{k,2}) +2r\theta_1\|\Delta \X_k\|^2_{\mathrm{F}} +  r\theta_1\|\Delta \X_{k-1}\|^2_{\mathrm{F}}  +2r\theta_2\|\Delta \W_k\|^2_{\mathrm{F}} +r\theta_2\|\Delta \W_{k-1}\|^2_{\mathrm{F}}  \\
& + r\theta_3 \|\mathcal{C}_1^{\#}\Delta\La_{k,1}\|^2_{\mathrm{F}} + r \theta_4 \|\mathcal{C}_1^{\#}\Delta \La_{k-1,1} \|_{\mathrm{F}}^2 +  r\theta_5 \|\mathcal{B}_2^{\#}\Delta\La_{k,2}\|^2_{\mathrm{F}}.
\end{aligned}
\end{equation}
It follows from \eqref{eqn:L}, the definition of $\mathcal{T}_k$, and  Assumption~\ref{assum2} that  
% According to Equation \eqref{eqn2:La2} in Lemma \ref{lem2:mon}, it follows that
\be \label{ieqn2:RK}
\begin{aligned}
& \mathcal{T}_{k+1} + a\left(\|\Delta \X_{k+1} \|_{\mathrm{F}}^2 + \|\Delta \W_{k+1} \|_{\mathrm{F}}^2 + \|\Delta \La_{k+1,1}\|^2_{\mathrm{F}}  + \|\Delta \La_{k+1,2}\|^2_{\mathrm{F}} \right)  \leq \mathcal{T}_{k+1} + \|\Delta \W_{k+1}\|_{B_{k} - 3r \theta_2 \mathbf{I}}^2 \\
&+ \| \Delta \X_{k+1} \|^2_{\frac{L_{k,1}}{2} - 3r\theta_1 \mathbf{I} } + \frac{r-1}{\beta_1 \mu} \|\Delta \La_{k+1,1}\|^2_{\mathrm{F}}  + \frac{r-1}{\beta_2 \mu} \|\Delta \La_{k+1,2}\|^2_{\mathrm{F}}
\le \mathcal{T}_k \le \mathcal{T}_{k_0},
\end{aligned}
\ee
where $k \ge k_0, a = \min \{a_1,a_2,\frac{r-1}{\beta_1\mu}, \frac{r-1}{\beta_2 \mu} \}$ and $a_1,a_2$ are defined in Assumption~\ref{assum2}, the first inequality follows from \ref{assum2d} and \ref{assum2e} in  Assumption \ref{assum2}, the second inequality follows from Lemma \ref{lem2:mon}, and the third inequality is due to the induction of $\mathcal{T}_{k+1} \leq \mathcal{T}_{k}$ for any $k \ge k_0$.

% \revise{ In the following theorem, we bound $\La_{k,1}$ and $\La_{k,2}$ and then prove the boundness of sequence $\{ (\X_k,\W_k,\La_{k,1},\La_{k,2})\}_{k \ge 0}$. Compared to the analysis of PL-ADMM, the main difference of is that we bound $\La_{k+1,1}$ and $\La_{k+1,2}$ using ${\rm range}(\mathcal{B}_1) \subseteq {\rm range}(\mathcal{C}_1)$, $\mathbf{\dot{B}}_1 \in {\rm range}(\mathcal{C}_1)$, and $\qb_2 \in {\rm range}(\mathcal{B}_2)$  respectively instead of the ``range assumption".    }
\begin{theorem}[Bounded sequence of $\{(\X_k,\W_k,\La_{k,1}, \La_{k,2}) \}_{k \ge 0}$] \label{thm2:Xbound}
Assume Assumption \ref{assum2} holds.  The sequence $\{(\X_k,\W_k,\La_{k,1}, \La_{k,2})\}_{k \ge 0}$ generated by  \eqref{scheme2} is bounded. Consequently, the limiting point set of $\{(\X_k,\W_k,\La_{k,1}, \La_{k,2})\}_{k \ge 0}$ is nonempty.
\end{theorem}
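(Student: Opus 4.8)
The plan is to mirror the proof of Theorem~\ref{thm:Xbound}, but starting from the merit‑function inequality~\eqref{ieqn2:RK}, which already gives $\mathcal{T}_{k+1}\le\mathcal{T}_{k_0}$ for all $k\ge k_0$, with $\mathcal{T}_{k_0}$ a finite constant (a finite expression evaluated at a single iterate; note boundedness of the sequence is not yet known). Expanding $\mathcal{T}_{k+1}$ through $\mathcal{L}_{\beta_1,\beta_2}$ produces $f(\X_{k+1})$, $g(\W_{k+1})$, the two penalty terms $\tfrac{\beta_1}{2}\|\mathcal{C}_1(\X_{k+1})+\mathcal{B}_1(\W_{k+1})-\qb_1+\La_{k+1,1}/\beta_1\|_{\mathrm{F}}^2$ and $\tfrac{\beta_2}{2}\|\mathcal{B}_2(\W_{k+1})-\qb_2+\La_{k+1,2}/\beta_2\|_{\mathrm{F}}^2$, the two indefinite terms $-\tfrac{1}{2\beta_1}\|\La_{k+1,1}\|_{\mathrm{F}}^2$ and $-\tfrac{1}{2\beta_2}\|\La_{k+1,2}\|_{\mathrm{F}}^2$, and nonnegative $\Delta$‑terms. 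The whole difficulty is to dominate the two indefinite terms; this is exactly where earlier analyses invoke the ``range assumption''. Throughout I use that $\La_{k,1}\in\mathrm{range}(\mathcal{C}_1)$ and $\La_{k,2}\in\mathrm{range}(\mathcal{B}_2)$ for every $k$ (shown inductively from the $\La_1$‑ and $\La_2$‑updates in~\eqref{scheme2} together with $\qb_1\in\mathrm{range}(\mathcal{C}_1)$, $\mathrm{range}(\mathcal{B}_1)\subseteq\mathrm{range}(\mathcal{C}_1)$, $\qb_2\in\mathrm{range}(\mathcal{B}_2)$ from~\ref{assum2c}), so that $\lambda_+^{\mathcal{C}_1^{\#}\mathcal{C}_1}\|\La_{k,1}\|_{\mathrm{F}}^2\le\|\mathcal{C}_1^{\#}\La_{k,1}\|_{\mathrm{F}}^2$ and $\lambda_+^{\mathcal{B}_2^{\#}\mathcal{B}_2}\|\La_{k,2}\|_{\mathrm{F}}^2\le\|\mathcal{B}_2^{\#}\La_{k,2}\|_{\mathrm{F}}^2$.

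For the first multiplier I would use the dual identity $\mathcal{C}_1^{\#}\La_{k+1,1}=\mu\dH_{k+1,1}+(1-\mu)\mathcal{C}_1^{\#}\La_{k,1}$ from~\eqref{eqn2:La}, write $\mu\mathcal{C}_1^{\#}\La_{k+1,1}$ as a convex combination (weights $\rho(\mu)$ and $|1-\mu|$, valid since $\mu\in(0,2)$) of $\tfrac{\mu}{\rho(\mu)}\dH_{k+1,1}$ and $-\mathrm{sign}(1-\mu)\,\mathcal{C}_1^{\#}\Delta\La_{k+1,1}$, apply Jensen's inequality and the eigenvalue bound above, and expand $\|\dH_{k+1,1}\|_{\mathrm{F}}^2\le 3\|\nabla f(\X_{k+1})\|_{\mathrm{F}}^2+3q_1^2\|\Delta\X_{k+1}\|_{\mathrm{F}}^2+3\beta_1^2\|\mathcal{C}_1\|^2\|\mathcal{B}_1\|^2\|\Delta\W_{k+1}\|_{\mathrm{F}}^2$ from~\eqref{def2:qw}. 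This yields
\[
-\tfrac{1}{2\beta_1}\|\La_{k+1,1}\|_{\mathrm{F}}^2\ \ge\ -\theta_{3,0}\|\nabla f(\X_{k+1})\|_{\mathrm{F}}^2-c_1\|\Delta\X_{k+1}\|_{\mathrm{F}}^2-c_2\|\Delta\W_{k+1}\|_{\mathrm{F}}^2-c_3\|\mathcal{C}_1^{\#}\Delta\La_{k+1,1}\|_{\mathrm{F}}^2
\]
with $\theta_{3,0}$ as in~\eqref{theta340} and constants $c_1,c_2,c_3>0$, together with the entirely analogous bound at index $k$.

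The second multiplier is the crux. Its identity in~\eqref{eqn2:La} carries the extra term $-\mu\mathcal{B}_1^{\#}\La_{k,1}$, which is \emph{not} a difference of iterates, so naively recursing in $\La_{\cdot,2}$ would produce a geometric factor that need not be below one. Instead I would eliminate $\La_{k,2}$ to get $\mu\mathcal{B}_2^{\#}\La_{k+1,2}=\mu\dH_{k+1,2}-(1-\mu)\mathcal{B}_2^{\#}\Delta\La_{k+1,2}-\mathcal{B}_1^{\#}\Delta\La_{k+1,1}-\mu\mathcal{B}_1^{\#}\La_{k,1}$, apply a $\rho(\mu)$‑weighted convex‑combination/Jensen estimate in the style of~\eqref{eqn:CLa}, and then \emph{substitute} into the last summand the bound on $\|\La_{k,1}\|_{\mathrm{F}}^2$ obtained in the previous step (at index $k$). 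Using $\|\mathcal{B}_1^{\#}\cdot\|_{\mathrm{F}}\le\|\mathcal{B}_1\|\,\|\cdot\|_{\mathrm{F}}$, $\|\Delta\La_{k+1,1}\|_{\mathrm{F}}^2\le(\lambda_+^{\mathcal{C}_1^{\#}\mathcal{C}_1})^{-1}\|\mathcal{C}_1^{\#}\Delta\La_{k+1,1}\|_{\mathrm{F}}^2$, the Lipschitz bounds of~\ref{assum2b} (also to move $\nabla g(\W_k)$ and $\nabla f(\X_k)$ to index $k+1$ at the cost of extra $\Delta$‑terms), and the $\|\dH_{k+1,2}\|_{\mathrm{F}}^2$ expansion from~\eqref{def2:qw}, one gets a lower bound for $-\tfrac{1}{2\beta_2}\|\La_{k+1,2}\|_{\mathrm{F}}^2$ whose coefficients on $\|\nabla g(\W_{k+1})\|_{\mathrm{F}}^2$ and on $\|\nabla f(\X_{k+1})\|_{\mathrm{F}}^2$ are both at most $\theta_{4,0}$, the remaining terms being nonnegative multiples of $\|\Delta\X_{k+1}\|_{\mathrm{F}}^2,\|\Delta\X_k\|_{\mathrm{F}}^2,\|\Delta\W_{k+1}\|_{\mathrm{F}}^2,\|\Delta\W_k\|_{\mathrm{F}}^2,\|\mathcal{C}_1^{\#}\Delta\La_{k,1}\|_{\mathrm{F}}^2,\|\mathcal{C}_1^{\#}\Delta\La_{k+1,1}\|_{\mathrm{F}}^2,\|\mathcal{B}_2^{\#}\Delta\La_{k+1,2}\|_{\mathrm{F}}^2$; the $\max$ in~\eqref{theta340} is precisely what lets the single constant $\theta_{4,0}$ control both the $\|\nabla g\|^2$‑route (through $\dH_{k+1,2}$) and the $\|\nabla f\|^2$‑route (through $\mathcal{B}_1^{\#}\La_{k,1}$).

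Plugging both estimates into $\mathcal{T}_{k+1}\le\mathcal{T}_{k_0}$, I would split $f=(1-\kappa)f+\kappa f$ and apply the descent lemma (\cite[Lemma~5.7]{beck2017first} under~\ref{assum2b}): with step $1/L_f$, $\inf_{\X}\{\kappa f(\X)-\tfrac{\kappa}{2L_f}\|\nabla f(\X)\|_{\mathrm{F}}^2\}\ge\kappa\inf f>-\infty$, and $\theta_{3,0}+\theta_{4,0}<\tfrac{\kappa}{2L_f}$ from~\ref{assum2e} makes the combined $\|\nabla f\|^2$‑coefficient strictly dominated; likewise $g(\W_{k+1})-\theta_{4,0}\|\nabla g(\W_{k+1})\|_{\mathrm{F}}^2\ge\inf g$ using step $1/L_g$ and $\theta_{4,0}<\tfrac{1}{2L_g}$ from~\ref{assum2e}. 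The $\|\mathcal{C}_1^{\#}\Delta\La\|^2$ and $\|\mathcal{B}_2^{\#}\Delta\La\|^2$ terms carried inside $\mathcal{T}$ with coefficients $r\theta_3,r\theta_4,r\theta_5$ ($r>1$) absorb their negative counterparts, and the shifted $\Delta$‑terms telescope exactly as in~\eqref{ieqn2:RK} (using~\ref{assum2d}). One ends up with a uniform estimate bounding $f(\X_{k+1})+(1-\kappa)g(\W_{k+1})$, the two penalty terms, and $a'\!\left(\|\Delta\X_{k+1}\|_{\mathrm{F}}^2+\|\Delta\W_{k+1}\|_{\mathrm{F}}^2+\|\Delta\La_{k+1,1}\|_{\mathrm{F}}^2+\|\Delta\La_{k+1,2}\|_{\mathrm{F}}^2\right)$ by a constant $C$, for all $k\ge k_0$, with $a',C>0$. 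Boundedness then follows in cascade: $\Delta\La_{k+1,2}$ bounded $\Rightarrow$ $\mathcal{B}_2(\W_{k+1})=\qb_2+\tfrac{1}{\mu\beta_2}\Delta\La_{k+1,2}$ bounded $\Rightarrow$ $\W_{k+1}$ bounded (since $\mathcal{B}_2^{\#}\mathcal{B}_2$ is full rank); $\Delta\La_{k+1,1}$ bounded $\Rightarrow$ $\mathcal{C}_1(\X_{k+1})=\qb_1-\mathcal{B}_1(\W_{k+1})+\tfrac{1}{\mu\beta_1}\Delta\La_{k+1,1}$ bounded, and with either coercivity of $f$ or full rank of $\mathcal{C}_1^{\#}\mathcal{C}_1$ (\ref{assum2c}) this gives $\X_{k+1}$ bounded; finally the two penalty terms, together with boundedness of $\mathcal{C}_1(\X_{k+1}),\mathcal{B}_1(\W_{k+1}),\mathcal{B}_2(\W_{k+1})$ and fixedness of $\qb_1,\qb_2$, force $\La_{k+1,1},\La_{k+1,2}$ bounded. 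The finitely many iterates with $k<k_0$ are bounded automatically, so the whole sequence is bounded, and its limit point set is then nonempty since a bounded sequence in a finite‑dimensional space has a convergent subsequence. The single genuine obstacle is the third paragraph---dominating $-\tfrac{1}{2\beta_2}\|\La_{k+1,2}\|_{\mathrm{F}}^2$ without the range assumption---which is what forces the substitution trick and the precise constant bookkeeping making the newly introduced~\ref{assum2d}--\ref{assum2e} exactly sufficient.
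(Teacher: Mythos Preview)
Your proposal is correct and follows essentially the same route as the paper: start from the merit inequality~\eqref{ieqn2:RK}, expand $\mathcal{L}_{\beta_1,\beta_2}$, bound $-\tfrac{1}{2\beta_1}\|\La_{k+1,1}\|_{\mathrm{F}}^2$ via the $\mathcal{C}_1^{\#}\La_{k+1,1}$ identity and a convex-combination/Jensen step, then treat $-\tfrac{1}{2\beta_2}\|\La_{k+1,2}\|_{\mathrm{F}}^2$ by the analogous identity and \emph{substitute} the first-multiplier bound for the residual $\mathcal{B}_1^{\#}\La_{\cdot,1}$ term, apply the descent lemma to absorb the $\|\nabla f\|^2$ and $\|\nabla g\|^2$ contributions via~\ref{assum2e}, and finish with the same cascade $\W\to\X\to\La_{1},\La_{2}$. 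The only cosmetic differences are that the paper rewrites the second identity so that $\La_{k+1,1}$ (rather than your $\La_{k,1}$) appears before substitution, and that your sentence ``bounding $f(\X_{k+1})+(1-\kappa)g(\W_{k+1})$'' should read ``$(1-\kappa)f(\X_{k+1})$'' (since all of $g$ is absorbed); neither affects the argument.
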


\begin{proof}
% Let $\{(\X_k,\W_k,\La_{k,1}, \La_{k,2})\}_{k \ge 0}$ be the sequence generated by Algorithm \ref{algorithm:denoise}. 
According to \eqref{ieqn2:RK}, there exists $k_0 \ge 1$ such that $\mathcal{T}_{k+1} \le \mathcal{T}_{k_0}$ for all $k \ge k_0.$ Therefore, it holds that
\begin{equation}  \label{ieqn2:RK2}
\begin{aligned}
& f(\X_{k+1}) + g(\W_{k+1}) + \frac{\beta_1}{2}\left\|\mathcal{C}_1(\X_{k+1}) + \mathcal{B}_1(\W_{k+1}) - \mathbf{\dot{B}}_1 + \frac{\La_{k+1,1}}{\beta_1} \right\|_\mF^2 \\
&+ \frac{\beta_2}{2} \left\|\mathcal{B}_2(\W_{k+1}) - \qb_2 + \frac{\La_{k+1,2}}{\beta_2} \right\|_{\mathrm{F}}^2  - \frac{1}{2\beta_1}\left\|\La_{k+1,1}\right\|_\mF^2 - \frac{1}{2\beta_2}\left\|\La_{k+1,2}\right\|_\mF^2  \\
& +(r \theta_2 + a)  (\|\Delta \W_{k+1}\|_{\mathrm{F}}^2  ) +  (r\theta_1 +a) \left(\|\Delta \X_{k+1}\|_{\mathrm{F}}^2 + \|\Delta \X_{k}\|_{\mathrm{F}}^2 \right)  + a( \|\Delta \La_{k+1,1} \|_{\mathrm{F}}^2 + \|\Delta \La_{k+1,2} \|_{\mathrm{F}}^2   ) \\
& + r \theta_3 \|\mathcal{C}_1^{\#} \Delta \La_{k+1,1}\|_{\mathrm{F}}^2 + r\theta_4 \|\mathcal{C}_1^{\#} \Delta \La_{k,1}\|_{\mathrm{F}}^2
+ r \theta_5 \|\mathcal{B}_2^{\#} \Delta \La_{k+1,2} \|_{\mathrm{F}}^2 \le \mathcal{T}_{k_0}. 
\end{aligned}
\end{equation}
It follows from \eqref{eqn2:La} that 
\begin{equation} \label{eqn:muLa2}
\begin{aligned}
    \mathcal{C}_1^{\#} \La_{k+1,1} &= \mu \dH_{k+1,1} + (1-\mu) \mathcal{C}_1^{\#}\La_{k,1}, \\
    \mathcal{B}_2^{\#} \La_{k+1,2} &= \mu \dH_{k+1,2} + (1-\mu)\mathcal{B}_2^{\#}\La_{k,2} - (1-\mu)\mathcal{B}_1^{\#}(\Delta \La_{k+1,1}) - \mu\mathcal{B}_1^{\#} ( \La_{k+1,1}).
    \end{aligned}
\end{equation}
Since $\mu \in (0,2),$  Equation \eqref{eqn:muLa2} can be rewritten as:
\begin{equation}
\begin{aligned}
\mu \mathcal{C}_1^{\#} \La_{k+1,1} =& \rho(\mu) \frac{ \mu \dH_{k+1}}{\rho(\mu)} + |1-\mu|\left(\text{sign}(1-\mu) \mathcal{C}_1^{\#}(\La_k - \La_{k+1}) \right), \\
 \mu \mathcal{B}_2^{\#} \La_{k+1,2} =& \frac{\rho(\mu)}{3}\frac{3\mu}{\rho(\mu)}  \dH_{k+1,2} + |1-\mu|\text{sign}(1-\mu)\mathcal{B}_2^{\#} \Delta \La_{k+1,2} \\
&- \frac{\rho(\mu)}{3}\frac{3(1-\mu)}{\rho(\mu)}\mathcal{B}_1^{\#}(\Delta \La_{k+1,1})  -\frac{\rho(\mu)}{3}\frac{3\mu}{\rho(\mu)} (\mathcal{B}_1^{\#}( \La_{k+1,1})  ).
\end{aligned}
\end{equation}
It follows from the  convexity of $\|\cdot\|^2$ that
$$
\begin{aligned}
\lambda_+^{\mathcal{C}_1^{\#}\mathcal{C}_1} \mu^2 \|\La_{k+1,1}\|_{\mathrm{F}}^2 \le& \frac{\mu^2}{\rho(\mu)} \|\dH_{k+1,1}\|_{\mathrm{F}}^2 + |1-\mu| \|\mathcal{C}_1^{\#}\Delta \La_{k+1,1}\|_{\mathrm{F}}^2, \\
 \lambda_+^{\mathcal{B}_2^{\#}\mathcal{B}_2} \mu^2 \|\La_{k+1,2}\|_{\mathrm{F}}^2 \le& \frac{3\mu^2}{\rho(\mu)} \|\dH_{k+1,2} \|_{\mathrm{F}}^2 + |1-\mu|\|\mathcal{B}_2^{\#}\Delta \La_{k+1,2}\|_{\mathrm{F}}^2 \\
& + \frac{3|1-\mu|^2}{\rho(\mu)}\|\mathcal{B}_1^{\#} \Delta \La_{k+1,1} \|_{\mathrm{F}}^2 + \frac{3\mu^2}{\rho(\mu)}\|\mathcal{B}_1^{\#}\La_{k+1,1} \|_{\mathrm{F}}^2.
\end{aligned}
$$
According to \eqref{def2:qw}, it holds that
$$
\begin{aligned}
& \|\dH_{k+1,1}\|_{\mathrm{F}}^2 \le 3 q_1^2 \|\Delta \X_{k+1}\|_{\mathrm{F}}^2 + 3\|\nabla f(\X_{k+1})\|_{\mathrm{F}}^2 + 3\beta_1^2 \|\mathcal{C}_1\|^2 \|\mathcal{B}_1\|^2 \|\Delta \W_{k+1} \|_{\mathrm{F}}^2, \\
& \|\dH_{k+1,2}\|_{\mathrm{F}}^2 \le 2(q_2 + L_g)^2 \|\Delta \W_{k+1}\|_{\mathrm{F}}^2 + 2\|\nabla g(\W_{k+1})\|_{\mathrm{F}}^2.
\end{aligned}
$$
Hence it follows that
\begin{equation} \label{eqn2:La3}
\begin{aligned}
& -\frac{1}{2\beta_1} \|\La_{k+1,1}\|_{\mathrm{F}}^2 \ge -\theta_{3,0} \| \nabla f (\X_{k+1})\|_{\mathrm{F}}^2 - \theta_{3,1} \|\Delta \X_{k+1}\|_{\mathrm{F}}^2  - \theta_{3,2} \|\Delta \W_{k+1} \|^2 - \theta_{3,3} \|\mathcal{C}_1^{\#} \Delta \La_{k+1,1}\|_{\mathrm{F}}^2, \\
&-\frac{1}{2\beta_2} \|\La_{k+1,2}\|_{\mathrm{F}}^2 \ge -\theta_{4,0} \|\nabla g(\W_{k+1})\|_{\mathrm{F}}^2 -\theta_{4,0} \|\nabla f(\X_{k+1})\|_{\mathrm{F}}^2   - \theta_{4,2} \|\Delta \W_{k+1}\|_{\mathrm{F}}^2 - \theta_{4,2} \|\Delta \W_{k}\|_{\mathrm{F}}^2 \\
& - \theta_{4,5} \|\mathcal{B}_2^{\#} \Delta \La_{k+1,2}\|_{\mathrm{F}}^2 - \theta_{4,3} \| \mathcal{C}_1^{\#}\Delta \La_{k+1,1} \|_{\mathrm{F}}^2 + \theta_{4,4} \| \mathcal{C}_1^{\#}\Delta \La_{k,1} \|_{\mathrm{F}}^2 - \theta_{4,1}\|\Delta \X_k \|^2 - \theta_{4,1} \|\Delta \X_{k+1} \|_{\mathrm{F}}^2 , 
\end{aligned}
\end{equation}
where 
$$
\begin{aligned}
& \theta_{3,0} = \frac{3}{2\beta_1 \rho(\mu)\lambda_+^{\mathcal{C}_1^{\#}\mathcal{C}_1 } }, ~~\theta_{3,1} =  \frac{3q_1^2}{2\beta_1 \rho(\mu) \lambda_+^{\mathcal{C}_1^{\#}\mathcal{C}_1}} ,~~ \theta_{3,2} = \frac{3\beta_1 \|\mathcal{C}_1 \|^2 \|\mathcal{B}_1 \|^2 }{2 \rho(\mu) \lambda_+^{\mathcal{C}_1^{\#}\mathcal{C}_1 } }, \theta_{3,3} = \frac{|1-\mu|}{ 2 \beta_1 \mu^2 \lambda_+^{\mathcal{C}_1^{\#}\mathcal{C}_1}}, \\
& \theta_{4,0} = \max \left\{ \frac{3}{\beta_2 \rho(\mu)\lambda_+^{\mathcal{B}_2^{\#}\mathcal{B}_2 } } , \frac{ 9 \|\mathcal{B}_1 \|^2 }{2\beta_2\rho(\mu)^2 \lambda_+^{\mathcal{B}_2^{\#}\mathcal{B}_2 } \lambda_+^{\mathcal{C}_1^{\#}\mathcal{C}_1 }  } \right\}, \\
& \theta_{4,1} = \frac{9q_1^2\|\mathcal{B}_1\|^2 }{2\beta_2 \rho(\mu)^2\lambda_+^{\mathcal{C}_1^{\#}\mathcal{C}_1 } \lambda_+^{\mathcal{B}_2^{\#}\mathcal{B}_2 } } + \frac{6|1-\mu|^2 \|\mathcal{B}_1 \|^2 (q_1 + L_f)^2}{\beta_2 \lambda_+^{\mathcal{C}_1^{\#}\mathcal{C}_1} \lambda_+^{\mathcal{B}_2^{\#}\mathcal{B}_2} \rho(\mu)^3 } ,  \\
&\theta_{4,2} = \frac{3(L_g + q_2)^2 }{\beta_2\rho(\mu) \lambda_+^{\mathcal{B}_2^{\#}\mathcal{B}_2} } +  \frac{6 \|\mathcal{B}_1\|^2(|1-\mu|^2)(\beta_1^2 \|\mathcal{B}_1 \|^2 \|\mathcal{C}_1 \|^2   )}{\beta_2\rho(\mu)^3  \lambda_+^{\mathcal{C}_1^{\#}\mathcal{C}_1}\lambda_+^{\mathcal{B}_2^{\#}\mathcal{B}_2 } 
   } + \frac{9 \beta_1^2 \|\mathcal{B}_1 \|^4  \|\mathcal{C}_1 \|^2   }{2 \beta_2 \rho(\mu)^2 \lambda_+^{\mathcal{C}_1^{\#}\mathcal{C}_1}\lambda_+^{\mathcal{B}_2^{\#}\mathcal{B}_2} } ,  \\
  & \theta_{4,3} = \frac{3|1-\mu| \|\mathcal{B}_1\|^2 }{2\beta_2\rho(\mu)^2 \mu^2 \lambda_+^{\mathcal{B}_2^{\#}\mathcal{B}_2 }\lambda_+^{\mathcal{C}_1^{\#}\mathcal{C}_1 } }, \theta_{4,4} =  \frac{3|1-\mu|^3 \|\mathcal{B}_1\|^2 }{2\beta_2\rho(\mu)^2 \mu^2 \lambda_+^{\mathcal{B}_2^{\#}\mathcal{B}_2 }\lambda_+^{\mathcal{C}_1^{\#}\mathcal{C}_1 } }  ,\theta_{4,5} = \frac{|1-\mu|}{ 2 \beta_2 \mu^2 \lambda_+^{\mathcal{B}_2^{\#}\mathcal{B}_2}}.
\end{aligned}
$$
Define $\theta_6 = \theta_{3,0} + \theta_{4,0}, \theta_7 = \theta_{3,1} + \theta_{4,1}, \theta_8 = \theta_{3,2} + \theta_{4,2}, \theta_9 = \theta_{3,3} + \theta_{4,3}, \theta_{10} = \theta_{4,4}, \theta_{11} = \theta_{4,5}$.
Using  Inequalities \eqref{eqn2:La3} and \eqref{ieqn2:RK2}, we obtain
\begin{align}
& (1-\kappa)f(\X_{k+1})  + \frac{\beta_1}{2}\left\|\mathcal{C}_1(\X_{k+1}) + \mathcal{B}_1(\W_{k+1}) - \mathbf{\dot{B}}_1 + \frac{\La_{k+1,1}}{\beta_1} \right\|_\mF^2 + \frac{\beta_2}{2} \left\|\mathcal{B}_2(\W_{k+1}) - \qb_2 + \frac{\La_{k+1,2}}{\beta_2} \right\|_{\mathrm{F}}^2 \nonumber \\
&+ a (\|\Delta \La_{k+1,1}\|_{\mathrm{F}}^2 + \|\Delta \La_{k+1,2}\|_{\mathrm{F}}^2) + (r \theta_2 + a -\theta_8) \|\Delta \W_{k+1}\|_{\mathrm{F}}^2 + (r \theta_1 + a -\theta_7) \left(\|\Delta \X_{k+1}\|_{\mathrm{F}}^2  \right) \nonumber \\
& + (r \theta_3 - \theta_9) \|\mathcal{C}_1^{\#} \Delta \La_{k+1,1}\|_{\mathrm{F}}^2  + (r \theta_4 - \theta_{10} )\|\mathcal{C}_1^{\#}\Delta \La_{k,1} \|_{\mathrm{F}}^2 + (r \theta_5 - \theta_{11} ) \|\mathcal{B}_2^{\#} \Delta \La_{k+1,2}\|_{\mathrm{F}}^2 \nonumber \\
& \le \mathcal{T}_{k_0} - \inf_{\W} \left\{  g(\W) -  \theta_{4,0} \|\nabla g(\W)\|_{\mathrm{F}}^2 \right\} -  \inf_{\X} \left\{ \kappa f(\X) -  \theta_{6} \|\nabla f(\X)\|_{\mathrm{F}}^2 \right\},  \label{eqn:ieqn2:RK3}
\end{align}
where $\kappa \in (0,1)$.
According to \eqref{eqn:Lip} in \ref{assum2b} of Assumption \ref{assum2},
% $\nabla g$ is $L_g$ Lipschitz continous, then for any $k \ge k_0, k_0 \ge 1$ and $\W$, it holds that $g(\W) \le g(\W_k) + \iprod{\nabla g (\W_k)}{\W -\W_k} + \frac{L_g}{2}\|\W-\W_k\|^2.$
 setting $\X_1 = \X - \delta_2 \nabla f(\X)$ and $\X_2 = \X $, it follows that $\kappa f(\X_k - \delta_1 \nabla f(\X_k)) \le \kappa f(\X_k) - \kappa(\delta_2 - \frac{L_f \delta_2^2}{2})\|\nabla f(\X_k)\|_{\mathrm{F}}^2  .$ Since $g$ and $f$ are bounded from below, there exists $M$ such that
\begin{equation} \label{ineq2:delta1}
\begin{aligned}
    & -M < \inf\{ g(\W) - (\delta_1 - \frac{L_g \delta_1^2}{2})\| \nabla g(\W)\|_{\mathrm{F}}^2: \W \in \mathbb{H}^{m \times n} \}, \\
    & -M < \inf\{\kappa f(\X) - \kappa(\delta_2 - \frac{L_f \delta_2^2}{2})\|\nabla f(\X)\|_{\mathrm{F}}^2: \X \in \mathbb{H}^{m \times n} \}.
    \end{aligned}
\end{equation}
We choose $\delta_1 = \frac{1}{L_g}, \delta_2 = \frac{1}{L_f}$. According to \ref{assum2e} in Assumption \ref{assum2}, we have $\theta_{4,0} < \frac{1 }{2 L_g} =  (\delta_1 - \frac{L_g \delta_1^2}{2}), \theta_{6} < \frac{\kappa}{2 L_f} = \kappa(\delta_2 - \frac{L_f \delta_2^2}{2}).$  Since $r > 1$ and $\mu \in (0,2),$ according to the denifition of $\theta_i, i =1,\cdots,9$, it holds that $r \theta_1 + a - \theta_7 >0$,$r\theta_2 + a - \theta_8 >0, r\theta_3 - \theta_9 > 0, r \theta_4 - \theta_{10} > 0, r \theta_5 - \theta_{11} > 0$. It follows from \eqref{eqn:ieqn2:RK3} that
\begin{equation} \label{L2:bound}
\begin{aligned}
& (1 - \kappa) f(\X_{k+1}) + \frac{\beta_2}{2} \|\mathcal{B}_2(\W_{k+1}) - \qb_2 + \frac{\La_{k+1,2}}{\beta_2} \|_{\mathrm{F}}^2 \\
& + \frac{\beta_1}{2}\left\|\mathcal{C}_1(\X_{k+1}) + \mathcal{B}_1(\W_{k+1}) - \mathbf{\dot{B}}_1 + \frac{\La_{k+1,1}}{\beta_1} \right\|_\mF^2  + a( \|\Delta \La_{k+1,1}\|_{\mathrm{F}}^2 + \|\Delta \La_{k+1,2}\|_{\mathrm{F}}^2 )< \mathcal{T}_{k_0} + 2M.
\end{aligned}
\end{equation}
 According to \eqref{L2:bound}, $\Delta \La_{k+1,1}, \Delta \La_{k+1,2} $ are bounded. By the $\La_2$ update in \eqref{scheme2}, we have $\mathcal{B}_2\W_{k+1} = \frac{1}{\beta_2 \mu} \Delta\La_{k+1,2} + \qb_2$. If $\mathcal{B}_2^{\#}\mathcal{B}_2$ is full rank, $\{ \W_k \}$ is bounded.
By the $\La_1$ update in \eqref{scheme2}, we have $ \mathcal{C}_1\X_{k+1}=  \frac{1}{\beta_1 \mu}\Delta \La_{k+1,1} - \mathcal{B}_1\W_{k+1} +\qb_1.$ 
 Since $\{\mathcal{B}_1\W_k\}_{k \ge k_0}$ is bounded, if  $\mathcal{C}_1^{\#}\mathcal{C}_1$ is full rank or $f(\X_{k+1})$ is coercive, it follows that $\{ \X_k \}_{k \ge k_0}$ is bounded. 
From the fact that $ \frac{\beta_1}{2}\left\|\mathcal{C}_1(\X_{k+1}) + \mathcal{B}_1(\W_{k+1}) - \mathbf{\dot{B}}_1 + \frac{\La_{k+1,1}}{\beta_1} \right\|_\mF^2$ and $ \frac{\beta_2}{2}\left\|\mathcal{B}_2(\W_{k+1}) - \mathbf{\dot{B}}_2 + \frac{\La_{k+1,2} }{\beta_2} \right\|_\mF^2$  are bounded, it follows that $\{\La_{k_1} \}_{k \ge k_0}$ and $\{\La_{k_2} \}_{k \ge k_0}$  are bounded. As a consequence, $\{\X_k\}_{k \ge 1}, \{\W_k\}_{k \ge 1}, \{\La_{k,1}\}_{k \ge 1},\{\La_{k,2}\}_{k \ge 1}$ are bounded. 
\end{proof}

Lemma \ref{lem2:sub:bound} is used in Lemma \ref{crit2} to show the limiting behavior of sequence $\{(\X_{k},\W_{k},\La_{k,1},\La_{k,2} )\}_{k \ge 0}$.
\begin{lemma}[Subgradient bound] \label{lem2:sub:bound}
Suppose that \ref{assum2b} in Assumption holds.
 Let $\{(\X_k,\W_k,\La_{k,1},\La_{k,2} )\}_{k \ge 0}$ be a sequence generated by~\eqref{scheme2}. Then 
 \[
 \widetilde{\qd}_{k+1}:= (\qd_{\X_{k+1}},\qd_{\W_{k+1}},\qd_{\La_{k+1,1}},\qd_{\La_{k+1,2}}) \in \partial \mathcal{L}_{\beta_1,\beta_2}(\X_{k+1},\W_{k+1},\La_{k+1,1},\La_{k+1,2} ), 
 \]
  where
$$
\begin{aligned}
\qd_{\X_{k+1,1}} &:=  \mathcal{C}_1^{\#} \Delta \La_{k+1} - L_{k,1} \Delta \X_{k+1}  - \beta_1 \mathcal{C}_1^{\#}\mathcal{B}_1 \Delta \W_{k+1} ,\quad \qd_{\La_{k+1}} := \frac{1}{\beta_1 \mu} \Delta \La_{k+1,1} ,\, \\
\qd_{\W_{k+1}} &:= \nabla g(\W_{k+1}) - \nabla g(\W_k) + \mathcal{C}_1^{\#}\Delta \La_{k+1}  - L_{k,2} \Delta \W_{k+1},\quad \qd_{\La_{k+1,2}} := \frac{1}{\beta_2 \mu} \Delta  \La_{k+1,2}.
\end{aligned}
$$
Furthermore, it holds that
$$
|\|\widetilde{\qd}_{k+1}\|| \leq \pi \left( \| \Delta \W_{k+1}\|_{\mathrm{F}} + \| \Delta \X_{k+1}\|_{\mathrm{F}}+ \| \Delta \La_{k+1,1}\|_{\mathrm{F}} +  \| \Delta \La_{k+1,2}\|_{\mathrm{F}} \right),
$$
where 
$
 \pi = \max \left\{q_1, \beta_1 \|\mathcal{C}_1\| \|\mathcal{B}_1\| + q_2 + L_g, \| \mathcal{C}_1\| + \|\mathcal{B}_1\| + \frac{1}{\beta_1\mu}, \|\mathcal{B}_2 \| + \frac{1}{\beta_2 \mu} \right\}.   
$

\end{lemma}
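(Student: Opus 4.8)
The plan is to mimic the proof of Lemma~\ref{lem:sub:bound}, the only new ingredient being the presence of two Lagrange multipliers. By~\ref{assum2b}, both $f$ and $g$ are $C^1$, so $\mathcal{L}_{\beta_1,\beta_2}$ is differentiable and its limiting subdifferential at a point is the singleton consisting of the four partial gradients with respect to $\X$, $\W$, $\La_1$ and $\La_2$. First I would record the first-order optimality conditions of the two subproblems in~\eqref{scheme2}: for the $\X$-update,
\[
\nabla f(\X_{k+1}) + \beta_1 \mathcal{C}_1^{\#}\!\left(\mathcal{C}_1(\X_{k+1}) + \mathcal{B}_1(\W_k) - \mathbf{\dot{B}}_1 + \tfrac{\La_{k,1}}{\beta_1}\right) + L_{k,1}\Delta \X_{k+1} = 0,
\]
and for the $\W$-update the identity~\eqref{upW:2}.

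Next I would evaluate $\nabla_\X\mathcal{L}_{\beta_1,\beta_2}$, $\nabla_\W\mathcal{L}_{\beta_1,\beta_2}$, $\nabla_{\La_1}\mathcal{L}_{\beta_1,\beta_2}$ and $\nabla_{\La_2}\mathcal{L}_{\beta_1,\beta_2}$ at $(\X_{k+1},\W_{k+1},\La_{k+1,1},\La_{k+1,2})$ and subtract the optimality conditions above. The discrepancies between the ``stale'' quantities used in the subproblems ($\W_k$, $\La_{k,1}$, $\La_{k,2}$) and the current ones, together with the proximal anchors ($\X_k$, $\W_k$) and the linearization of $g$, are then expressed through $\Delta\X_{k+1}$, $\Delta\W_{k+1}$, $\Delta\La_{k+1,1}$ and $\Delta\La_{k+1,2}$: the $\X$-component acquires a cross term $\beta_1\mathcal{C}_1^{\#}\mathcal{B}_1\Delta\W_{k+1}$ (since $\W$ is still $\W_k$ when $\X_{k+1}$ is formed), a term $\mathcal{C}_1^{\#}\Delta\La_{k+1,1}$ from the $\La_1$-update, and the proximal contribution $L_{k,1}\Delta\X_{k+1}$; the $\W$-component acquires $\nabla g(\W_{k+1})-\nabla g(\W_k)$ from the linearization, the contributions $\mathcal{B}_1^{\#}\Delta\La_{k+1,1}$ and $\mathcal{B}_2^{\#}\Delta\La_{k+1,2}$ of the two multiplier updates, and $L_{k,2}\Delta\W_{k+1}$; and each $\nabla_{\La_i}\mathcal{L}_{\beta_1,\beta_2}$ is simply the $i$-th constraint residual, which by the update rule in~\eqref{scheme2} equals $\tfrac{1}{\mu\beta_i}\Delta\La_{k+1,i}$. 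This establishes $\widetilde{\qd}_{k+1}\in\partial\mathcal{L}_{\beta_1,\beta_2}(\X_{k+1},\W_{k+1},\La_{k+1,1},\La_{k+1,2})$.

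To obtain the norm bound I would estimate each $\|\qd_\cdot\|_{\mathrm{F}}$ by the triangle inequality, using $q_i:=\sup_{k\ge0}L_{k,i}<\infty$ for the proximal terms, the Lipschitz bound $\|\nabla g(\W_{k+1})-\nabla g(\W_k)\|_{\mathrm{F}}\le L_g\|\Delta\W_{k+1}\|_{\mathrm{F}}$ from~\ref{assum2b} for the linearization error, and the operator norms $\|\mathcal{C}_1\|$, $\|\mathcal{B}_1\|$, $\|\mathcal{B}_2\|$ for the remaining linear maps. Summing the four estimates and invoking $|\|\widetilde{\qd}_{k+1}\||_{\mathrm{F}}\le\sum\|\qd_\cdot\|_{\mathrm{F}}$ from the preliminaries yields the claimed inequality, with $\pi$ equal to the maximum of the resulting coefficients. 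The only genuine obstacle relative to Lemma~\ref{lem:sub:bound} is the bookkeeping of which of the two $\La$-updates contributes a difference term to which partial gradient — in particular both enter the $\W$-component — but once this is sorted the calculation is entirely routine.
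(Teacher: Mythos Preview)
Your proposal is correct and follows essentially the same approach as the paper's proof: write the first-order optimality conditions for the $\X$- and $\W$-subproblems, subtract them from the partial gradients of $\mathcal{L}_{\beta_1,\beta_2}$ at the updated point, read off each $\qd_\cdot$ in terms of $\Delta\X_{k+1}$, $\Delta\W_{k+1}$, $\Delta\La_{k+1,1}$, $\Delta\La_{k+1,2}$, and then bound via the triangle inequality, the Lipschitz constant $L_g$, and operator norms. Your remark that both multiplier updates contribute to the $\W$-component (through $\mathcal{B}_1^{\#}\Delta\La_{k+1,1}$ and $\mathcal{B}_2^{\#}\Delta\La_{k+1,2}$) is exactly the extra bookkeeping relative to Lemma~\ref{lem:sub:bound}, and matches the paper's computation.
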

\begin{proof}
According to the optimality condition of the update of $\X_{k+1},$ it follows that
$$
 - \beta_1 \mathcal{C}_1^{\#}(\mathcal{C}_1(\X_{k+1}) + \mathcal{B}_1(\W_{k}) - \mathbf{\dot{B}}_1 + \La_{k,1}/\beta_1) - L_{k,1} \Delta \X_{k+1} = \nabla f(\X_{k+1}),
$$
which yields
\begin{equation} \label{eqn2:dX}
\qd_{\X_{k+1}} =  - \mathcal{C}_1^{\#} \Delta \La_{k+1,1} - L_{k,1} \Delta \X_{k+1} + \beta_1 \mathcal{C}_1^{\#}\mathcal{B}_1 \Delta \W_{k+1} \in \partial_{\X} \mathcal{L}_{\beta_1,\beta_2}(\X_{k+1},\W_{k+1},\La_{k+1,1},\La_{k+1,2}).
\end{equation}
 According to the optimality condition of the update of $\W_{k+1},$ the following equality holds:
 
 \begin{equation} \label{eqn:02W}
 \nabla g(\W_{k}) + \beta_1\mathcal{C}_1^{\#}(\mathcal{C}_1\X_{k+1} - \mathcal{B}_1\W_{k+1}-\qb_1  + \frac{\La_{k,1}}{\beta_1})  + \beta_2 \mathcal{B}_2^{\#}(\mathcal{B}_2(\W_{k+1}) - \qb_2 + \frac{\La_{k,2}}{\beta_2} ) + L_{k,2} \Delta \W_{k+1}   = 0.
 \end{equation}
% By the subproblem of $\W_{k+1}$ and $\La_{k+1}$ in \eqref{scheme2},  
It follows from~\eqref{eqn:02W} and the update of $\La_{k+1,1}, \La_{k+1,2}$ that
\begin{equation} \label{eqn:dW2}
\begin{aligned}
 \qd_{\W_{k+1}} &= \nabla g(\W_{k+1}) - \nabla g(\W_k) + \mathcal{C}_1^{\#}\Delta \La_{k+1,1} + \mathcal{B}_2^{\#} \Delta \La_{k+1,2}  - L_{k,2} \Delta \W_{k+1} , \\
\qd_{\La_{k+1,1}} &= \frac{1}{\mu\beta_1 } \Delta \La_{k+1,1} \in \partial_{\La_1} \mathcal{L}_{\beta_1,\beta_2}(\X_{k+1},\W_{k+1},\La_{k+1,1},\La_{k+1,2}), \\
\qd_{\La_{k+1,2}} &= \frac{1}{\mu\beta_2 } \Delta \La_{k+1,2} \in \partial_{\La_2} \mathcal{L}_{\beta_1,\beta_2}(\X_{k+1},\W_{k+1},\La_{k+1,1},\La_{k+1,2}).
\end{aligned}
\end{equation}
Hence, it follows that $\qd_{\W_{k+1}}  \in \partial_{\W} \mathcal{L}_{\beta_1,\beta_2}(\X_{k+1},\W_{k+1},\La_{k+1,1},\La_{k+1,2}). $
Combining \eqref{eqn2:dX}, \eqref{eqn:dW2}, we obtain
$$
\begin{aligned}
\| \qd_{\X_{k+1}}\| & \le \|\mathcal{C}_1\| \|\Delta \La_{k+1,1}\|_{\mathrm{F}}  + q_1 \|\Delta \X_{k+1} \|_{\mathrm{F}} + \beta_1 \|\mathcal{C}_1\| \|\mathcal{B}_1\| \|\Delta \W_{k+1} \|_{\mathrm{F}}, \\
& \le \|\mathcal{B}_1\| \|\Delta \La_{k+1,1}\|_{\mathrm{F}} +  \|\mathcal{B}_2\| \|\Delta \La_{k+1,2}\|_{\mathrm{F}}   + (q_2 + L_g) \|\Delta \W_{k+1}\|_{\mathrm{F}}, 
 \\
\| \qd_{\La_{k+1,1}}\|  & = \frac{1}{\beta_1 \mu} \|\Delta \La_{k+1,1}\|_{\mathrm{F}}, \| \qd_{\La_{k+1,2}}\|  = \frac{1}{\beta_2 \mu} \|\Delta \La_{k+1,2}\|_{\mathrm{F}}. \\
\end{aligned}
$$
Therefore, the following inequality holds:
\begin{equation} \label{eqn2:bound4}
\begin{aligned}
& | \|\widetilde{\qd}_{k+1}\| | \le \|\qd_{\W_{k+1}}\|_{\mathrm{F}} + \|\qd_{\X_{k+1}} \|_{\mathrm{F}} + \|\qd_{\La_{k+1,1}}\|_{\mathrm{F}} + \|\qd_{\La_{k+1,2}}\|_{\mathrm{F}} \\
& \le \pi (\|\Delta \X_{k+1} \|_{\mathrm{F}} + \|\Delta \W_{k+1} \|_{\mathrm{F}}   + \|\Delta \La_{k+1,1} \|_{\mathrm{F}}  + \|\Delta \La_{k+1,2} \|_{\mathrm{F}} ),
\end{aligned}
\end{equation}
which completes the proof.
\end{proof}

\begin{lemma} \label{crit2}
%Let $\{(\X_k,\W_k,\La_k)\}_{k \ge 0}$ be a sequence generated by~\eqref{scheme2}.
Suppose that Assumption \ref{assum2} holds.
Any limit point $(\X_*,\W_*,\La_{*,1},\La_{*,2})$ of the sequence $\{(\X_{k},\W_{k},\La_{k,1},\La_{k,2} )\}_{k \ge 0}$ generated by~\eqref{scheme2} is a stationary
point of \eqref{prob:conve2}, i.e.
$$
0 =  \nabla f(\X_*) + \mathcal{C}_1^{\#}\La_{*,1},\; 0 = \nabla g(\W_*) + \mathcal{B}_1^{\#}\La_{*,1} + \mathcal{B}_2^{\#}\La_{*,2},\;  \mathcal{C}_1\X_* + \mathcal{B}_1\W_* = \mathbf{\dot{B}}_1, \; \mathcal{B}_2\W_* = \qb_2.
$$
\end{lemma}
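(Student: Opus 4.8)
The plan is to mirror the argument of Lemma~\ref{crit} from the single-constraint analysis, now working with the augmented Lagrangian $\mathcal{L}_{\beta_1,\beta_2}$ and the two-constraint subgradient bound of Lemma~\ref{lem2:sub:bound}, together with the boundedness supplied by Theorem~\ref{thm2:Xbound}.

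First I would establish, exactly as in Theorem~\ref{thm:Xcov}, that the successive differences vanish. Summing inequality~\eqref{ieqn2:RK} from $k_0$ to $K$ and letting $K\to\infty$, and using that $\mathcal{L}_{\beta_1,\beta_2}$ is continuous and hence bounded below along the bounded sequence guaranteed by Theorem~\ref{thm2:Xbound} (while all the remaining terms in $\mathcal{T}_k$ are nonnegative), one obtains $\sum_{k\ge k_0}\big(\|\Delta\X_{k+1}\|_{\mathrm{F}}^2+\|\Delta\W_{k+1}\|_{\mathrm{F}}^2+\|\Delta\La_{k+1,1}\|_{\mathrm{F}}^2+\|\Delta\La_{k+1,2}\|_{\mathrm{F}}^2\big)<\infty$. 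Consequently $\|\Delta\X_k\|_{\mathrm{F}}$, $\|\Delta\W_k\|_{\mathrm{F}}$, $\|\Delta\La_{k,1}\|_{\mathrm{F}}$, and $\|\Delta\La_{k,2}\|_{\mathrm{F}}$ all tend to $0$.

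Next I would pass to a convergent subsequence $(\X_{k_j},\W_{k_j},\La_{k_j,1},\La_{k_j,2})\to(\X_*,\W_*,\La_{*,1},\La_{*,2})$, which exists by Theorem~\ref{thm2:Xbound}. Since the differences vanish, the shifted iterates $(\X_{k_j+1},\W_{k_j+1},\La_{k_j+1,1},\La_{k_j+1,2})$ converge to the same limit. By Lemma~\ref{lem2:sub:bound}, $\widetilde{\qd}_{k_j+1}\in\partial\mathcal{L}_{\beta_1,\beta_2}(\X_{k_j+1},\W_{k_j+1},\La_{k_j+1,1},\La_{k_j+1,2})$ with $|||\widetilde{\qd}_{k_j+1}|||\le\pi\big(\|\Delta\W_{k_j+1}\|_{\mathrm{F}}+\|\Delta\X_{k_j+1}\|_{\mathrm{F}}+\|\Delta\La_{k_j+1,1}\|_{\mathrm{F}}+\|\Delta\La_{k_j+1,2}\|_{\mathrm{F}}\big)\to0$. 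Because $f$ and $g$ are $C^1$ by~\ref{assum2b}, $\mathcal{L}_{\beta_1,\beta_2}$ is continuously differentiable, so $\mathcal{L}_{\beta_1,\beta_2}$ and its gradient are continuous; passing to the limit along the subsequence and invoking the closedness of the limiting subdifferential yields $0\in\partial\mathcal{L}_{\beta_1,\beta_2}(\X_*,\W_*,\La_{*,1},\La_{*,2})$, i.e. $\nabla\mathcal{L}_{\beta_1,\beta_2}(\X_*,\W_*,\La_{*,1},\La_{*,2})=0$.

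Finally I would read off the four equations by computing the partial gradients of $\mathcal{L}_{\beta_1,\beta_2}$ at the limit point. The $\La_1$- and $\La_2$-components give the primal feasibility $\mathcal{C}_1\X_*+\mathcal{B}_1\W_*=\qb_1$ and $\mathcal{B}_2\W_*=\qb_2$; substituting these back into the $\X$- and $\W$-components collapses them to $\nabla f(\X_*)+\mathcal{C}_1^{\#}\La_{*,1}=0$ and $\nabla g(\W_*)+\mathcal{B}_1^{\#}\La_{*,1}+\mathcal{B}_2^{\#}\La_{*,2}=0$, which are precisely the claimed stationarity conditions. The only genuinely delicate step is the first one — the vanishing of the successive differences — because it relies on the monotonicity chain~\eqref{ieqn2:RK}, whose validity already consumed Assumption~\ref{assum2} (in particular the newly introduced~\ref{assum2c} replacing the range assumption); once that decrease estimate is available, the remainder is the standard limiting-subdifferential closedness argument.
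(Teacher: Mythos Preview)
Your proposal is correct and follows essentially the same route as the paper: bound the subgradient via Lemma~\ref{lem2:sub:bound}, use the vanishing of successive differences, and invoke closedness of the limiting subdifferential along a convergent subsequence. If anything, you are more explicit than the paper, which attributes $\qd_{k_j}\to 0$ directly to Theorem~\ref{thm2:Xbound} without spelling out the summability argument from~\eqref{ieqn2:RK} that you include; your additional final step of reading off the four KKT equations from $\nabla\mathcal{L}_{\beta_1,\beta_2}=0$ is also left implicit in the paper.
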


\begin{proof}
Let $\{(\X_{k_j},\W_{k_j},\La_{k_j,1},\La_{k_j,2} )\}_{j \ge 0}$ be a subsequence of $\{(\X_{k},\W_{k},\La_{k,1},\La_{k,2})\}_{k \ge 0}$ such that $(\X_*,\W_*,\La_{*,1},\La_{*,2} ) = \lim_{j \rightarrow \infty}(\X_{k_j},\W_{k_j},\La_{k_j,1},\La_{k_j,2} )$. By the continuity of $\mathcal{L}_{\beta_1,\beta_2}$, $\mathcal{L}_{\beta_1,\beta_2} (\X_{k_j},\W_{k_j},\La_{k_j,1},\La_{k_j,2} ) \rightarrow \mathcal{L}_{\beta_1,\beta_2} (\X_*,\W_*,\La_{*,1},\La_{*,2})$ as $j \rightarrow \infty$. Let $\qd_{k_j} \in \partial \mathcal{L}_{\beta_1,\beta_2}(\X_{k_j},\W_{k_j},\La_{k_j,1},\La_{k_j,2} )$ and according to Lemma \ref{lem2:sub:bound}, it follows that
$
\|\qd_{k_j}\|_{\mathrm{F}} \leq \rho \left( \| \Delta \W_{k_j}\|_{\mathrm{F}} + \| \Delta \X_{k_j}\|_{\mathrm{F}}+ \| \Delta \La_{k_j,1}\|_{\mathrm{F}} + \| \Delta \La_{k_j,2}\|_{\mathrm{F}}\right).
$
%Since $\|\widetilde{\qd}_{k+1}\|_{\mathrm{F}} \rightarrow 0$ as $j \rightarrow \infty,$
According to Theorem~\ref{thm2:Xbound}, it follows that $\qd_{k_j} \rightarrow 0.$ By the closeness
criterion of the limiting subdifferential, $(\X_*,\W_*,\La_{*,1},\La_{*,2}) \in \text{crit}\, \mathcal{L}_{\beta_1,\beta_2}(\X,\W,\La_1,\La_2)$. The proof is completed.
\end{proof}

Since we have established the key properties, i.e. sufficient decrease property (Lemma \ref{lem2:mon}), the existence of limiting point (Theorem \ref{thm2:Xbound} and Lemma \ref{crit2}) and subgradient bound (Lemma \ref{lem2:sub:bound}), the global convergence of the algorithm is similar to the proof method in the literatures \cite{attouch2009convergence,attouch2013convergence,yashtini2022convergence}. We present the convergence result in Theorem \ref{thm2:global}, but omit its proof. 
\begin{theorem} \label{thm2:global}
Let $\tilde{\varGamma}$ denote the set of the limit points of the sequence $ \{(\X_k,\W_k,\La_{k,1},\La_{k,2} )\}_{k \ge 0}$. Suppose that Assumption \ref{assum2} holds and  $\mathcal{T}$ defined in \eqref{definition2:Rk} satisfies the KL property on $(\X_+,\W_+,\La_{+,1},\La_{+,2},\X_+,\W_+,\La_{+,1},\La_{+,2}, \X_+,\W_+,\La_{+,1})$ for any $(\X_+,\W_+,\La_{+,1},\La_{+,2} ) \in \tilde{\varGamma}$, 
% i.e.  for every $\mathbf{\dot{V}}'_* =(\X_*,\W_*,\La_{*,1},\La_{*,2},\X_*,\W_*,\La_{*,1},\La_{*,2},\W_*,\La_*) \in \tilde{\varGamma} $, there exists $\varepsilon >0$ and desingularizing function  $\psi: [0,\eta] \rightarrow [0,\infty),$ for some $\eta \in [0,\infty )$ such that for all $\mathbf{\dot{V}}' = (\X,\W,\La_1,\La_2,\X',\W',\La_1',\La_2',\W'',\La'')$ in the following set:
% \begin{equation} \label{eqn:def:S2}
% \mathcal{S}_2:= \{\mathbf{\dot{V}}':
% \text{dist}(\mathbf{\dot{V}}',\varGamma)< \varepsilon\,\, \text{and}\,\, \mathcal{T}(\mathbf{\dot{V}}'_*)< \mathcal{T}(\mathbf{\dot{V}}') < \mathcal{T}(\mathbf{\dot{V}}'_*) + \eta \},
% \end{equation}
% the inequality 
% \begin{equation} \label{eqn:KL}
% \psi'(\mathcal{T}(\mathbf{\dot{V}}') - \mathcal{T}(\mathbf{\dot{V}}'_*) )\text{dist}(0, \partial \mathcal{T}(\mathbf{\dot{V}}') ) \ge 1    
% \end{equation}
%  holds, 
then $ \{(\X_k,\W_k,\La_{k,1},\La_{k,2} )\}_{k \ge 0}$ satisfies the finite length property:
$
\sum_{k=0}^{\infty} \|\Delta \X_k\|_{\mathrm{F}} + \|\Delta \W_k\|_{\mathrm{F}} + \|\Delta \La_{k,1}\|_{\mathrm{F}}  + \|\Delta \La_{k,2}\|_{\mathrm{F}} < \infty,
$
and consequently converges to a stationary point of \eqref{prob:conve2}.
\end{theorem}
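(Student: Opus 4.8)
The plan is to follow the same three-step template used in the proof of Theorem~\ref{thm:global-con}: (i) collect the sufficient-decrease, boundedness and subgradient-bound estimates already established for \eqref{scheme2}; (ii) identify the value that the merit function $\mathcal{T}$ and the augmented Lagrangian $\mathcal{L}_{\beta_1,\beta_2}$ share on the limit point set; and (iii) run the Kurdyka--\L ojasiewicz (KL) argument on $\mathcal{T}_k$ to obtain the finite-length property, from which convergence to a critical point of \eqref{prob:conve2} follows by Lemma~\ref{crit2}. The ingredients (i) are in hand: Lemma~\ref{lem2:mon} and the inequality~\eqref{ieqn2:RK} give the monotone decrease $\mathcal{T}_{k+1} + a(\|\Delta\X_{k+1}\|_{\mathrm{F}}^2 + \|\Delta\W_{k+1}\|_{\mathrm{F}}^2 + \|\Delta\La_{k+1,1}\|_{\mathrm{F}}^2 + \|\Delta\La_{k+1,2}\|_{\mathrm{F}}^2)\le\mathcal{T}_k$, Theorem~\ref{thm2:Xbound} gives boundedness (hence $\mathcal{T}_\infty:=\lim_k\mathcal{T}_k$ exists and $\|\Delta\X_k\|_{\mathrm{F}},\|\Delta\W_k\|_{\mathrm{F}},\|\Delta\La_{k,1}\|_{\mathrm{F}},\|\Delta\La_{k,2}\|_{\mathrm{F}}\to0$ by summing \eqref{ieqn2:RK} as in Theorem~\ref{thm:Xcov}), and Lemma~\ref{lem2:sub:bound} bounds $\partial\mathcal{L}_{\beta_1,\beta_2}$.

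The first genuinely new piece is the analogue of Lemma~\ref{lma-D} for the eleven-block merit function~\eqref{definition2:Rk}. Differentiating $\mathcal{T}_k$ with respect to $\X,\W,\La_1,\La_2,\X',\W',\La_1',\La_2',\X'',\W'',\La_1''$ and using the subgradient expressions \eqref{eqn2:dX}, \eqref{eqn:dW2} from Lemma~\ref{lem2:sub:bound}, one gets $\widetilde{\qs}_k\in\partial\mathcal{T}_k$ with
\[
||| \widetilde{\qs}_k ||| \le \tilde\pi\left( \|\Delta\X_k\|_{\mathrm{F}} + \|\Delta\X_{k-1}\|_{\mathrm{F}} + \|\Delta\W_k\|_{\mathrm{F}} + \|\Delta\W_{k-1}\|_{\mathrm{F}} + \|\Delta\La_{k,1}\|_{\mathrm{F}} + \|\Delta\La_{k-1,1}\|_{\mathrm{F}} + \|\Delta\La_{k,2}\|_{\mathrm{F}} \right),
\]
where $\tilde\pi$ depends on $\pi$ of Lemma~\ref{lem2:sub:bound} and on $r,\theta_1,\dots,\theta_5,\|\mathcal{C}_1\|,\|\mathcal{B}_2\|$; the lagged increments appear because the quadratic correction terms $2r\theta_1\|\Delta\X\|^2,r\theta_1\|\Delta\X'\|^2,2r\theta_2\|\Delta\W\|^2,\dots$ in $\mathcal{T}$ contribute differences at two consecutive steps. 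Combining this with the vanishing of $\|\Delta\X_k\|_{\mathrm{F}}$, etc., one obtains the analogue of Lemma~\ref{lma-cluster2}: the (lifted) limit point set $\tilde\varGamma$ is nonempty and compact, is contained in $\{(\X,\W,\La_1,\La_2):(\X,\W,\La_1,\La_2)\in\mathrm{crit}\,\mathcal{L}_{\beta_1,\beta_2}\}$ by Lemma~\ref{crit2}, and $\mathcal{T}_k$ and $\mathcal{L}_{\beta_1,\beta_2}(\X_k,\W_k,\La_{k,1},\La_{k,2})$ both tend to the constant value $\mathcal{T}_\infty = f(\X_*)+g(\W_*)$ on $\tilde\varGamma$ (the lifted quadratic terms vanish since the increments do; the penalty/inner-product terms vanish since $\|\mathcal{C}_1\X_k+\mathcal{B}_1\W_k-\qb_1\|_{\mathrm{F}}\to0$ and $\|\mathcal{B}_2\W_k-\qb_2\|_{\mathrm{F}}\to0$ from the $\La_1,\La_2$ updates together with $\|\Delta\La_{k,1}\|_{\mathrm{F}},\|\Delta\La_{k,2}\|_{\mathrm{F}}\to0$).

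With these in place, the KL argument proceeds as in Theorem~\ref{thm:global-con}. Set $\mathcal{E}_k:=\mathcal{T}_k-\mathcal{T}_\infty\ge0$, nonincreasing with limit $0$. If $\mathcal{E}_{k_1}=0$ for some $k_1$, then $\|\Delta\X_{k+1}\|_{\mathrm{F}}^2 + \|\Delta\W_{k+1}\|_{\mathrm{F}}^2 + \|\Delta\La_{k+1,1}\|_{\mathrm{F}}^2 + \|\Delta\La_{k+1,2}\|_{\mathrm{F}}^2 \le \tfrac{1}{a}(\mathcal{E}_k-\mathcal{E}_{k+1})=0$ for $k\ge k_1$ and finiteness of the length is immediate. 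Otherwise $\mathcal{E}_k>0$ for all large $k$; since $\mathrm{dist}$ of the eleven-tuple to $\tilde\varGamma$ tends to $0$ and $\mathcal{T}_\infty<\mathcal{T}_k<\mathcal{T}_\infty+\eta$ eventually, the eleven-tuple lies in the KL set $\mathcal{S}$ and $\psi'(\mathcal{E}_k)\,\mathrm{dist}(0,\partial\mathcal{T}_k)\ge1$. Using concavity of $\psi$ ($\psi(\mathcal{E}_k)-\psi(\mathcal{E}_{k+1})\ge\psi'(\mathcal{E}_k)(\mathcal{E}_k-\mathcal{E}_{k+1})$), the decrease bound~\eqref{ieqn2:RK}, the subgradient bound above, and an arithmetic--geometric-mean step, one gets
\[
\|\Delta\X_{k+1}\|_{\mathrm{F}} + \|\Delta\W_{k+1}\|_{\mathrm{F}} + \|\Delta\La_{k+1,1}\|_{\mathrm{F}} + \|\Delta\La_{k+1,2}\|_{\mathrm{F}} \le C_1\bigl(\psi(\mathcal{E}_k)-\psi(\mathcal{E}_{k+1})\bigr) + C_2\, h_k,
\]
with $h_k := \|\Delta\X_k\|_{\mathrm{F}} + \|\Delta\X_{k-1}\|_{\mathrm{F}} + \|\Delta\W_k\|_{\mathrm{F}} + \|\Delta\W_{k-1}\|_{\mathrm{F}} + \|\Delta\La_{k,1}\|_{\mathrm{F}} + \|\Delta\La_{k-1,1}\|_{\mathrm{F}} + \|\Delta\La_{k,2}\|_{\mathrm{F}}$ and $C_2$ controllable by the AM--GM weight. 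Summing over $k$, telescoping the $\psi$ terms, and absorbing the lagged increments with the identity $\sum_k s_k = \sum_k s_{k+1} + s_{\underline k} - s_K$ (using boundedness of finitely many early increments) yields $\sum_k \|\Delta\X_k\|_{\mathrm{F}} + \|\Delta\W_k\|_{\mathrm{F}} + \|\Delta\La_{k,1}\|_{\mathrm{F}} + \|\Delta\La_{k,2}\|_{\mathrm{F}} < \infty$. Hence $\{(\X_k,\W_k,\La_{k,1},\La_{k,2})\}$ is Cauchy, converges, and by Lemma~\ref{crit2} its limit is a stationary point of \eqref{prob:conve2}.

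\textbf{Main obstacle.} The difficulty is bookkeeping rather than conceptual: because $\mathcal{T}_k$ carries up to two levels of history in $\X,\W,\La_1$ (and one in $\La_2$), the subgradient estimate and the KL recursion involve several lagged increment terms, and after the AM--GM step one must verify that the aggregate coefficient multiplying $h_k$ can be made strictly less than $1$ uniformly in $k$ — achieved by choosing the auxiliary parameter large relative to $\tilde\pi$, mirroring the choice $\delta_0 = 1 - \sqrt{3}\tilde\pi/(2\gamma)$ in Theorem~\ref{thm:global-con} — and then propagate the resulting finite sum through a \emph{two-step} (rather than one-step) recursion. Everything else — continuity of $\mathcal{L}_{\beta_1,\beta_2}$, closedness of $\tilde\varGamma$, the two-case split on $\mathcal{E}_k$ — is identical to the single-constraint case, which is why the detailed proof can be safely omitted.
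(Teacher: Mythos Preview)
Your proposal is correct and follows precisely the approach the paper intends: the paper explicitly omits the proof of Theorem~\ref{thm2:global}, stating that it ``is similar to the proof method in the literatures \cite{attouch2009convergence,attouch2013convergence,yashtini2022convergence},'' and your outline faithfully mirrors the proof of Theorem~\ref{thm:global-con} with the appropriate adaptations to the eleven-block merit function $\mathcal{T}$, including the analogue of Lemma~\ref{lma-D}, the two-step lagged recursion, and the invocation of Lemma~\ref{crit2}. Your identification of the main obstacle as bookkeeping with multiple lagged increments (rather than a conceptual difficulty) matches the paper's own assessment that the argument can be ``safely omitted.''
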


According to Theorem \ref{thm2:global}, we have the following convergence corollary of Algorithm \ref{algorithm:inpaint} for SLRQA-NF.
\begin{corollary} \label{corollary-2}
Suppose the penalty parameter $\beta_1$ and $\beta_2$ are large enough, and $L_{1,k}, L_{2,k}$ are set as given constants for all $k$ such that  \ref{assum2d} and \ref{assum2e} in Assumption~\ref{assum2} hold, then the sequence  $\{(\X_k,\W_k,\La_{k,1},\La_{k,2})\}_{k\ge 0}$ generated by Algorithm \ref{algorithm:inpaint} converges to a  stationary point of SLRQA-NF in \eqref{model:new2}. 
\end{corollary}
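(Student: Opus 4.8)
The plan is to follow the template of Corollary~\ref{corollary-1}: realize SLRQA-NF~\eqref{model:new2} as the instance of the generic problem~\eqref{prob:conve2} obtained by taking $f(\X)=\sum_i\phi(\sigma_i(\sqrt{\X^{*}\X+\varepsilon^2\I}),\gamma)$, $g(\W)=p(\W)$, $\mathcal{C}_1=\mathcal{I}$, $\mathcal{B}_1=-\mathcal{I}$, $\mathcal{B}_2=\mathcal{P}_\Omega\mathcal{A}\mathcal{W}^{\#}$, $\qb_1=0$, $\qb_2=\mathcal{P}_\Omega(\Y)$ (with the variable $\X$ in~\eqref{prob:conve2} playing the role of $\Z$ in Algorithm~\ref{algorithm:inpaint}), verify that all hypotheses of Theorem~\ref{thm2:global} are satisfied, and then invoke that theorem. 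Thus there are two things to check: that Assumption~\ref{assum2} holds for this instance, and that the merit function $\mathcal{T}$ of~\eqref{definition2:Rk} satisfies the KL property on the limit point set.

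First I would verify Assumption~\ref{assum2} term by term. For~\ref{assum2b}: $p(\W)$ is the Huber function, hence bounded below with $\tfrac1\delta$-Lipschitz gradient, and $f(\X)$ is bounded below because every surrogate in Table~\ref{nonconvex2} is bounded below on $[0,\infty)$; moreover the smoothing parameter $\varepsilon>0$ makes $x\mapsto\phi(\sqrt{x^2+\varepsilon^2},\gamma)$ continuously differentiable with Lipschitz derivative, so that $\nabla f$ is Lipschitz by the spectral-function calculus of~\cite[Corollary~2.5]{lewis1995convex} and~\cite[Theorem~3.3]{ding2020spectral} (this is precisely the purpose of the smoothing, cf.\ the Remark following Assumption~\ref{assum2}). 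For~\ref{assum2c}: $\mathcal{C}_1^{\#}\mathcal{C}_1=\mathcal{I}$ is full rank; ${\rm range}(\mathcal{B}_1)={\rm range}(\mathcal{C}_1)=\mathbb{H}^{m\times n}$, so ${\rm range}(\mathcal{B}_1)\subseteq{\rm range}(\mathcal{C}_1)$ and $\qb_1=0\in{\rm range}(\mathcal{C}_1)$; $\mathcal{B}_2^{\#}\mathcal{B}_2=\mathcal{W}\mathcal{A}^{\#}\mathcal{P}_\Omega^{\#}\mathcal{P}_\Omega\mathcal{A}\mathcal{W}^{\#}$ is full rank when $\mathcal{B}_2$ is regarded as a map onto the $\Omega$-indexed coordinate space (the $(\mathbf{I};\mathbf{T})$ reformulation recalled in the Remark), and $\qb_2=\mathcal{P}_\Omega(\Y)$ is $\Omega$-supported and hence lies in ${\rm range}(\mathcal{B}_2)$ since $\mathcal{A}$ and $\mathcal{W}$ are invertible. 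Finally, since $\beta_1,\beta_2$ are taken large enough and $L_{1,k},L_{2,k}$ are constants, the explicit lower bounds on $\beta_1,\beta_2$ displayed in the Remark show that~\ref{assum2d} and~\ref{assum2e} hold; these are exactly the standing hypotheses of the corollary.

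It then remains to show that $\mathcal{T}$ in~\eqref{definition2:Rk} satisfies the KL property on $\tilde{\varGamma}$. As in Corollary~\ref{corollary-1}, I would argue that $\mathcal{T}$ is semi-algebraic: the Huber function and every Frobenius norm appearing in $\mathcal{L}_{\beta_1,\beta_2}$ and in the added quadratic terms are semi-algebraic~\cite[Example~2]{bolte2014proximal}, linear maps and inner products are semi-algebraic, and the spectral function $\X\mapsto\sum_i\phi(\sigma_i(\X),\gamma)$ is semi-algebraic when $\phi$ is, by~\cite[Appendix, Lemma~4]{shang2016scalable} together with the fact that $\X\mapsto\sqrt{\X^{*}\X+\varepsilon^2\I}$ preserves this class; since sums and compositions of semi-algebraic functions are semi-algebraic, $\mathcal{T}$ is semi-algebraic and hence satisfies the KL inequality by~\cite[Theorem~3]{bolte2014proximal}. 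Applying Theorem~\ref{thm2:global} then yields the finite-length property $\sum_{k\ge0}\big(\|\Delta\X_k\|_{\mathrm{F}}+\|\Delta\W_k\|_{\mathrm{F}}+\|\Delta\La_{k,1}\|_{\mathrm{F}}+\|\Delta\La_{k,2}\|_{\mathrm{F}}\big)<\infty$ and convergence of the whole sequence to a limit that, by Lemma~\ref{crit2}, is a stationary point of~\eqref{prob:conve2}, i.e.\ of SLRQA-NF~\eqref{model:new2}.

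The step I expect to be the main obstacle is the verification of~\ref{assum2c}, specifically the full-rankness of $\mathcal{B}_2^{\#}\mathcal{B}_2$: because $\mathcal{P}_\Omega$ is a genuine coordinate projection, one must be precise about the codomain of $\mathcal{B}_2$ (the $\Omega$-indexed entries, not all of $\mathbb{H}^{m\times n}$), exactly as in the $(\mathbf{I};\mathbf{T})$ decomposition noted in the Remark — and this is also the very place where the usual ``range assumption''~\eqref{eqn:03} fails, so that the new assumption~\ref{assum2c} is genuinely needed here rather than being a routine check. A secondary subtlety is that the log-determinant and logarithmic surrogates in Table~\ref{nonconvex2} are only definable in an o-minimal structure (containing $\exp$ and $\log$) rather than semi-algebraic; for those choices of $\phi$ one should replace ``semi-algebraic'' by ``definable'' throughout the KL argument, which still yields the KL inequality for $\mathcal{T}$. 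Once these two points are settled, the conclusion follows immediately from Theorem~\ref{thm2:global}.
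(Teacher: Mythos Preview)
Your proposal is correct and follows essentially the same approach as the paper's own proof: verify that \ref{assum2b} and \ref{assum2c} hold for the SLRQA-NF instantiation, note that \ref{assum2d} and \ref{assum2e} are standing hypotheses, establish the KL property of $\mathcal{T}$ via semi-algebraicity (the paper simply cites the argument of Corollary~\ref{corollary-1}), and apply Theorem~\ref{thm2:global}. Your version is considerably more detailed than the paper's three-sentence proof and even flags genuine subtleties (the codomain of $\mathcal{B}_2$ for the full-rank condition in~\ref{assum2c}, and the need for an o-minimal rather than semi-algebraic argument for the logarithmic/Laplace surrogates) that the paper glosses over.
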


\begin{proof}
    By the definition of SLRQA \eqref{model:new}, \ref{assum2b} and \ref{assum2c} in  Assumption \ref{assum2} are satisfied. Furthermore, it follows from the proof of Corollary \ref{corollary-1} that $\mathcal{T}$ defined in \eqref{definition2:Rk} satisfy the KL property. Hence, the final result follows from Theorem \ref{thm2:global}.
\end{proof}

\section{Numerical experiment} \label{5}
 In this section, color image denoising and image inpainting problems are respectively used to demonstrate the performance of SLRQA in~\eqref{model:new} with Algorithm~\ref{algorithm:denoise} and SLRQA-NF in~\eqref{model:new2} with Algorithm~\ref{algorithm:inpaint}. 
% Algorithm~\ref{algorithm:denoise} and ~\ref{algorithm:inpaint} are implemented to solve SLRQA~\eqref{model:new} and SLRQA-NF~\eqref{model:new2}, respectively. 
In Section~\ref{sec:ParaSetting}, we give the parameter setting for SLRQA and SLRQA-NF.  In Section~\ref{sec:QuaVSRGB}, we compare SLRQA and SLRQA-NF based on quaternion and RGB representation to demonstrate the rationality of quaternion representation. Furthermore, in Section \ref{sec:ColorImagDe} and \ref{sec:ColorImagInp},  comparisons of SLRQA and SLRQA-NF with other state-of-the-art methods are also presented to respectively show the superiority of SLRQA and SLRQA-NF.
% In our experiments, we first compare SLRQA and SLRQA-NFs based on quaternion and RGB representation to demonstrate the rationality of quaternion representation. Furthermore, comparisons of SLQRA and SLRQA-NF with other state-of-the-art methods are also presented to indicate the superiority of SLRQA and SLRQA-NF.
All the experiments are performed using MATLAB R2022b running on a desktop with an Intel Core R7-5800H CPU (3.90 GHz) and 16 GB of RAM. The codes are available at {\url{https://github.com/dengzhanwang/SLRQA/tree/main}}.

\subsection{Problem description, parameter setting, and testing environment} \label{sec:ParaSetting}

The models for color image denoising and image inpainting are respectively given by
\be \label{model:imagedenoising}
\begin{aligned}
    \min_{\X,\W \in \mathbb{H}^{m \times n} } \quad & \sum_i \phi(\sigma_i( \sqrt{ {\X}\strut^{*} \X + \varepsilon^2 \mathbf{I} }  ),\gamma)+\lambda p(\W) + \frac{1}{2\tau^2}\|\X - \Y\|_{\mathrm{F}}^2, \quad
    \st \quad  \mathcal{W}(\X)=\W,
\end{aligned}
\ee
and
\be \label{model:imageinpainting}
\begin{aligned}
    \min_{\Z,\W \in \mathbb{H}^{m \times n}} \quad & \sum_i (\sigma_i( \sqrt{ {\Z}\strut^{*} \Z + \varepsilon^2 \mathbf{I} }  ),\gamma) +\lambda p(\W),\quad
    \st \quad  \mathcal{P}_\Omega(\mathcal{W}^{\#}(\W))= \mathcal{P}_\Omega(\Y),\quad \Z=\W,
\end{aligned}
\ee
where $\mathcal{W}$ is the QDCT, see details in~\cite{QDCT}, $p(\W)$ is chosen as the Huber function, and the function $\phi$ is chosen as the Schatten-$\gamma$, Laplace, or Weighted Schatten-$\gamma$ functions. 
Note that the color image denoising problem~\eqref{model:imagedenoising} is SLRQA in~\eqref{model:new} with $\mathcal{A}$ being identity, $\Omega$ being the entire indices of pixels, and $p$ being the Huber function. The color image inpainting problem~\eqref{model:imageinpainting} is SLRQA-NF in~\eqref{model:new2} with $\mathcal{A}$ being identity and $p$ being the Huber function. Problem~\eqref{model:imagedenoising} with the Schatten-$\gamma$, Laplace, and Weighted Schatten-$\gamma$ functions for $\phi$ is respectively denoted by SLRQA-1, SLRQA-2, and SLRQA-3. Likewise, Problem~\eqref{model:imageinpainting} using the three options of $\phi$ is denoted by SLRQA-NF-1, SLRQA-NF-2, and SLRQA-NF-3. 
%We note that the denifition of $\phi$ for SLRQA-3 and SLRQA-NF-3 is realted to index $i$. For the sake of uniformity, we omit the subscript $i$ \cite{chen2019low}.
% The values of $\gamma$, $\lambda$, $\tau$, and $\delta$ are given later when reporting numerical results. 

The ten images in Figure~\ref{picdn} are used to generate 
simulation problems. Specifically, for image denoising, the noisy images $\Y$ are obtained by adding additive white Gaussian noise with zero mean and variance $\tau^2$ to the color images in Figure~\ref{picdn}, and for image inpainting, each pixel of an image is chosen to be in the indices set $\Omega$ with the same probability $\chi \in (0, 1)$. Therefore, $\chi$ is also called the missing rate. The parameters $\tau$, $\lambda$, and $\gamma$ are specified later when reporting numerical results. Note that the parameters $\gamma$ and $\lambda$ depend on the choice of the surrogate function $\phi$, the noise level, and whether the non-local self-similarity is used.
The parameter $\delta$ is set depending on the accuracy of the current iterate. Specifically, $\delta = 1$ if $\epsilon_k \geq 10^{-2}$, $\delta = 10^{-2}$ if $10^{-3} \leq \epsilon_k < 10^{-2}$, and $\delta = 10^{-4}$ otherwise, where $\epsilon_k = \|\X_{k+1} - \X_{k}\|_{\mF} + \|\W_{k+1} - \W_{k}\|_{\mF} + \|\mathcal{W}(\X_{k+1}) - \W_{k+1}\|_{\mF}$ in Algorithm~\ref{algorithm:denoise} and $\epsilon_k = \|\Z_{k+1} - \W_{k+1}\|_{\mF} + \|\mathcal{P}_\Omega( \mathcal{W}^{\#}(\W_{k+1})) - \mathcal{P}_\Omega(\Y)\|_{\mF}$ in Algorithm~\ref{algorithm:inpaint}. Furthermore, for PL-ADMM-NF, the objective function of the subproblem of $\W$ is differentiable, we solve the subproblem using the gradient descent method.

The parameters in Algorithm \ref{algorithm:denoise}, \ref{algorithmf:X} and  \ref{algorithm:inpaint} are set as $\mu = 1.1$, $\beta = 10$, $L_{1, k}= L_{2, k}= 1, \forall k$, $\eta_C = 10^{-10}$, and $\eta = 10^{-4}$. Note that $\eta_C = 10^{-10}$ is for the high accuracy of the $\X$ or $\Z$-subproblem. The threshold $\varepsilon$ of $\phi$ is set to $10^{-2}$ in our experiments.
% . In the update of $\X$ or $\Z$, we set $\eta_C$ in Algorithm~ \ref{algorithmf:X} as $10^{-10}$ to ensure the high accuracy of the subproblems. The tolerance $\eta$ in Algorithm~ \ref{algorithm:denoise} and ~\ref{algorithm:inpaint} is set to $10^{-4}$. 

The $\texttt{qtfm}$ \cite{qtfm} package is used for the quaternion computations except the QSVD. %It is more efficient to compute a 
The QSVD of a quaternion matrix $\X \in \mathbb{H}^{m \times n}$ is computed by taking the SVD of the complex adjoint form of $\X$, which needs an SVD of a $2m \times 2n$ complex matrix and is more efficient. For the details, we refer to~\cite{chen2019low}.

 % For color image denoising problems, we add additive white Gaussian noise with zero mean and variance $\tau^2$ to the 8 color images as shown in Figure~\ref{picdn}. 

%  For color image denoising problems, the noise level parameter $\tau$ is set to 10, 30, and 50, respectively. 
% The parameters $\gamma$ and $\lambda$, which are associated with the convex substitution function and $p(\W)$, vary between the noise level and whether the non-local self-similarity (NSS) is used or not.
% %The parameter $\gamma$ of the convex substitution function and the parameter $\lambda$ associated with $p(\W)$ take on different values depending on varying noise levels and the utilization of non-local self-similarity (NSS). 
% To ensure better performance of SLRQA and LRQA, we also increase $\lambda$ as the noise level increases. Specific values of these parameters will be introduced in subsequent image denoising experiments.
% In the following image denoising experiment, the specific values of these parameters are given. 
% For color image inpainting problems, the 8 tested images are presented in Figure~\ref{picdn}. %Missing rate $\chi$ 
% % Furthermore, we use $\chi$ to represent the missing rate of an image and we set $\lambda = 0.5$ for all image inpainting experiments.

Peak Signal-to-Noise Ration (PSNR) and Structure Similarity (SSIM) \cite{wang2004image}  are chosen to measure the quality of the images recovered by all methods.  MATLAB commands $\texttt{psnr}$ and $\texttt{ssim}$ are used to compute the PSNR and SSIM for color images.  In general, the higher values the PSNR and
SSIM are, the better the denoising quality is.

\subsection{Quaternion and RGB representation} \label{sec:QuaVSRGB}

% \qquad In this subsection, we compare the performance of SLRQA and SLRQA-NF for color image denoising and inpainting problems on images represented by quaternion and RGB respectively. 
% % to demonstrate that it is reasonable to use a quaternion representation for color image processing problems.
% In our experiments, same parameters are used for the quaternion and RGB based models to ensure a fair comparison.

Simulation problems of image denoising in Section~\ref{sec:QuaVSRGBImagDe} and image inpainting in Section~\ref{sec:QuaVSRGBImagIn} are used to compare the performance of quaternion and RGB representations. %The same parameters are used for the quaternion and RGB based models to ensure a fair comparison.

\subsubsection{Image denoising} \label{sec:QuaVSRGBImagDe}

The parameter $\gamma$ is set as 0.5 in SLRQA-1, SLRQA-2, and SLRQA-3. The parameter $w$ in SLRQA-3 is chosen depending on the number of iteration $k$ and the index $i$, i.e., $w_{i,k}= 20 /(\sigma_i(\X_{k-1}) + 10^{-4})$. 
Note that the definition of $\phi$ for SLRQA-3 depends on the index $i$. Such dependency has been used in e.g.,~\cite{chen2019low}.
The multiple values of $(\lambda, \tau)$ are used, i.e., $(\lambda, \tau) = (0.01, 10), (0.3, 30)$, and $(0.5, 50)$. 

The PSNR and SSIM values of SLRQA with quaternion and RGB representation are reported in Table~$\ref{TAB8}$. Furthermore, the comparison of the two representations for SLRQA-1 with different noise levels is shown in Figure~\ref{fig: compare}. The figure and table show that SLRQA with quaternion representation achieves higher PSNR and SSIM compared with RGB representation in all cases. This observation coincides with the intuition that quaternion representation uses the information between channels to achieve better performance.
% Consequently, the experimental results indicate that for color image denoising problems, quaternion representation can utilize the information between channels and achieve better performance compared with RGB representation at different noise levels.

\begin{figure}[htbp]
\centering
\begin{minipage}{0.17\linewidth}
\centering
\includegraphics[width=1\linewidth]{./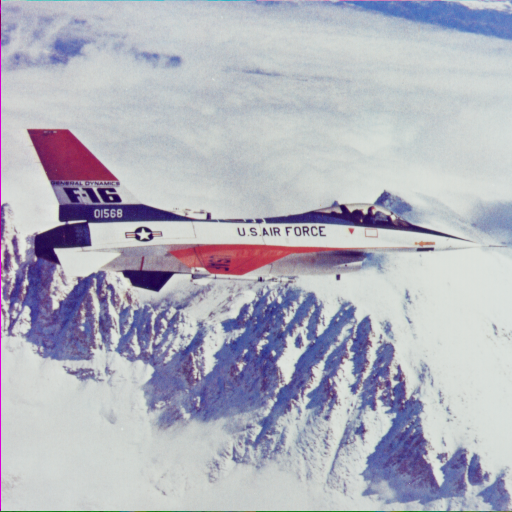}
\end{minipage}
 \hspace{0.2cm}
\begin{minipage}{0.17\linewidth}
\centering
\includegraphics[width=1\linewidth]{./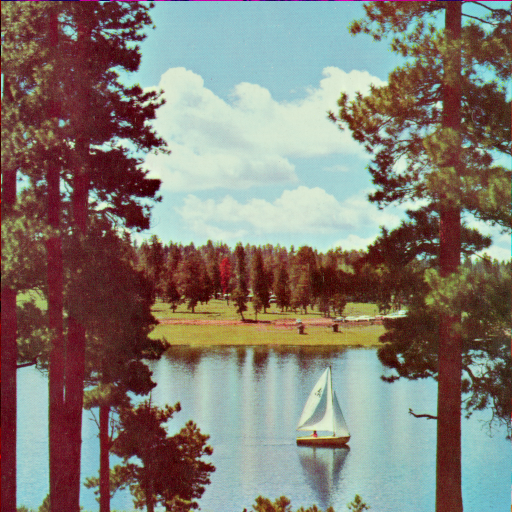}
\end{minipage}
 \hspace{0.2cm}
\begin{minipage}{0.17\linewidth}
\centering
\includegraphics[width=1\linewidth]{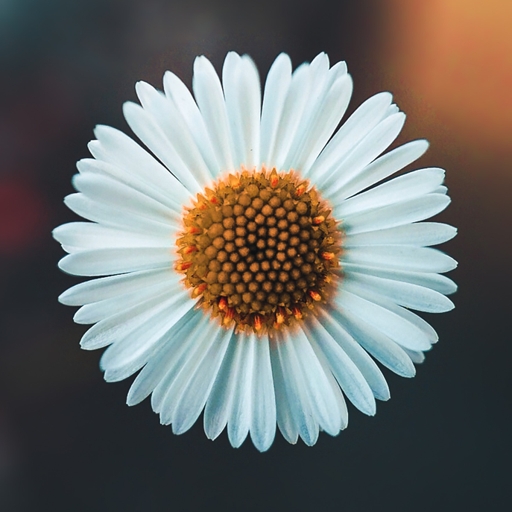}
\end{minipage}
 \hspace{0.2cm}
\begin{minipage}{0.17\linewidth}
\centering
\includegraphics[width=1\linewidth]{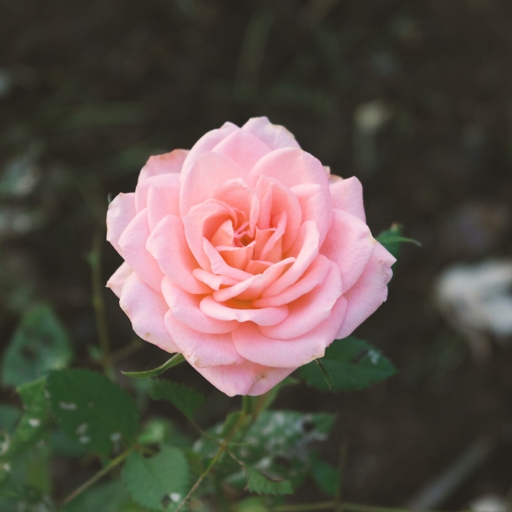}
\end{minipage}
 \hspace{0.2cm}
 \vspace{0.2cm}
\begin{minipage}{0.17\linewidth}
\centering
\includegraphics[width=1\linewidth]{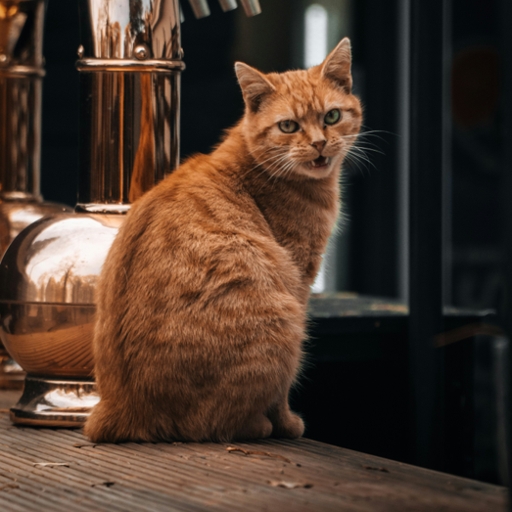}
\end{minipage}

\begin{minipage}{0.17\linewidth}
	\centering
	\includegraphics[width=1\linewidth]{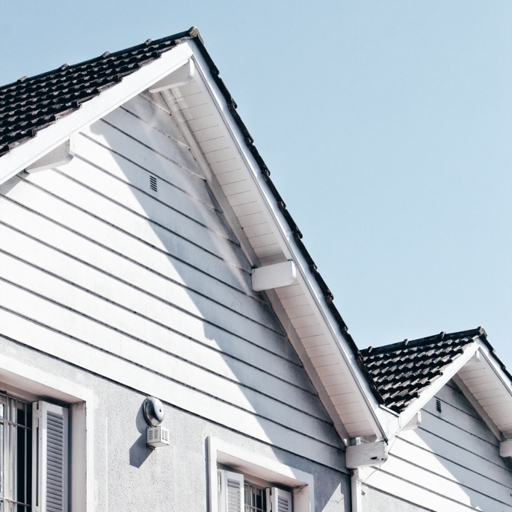}
\end{minipage}
 \hspace{0.2cm}
\begin{minipage}{0.17\linewidth}
	\centering
	\includegraphics[width=1\linewidth]{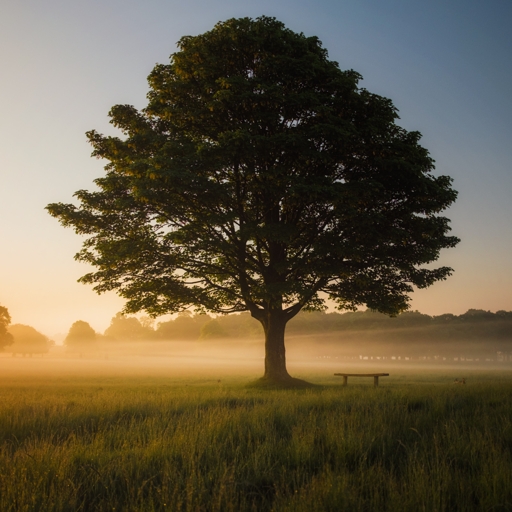}
\end{minipage}
 \hspace{0.2cm}
\begin{minipage}{0.17\linewidth}
	\centering
	\includegraphics[width=1\linewidth]{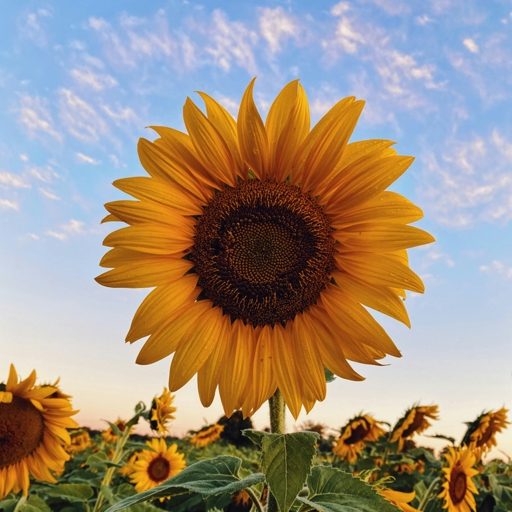}
\end{minipage}
 \hspace{0.2cm}
\begin{minipage}{0.17\linewidth}
	\centering
	\includegraphics[width=1\linewidth]{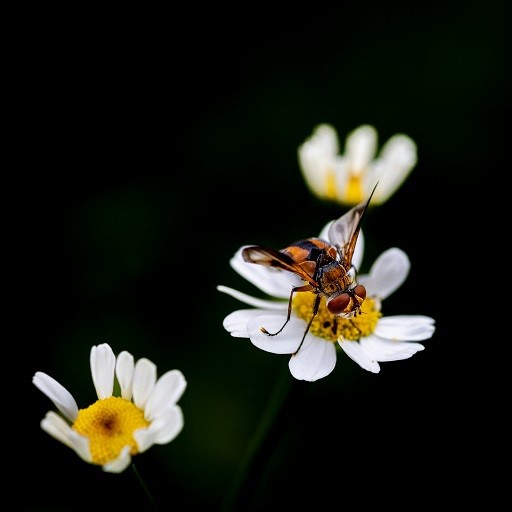}
\end{minipage}
 \hspace{0.2cm}
\begin{minipage}{0.16\linewidth}
	\centering
	\includegraphics[width=1\linewidth]{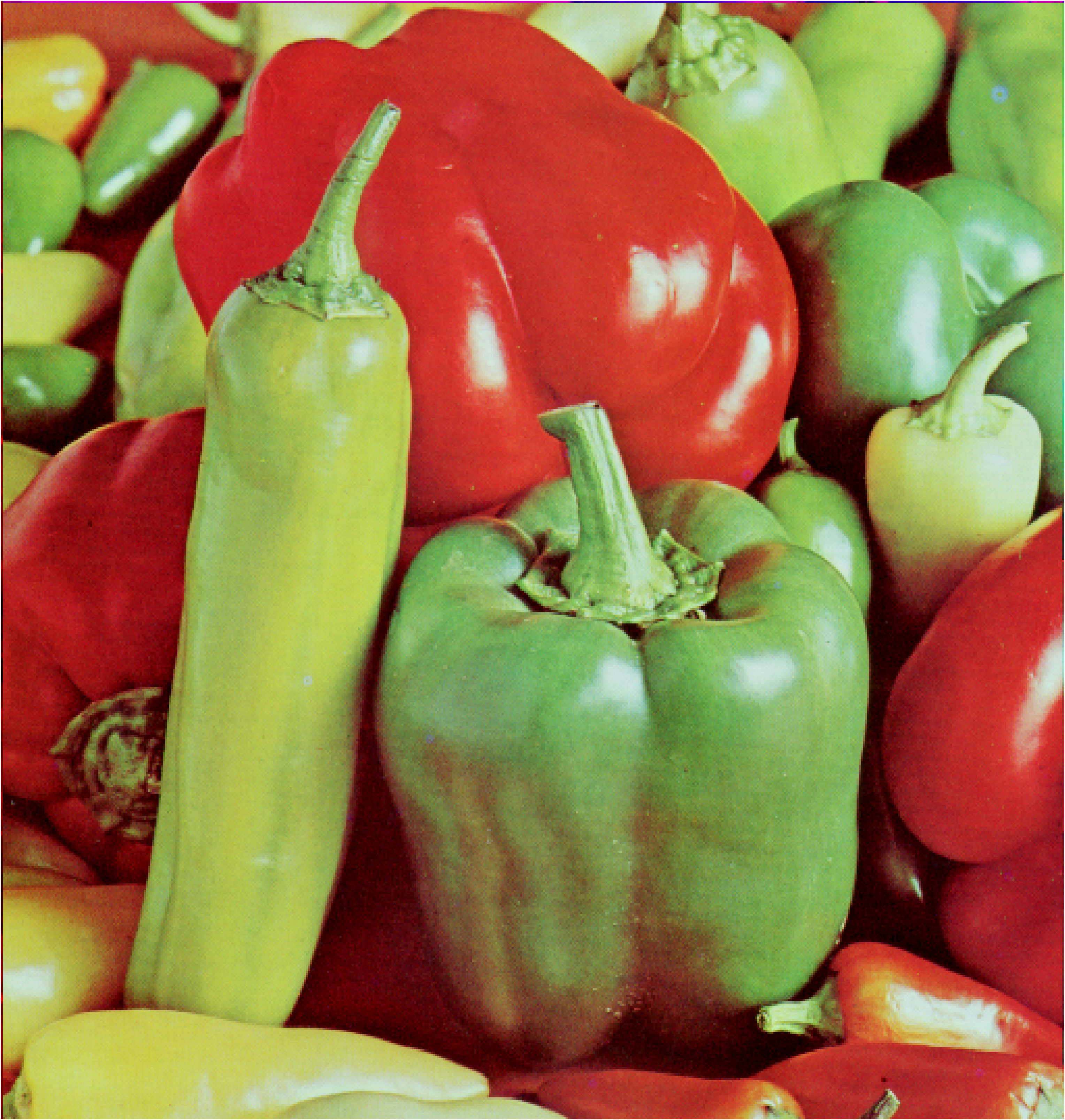}
\end{minipage}
\caption{The 10 color images (512 $\times$ 512 $\times$ 3) for numerical experiments.}\label{picdn}
\end{figure}
% \begin{figure}[htbp]
% 	\centering
%  	\begin{minipage}{0.09\linewidth}
%  		\centering
%  		\includegraphics[width=1\linewidth]{./denoising/image1.png}
%  	\end{minipage}
%  	\begin{minipage}{0.09\linewidth}
%  		\centering
%  		\includegraphics[width=1\linewidth]{./denoising/image2.png}
%  		 %文中引用该图片代号
%  	\end{minipage}
%  	\begin{minipage}{0.09\linewidth}
%  		\centering
%  		\includegraphics[width=1\linewidth]{denoising/image3.png}
%  		 %文中引用该图片代号
%  	\end{minipage}
%  	\begin{minipage}{0.09\linewidth}
%  		\centering
%  		\includegraphics[width=1\linewidth]{denoising/image4.png}
%  		 %文中引用该图片代号
%  	\end{minipage}
%  	%\qquad
%  	%让图片换行，
%  	\begin{minipage}{0.09\linewidth}
%  		\centering
%  		\includegraphics[width=1\linewidth]{denoising/image5.png}
%  		 %文中引用该图片代号
%  	\end{minipage}
%  	\begin{minipage}{0.09\linewidth}
%  		\centering
% 		\includegraphics[width=1\linewidth]{denoising/image6.png}
%  		 %文中引用该图片代号
%  	\end{minipage}
%  	\begin{minipage}{0.09\linewidth}
%  		\centering
%  		\includegraphics[width=1\linewidth]{denoising/image7.png}
%  	\end{minipage}
%  	\begin{minipage}{0.09\linewidth}
%  		\centering
%  		\includegraphics[width=1\linewidth]{denoising/image8.png}
%  	\end{minipage}
%    	\begin{minipage}{0.09\linewidth}
%  		\centering
%  		\includegraphics[width=1\linewidth]{denoising/image9.png}
%  	\end{minipage}
%    	\begin{minipage}{0.09\linewidth}
%  		\centering
%  		\includegraphics[width=1\linewidth]{denoising/image10.png}
%  	\end{minipage}
%  	\caption{The 10 color images (512 $\times$ 512 $\times$ 3) for numerical experiments.}\label{picdn}
%  \end{figure}

\begin{table*}[h]
	\setlength{\abovecaptionskip}{0.cm}
	\setlength{\belowcaptionskip}{-0.cm}
	\caption{PSNR/SSIM values of SLRQA using the RGB and quaternion representations. ``QR'' denotes that the quaternion representation is used. \textbf{Bold} fonts denote the best performance on the same conditions.}
	\label{TAB8}
	\centering
	\resizebox{\textwidth}{!}{
		\begin{tabular}{|c|c|c|c|c|c|c|c|c|c|c|c|}
			\hline
			\multicolumn{2}{|c|}{\diagbox[innerwidth=2.5cm]{Model}{Image}}&Image1&Image2&Image3&Image4&Image5&Image6&Image7&Image8&Image9&Image10\\
			\hline
			\multirow{2}*{\makecell[c]{SLRQA-1 \\ $\tau = 10$}}&QR&\textbf{32.38/0.82}&\textbf{30.43/0.81}&\textbf{31.89/0.81}&\textbf{33.86/0.87}&\textbf{33.27/0.86}&\textbf{31.21/0.81}&\textbf{31.20/0.78}&\textbf{31.20/0.78}&\textbf{33.53/0.70}&\textbf{31.54/0.79}\\
			\cline{2-12}
			\multicolumn{1}{|c|}{}&RGB&31.90/0.80&30.27/0.78&31.50/0.77&33.60/0.79&32.56/0.82&30.88/0.77&30.10/0.75&30.34/0.75&32.89/0.68&31.02/0.72\\
			\hline
			\multirow{2}*{\makecell[c]{SLRQA-1 \\ $\tau = 30$}}&QR&\textbf{24.86/0.70}&\textbf{23.91/0.66}&\textbf{24.97/0.72}&\textbf{26.68/0.67}&\textbf{25.65/0.69}&\textbf{22.85/0.69}&\textbf{24.70/0.68}&\textbf{24.19/0.66}&\textbf{25.30/0.64}&\textbf{24.97/0.66}\\
			\cline{2-12}
			\multicolumn{1}{|c|}{}&RGB&23.55/0.67&20.71/0.57&23.65/0.70&25.10/0.63&23.26/0.63&21.29/0.66&22.74/0.67&23.91/0.63&24.02/0.60&23.45/0.62\\
			\hline
			\multirow{2}*{\makecell[c]{SLRQA-1 \\ $\tau = 50$}}&QR&\textbf{23.33/0.50}&\textbf{22.04/0.48}&\textbf{23.01/0.51}&\textbf{22.23/0.46}&\textbf{21.84/0.52}&\textbf{21.24/0.50}&\textbf{23.57/0.48}&\textbf{22.14/0.48}&\textbf{22.78/0.56}&\textbf{22.53/0.57}\\
			\cline{2-12}
			\multicolumn{1}{|c|}{}&RGB &22.67/0.49&21.56/0.46&22.66/0.49&21.73/0.44&21.48/0.51&20.94/0.48&23.18/0.46&21.55/0.47&21.54/0.55&21.67/0.54\\
			\hline
   \hline
			\multirow{2}*{\makecell[c]{SLRQA-2 \\ $\tau = 10$}}&QR&\textbf{31.45/0.78}&\textbf{30.19/0.80}&\textbf{31.79/0.78}&\textbf{32.45/0.75}&\textbf{31.18/0.80}&\textbf{30.92/0.80}&\textbf{30.82/0.75}&\textbf{30.75/0.74}&\textbf{32.87/0.70}&\textbf{31.56/0.76}\\
			\cline{2-12}
			\multicolumn{1}{|c|}{}&RGB&31.20/0.77&30.72/0.80&31.46/0.78&31.91/0.72&31.04/0.80&30.57/0.79&30.20/0.74&30.35/0.72&31.40/0.67&30.32/0.72\\
			\hline
			\multirow{2}*{\makecell[c]{SLRQA-2 \\ $\tau = 30$}}&QR&\textbf{24.30/0.61}&\textbf{22.77/0.64}&\textbf{25.21/0.64}&\textbf{22.74}/0.57&\textbf{25.32/0.58}&\textbf{21.72/0.64}&\textbf{24.39/0.68}&\textbf{24.06/0.62}&\textbf{25.45/0.68}&\textbf{25.02/0.70}\\
			\cline{2-12}
			\multicolumn{1}{|c|}{}&RGB&24.12/0.60&22.47/0.63&24.98/0.62&22.52/0.56&25.02/0.58&21.59/0.62&24.26/0.68&23.75/0.60&24.82/0.65&24.30/0.64\\
			\hline
			\multirow{2}*{\makecell[c]{SLRQA-2 \\ $\tau = 50$}}&QR&\textbf{23.50/0.51}&\textbf{22.77/0.52}&\textbf{24.91/0.58}&\textbf{22.34/0.47}&\textbf{22.62/0.67}&\textbf{21.02/0.51}&\textbf{23.09/0.45}&\textbf{23.26/0.52}&\textbf{23.02/0.62}&\textbf{22.60/0.70}\\
			\cline{2-12}
			\multicolumn{1}{|c|}{}&RGB &23.27/0.51&21.96/0.52&24.76/0.57&22.03/0.46&21.88/0.65&20.94/0.51&22.88/0.45&22.90/0.51&22.32/0.60&22.12/0.65\\
			\hline
   \hline
			\multirow{2}*{\makecell[c]{SLRQA-3 \\ $\tau = 10$}}&QR&\textbf{32.86/0.81}&\textbf{31.20/0.82}&\textbf{32.03/0.84}&\textbf{30.36/0.75}&\textbf{32.50/0.87}&\textbf{30.77/0.76}&\textbf{31.10/0.80}&\textbf{31.92/0.76}&\textbf{33.89/0.73}&\textbf{31.87/0.82}\\
			\cline{2-12}
			\multicolumn{1}{|c|}{}&RGB&32.76/0.80&30.80/0.81&31.30/0.84&30.01/0.74&32.04/0.86&30.17/0.74&30.48/0.78&31.65/0.75&32.20/0.68&32.02/0.78\\
			\hline
			\multirow{2}*{\makecell[c]{SLRQA-3 \\ $\tau = 30$}}&QR&\textbf{26.40/0.67}&\textbf{24.02/0.64}&\textbf{26.22/0.68}&\textbf{24.21/0.64}&\textbf{26.34/0.68}&\textbf{22.50/0.63}&\textbf{26.42/0.68}&\textbf{26.12/0.67}&\textbf{25.80/0.68}&\textbf{25.68/0.70}\\
			\cline{2-12}
			\multicolumn{1}{|c|}{}&RGB&25.32/0.66&23.78/0.63&26.18/0.67&23.82/0.64&25.89/0.67&22.13/0.61&24.26/0.68&26.05/0.66&24.79/0.62&25.03/0.67\\
			\hline
			\multirow{2}*{\makecell[c]{SLRQA-3 \\ $\tau = 50$}}&QR&\textbf{23.90/0.51}&\textbf{22.06/0.45}&\textbf{24.99/0.60}&\textbf{23.51/0.49}&\textbf{22.72/0.65}&\textbf{22.00/0.53}&\textbf{23.92/0.53}&\textbf{24.32/0.56}&\textbf{23.04/0.63}&\textbf{23.10/0.65}\\
			\cline{2-12}
			\multicolumn{1}{|c|}{}&RGB &23.57/0.50&21.46/0.43&24.76/0.60&22.63/0.50&22.08/0.63&21.94/0.53&23.18/0.52&23.58/0.55&22.23/0.56&22.30/0.62\\
			\hline
		\end{tabular}
	}
	%  \begin{tablenotes}    %这行要添加， 从这开始
	% 	\footnotesize               %这行要添加
	% 	\item[1] \textbf{Bold} fonts denote the best performance on the same conditions.
	% \end{tablenotes}
\end{table*}

 \begin{figure}[h]
	\subfigure{
		\includegraphics[width=0.45\textwidth]{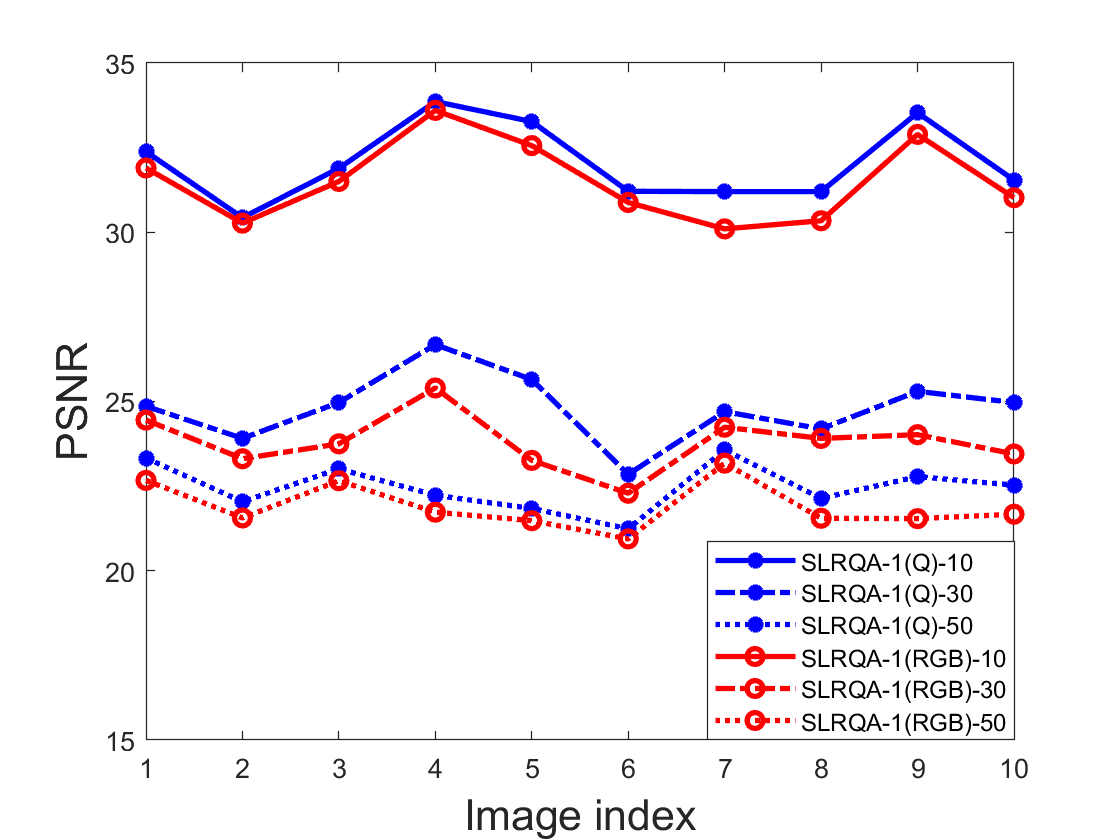}
	}
	\subfigure{
		\includegraphics[width=0.45\textwidth]{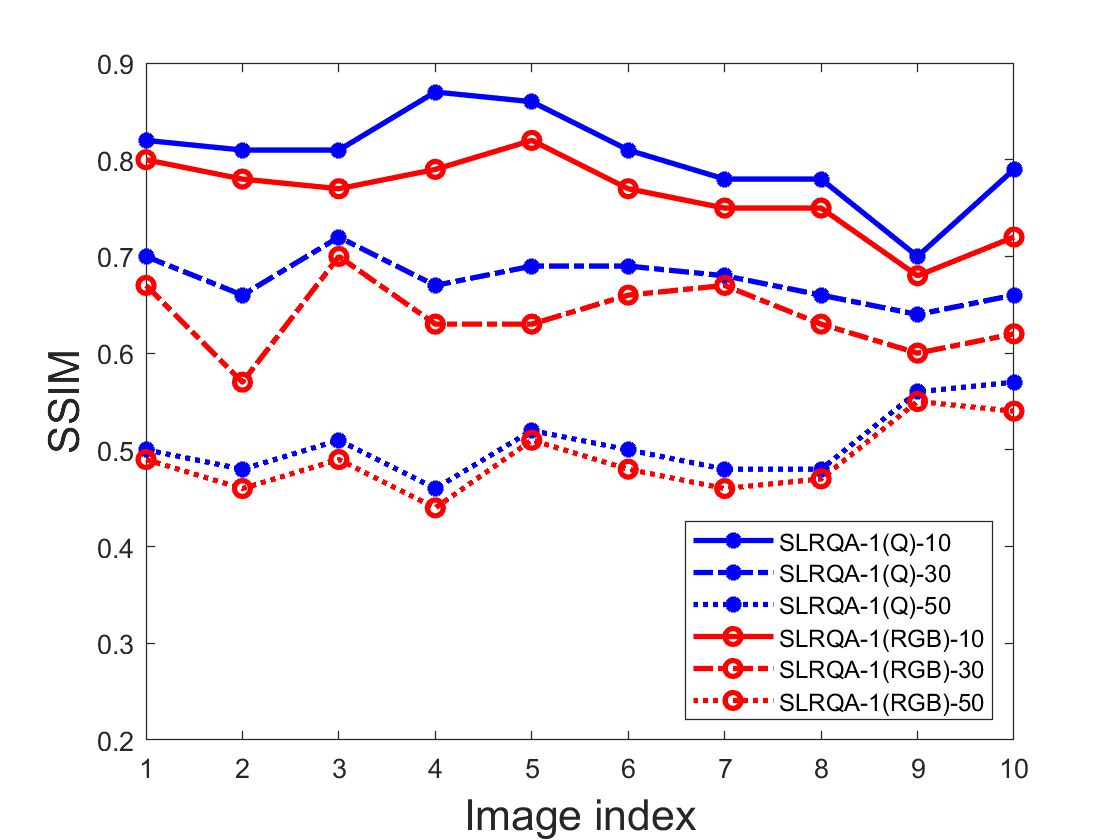}
	}
 \caption{PSNR and SSIM comparison of SLRQA-1 using quaternion and RGB representation. In the legend, the suffix (Q) means that quaternion representation is used, and (RGB) means that RGB representation is used. The numbers 10, 30, and 50 denote the variance of the noise respectively.}\label{fig: compare}
\end{figure}

%\begin{figure}[htbp]
% 	\centering
% 	\begin{minipage}{0.24\linewidth}
% 		\centering
% 		\includegraphics[width=0.9\linewidth]{denoising/image6_o.png}
% 		\centerline{(a) Original Image}
% 		%\label{chutian1}%
% 	\end{minipage}
% 	\begin{minipage}{0.24\linewidth}
% 		\centering
% 		\includegraphics[width=0.9\linewidth]{denoising/image6_n30.png}
% 		\centerline{(b) Noise Image($\tau = 30$)}
% 		% %文中引用该图片代号
% 	\end{minipage}
% 	\begin{minipage}{0.24\linewidth}
% 		\centering
% 		\includegraphics[width=0.9\linewidth]{denoising/image6_n30_SLRQA-1rgb.png}
% 		\centerline{(c) SLRQA-1 in RGB}
% 		% %文中引用该图片代号
% 	\end{minipage}
% 	\begin{minipage}{0.24\linewidth}
% 		\centering
% 		\includegraphics[width=0.85\linewidth]{denoising/image6_n30_SLRQA-1.png}
% 	   \centerline{(d) SLRQA-1 in QR}
% 		% %文中引用该图片代号
% 	\end{minipage}
% 	%\qquad
% 	%让图片换行，
% 	\caption{Comparison of RGB and SLRQA quaternionic for color image denoising results
% on image6. (a)  the original image, (b)  the noise image corrupted by Gaussian white noise with variance $\tau = 30$, (c),(d) are the recovered
% image by SLRQA-1 represented in quaternion and RGB channels respectively.}\label{result1}
% \end{figure}

\subsubsection{Image inpainting} \label{sec:QuaVSRGBImagIn}

\begin{table*}[h]
\caption{PSNR/SSIM values of SLRQA-NF using the RGB and quaternion representations. ``QR'' denotes that the quaternion representation is used.}
\label{TAB1}
\centering
\resizebox{\textwidth}{!}{
\begin{tabular}{|c|c|c|c|c|c|c|c|c|c|c|c|}
\hline
\multicolumn{2}{|c|}{\diagbox[innerwidth=4cm]{Model}{Image}}&Image1&Image2&Image3&Image4&Image5&Image6&Image7&Image8&Image9&Image10\\
\hline
\multirow{2}*{SLRQA-NF-1}&QR&\textbf{35.58/0.96}&\textbf{31.37/0.97}&\textbf{34.56/0.98}&\textbf{42.48/0.99}&\textbf{39.38/0.98}&\textbf{32.73/0.99}&\textbf{32.45/0.97}&\textbf{35.08/1.00}&\textbf{34.93/0.99}&\textbf{34.41/0.99}\\
\cline{2-12}
\multicolumn{1}{|c|}{($\chi$=0.3)}&RGB&34.66/0.93&30.83/0.96&33.66/0.97&40.19/0.98&38.63/0.99&30.10/0.90&30.85/0.92&33.91/0.98&34.11/0.99&33.61/0.99\\
\hline
\multirow{2}*{SLRQA-NF-1}&QR&\textbf{32.03/0.94}&\textbf{28.04/0.94}&\textbf{30.40/0.96}&\textbf{37.56/0.98}&\textbf{34.82/0.98}&\textbf{25.75/0.98}&\textbf{27.88/0.92}&\textbf{31.06/0.97}&\textbf{31.58/0.98}&\textbf{31.28/0.98}\\
\cline{2-12}
\multicolumn{1}{|c|}{($\chi$=0.5)}&RGB&31.19/0.90&27.59/0.93&29.77/0.93&36.42/0.98&34.28/0.97&25.10/0.77&27.35/0.86&30.33/0.96&30.91/0.98&30.63/0.98\\
\hline
\multirow{2}*{SLRQA-NF-1}&QR&\textbf{28.29/0.88}&\textbf{25.20/0.89}&\textbf{26.35/0.93}&\textbf{33.16/0.99}&\textbf{31.00/0.96}&\textbf{23.78/0.96}&\textbf{26.63/0.81}&\textbf{27.12/0.94}&\textbf{28.90/0.97}&\textbf{28.26/0.97}\\
\cline{2-12}
\multicolumn{1}{|c|}{($\chi$=0.7)}&RGB&25.49/0.71&22.53/0.79&23.23/0.75&29.27/0.89&28.42/0.86&20.59/0.59&24.48/0.77&24.72/0.86&25.85/0.77&25.53/0.93\\
\hline
\multirow{2}*{SLRQA-NF-1}&QR&\textbf{25.88/0.84}&\textbf{23.14/0.86}&\textbf{24.91/0.90}&\textbf{30.74/0.95}&\textbf{28.63/0.93}&\textbf{21.61/0.93}&\textbf{25.29/0.74}&\textbf{25.30/0.92}&\textbf{25.54/0.96}&\textbf{26.67/0.96}\\ \cline{2-12}
\multicolumn{1}{|c|}{($\chi$=0.8)}&RGB&22.41/0.54&20.63/0.69&21.12/0.67&26.51/0.81&24.92/0.78&18.14/0.46&22.82/0.71&22.38/0.79&23.97/0.61&22.54/0.91\\
\hline
\hline
\multirow{2}*{SLRQA-NF-2}&QR&\textbf{35.33/0.96}&\textbf{30.97/0.97}&\textbf{33.56/0.98}&\textbf{40.86/0.99}&\textbf{38.09/0.99}&\textbf{31.26/0.97}&\textbf{32.12/0.97}&\textbf{34.13/0.99}&\textbf{34.49/0.99}&\textbf{34.18/0.99}\\
\cline{2-12}
\multicolumn{1}{|c|}{($\chi$=0.3)}&RGB&34.18/0.95&30.83/0.97&33.15/0.98&39.19/0.99&37.81/0.99&26.62/0.99&26.85/0.95&33.54/0.98&33.97/0.99&33.84/0.99\\
\hline
\multirow{2}*{SLRQA-NF-2}&QR&\textbf{31.25/0.94}&\textbf{27.53/0.94}&\textbf{29.65/0.96}&\textbf{36.27/0.99}&\textbf{33.75/0.98}&\textbf{26.62/0.91}&\textbf{28.91/0.94}&\textbf{30.15/0.97}&\textbf{31.08/0.98}&\textbf{30.83/0.98}\\
\cline{2-12}
\multicolumn{1}{|c|}{($\chi$=0.5)}&RGB&30.65/0.92&27.49/0.94&29.46/0.95&35.60/0.97&33.63/0.97&24.62/0.96&26.91/0.90&29.92/0.96&30.78/0.98&30.63/0.98\\
\hline
\multirow{2}*{SLRQA-NF-2}&QR&\textbf{27.30/0.88}&\textbf{24.05/0.89}&\textbf{25.87/0.93}&\textbf{31.87/0.99}&\textbf{29.95/0.96}&\textbf{22.70/0.80}&\textbf{26.19/0.89}&\textbf{26.41/0.94}&\textbf{27.56/0.97}&\textbf{27.46/0.97}\\
\cline{2-12}
\multicolumn{1}{|c|}{($\chi$=0.7)}&RGB&25.90/0.86&23.19/0.87&25.63/0.90&31.27/0.96&27.39/0.94&20.26/0.77&24.52/0.85&25.17/0.93&25.53/0.96&25.25/0.96\\
\hline
\multirow{2}*{SLRQA-NF-2}&QR&\textbf{24.54/0.79}&\textbf{22.24/0.84}&\textbf{23.61/0.89}&\textbf{28.74/0.94}&\textbf{27.09/0.91}&\textbf{20.11/0.71}&\textbf{24.52/0.86}&\textbf{23.89/0.90}&\textbf{25.56/0.95}&\textbf{25.33/0.95}\\
\cline{2-12}
\multicolumn{1}{|c|}{($\chi$=0.8)}&RGB&21.90/0.76&21.84/0.83&21.13/0.86&26.35/0.93&24.92/0.91&17.85/0.81&22.35/0.85&21.23/0.89&23.07/0.94&22.75/0.95\\
\hline
\hline
\multirow{2}*{SLRQA-NF-3}&QR&\textbf{35.91/0.96}&\textbf{31.41/0.97}&\textbf{34.46/0.98}&\textbf{41.86/0.99}&\textbf{39.00/0.99}&\textbf{32.80/0.97}&\textbf{32.52/0.97}&\textbf{34.61/1.00}&\textbf{35.77/0.99}&\textbf{34.63/0.99}\\
\cline{2-12}
\multicolumn{1}{|c|}{($\chi$=0.3)}&RGB&34.66/0.93&30.83/0.96&33.66/0.97&40.19/0.99&38.63/0.99&28.79/0.95&30.33/0.95&33.91/0.98&34.11/0.99&33.61/0.99\\
\hline
\multirow{2}*{SLRQA-NF-3}&QR&\textbf{32.02/0.93}&\textbf{28.21/0.94}&\textbf{30.52/0.96}&\textbf{37.69/0.99}&\textbf{34.80/0.98}&\textbf{27.97/0.92}&\textbf{29.33/0.94}&\textbf{30.92/0.97}&\textbf{32.14/0.98}&\textbf{31.60/0.98}\\
\cline{2-12}
\multicolumn{1}{|c|}{($\chi$=0.5)}&RGB&31.23/0.90&27.59/0.93&29.77/0.93&36.42/0.98&34.28/0.97&25.46/0.90&27.37/0.91&30.33/0.96&30.91/0.98&30.63/0.98\\
\hline
\multirow{2}*{SLRQA-NF-3}&QR&\textbf{28.40/0.88}&\textbf{25.05/0.89}&\textbf{26.87/0.93}&\textbf{33.37/0.97}&\textbf{30.95/0.96}&\textbf{24.00/0.83}&\textbf{26.77/0.89}&\textbf{27.41/0.94}&\textbf{28.56/0.97}&\textbf{28.46/0.97}\\
\cline{2-12}
\multicolumn{1}{|c|}{($\chi$=0.7)}&RGB&25.59/0.72&22.53/0.80&23.23/0.76&29.27/0.90&28.42/0.89&22.10/0.80&24.86/0.85&24.72/0.87&25.85/0.78&25.53/0.94\\
\hline
\multirow{2}*{SLRQA-NF-3}&QR&\textbf{26.08/0.84}&\textbf{23.36/0.86}&\textbf{25.11/0.91}&\textbf{30.86/0.95}&\textbf{28.70/0.93}&\textbf{21.83/0.76}&\textbf{25.43/0.86}&\textbf{25.58/0.92}&\textbf{26.74/0.96}&\textbf{26.75/0.96}\\
\cline{2-12}
\multicolumn{1}{|c|}{($\chi$=0.8)}&RGB&22.60/0.56&20.84/0.70&21.24/0.67&26.72/0.81&25.12/0.79&21.83/0.74&25.43/0.82&22.60/0.80&23.97/0.61&22.89/0.91\\
\hline
\end{tabular}
}
\end{table*}

The parameter $\gamma$ is set as 0.7, 1, and 0.7 respectively for SLRQA-NF-1, SLRQA-NF-2, and SLRQA-NF-3. The parameter $w$ in SLRQA-NF-3 is set as $w_{i,k} = 10 /(\sigma_i(\X_{k-1})+10^{-4})$.

% For image inpainting problems, we set $\gamma=0.7$ for SLRQA-NF-1, $\gamma=1$ for SLRQA-NF-2, and $\gamma = 0.7$ with $w_{i,k} = c /(\sigma_i(\X_{k-1})+\varepsilon)$ for SLRQA-NF-3, where $c = 10$ and $\varepsilon = 10^{-4}$ are given constants.
% The parameter $\lambda$ is set to 0.5 for all different missing rates and the SLRQA-NF with different choices of $\phi$. 
It is shown in Table \ref{TAB1} that the PSNR and SSIM values of the  SLRQA-NF with quaternion representation are higher than  SLRQA-NF with RGB representation for all different missing rate~$\chi$.
% which represent that quaternion based model is more effective than original RGB model.
%As it is known to us that, SSIM represent the structural similarity of two images.
 % which indicates that quaternion representation makes use of the correlation between channels. 
 Furthermore, the PSNR and SSIM values of different $\chi$ and different images are given in Figure \ref{fig:357} which shows that SLRQA-NF-1 with quaternion representation method outperforms SLRQA-NF-1 with RGB representation by a large margin. Moreover, the improvements in PSNR and SSIM increase with the rate of corruption.
 \begin{figure}[h]
	\subfigure{
	\includegraphics[width=0.45\textwidth]{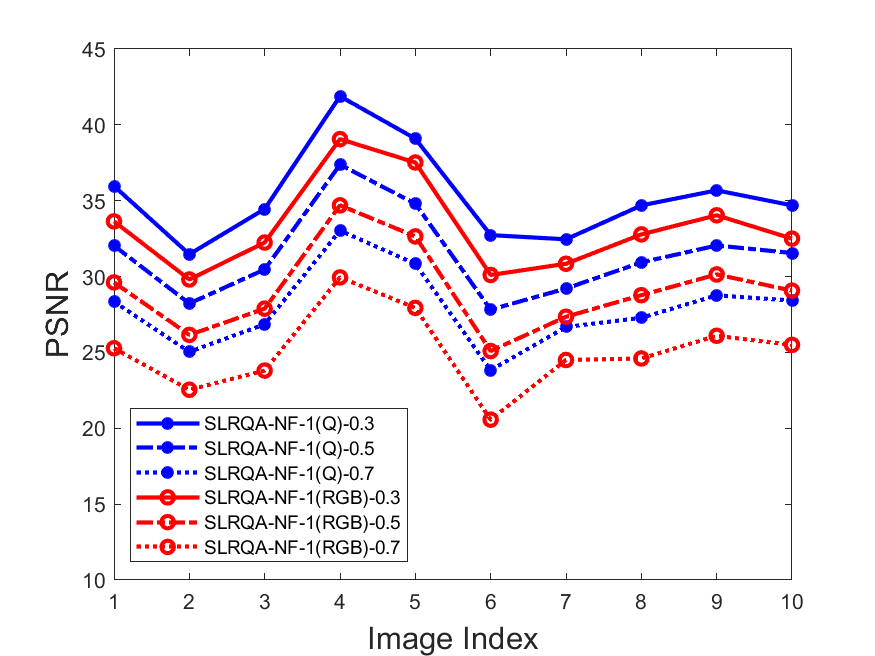}
	}
	\subfigure{
	\includegraphics[width=0.45\textwidth]{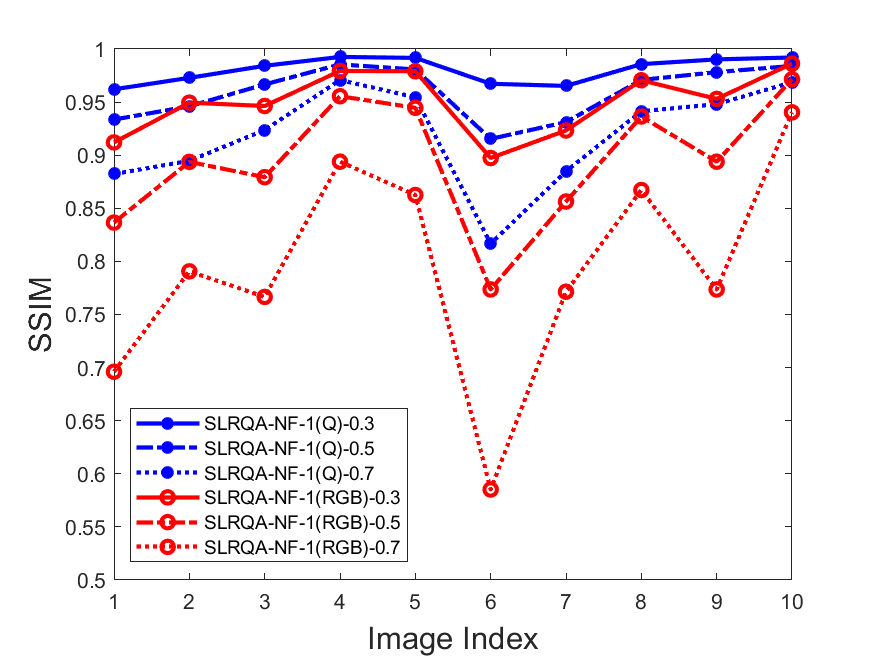}
	}
  \caption{PSNR and SSIM comparisons of quaternion and RGB based SLRQA-NF-1. In the legend, the suffix (Q) means that quaternion representation is used, and (RGB) means that RGB representation is used. The numbers 0.3, 0.5, and 0.7 denote the missing rate of the images respectively. }
 \label{fig:357}
\end{figure}

It is concluded that the quaternion-based model %can make use of the information between RGB channels and 
achieves better denoising and inpainting performance. Hence, it is preferable to use the quaternion model in color image processing.

  % \item When handling true low-rank images, the numerical results has shown the effectiveness of SLRQA. In other words, we can also recover the original image even though the missing rate is high.
  
  % \item When compared with the same RGB algorithm, algorithm represented in quaternion will have higher scores in PSNR and SSIM. Furthermore, the algorithm represented in quaternions has better performance. This results from the advantage of quaternion representation for color images.

% \whcomm{stop here=============}{}

\subsection{Color image denoising} \label{sec:ColorImagDe}
Non-local self-similarity~(NSS)  is a significant technique in color image denoising problems. 
The comparison without NSS directly reflects the superiority of the compared models, and the comparison with NSS better reflects the robustness of the models in practice. 
% To further improve the performance of the tested methods for image denoising problems, we compare SLRQA with other methods with NSS.
To fully compare the SLRQA with other state-of-the-art color image denoising methods, we tested their performance without and with the usage of NSS respectively in Section~\ref{6-3-1} and Section~\ref{6-3-2}.

% We note that the denifition of $\phi$ for SLRQA-3 is realted to index $i$. For the sake of uniformity, we omit the subscript $i$ as in~\cite{chen2019low}.

% In this subsection, to demonstrate the effectiveness of SLRQA \eqref{model:new}, we first compare SLRQA with other methods without using NSS in Section \ref{6-3-1}. To improve the performance of all the tested algorithms, numerical experiments with NSS are also conducted in Section \ref{6-3-2}.
% These two numerical experiments fully demonstrate the effectiveness of the SLRQA algorithm in the color image denoising problem.

\subsubsection{Color image denoising without NSS} \label{6-3-1}
The compared methods include pQSTV~\cite{TTW2022pQSTV}, QWSNM~\cite{QHZ2024QWSMN}, MCNNFNM~\cite{YS2023MCNNFNM}, SRRC~\cite{zhang2023SRRC}, QWNNM~\cite{huang2022quaternion}, LRQA~\cite{chen2019low}, and DWT~\cite{cai2016image}\footnote{
The implementations that we use are available at \\
pQSTV: \href{https://github.com/Huang-chao-yan/pQSTV.git}{ https://github.com/Huang-chao-yan/pQSTV.git}\\
QWSNM: 
\href{https://github.com/qiuxuanzhizi/QWSNM.git}{https://github.com/qiuxuanzhizi/QWSNM.git}\\
MCNNFNM: \href{https://github.com/wangzhi-swu/MCNNFNM.git}{https://github.com/wangzhi-swu/MCNNFNM.git}\\
SRRC: \href{https://github.com/zt9877/SRRC.git}{https://github.com/zt9877/SRRC.git} \\
% WNNM: \href{http://www4.comp.polyu.edu.hk/~cslzhang/papers.htm}{http://www4.comp.polyu.edu.hk/~cslzhang/papers.htm} \\
LRQA: \href{https://www.fst.um.edu.mo/personal/wp-content/uploads/2021/05/LRQA.zip}{https://www.fst.um.edu.mo/personal/wp-content/uploads/2021/05/LRQA.zip}}. QWSNM, MCNNFNM, SRRC, QWNNM, and  LRQA are low-rank minimization models. QWSNM  extends the WSNM into the quaternion domain for color restoration. MCNNFNM is a multi-channel nuclear norm minus Frobenius norm minimization model. SRRC  exploits the weighted Schatten $p$-norm as the regularizer for the rank residual to obtain a new rank minimization model. QWNNM uses the weighted nuclear norm as an alternative function to the rank function. LRQA is a quaternion-based model that makes use of the low-rankness property by a nonconvex surrogate function. Compared with SLRQA, none of SRRC, QWNNM, and LRQA exploit the sparsity feature of the image. pQSTV is a dictionary method for color image denoising that combines total variation regularization and pure quaternion representation.
DWT~\cite{cai2016image} is a vanilla analysis-based approach \eqref{model:denoising-analysis} where low-rankness prior is not used and is implemented on our own. It is noted that MCNNFNM, SRRC, and DWT are monochromatic methods, while QWNNM, pQSTV, QWSNM, LRQA, and SLRQA are quaternion-based models. LRQA-1 and LRQA-2 denote the LRQA models using Schatten-$\gamma$ and Laplace function, respectively. The comparisons with LRQA and QWNNM can be regarded as ablation experiments to present the significance of sparsity and low-rankness prior respectively. For the compared methods, we use the default parameters stated in the corresponding papers.

% To fully demonstrate the superiority of the SLRQA and make a fair comparison with the above denoising models, we drop the use of nonlocal self-similarity in the algorithm.

%The parameter $\gamma$ is set as 0.5 in SLRQA-1, SLRQA-2, and SLRQA-3. The parameter $w$ in SLRQA-3 is set as $w_{i,k} = 20 /(\sigma_i(\X_{k-1})+10^{-4})$. The multiple values of $(\lambda, \tau)$ are used, i.e., $(\lambda, \tau) = (0.01, 10), (0.3, 30)$, and $(0.5, 50)$.

% \quad To demonstrate the advantages of algorithm SLRQA more directly, in this part, we do not use NSS but directly denoise the whole image. 

The PSNR and SSIM results of the compared methods are shown in Table~$\ref{TAB5}$. It can be seen that SLRQA  has better results than the other methods in terms of PSNR and SSIM in most cases. Furthermore, the PSNR values of SRRC, QWNNM, and DWT are much lower than that of SLRQA when $\tau$ is large.
Note that since NSS is not used in these methods, the results in Table~$\ref{TAB5}$ underscore the effectiveness and robustness of the SLRQA and further validate the advanced nature and adaptability of its intrinsic architecture. We also give the visual comparison between all competing methods and SLRQA as shown in Figure~\ref{result1}. It can be seen in the highlighted red rectangles that when compared with other denoising methods, SLRQA can preserve more details of images. In particular, in terms of hair details, the images denoised by the DWT and LRQA-1 methods are still noisy. The images recovered by LRQA and SRRC methods contain blurry parts, and hence the detailed features of the images are lost. In contrast, the images recovered by SLRQAs have better results both numerically and visually.

To further analyze the details of PL-ADMM, we plot the empirical convergence of PL-ADMM for Problem~\eqref{model:imagedenoising}
on ten tested images with $\tau = 30$. The results are shown in Figure~\ref{fig:dn1}.
%where the x-axis represents the number of iterations and the y-axis represents the error $||\X_{k}-\X_{k-1}||_{\mathrm{F}}$, $||\W_{k}-\W_{k-1}||_\mathrm{F}$ and $||\mathcal{W}(\X_{k})-\W_k||_{\mathrm{F}}$, respectively. 
It can be observed that the error curve has a downward trend, illustrating the empirical convergence of Algorithm \ref{algorithm:denoise}. We can observe that $\|\Delta \X_{k+1} \|_{\mathrm{F}} + \|\Delta \W_{k+1} \|_{\mathrm{F}}   + \|\Delta \La_{k+1} \|_{\mathrm{F}}  \rightarrow 0$.
Hence, 
% it follows from \eqref{eqn:bound4} that $| \|\widetilde{\qd}_{k+1}\| | \rightarrow 0$.
 % According to the update of $\La_{k+1}$ and Lemma \ref{crit}, 
it follows that $(\X,\W,\La)$ converges to a stationary point of $\mathcal{L}_{\beta}$ and hence a KKT point of the original problem in~\eqref{model:imagedenoising} \cite[Theorem 2]{yashtini2022convergence}.

% \begin{figure}[htbp]
% 	\centering
% 	\begin{minipage}{0.24\linewidth}
% 		\centering
% 		\includegraphics[width=0.9\linewidth]{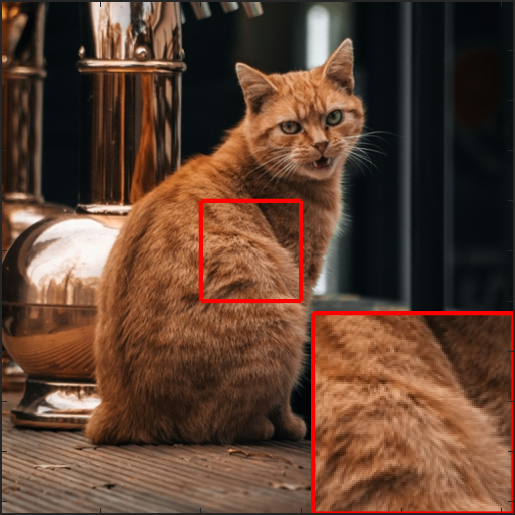}
% 		\centerline{(a) Original Image}
% 		%\label{chutian1}%
% 	\end{minipage}
% 	\begin{minipage}{0.24\linewidth}
% 		\centering
% 		\includegraphics[width=0.9\linewidth]{denoising/nsig10_image5.png}
% 		\centerline{(b) Noise Image($\tau = 10$)}
% 		% %文中引用该图片代号
% 	\end{minipage}
% 	\begin{minipage}{0.24\linewidth}
% 		\centering
% 		\includegraphics[width=0.9\linewidth]{}
% 		\centerline{(c) LRQA}
% 		% %文中引用该图片代号
% 	\end{minipage}
% 	\begin{minipage}{0.24\linewidth}
% 		\centering
% 		\includegraphics[width=0.85\linewidth]{}
% 	   \centerline{(d) SLRQA}
% 		% %文中引用该图片代号
% 	\end{minipage}
% 	%\qquad
% 	%让图片换行，
% 	\caption{Color image denoising result on image5. (a)  the original image, (b)  the noise image corrupted by Gaussian white noise with variance $\tau = 10$, (c)  the  denoised image reconstructed by SLRQA, (d)  denoised image reconstructed by LRQA$\cite{chen2019low}$.}\label{result1}
% \end{figure}

\begin{table*}[h]\scriptsize
	\setlength{\abovecaptionskip}{0.cm}
	\setlength{\belowcaptionskip}{-0.cm}
        \setlength{\tabcolsep}{2pt}
	\caption{\footnotesize{PSNR/SSIM results on color image denoising problem  of different methods without NSS.}}
	\label{TAB5}
	\centering
	\resizebox{\linewidth}{!}{
		\begin{tabular}{|c|c|c|c|c|c|c|c|c|c|c|c|c|}
			\hline
			\multicolumn{2}{|c|}{\diagbox[innerwidth=2.2cm]{Image}{Algorithm}}&pQSTV\cite{TTW2022pQSTV}&QWSNM\cite{QHZ2024QWSMN}&MCNNFNM\cite{YS2023MCNNFNM}&SRRC$\cite{zhang2023SRRC}$&QWNNM$\cite{gu2014weighted}$&DWT$\cite{cai2016image}$&LRQA-1\cite{chen2019low} &LRQA-2\cite{chen2019low}&SLRQA-1&SLRQA-2&SLRQA-3\\ 
			\hline
			\multirow{8}*{$\tau = 10$}&Image1&30.32/0.71&29.00/0.79&30.09/0.79&29.23/0.70&28.06/0.73&27.67/0.61&28.48/0.61&29.21/0.65&31.55/0.76&31.45/0.78&\textbf{32.86/0.81}\\
			\cline{2-13}
			\multicolumn{1}{|c|}{}&Image2&29.21/0.80&30.10/0.79 &28.10/0.75 &28.56/0.73&28.01/0.70&27.98/0.71&28.44/0.72&28.42/0.72&30.36/0.80&30.19/0.80&\textbf{31.20/0.82}\\
			
                \cline{2-13}
			\multicolumn{1}{|c|}{}&Image3&30.20/0.78&29.02/0.75&30.12/0.77 &28.03/0.60&27.81/0.61&28.21/0.61&28.49/0.64&28.56/0.65&31.33/0.77&31.79/0.78 &\textbf{32.03/0.84}\\
			
               \cline{2-13}
			\multicolumn{1}{|c|}{}&Image4&28.56/0.70 &29.20/0.72 &28.90/0.69 &28.12/0.60&27.90/0.65&28.29/0.66&28.53/0.64 &28.55/0.68&30.09/0.74&\textbf{31.45/0.75}&30.36/\textbf{0.75}\\

               \cline{2-13}
			\multicolumn{1}{|c|}{}&Image5&30.92/0.80&30.08/0.78 &30.81/0.80 &28.82/0.66&28.72/0.64&28.67/0.64&28.85/0.65&28.71/0.67&32.13/0.85&31.18/0.80&\textbf{32.50/0.87}\\
			
			\cline{2-13}
			\multicolumn{1}{|c|}{}&Image6&28.09/0.70&30.01/0.76&29.40/0.71 &28.32/0.60&27.89/0.56&28.02/0.61&28.72/0.65&28.66/0.65&30.75/0.76&\textbf{30.92/0.80}&30.77/0.76\\

               \cline{2-13}
			\multicolumn{1}{|c|}{}&Image7&30.08/0.75&29.60/0.74&30.23/0.75 &28.45/0.62&27.32/0.58&28.02/0.59&28.65/0.60&29.04/0.94 &30.95/0.73&30.82/0.75&\textbf{31.10/0.80}\\

               \cline{2-13}
			\multicolumn{1}{|c|}{}&Image8&29.20/0.70&30.91/0.71 &30.70/0.75 &28.01/0.60&28.11/0.59&28.46/0.58&28.56/0.63&28.24/ 0.63&31.23/0.76&30.75/0.74&\textbf{31.92/0.76}\\
      \cline{2-13}
			\multicolumn{1}{|c|}{}&Image9&30.00/0.75&29.80/0.71&29.32/0.72 &28.02/0.74&27.98/0.73&27.62/0.71&28.82/0.76&29.10/0.76&30.14/0.70&29.66/0.78&\textbf{30.80/0.79}\\
        \cline{2-13}
			\multicolumn{1}{|c|}{}&Image10&30.86/0.75&30.90/0.75&31.02/0.76 &30.52/0.74&29.72/0.72&29.62/0.70&30.90/0.76&31.10/0.79&31.14/0.78&31.26/0.80&\textbf{31.32/0.81}\\
%----------------------------------------------------------------
			\hline
                \hline
			\multirow{8}*{$\tau = 30$}&Image1&23.98/0.56&24.80/0.55&25.00/0.57 &20.32/0.30&20.41/0.25&20.36/0.28&20.49/0.27&20.80/0.30&25.47/0.66&24.30/0.61&\textbf{26.40/0.67}\\
		\cline{2-13}
			\multicolumn{1}{|c|}{}&Image2&22.80/0.55 &23.01/0.57&21.32/0.55 &20.01/0.32&20.73/0.32&20.09/0.33&20.44/0.37&20.40/0.37&23.90/0.61&22.77/0.64&\textbf{24.02/0.64}\\
			
                \cline{2-13}
			\multicolumn{1}{|c|}{}&Image3&22.90/0.54&23.43/0.58 &22.03/0.52 &20.12/0.21&20.24/0.22&20.55/0.25 &20.50/0.27&21.10/0.31&26.15/0.65&25.21/0.64&\textbf{26.22/0.68}\\
			
               \cline{2-13}
			\multicolumn{1}{|c|}{}&Image4&22.39/0.43&23.09/0.55&23.10/0.55 &20.22/0.29&20.10/0.26& 20.01/0.29&20.68/0.29&20.20/0.31&23.81/0.59&22.74/0.57&\textbf{24.21/0.64}\\

               \cline{2-13}
			\multicolumn{1}{|c|}{}&Image5&23.03/0.59&24.43/0.60 &24.60/0.60 &21.02/0.32&20.58/0.33& 20.65/0.37
&21.23/0.30&21.30/0.33&\textbf{26.60/0.68}&25.32/0.58&26.34/\textbf{0.68}\\
			
			\cline{2-13}
			\multicolumn{1}{|c|}{}&Image6&21.30/0.59&21.01/0.55&22.30/0.60 &20.87/0.33&21.01/0.37& 20.37/0.37&20.76/0.36&20.61/0.35&\textbf{23.95/0.66}&21.72/0.64&22.50/0.63\\

               \cline{2-13}
			\multicolumn{1}{|c|}{}&Image7&23.09/0.56&24.33/0.60&24.90/0.61 &20.33/0.29&20.71/0.29& 20.94/0.22&20.93/0.26&20.21/0.21&25.12/0.63&24.39/0.68&\textbf{26.42/0.68}\\

               \cline{2-13}
			\multicolumn{1}{|c|}{}&Image8&23.33/0.57&24.01/0.58&24.40/0.61 &20.56/0.22&20.42/0.24& 21.08/0.28&20.77/0.29&20.90/0.29&25.14/0.64&24.06/0.62&\textbf{26.12/0.67}\\
   \cline{2-13}
			\multicolumn{1}{|c|}{}&Image9&24.90/0.60&24.23/0.59&24.89/0.61 &24.82/0.58&24.22/0.58&23.62/0.55&24.90/0.58&25.10/0.61&25.30/0.64&25.45/0.68&\textbf{25.80/0.68}\\
        \cline{2-13}
			\multicolumn{1}{|c|}{}&Image10&23.90/0.60 &23.84/0.55&24.01/0.62&22.32/0.60&23.82/0.62&23.62/0.61&24.34/0.66&24.90/0.68&24.97/0.66&25.02/0.70&\textbf{25.68/0.70}\\
			% \hline
			%\multirow{1}*{}&Average&29.337/0.799& & & & & & \\
	% 		\hline
	% \end{tabular}
	% }

 %  \resizebox{\textwidth}{!}{
	% 	\begin{tabular}{|c|c|c|c|c|c|c|c|c|}
			%\hline
		%	\multicolumn{2}{|c|}{\diagbox[innerwidth=2.5cm]{Image}{Algorithm}}&WNNM$\cite{gu2014weighted}$ &DWT&\makecell[c]{LRQA-1\\ $\gamma = 0.65$}&\makecell[c]{LRQA-1\\ $\gamma = 0.5$}&\makecell[c]{SLRQA-1\\ $\gamma = 0.3$}&\makecell[c]{SLRQA-2\\ $\gamma = 0.9$}&\makecell[c]{SLRQA-3\\ $\gamma = 0.3$}.\\
                \hline
			\hline
			\multirow{8}*{$\tau = 50$}&Image1&18.02/0.32 &19.03/0.45&20.30/0.48 &17.92/0.17&17.01/0.14&18.01/0.18&17.66/0.17&17.30/0.18&23.33/0.50&23.50/0.51&\textbf{23.90}/0.51\\
			\cline{2-13}
			\multicolumn{1}{|c|}{}&Image2&18.30/0.40&19.02/0.45&18.46/0.42 &16.02/0.21&17.03/0.20&17.94/0.25&17.53/0.25&17.10/0.26&22.04/0.48&\textbf{22.77/0.52}&22.06/0.45\\
			
                \cline{2-13}
			\multicolumn{1}{|c|}{}&Image3&20.01/0.49&20.30/0.52&21.33/0.50 &17.75/0.15&17.40/0.21&17.24/0.19&17.56/0.17&17.90/0.18&24.21/0.51&24.91/0.58&\textbf{24.99/0.60}\\
			
               \cline{2-13}
			\multicolumn{1}{|c|}{}&Image4&20.30/0.39&21.02/0.42&20.80/0.40&17.08/0.14&16.60/0.16&16.87/0.17&17.82/0.19&17.20/0.14&22.23/0.46&22.34/0.47&\textbf{23.51/0.49}\\

               \cline{2-13}
			\multicolumn{1}{|c|}{}&Image5&20.20/0.45&21.03/0.50&20.39/0.51 &18.04/0.17&18.08/0.17& 18.31/0.18&18.39/0.19&18.30/0.17&21.84/0.52&21.62/\textbf{0.67}&\textbf{22.72}/0.65\\
			
			\cline{2-13}
			\multicolumn{1}{|c|}{}&Image6&19.32/0.48&19.80/0.50 &20.20/0.48 &17.53/0.25&18.01/0.26& 17.11/0.22&17.75/0.26&18.61/0.29&21.24/0.50&21.02/0.51&\textbf{22.00/0.53}\\

               \cline{2-13}
			\multicolumn{1}{|c|}{}&Image7&20.90/0.49&21.02/0.50&21.30/0.50 &17.60/0.17&18.21/0.18&17.11/0.13&18.08/0.17&18.71/0.17&23.57/0.48&23.09/0.45&\textbf{23.92/0.53}\\

               \cline{2-13}
			\multicolumn{1}{|c|}{}&Image8&21.93/0.50&22.03/0.49&22.30/0.51 &17.02/0.14&17.42/0.18&17.62/0.19&17.90/0.18&18.10/0.29&23.14/0.48&23.26/0.52&\textbf{24.32/0.56}\\

         \cline{2-13}
			\multicolumn{1}{|c|}{}&Image9&22.30/0.50 &21.79/0.51&22.80/0.52 &22.02/0.51&21..92/0.49&21.62/0.46&22.34/0.52&22.67/0.55&22.78/0.56&23.02/0.62&\textbf{23.04/0.63}\\
        \cline{2-13}
			\multicolumn{1}{|c|}{}&Image10&22.01/0.50&21.90/0.48 &22.34/0.55 &21.82/0.50&21.42/0.48&21.32/0.45&22.32/0.51&22.10/0.51&22.75/0.54&22.59/0.52&\textbf{23.11/0.58}\\
			%\hline
			%\multirow{1}*{}&Average&29.337/0.799& & & & & & \\
			\hline
	\end{tabular}
	}
	% \begin{tablenotes}    %这行要添加， 从这开始
	% 	\footnotesize               %这行要添加
	% 	\item[1] \textbf{Bold} fonts denote the best performance on the same conditions.
	% \end{tablenotes}
\end{table*}

\begin{figure}[h]
	\centering
	\begin{minipage}{0.17\linewidth}
		\centering
		\includegraphics[width=1\linewidth]{denoising/image5_o.png}
		\centerline{\footnotesize{(a) Image5}}
	\end{minipage}
	\begin{minipage}{0.17\linewidth}
		\centering
		\includegraphics[width=1\linewidth]{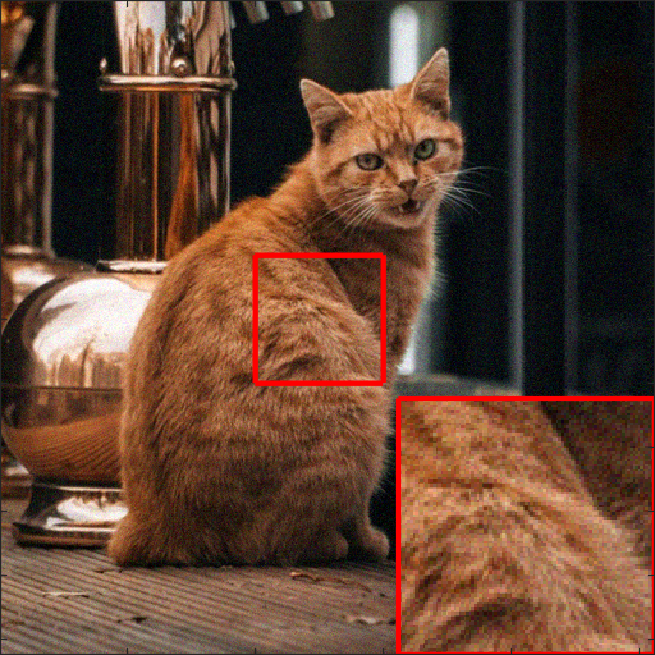}
		\centerline{\footnotesize{(b) Noisy Image}}
	\end{minipage}
    \begin{minipage}{0.17\linewidth}
		\centering
		\includegraphics[width=1\linewidth]{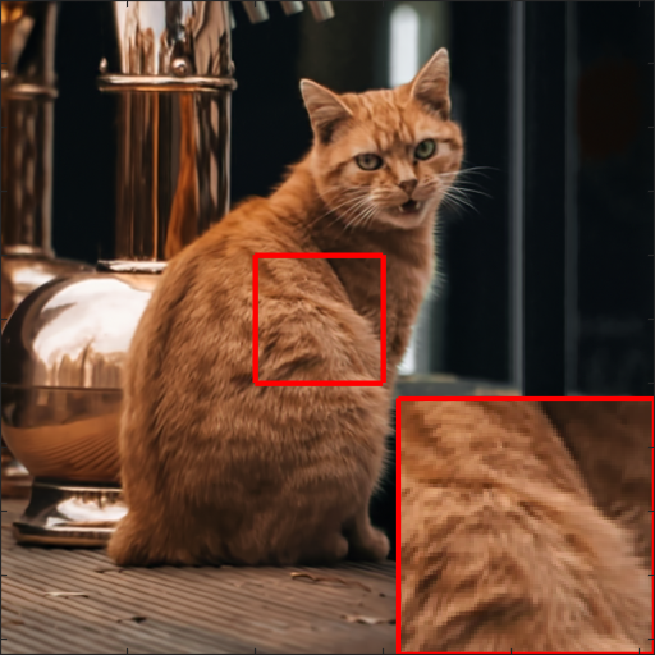}
		\centerline{\footnotesize{(c) pQSTV}}
		 %文中引用该图片代号
	\end{minipage}
    \begin{minipage}{0.17\linewidth}
		\centering
		\includegraphics[width=1\linewidth]{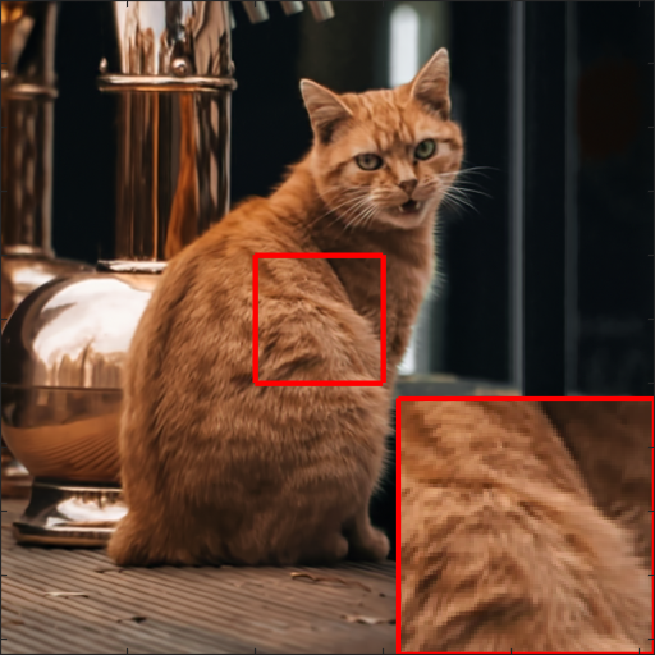}
		\centerline{\footnotesize{(d) QWSNM}}
		 %文中引用该图片代号
	\end{minipage}
    \begin{minipage}{0.17\linewidth}
		\centering
		\includegraphics[width=1\linewidth]{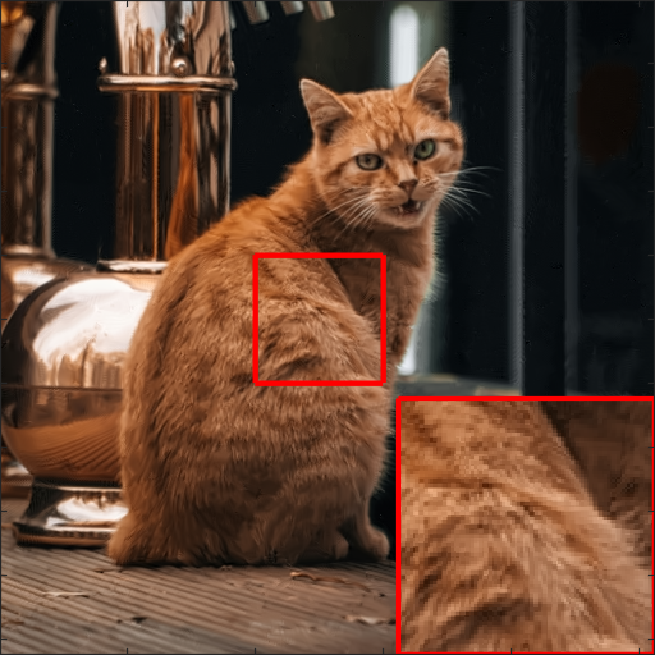}
		\centerline{\footnotesize{(e) MCNNFNM}}
		 %文中引用该图片代号
	\end{minipage}
 \begin{minipage}{0.17\linewidth}
		\centering
		\includegraphics[width=1\linewidth]{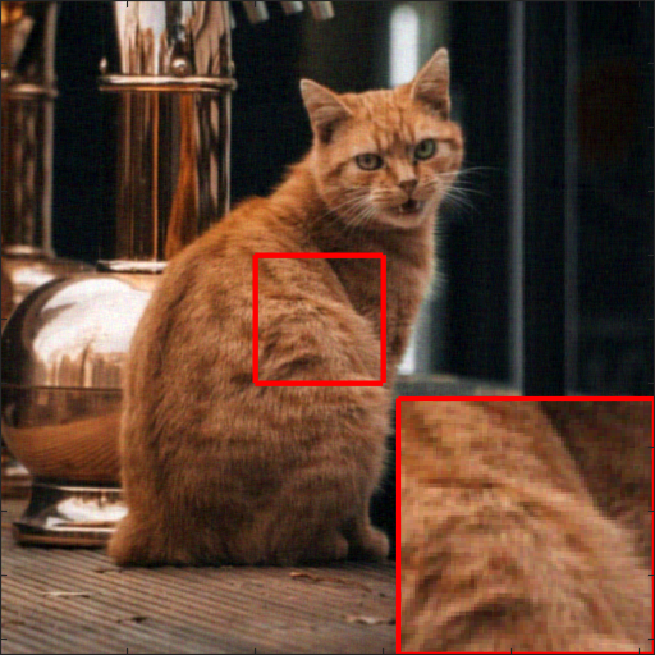}
		\centerline{\footnotesize{(f) SRRC}}
		 %文中引用该图片代号
	\end{minipage}
 \begin{minipage}{0.17\linewidth}
		\centering
		\includegraphics[width=1\linewidth]{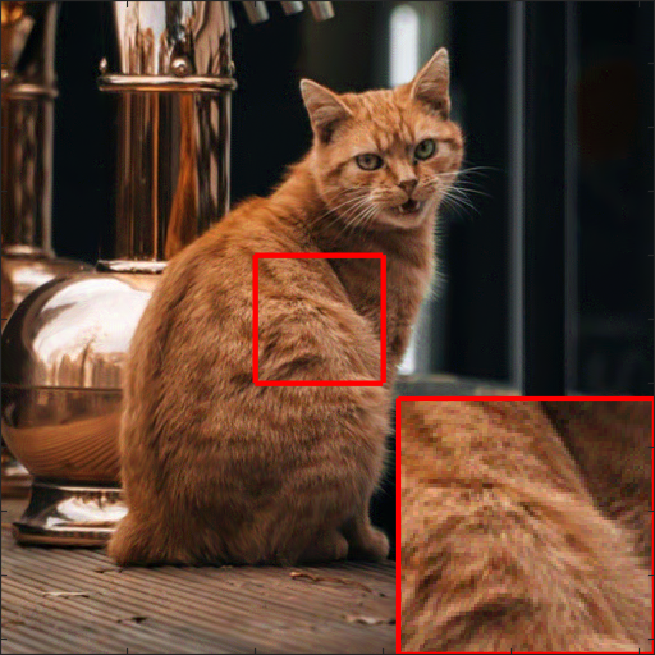}
		\centerline{\footnotesize{(g) QWNNM}}
		 %文中引用该图片代号
	\end{minipage}
 \begin{minipage}{0.17\linewidth}
		\centering
		\includegraphics[width=1\linewidth]{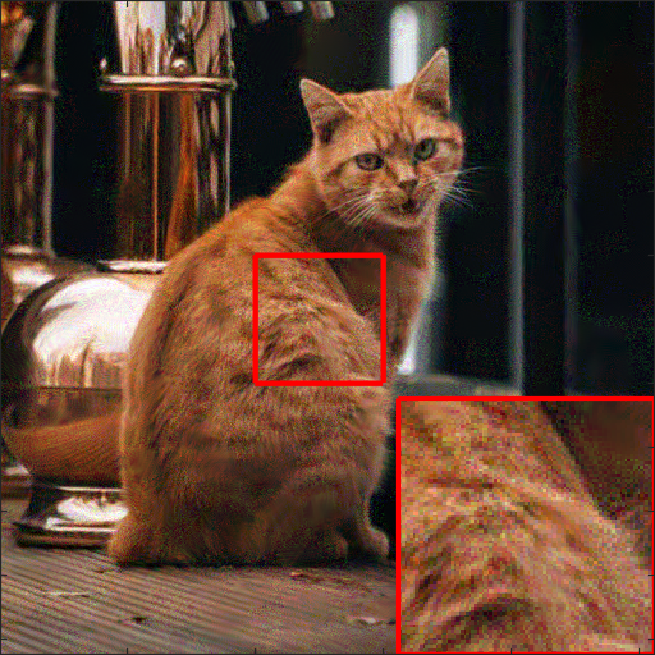}
		\centerline{\footnotesize{(h) DWT}}
		 %文中引用该图片代号
	\end{minipage}
	\begin{minipage}{0.17\linewidth}
		\centering
		\includegraphics[width=1\linewidth]{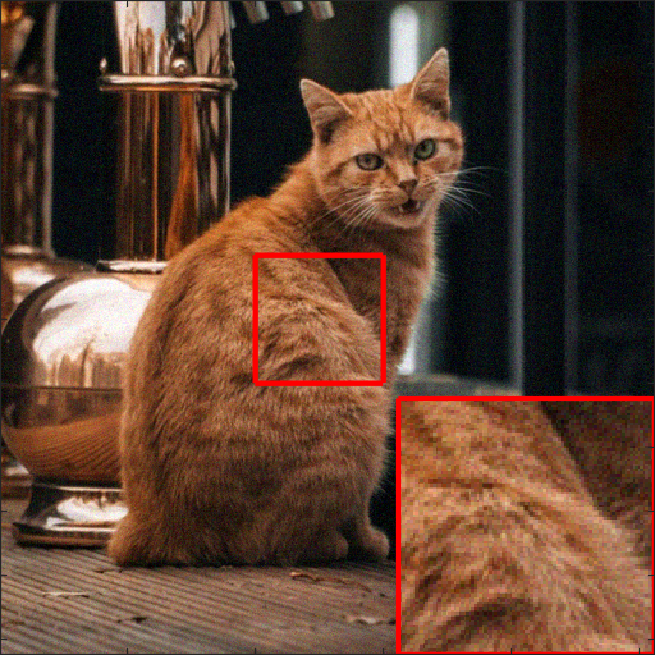}
	   \centerline{\footnotesize{(i) LRQA-1}}
		 %文中引用该图片代号
	\end{minipage}
\begin{minipage}{0.17\linewidth}
		\centering
		\includegraphics[width=1\linewidth]{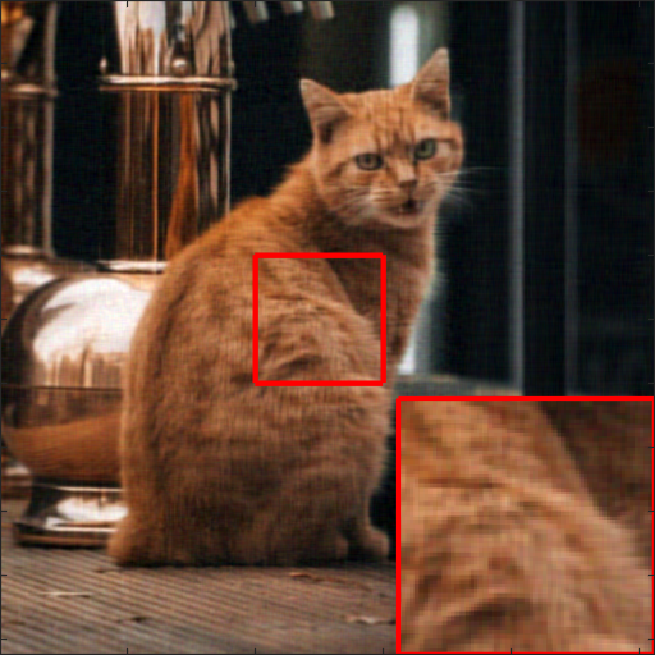}
	   \centerline{\footnotesize{(j) LRQA-2}}
		 %文中引用该图片代号
	\end{minipage}
 \begin{minipage}{0.17\linewidth}
		\centering
		\includegraphics[width=1\linewidth]{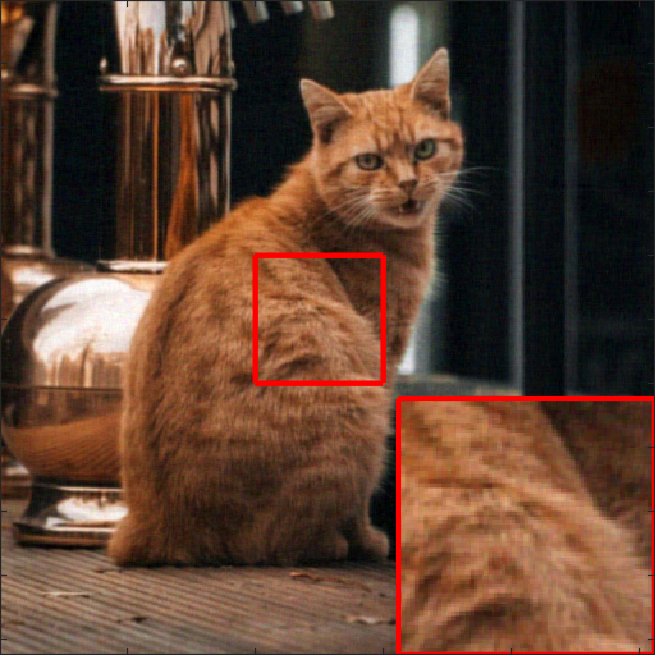}
	   \centerline{\footnotesize{(k) SLRQA-1}}
		 %文中引用该图片代号
	\end{minipage}
 \begin{minipage}{0.17\linewidth}
		\centering
		\includegraphics[width=1\linewidth]{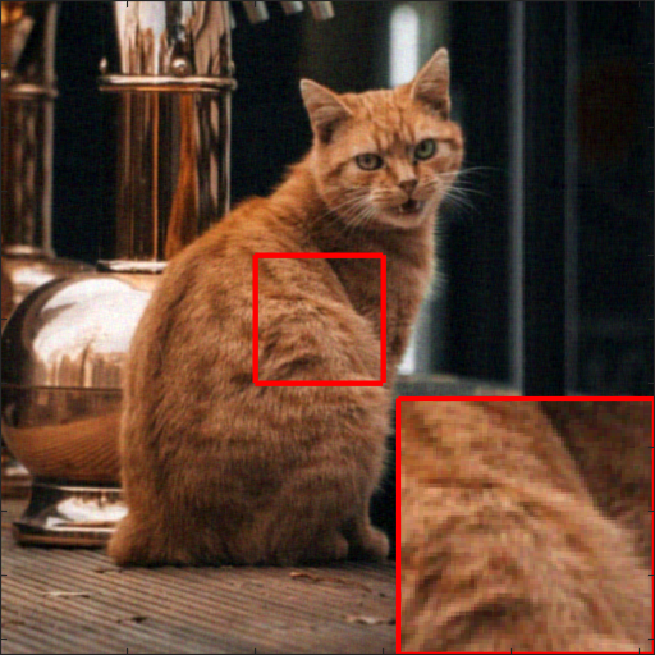}
	   \centerline{\footnotesize{(l) SLRQA-2}}
		 %文中引用该图片代号
	\end{minipage}
 \begin{minipage}{0.17\linewidth}
		\centering
		\includegraphics[width=1\linewidth]{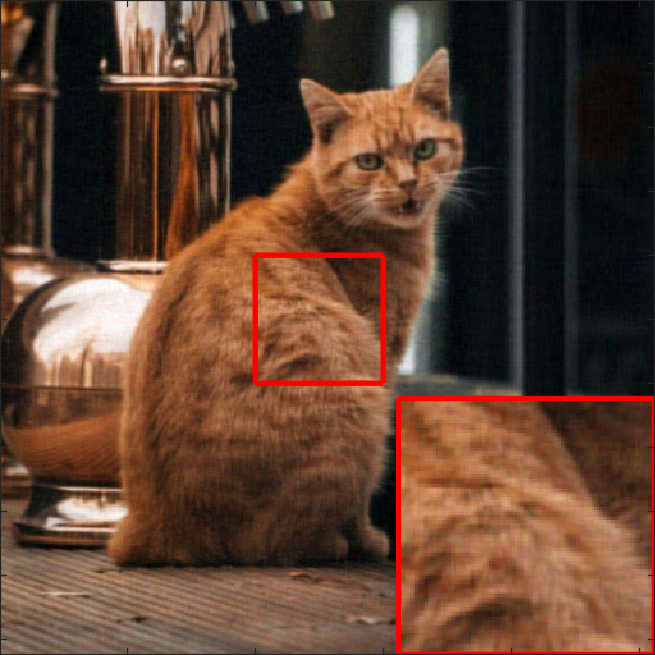}
	   \centerline{\footnotesize{(m) SLRQA-3}}
		 %文中引用该图片代号
	\end{minipage}
	%\qquad
	%让图片换行，
 \caption{Color image denoising results of different methods on "Image5" with  $\tau = 10$.}

 \label{result1}
\end{figure}
\begin{figure}[h]
	\subfigure{
\includegraphics[width=0.32\textwidth]{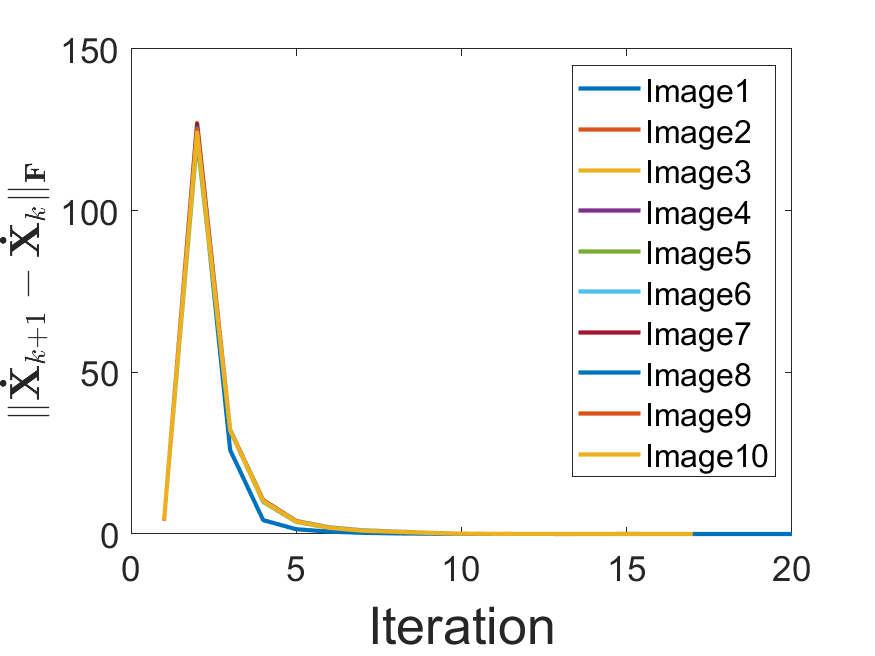}
	}
	\subfigure{
\includegraphics[width=0.32\textwidth]{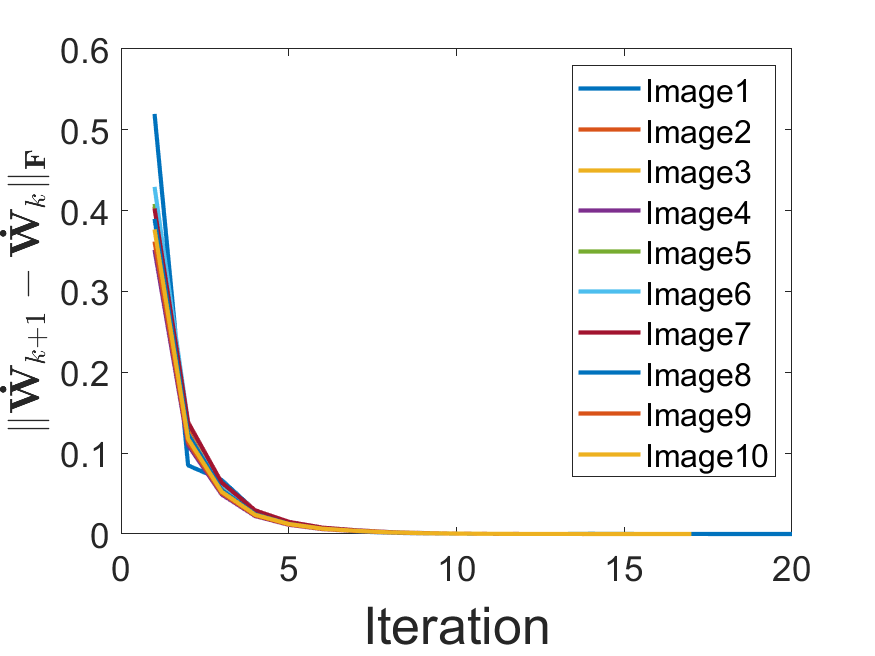}
	}
 \subfigure{
\includegraphics[width=0.32\textwidth]{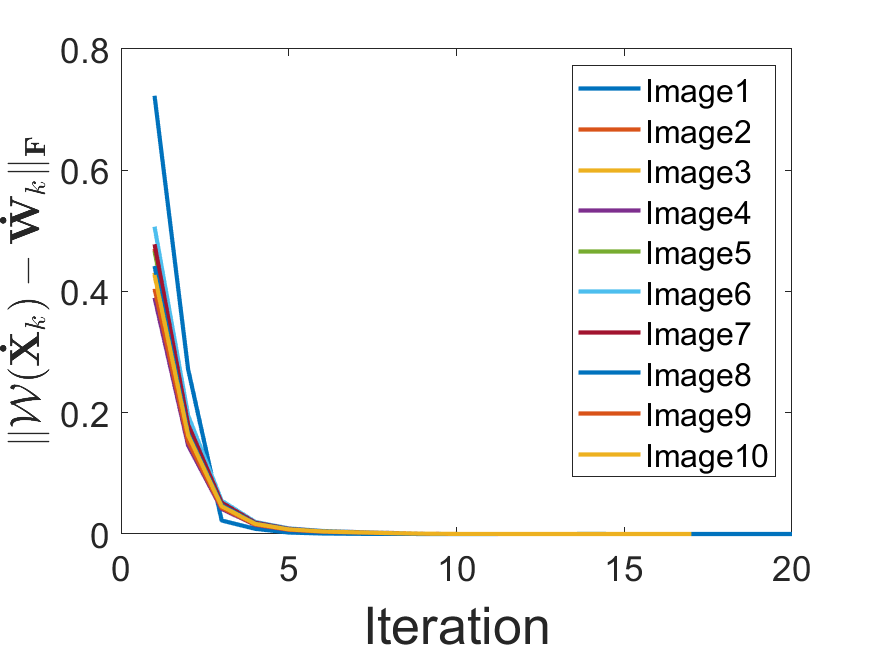}}
 \caption{The error convergence of SLRQA-1 for image denoising problems.}
  \label{fig:dn1}
 \end{figure}

\subsubsection{Color image denoising with NSS} \label{6-3-2}
For color image denoising problems with NSS, in addition to the methods compared in Section \ref{6-3-1}, we also compare SLRQA with CBM3D~\cite{MY2020CBMD}\footnote{The implementation is available from \url{https://webpages.tuni.fi/foi/GCF-BM3D/
} released 30 June 2024.}. Instead of constructing a denoising model, CBM3D employs block matching and sparse 3D collaborative filtering techniques to denoise the image, a process that cannot be achieved without using NSS. Hence we do not conduct a comparison with the CBM3D algorithm in the experiments of Section~\ref{6-3-1}.

The NSS prior is based on the fact that for a given local patch in a natural image, one can find many similar patches across the whole image. This technique has been widely used for image processing problems, such as image denoising, image deblurring, and image repairing. Here, the NSS process in~\cite{chen2019low} is used. Specifically, a noisy image $\mathbf{\dot{Y}}$ is divided into $n$ overlapping patches of size $\sqrt{d} \times \sqrt{d}$. Each patch is transformed into a vector $\mathbf{\dot{y}}^i \in \mathbb{H}^{d}, i = 1, 2,...,n$. Then for each patch, $s$ nearest neighbor patches are selected from a searching window with $L \times L$ pixels to form a set $K_i$. All patches in $K_i$ are stacked into a matrix $\mathbf{\dot{Y}}^i \in \mathbb{H}^{d\times s}$. We denoise $\mathbf{\dot{Y}}^i$ and aggregate all denoised patches together to form the clean color image $\mathbf{\dot{X}}$. Since all patches in each data matrix have similar structures, the constructed data matrix $\mathbf{\dot{Y}}^i$ is low rank. We also adopt the iterative regularization scheme in~\cite{chen2019low} and the iterative relaxation parameter is fixed to 0.1. % the specific process can refer to \cite{chen2019low}. 
For the parameter of NSS, we set patch size to 10$\times $10, 12$\times$12, and 14$\times$14 for $\tau$ = 10, 30, and 50, respectively. 
%The maximum iterations are set to 10, 12, and 14, and
The number of selected non-local similar patches $n$ is set as 70, 80, and 90,  respectively. The searching window $L$ is fixed at 30.
%The iterative relaxation parameter $\delta$ and the size of the searching window $L$ are fixed at 0.01 and 30 for all different noise levels. 

% The parameter $\gamma$ of the convex substitution function and the penalty parameter $\lambda$ of 1 norm take different values according to different noise levels. Here, the parameter $\lambda$ increases with the increase of the noise level. \\

The parameter $\gamma$ is set as 0.3, 0.8, and 0.3 respectively for SLRQA-1, SLRQA-2, and SLRQA-3. The parameter $w$ in SLRQA-3 is set as $w_{i,k} = 10 /(\sigma_i(\X_{k-1})+10^{-4})$.
The multiple values of $(\lambda, \tau)$ are used, i.e., $(\lambda, \tau) = (0.001, 10), (0.01, 30)$, and $(0.01, 50)$.
Intuitively, NSS enhances the low-rank property of the image. Hence, the parameter $\lambda$ is chosen to be smaller when compared to that without using NSS.

% For SLRQA-3, similar to the parameter setting in \cite{xie2016weighted},  $w^k_i = c\sqrt{n} / (\sigma_i^{1/p}(\mathbf{\dot{X}}_i^{k})+\varepsilon)$, where $n$ is the number of similar patches group, $\sigma_i(\mathbf{\dot{X}}_i^{k})$ is the $i$-th singular value of $\mathbf{\dot{X}}_i^{k}$. 
% % $\varepsilon = 10^{-4}$ to avoid being divided by zero, and $c = 2\sqrt{2}\tau^2$. 
% Since $\mathbf{\dot{X}}_i^{k}$ is unavailable before estimated, it can be initialized by 
% $$
%     \begin{split}
%         \sigma_i(\mathbf{\dot{X}}_i^{k}) = \sqrt{\max\{ \sigma_i^2(\mathbf{\dot{Y}}_i) - n\tau^2, 0\}}.
%     \end{split}
% $$
% \begin{figure}[h]
% 	\subfigure[]{
% 		\includegraphics[width=0.45\textwidth]{denoising/LRQAschattenPSNR.png}
% 	}
% 	\subfigure[]{
% 		\includegraphics[width=0.45\textwidth]{denoising/LRQAschattenSSIM.png}
% 	}
% 	\caption{PSNR and SSIM values of LRQA-1 for color image denoising with
% different $\gamma$ on image2.} \label{fig:LRQAcor}
% \end{figure}

 The PSNR and SSIM values of each image between the competing methods under different noise levels are reported in Table~$\ref{TAB7}$. Most proposed SLRQA methods achieve higher PSNR and SSIM values than other methods. Moreover, SLRQA-3 achieves the best results in most cases. The visual comparisons of Image5 with $\tau = 30$ are shown in Figure~$\ref{result2}$ as a typical example. 
It is shown in Figure~\ref{result2} that the images recovered by CBM3D and QWNNM methods are still noisy. In addition, the images recovered by DWT, LRQA-1, and LRQA-2 are overly smooth, resulting in the loss of the features of the original images. In contrast, the image recovered by SLRQA-3 can not only remove the noise but also keep the features of the original images. 

% Hence the PSNR and SSIM values of SLRQA-3 are almost the highest.

% As can be seen from Figure~\ref{result2}, in terms of the hair details, WNNM and LRQA-1, tend to be over-smooth. The SLRQA-3 not only removes most of the noise but also preserves small-scale image details. Therefore, these results demonstrate the feasibility of our proposed SLRQA method for color image denoising.
%It is shown that the SLRQA-3 method not only removes the noise but also preserves the details in the original image better. 

From the above numerical experiments, we can conclude that the SLRQA has better denoising performance compared to other methods with or without using NSS in the sense of PSNR and SSIM. %The numerical experiments fully demonstrate the effectiveness and robustness of the SLRQA in color image denoising problems.

\begin{table*}[h]\scriptsize
	\setlength{\abovecaptionskip}{0.cm}
	\setlength{\belowcaptionskip}{-0.cm}
        \setlength{\tabcolsep}{2pt}
	\caption{PSNR/SSIM results on color image denoising problem of different methods with NSS. (To highlight the gap between methods, we keep three decimal digits for SSIM.)}
	\label{TAB7}
	\centering
	\resizebox{\linewidth}{!}{
		\begin{tabular}{|c|c|c|c|c|c|c|c|c|c|c|c|c|c|}
			\hline
			\multicolumn{2}{|c|}{\diagbox[innerwidth=2.2cm]{Image}{Algorithm}}&pQSTV~\cite{TTW2022pQSTV}&QWSNM~\cite{QHZ2024QWSMN}&MCNNFNM~\cite{YS2023MCNNFNM}&CBM3D$~\cite{MY2020CBMD}$&SRRC$~\cite{zhang2023SRRC}$&QWNNM$~\cite{huang2022quaternion}$&DWT$~\cite{cai2016image}$&LRQA-1~\cite{chen2019low} &LRQA-2~\cite{chen2019low}&SLRQA-1&SLRQA-2&SLRQA-3\\
			\hline
			\multirow{8}*{$\tau = 10$}&Image1&34.78/0.917 &35.48/0.921 & 35.10/0.913&35.71/0.920&35.67/0.919&34.16/0.895&34.62/0.913&35.19/0.919&35.34/0.922&35.68/0.924&35.58/0.921&\textbf{35.86/0.926}\\
			\cline{2-14}
			\multicolumn{1}{|c|}{}&Image2&30.84/0.862 & 31.98/0.844&31.25/0.839&32.22/0.855&32.02/0.849&31.37/0.862&30.89/0.850&31.94/0.868&31.88/0.866&32.13/0.865&32.19/0.869&\textbf{32.40/0.872}\\
			
                \cline{2-14}
			\multicolumn{1}{|c|}{}&Image3&36.85/0.934 &38.50/0.975 &37.89/0.960&37.67/0.948&37.72/0.943&36.37/0.940&36.22/0.934&37.51/0.955&38.62/0.972&38.71/0.973&38.79/0.978 &\textbf{38.83/0.984}\\
			
               \cline{2-14}
			\multicolumn{1}{|c|}{}&Image4&39.79/0.969 & 40.10/0.975&39.51/0.956&39.36/0.952&39.12/0.948&38.24/0.936&38.89/0.946&39.67/0.955 &40.87/0.972&40.99/0.974&\textbf{41.05/0.976}&40.36/\textbf{0.976}\\

               \cline{2-14}
			\multicolumn{1}{|c|}{}&Image5&36.49/0.920 & 37.05/0.951&36.29/0.943&37.08/0.951&36.82/0.944&35.00/0.921&35.87/0.942&36.48/0.947&36.71/0.945&37.01/0.949&37.18/0.951&\textbf{37.50/0.954}\\
			
			\cline{2-14}
			\multicolumn{1}{|c|}{}&Image6&35.56/0.924 & 37.12/0.962&37.08/0.963&37.02/0.969&36.32/0.962&35.42/0.942&36.02/0.959&36.65/0.966&36.66/0.966&\textbf{37.37/0.969}&37.22/0.968&37.27/0.969\\

               \cline{2-14}
			\multicolumn{1}{|c|}{}&Image7&34.11/0.901 &\textbf{ 35.08/0.916}&34.66/0.907&34.25/0.905&34.01/0.894&33.69/0.882&33.92/0.899&34.25/0.904&34.24/0.904 &34.68/0.904&34.82/0.905&35.00/0.906\\

               \cline{2-14}
			\multicolumn{1}{|c|}{}&Image8&34.27/0.923 & 35.77/0.938&35.46/0.934&\textbf{35.94/0.942}&34.82/0.932&34.25/0.910&34.66/0.928&35.14/0.935&35.24/ 0.935&35.84/0.941&35.75/0.940&35.92/\textbf{0.942}\\

           \cline{2-14}
			\multicolumn{1}{|c|}{}&Image9&34.15/0.829 &34.83/0.870 &33.37/0.826&33.25/0.805&32.81/0.794&32.60/0.788&32.32/0.749&33.75/0.804&34.14/0.884 &34.24/0.895&34.92/0.892&\textbf{35.01/0.902}\\

               \cline{2-14}
			\multicolumn{1}{|c|}{}&Image10&33.05/0.812 & 33.32/0.826&32.80/0.811&32.94/0.780&33.52/0.802&32.84/0.833&32.85/0.818&32.84/0.893&32.24/ 0.804&33.50/0.836&35.45/0.820&\textbf{34.08/0.842}\\
			% \hline
			%\multirow{1}*{}&Average& & &&& & & \\
	% 		\hline
	% \end{tabular}
	% }

 %  \resizebox{\textwidth}{!}{
	% 	\begin{tabular}{|c|c|c|c|c|c|c|c|c|}
			%\hline
			%\multicolumn{2}{|c|}{\diagbox[innerwidth=2.5cm]{Image}{Algorithm}}&WNNM$\cite{gu2014weighted}$ &DWT&\makecell[c]{LRQA-1\\ $\gamma = 0.65$}&\makecell[c]{LRQA-1\\ $\gamma = 0.5$}&\makecell[c]{SLRQA-1\\ $\gamma = 0.3$}&\makecell[c]{SLRQA-2\\ $\gamma = 0.9$}&\makecell[c]{SLRQA-3\\ $\gamma = 0.3$}.\\
			\hline
                \hline
			\multirow{8}*{$\tau = 30$}&Image1&30.35/0.801 & 30.21/0.767&29.90/0.817&30.82/0.854&30.76/0.844&30.41/0.854&30.36/0.848&30.66/0.859&31.00/0.862&31.30/0.871&31.30/0.871&\textbf{31.40/0.877}\\
			\cline{2-14}
			\multicolumn{1}{|c|}{}&Image2&27.28/0.722 & 27.71/0.743&26.67/0.710&27.76/0.740&27.01/0.729&26.73/0.729&26.99/0.737&27.30/0.730&27.40/0.730&27.67/0.740&27.77/0.741&\textbf{28.00/0.743}\\
			
                \cline{2-14}
			\multicolumn{1}{|c|}{}&Image3&30.05/0.818 & 30.12/0.802&30.67/0.801&31.02/0.810&30.82/0.801&30.74/0.802&30.55/0.801 &30.93/0.811&31.10/0.813&31.15/0.813&31.21/0.814&\textbf{31.22/0.814}\\

                \cline{2-14}
			 \multicolumn{1}{|c|}{}&Image4&29.26/0.764& 29.52/0.770&28.85/0.767&29.45/0.802&29.22/0.791&29.10/0.806&29.00/0.798&29.37/0.805&29.203/0.81&29.57/0.806&29.74/0.807&\textbf{29.81/0.809}\\

               \cline{2-14}
			\multicolumn{1}{|c|}{}&Image5&30.64/0.846 & 30.02/0.832&30.35/0.816&32.21/0.847&31.02/0.846&31.08/0.846& 30.65/0.837
&31.34/0.853&31.30/0.853&\textbf{31.50/0.859}&31.32/0.853&31.34/0.854\\
			
			\cline{2-14}
			\multicolumn{1}{|c|}{}&Image6&31.35/0.902 & 30.19/0.823&30.90/0.898&31.55/0.932&30.87/0.923&31.01/0.922& 30.37/0.927&31.56/0.941&31.61/0.941&31.68/0.940&\textbf{31.72/0.941}&31.50/0.937\\

               \cline{2-14}
			\multicolumn{1}{|c|}{}&Image7&28.72/0.762&29.02/0.773 & 28.99/0.778&29.23/0.801&28.81/0.792&28.71/0.790& 28.94/0.794&29.09/0.799&29.21/0.801&29.32/0.801&29.39/0.801&\textbf{29.42/0.802}\\

               \cline{2-14}
			\multicolumn{1}{|c|}{}&Image8&30.04/0.808 & \textbf{30.29/0.834}&29.36/0.802&29.92/0.820&29.56/0.815&29.42/0.814& 28.68/0.817&29.82/0.819&29.90/0.822&29.80/0.818&30.06/0.823&30.12/0.827\\

        \cline{2-14}
			\multicolumn{1}{|c|}{}&Image9&29.00/0.770 &29.12/0.782 &28.87/0.712&29.25/0.795&29.01/0.794&28.80/0.738&28.12/0.690&29.05/0.799&29.24/0.804 &29.74/0.806&29.82/0.805&\textbf{30.02/0.816}\\

               \cline{2-14}
			\multicolumn{1}{|c|}{}&Image10&29.95/0.925 & 28.71/0.845&29.19/0.713&30.94/0.942&30.82/0.932&30.71/0.929&30.66/0.928&31.14/0.935&31.34/ 0.935&\textbf{31.93/0.942}&31.75/0.940&31.92/\textbf{0.942}\\
			% \hline
			%\multirow{1}*{}&Average&29.337/0.799& & & & & & \\
	% 		\hline
	% \end{tabular}
	% }

 %  \resizebox{\textwidth}{!}{
	% 	\begin{tabular}{|c|c|c|c|c|c|c|c|c|}
			%\hline
		%	\multicolumn{2}{|c|}{\diagbox[innerwidth=2.5cm]{Image}{Algorithm}}&WNNM$\cite{gu2014weighted}$ &DWT&\makecell[c]{LRQA-1\\ $\gamma = 0.65$}&\makecell[c]{LRQA-1\\ $\gamma = 0.5$}&\makecell[c]{SLRQA-1\\ $\gamma = 0.3$}&\makecell[c]{SLRQA-2\\ $\gamma = 0.9$}&\makecell[c]{SLRQA-3\\ $\gamma = 0.3$}.\\
			\hline
                \hline
			\multirow{8}*{$\tau = 50$}&Image1&28.07/0.816 &28.21/0.767 &27.33/0.731&28.32/0.815&27.92/0.810&28.01/0.814&28.15/0.815&28.31/0.818&28.30/0.818&\textbf{28.60/0.821}&28.50/0.819&\textbf{28.60}/0.819\\
			\cline{2-14}
			\multicolumn{1}{|c|}{}&Image2&25.57/0.631 & 25.36/0.625&24.66/0.630&25.23/0.670&25.02/0.661&25.03/0.660&24.94/0.653&25.22/0.666&25.10/0.666&25.47/0.669&\textbf{25.77/0.674}&25.60/0.670\\
                \cline{2-14}
			\multicolumn{1}{|c|}{}&Image3&28.55/0.728 & 28.95/0.782&28.57/0.779&28.82/0.778&28.75/0.770&28.74/0.761&28.24/0.754&28.85/0.769&28.90/0.773&28.95/0.781&28.91/0.781&\textbf{28.99/0.784}\\
			
               \cline{2-14}
			\multicolumn{1}{|c|}{}&Image4&27.01/0.736 & 27.15/0.712&26.67/0.707&27.12/0.739&27.08/0.740&26.60/0.736&26.87/0.737&27.06/0.742&27.20/0.741&27.27/0.746&27.34/0.746&\textbf{27.51/0.749}\\

               \cline{2-14}
			\multicolumn{1}{|c|}{}&Image5&27.06/0.604 & 27.82/0.760&27.47/0.697&28.56/0.762&28.34/0.760&28.08/0.756& 28.01/0.755&28.41/0.769&28.30/0.767&28.50/0.769&28.62/\textbf{0.776}&\textbf{28.72}/0.775\\
			
			\cline{2-14}
			\multicolumn{1}{|c|}{}&Image6&28.90/0.907 &28.84/0.912 &27.81/0.823&28.77/0.900&28.53/0.895&28.01/0.892& 28.11/0.899&28.52/0.916&28.61/0.918&28.98/0.930&\textbf{29.02/0.931}&29.00/0.931\\

               \cline{2-14}
			\multicolumn{1}{|c|}{}&Image7&26.04/0.681 & 26.19/0.692&26.55/0.683&26.53/0.749&26.43/0.745&26.21/0.738&26.11/0.737&26.74/0.747&26.71/0.744&26.82/0.750&26.89/0.751&\textbf{26.92/0.753}\\

               \cline{2-14}
			\multicolumn{1}{|c|}{}&Image8&27.91/0.712 & 27.74/0.721&26.73/0.702&28.12/0.723&27.90/0.714&27.42/0.714&27.62/0.719&28.02/0.723&28.10/0.729&28.10/0.730&28.26/0.732&\textbf{28.32/0.732}\\

      \cline{2-14}
			\multicolumn{1}{|c|}{}&Image9&27.63/0.664 & 27.90/0.687&26.19/0.650&26.65/0.655&26.71/0.664&26.80/0.682&26.02/0.629&27.05/0.694&27.24/0.700 &27.34/0.701&27.62/0.705&\textbf{28.70/0.736}\\

               \cline{2-14}
			\multicolumn{1}{|c|}{}&Image10&27.07/0.665 &27.71/0.702 &27.06/0.646&27.34/0.702&27.02/0.682&26.81/0.639&26.67/0.618&27.34/0.652&27.64/ 0.701&27.73/0.687&27.75/0.702&\textbf{28.02/0.721}\\
			%\hline
			%\multirow{1}*{}&Average&29.337/0.799& & & & & & \\
			\hline
	\end{tabular}
	}
\end{table*}

\begin{figure}[h]
	\centering
	\begin{minipage}{0.17\linewidth}
		\centering
\includegraphics[width=0.9\linewidth]{denoising/image5_o.png}
		\centerline{(a) Image5}
	\end{minipage}    
	\begin{minipage}{0.17\linewidth}
		\centering		\includegraphics[width=0.9\linewidth]{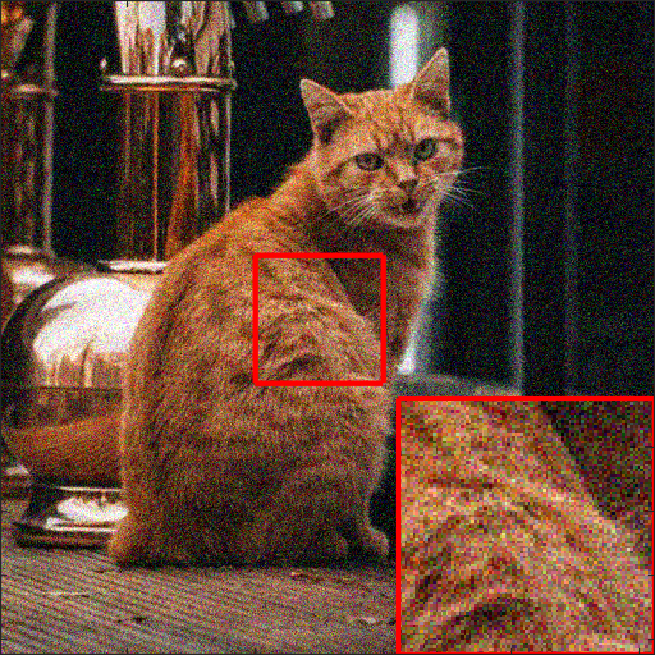}
		\centerline{(b) Noisy Image}
	\end{minipage}
\begin{minipage}{0.17\linewidth}
		\centering		\includegraphics[width=0.9\linewidth]{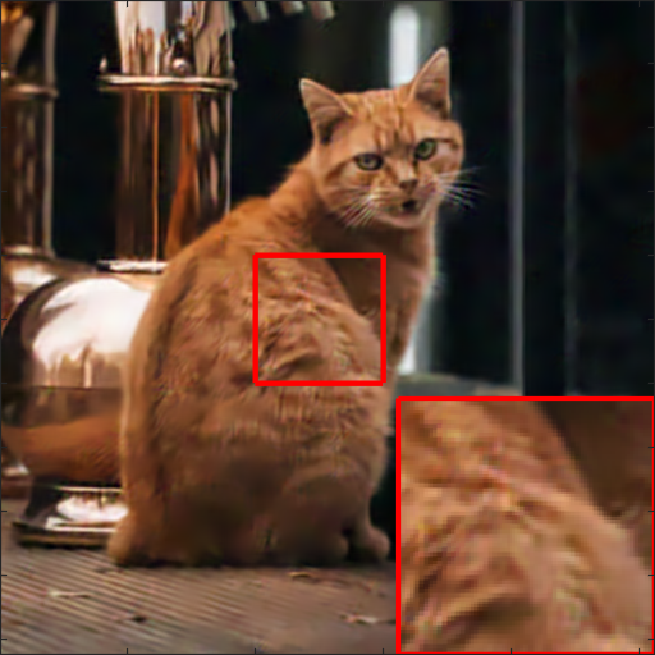}
		\centerline{(c) pQSTV}
	\end{minipage}    
    \begin{minipage}{0.17\linewidth}
		\centering		\includegraphics[width=0.9\linewidth]{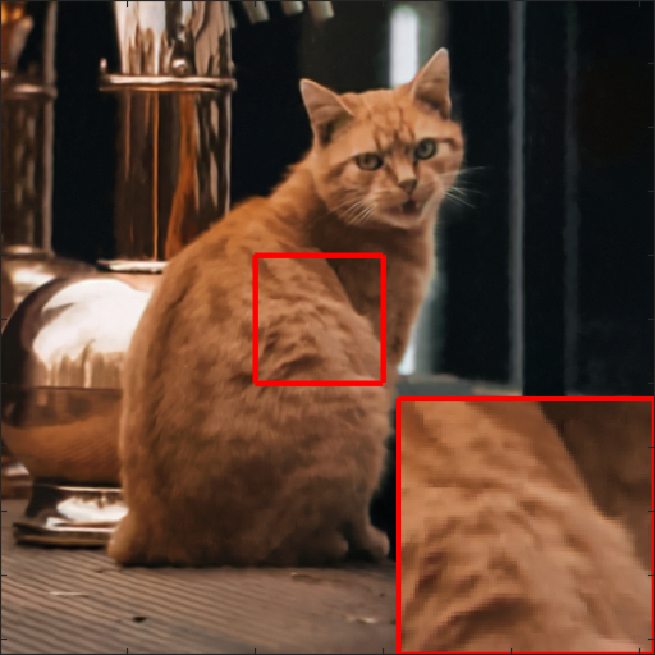}
		\centerline{(d) QWSNM}
	\end{minipage}
    \begin{minipage}{0.17\linewidth}
		\centering
		\includegraphics[width=0.9\linewidth]{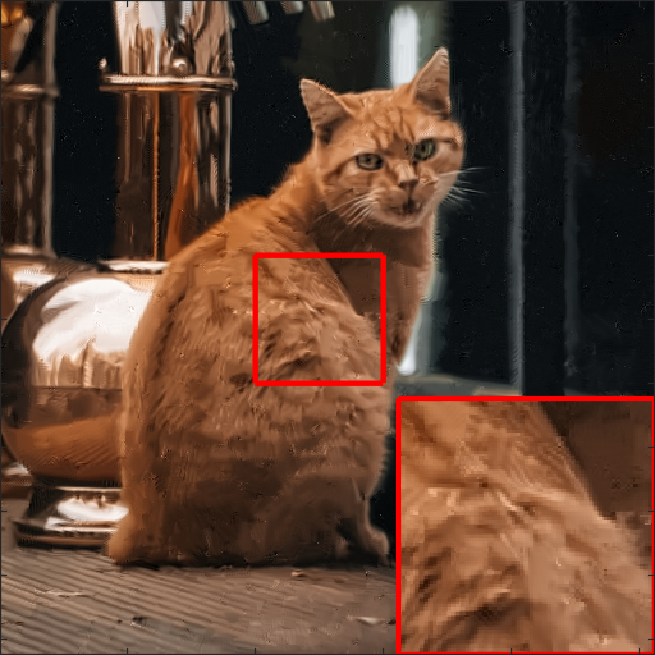}
		\centerline{(e) MCNNFNM}
	\end{minipage}   
	\begin{minipage}{0.17\linewidth}
		\centering
		\includegraphics[width=0.9\linewidth]{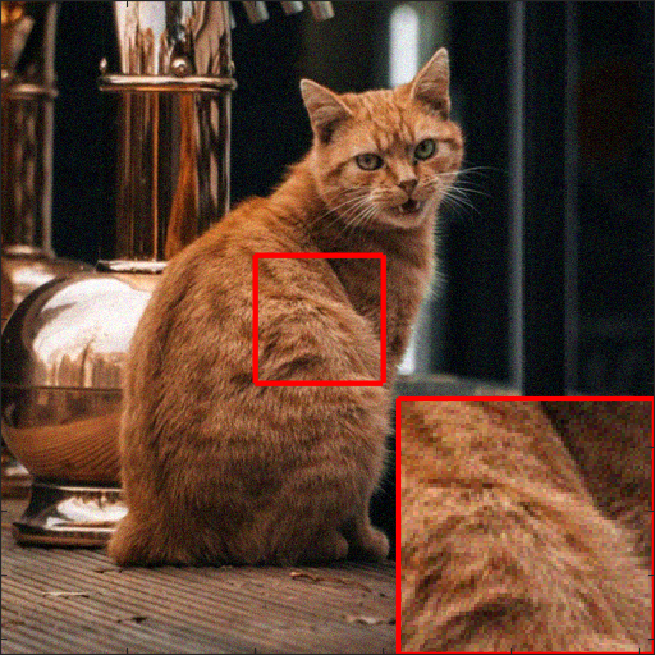}
		\centerline{(f) CBM3D}
		 %文中引用该图片代号
	\end{minipage}
 \begin{minipage}{0.17\linewidth}
		\centering
		\includegraphics[width=0.9\linewidth]{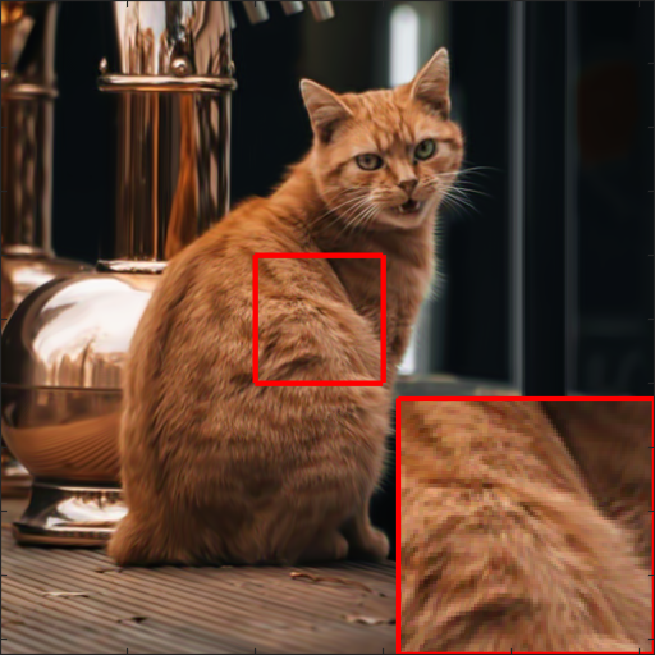}
		\centerline{(g) SRRC}
		 %文中引用该图片代号
	\end{minipage}
 \begin{minipage}{0.17\linewidth}
		\centering
		\includegraphics[width=0.9\linewidth]{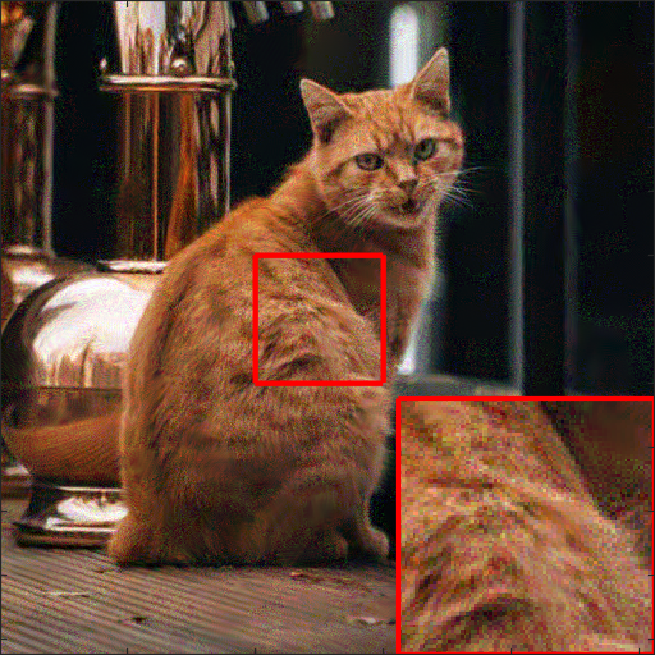}
		\centerline{(h) QWNNM}
		 %文中引用该图片代
	\end{minipage}
 \begin{minipage}{0.17\linewidth}
		\centering
		\includegraphics[width=0.9\linewidth]{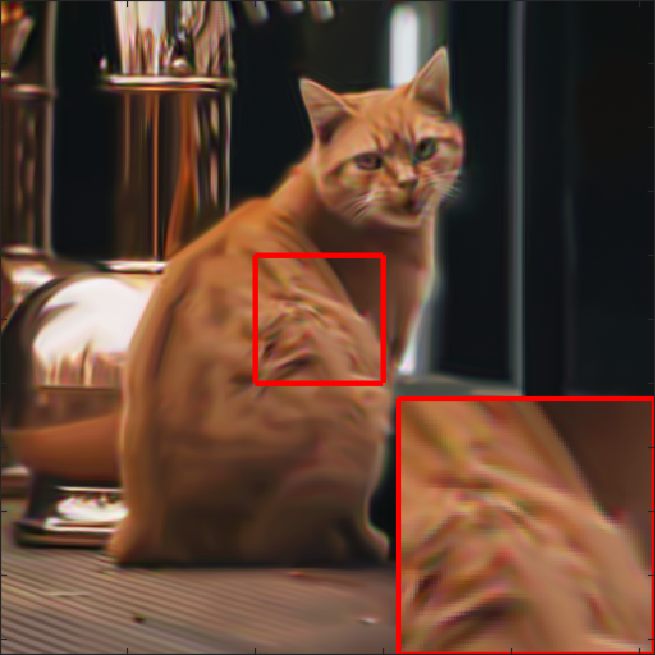}
		\centerline{(i) DWT}
		 %文中引用该图片代号
	\end{minipage}
	\begin{minipage}{0.17\linewidth}
		\centering
		\includegraphics[width=0.9\linewidth]{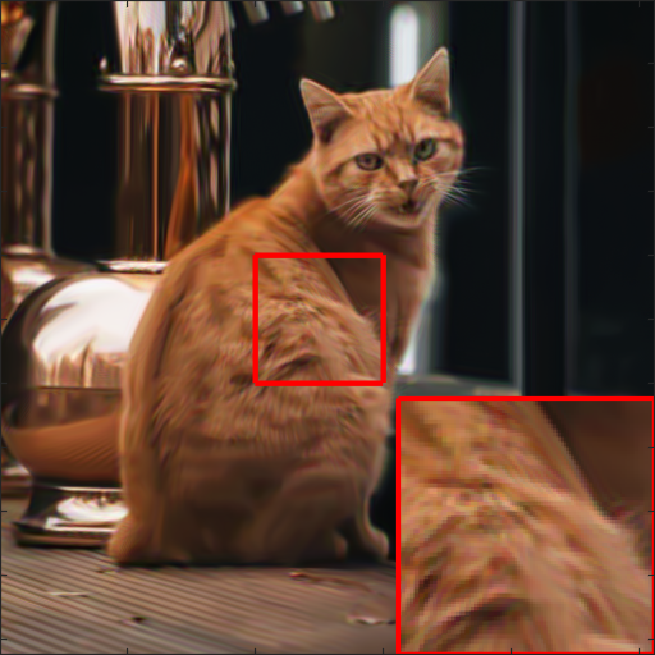}
	   \centerline{(j) LRQA-1}
		 %文中引用该图片代号
	\end{minipage}
\begin{minipage}{0.17\linewidth}
		\centering
		\includegraphics[width=0.9\linewidth]{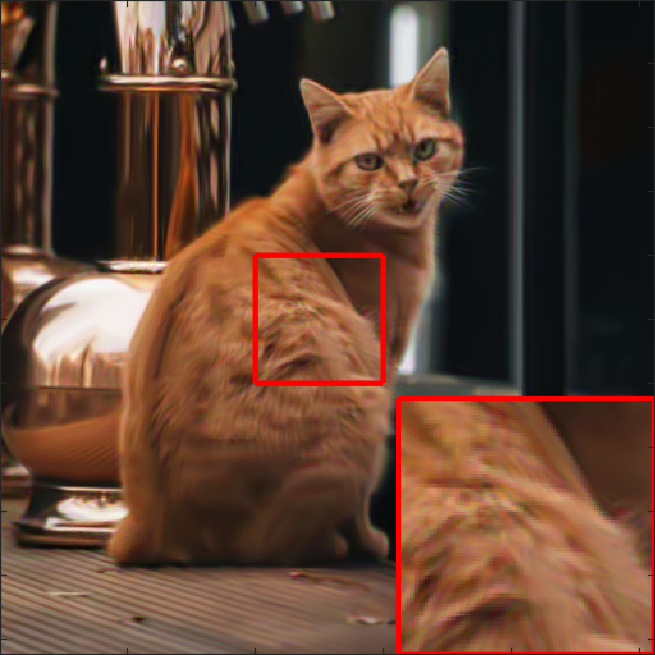}
	   \centerline{(k) LRQA-2}
		 %文中引用该图片代号
	\end{minipage}
 \begin{minipage}{0.17\linewidth}
		\centering
		\includegraphics[width=0.9\linewidth]{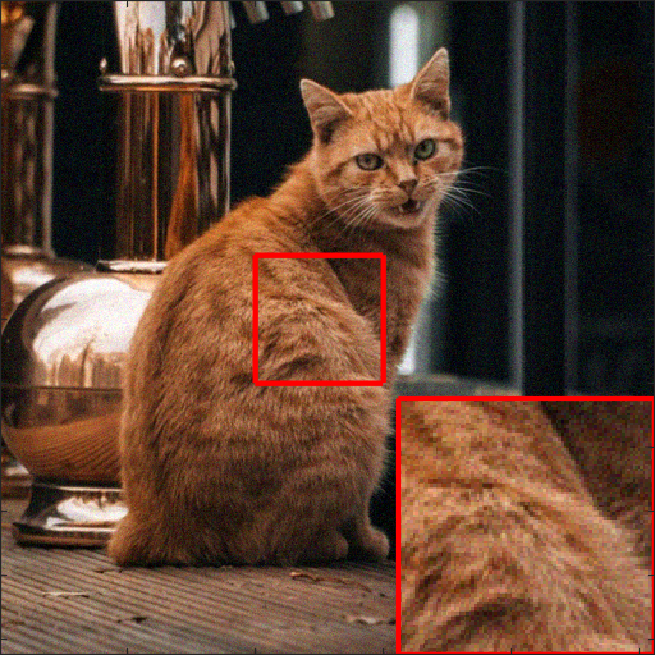}
	   \centerline{(l) SLRQA-1}
		 %文中引用该图片代号
	\end{minipage}
 \begin{minipage}{0.17\linewidth}
		\centering
		\includegraphics[width=0.9\linewidth]{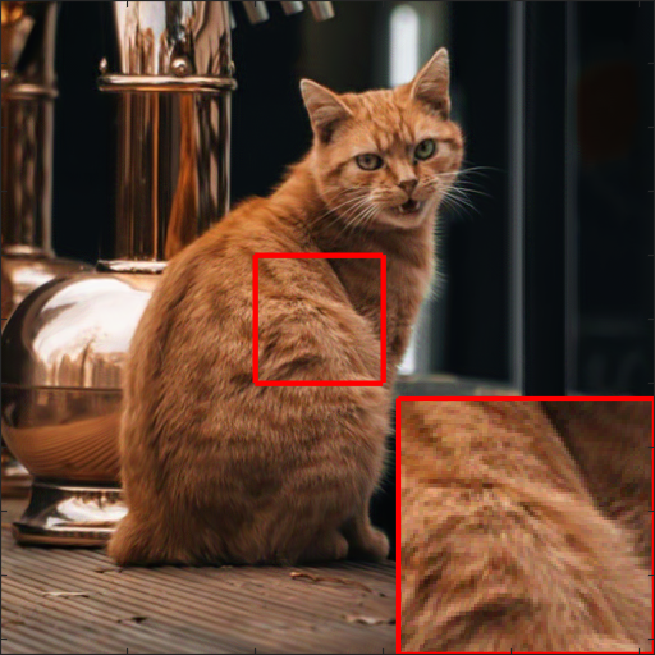}
	   \centerline{(m) SLRQA-2}		 %文中引用该图片代号
	\end{minipage}
 \begin{minipage}{0.17\linewidth}
		\centering
		\includegraphics[width=0.9\linewidth]{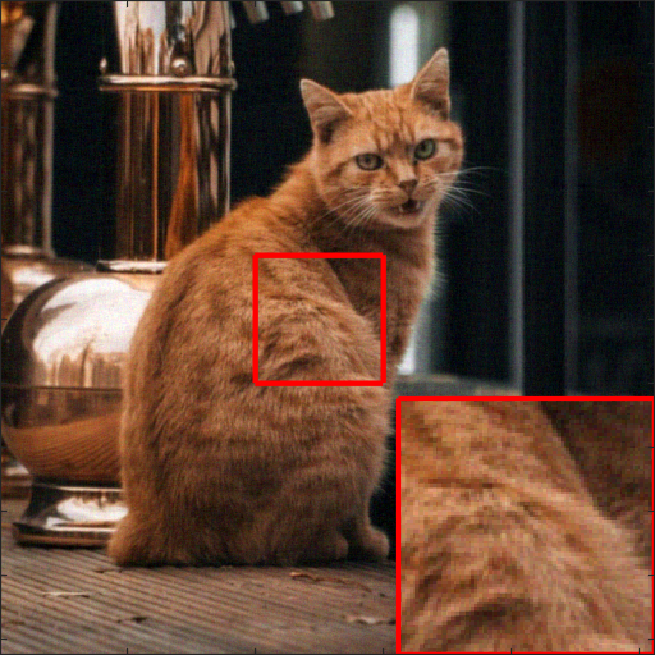}
	   \centerline{(n) SLRQA-3}
		 %文中引用该图片代号
	\end{minipage}
	%\qquad
	%让图片换行，
 \caption{Color image denoising results of tested methods on ``Image5"  with  $\tau = 30$.}
	%\caption{Color image denoising result on "image3". (a) the original image, (b) the noise image corrupted by Gaussian noise with variance $\tau = 30$, (c) image recovered by CBM3D, (d)  image recovered by SRRC, (e) image recovered by WNNM, (f) image recovered by DWT, (g)  image recovered by LRQA-1, (h) image recovered by LRQA-2, (i) image recovered by SLRQA-1, (g)  image recovered by SLRQA-2, (k) image recovered by SLRQA-3.}
 \label{result2}
\end{figure}

\subsection{ Color Image Inpainting} \label{sec:ColorImagInp}
%  For color image inpainting problems, the compared methods include Bilinear Factorization (BF) methods such as LMafit  \cite{wen2012solving}, MCNMF \footnote{\href{https://github.com/optsuite/LMaFit}https://github.com/optsuite/LMaFit} \footnote{\href{https://xu-yangyang.github.io/codes/MCNF.zip}https://xu-yangyang.github.io/codes/MCNF.zip}
% %\footnote{\href{https://www4.comp.polyu.edu.hk/~cslzhang/code/WNNM_MC_code.zip} https://github.com/optsuite/LMaFit }
% \cite{xu2012alternating}, and Rank Minimization (RM) methods such as SLRI \cite{liang2012repairing}, LRQA 
% \cite{chen2019low}, and  WNNM \footnote{\href{https://www.fst.um.edu.mo/personal/wp-content/uploads/2021/05/LRQA.zip}https://www.fst.um.edu.mo/personal/wp-content/uploads/2021/05/LRQA.zip} \footnote{\href{https://www4.comp.polyu.edu.hk/~cslzhang/code/WNNM_MC_code.zip}https://www4.comp.polyu.edu.hk/cslzhang/code/WNNM\_MC\_code.zip} \cite{huang2022quaternion}. 

For color image inpainting problems, the compared methods include Bilinear Factorization (BF) methods such as NCALR \cite{NCARL}, and Rank Minimization (RM) methods such as SLRI~\cite{liang2012repairing}, LRQA \cite{chen2019low}, and QWNNM \cite{huang2022quaternion}. \footnote{
The implementations that we use are available from: \\
LRQA: \href{https://www.fst.um.edu.mo/personal/wp-content/uploads/2021/05/LRQA.zip}{https://www.fst.um.edu.mo/personal/wp-content/uploads/2021/05/LRQA.zip} \\
QWNNM: \href{https://github.com/Huang-chao-yan/QWNNM}{https://github.com/Huang-chao-yan/QWNNM}\\
NCARL: \href{https://github.com/hyzhang98/NCARL}{https://github.com/hyzhang98/NCARL}.}.

We note that the BF methods require a rank estimate as a prior.
For fairness, the SLRI is extended to quaternion representation, which is referred to as QSLRI and performs better than the SLRI method. To fully demonstrate the performance of NCARL, we choose the best PSNR and SSIM from the initial rank {\rm range} $[50,\, 80,\, 100,\, 120,\, 150].$ The comparisons with LRQA and QSLRI can be regarded as ablation experiments to present the significance of sparsity and low-rankness prior respectively.
% since the weighted term adds additional degrees of freedom to the estimate of on convex surrogate function

The results of all tested algorithms are shown in Table \ref{TAB:inpaint}. It is shown therein that SLRQA-NF-3 always enjoys the best performance compared with other methods in terms of PSNR and SSIM. % which shows the PSNR and SSIMof LMaFit and MCNMF is low. 
The visual comparisons
between SLRQA-NF and all competing inpainting methods on the
 Image2 with $\chi = 0.7$ is shown in Figure \ref{result:inpaint} as a typical example. Although the NCARL method completes the image, the details of the recovered image are lost, and the recovered images are blurry. The images recovered by LRQA-1 and LRQA-2 methods have a relatively good effect, but the images are still slightly noisy or blurry.
 In addition, the images recovered by QWNNM are still little noisy. In contrast,  SLRQA-NF methods all demonstrate considerable inpainting results and have the highest PSNR and SSIM values. %The numerical experiments fully demonstrate the effectiveness and robustness of the SLRQA-NF in color image inpainting problems.
 
 % Figure \ref{result:inpaint} shows that when compared with SLRI and LRQA models, SLRQA-NF modles have better performance on all different missing ratios which demonstrates the robustness of SLRQA-NF. 

 \begin{figure}[H]
	\centering
	\begin{minipage}{0.16\linewidth}
		\centering
		\includegraphics[width=0.9\linewidth]{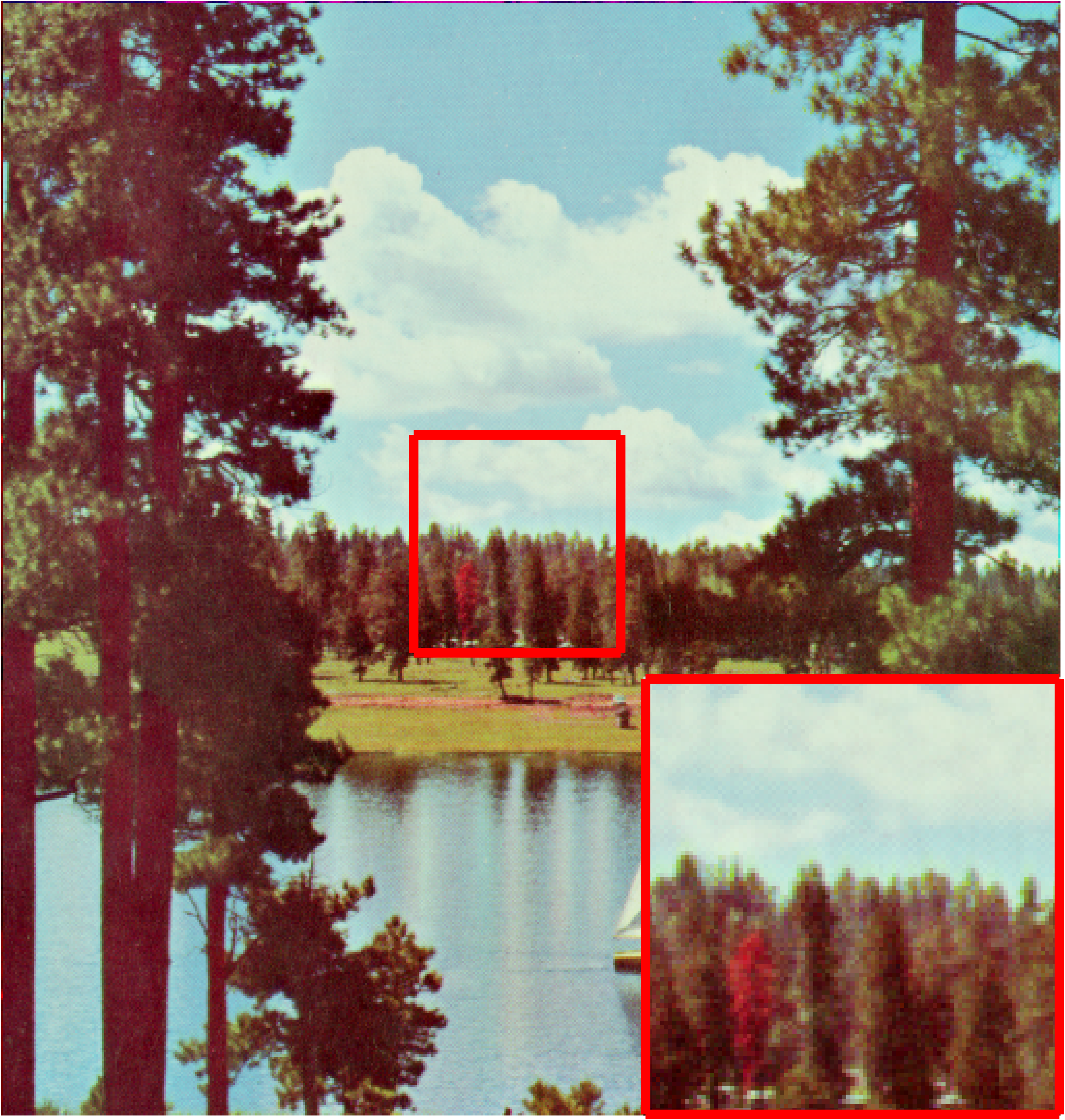}
		\centerline{(a)  Image2}
	\end{minipage}
	\begin{minipage}{0.16\linewidth}
		\centering
		\includegraphics[width=0.9\linewidth]{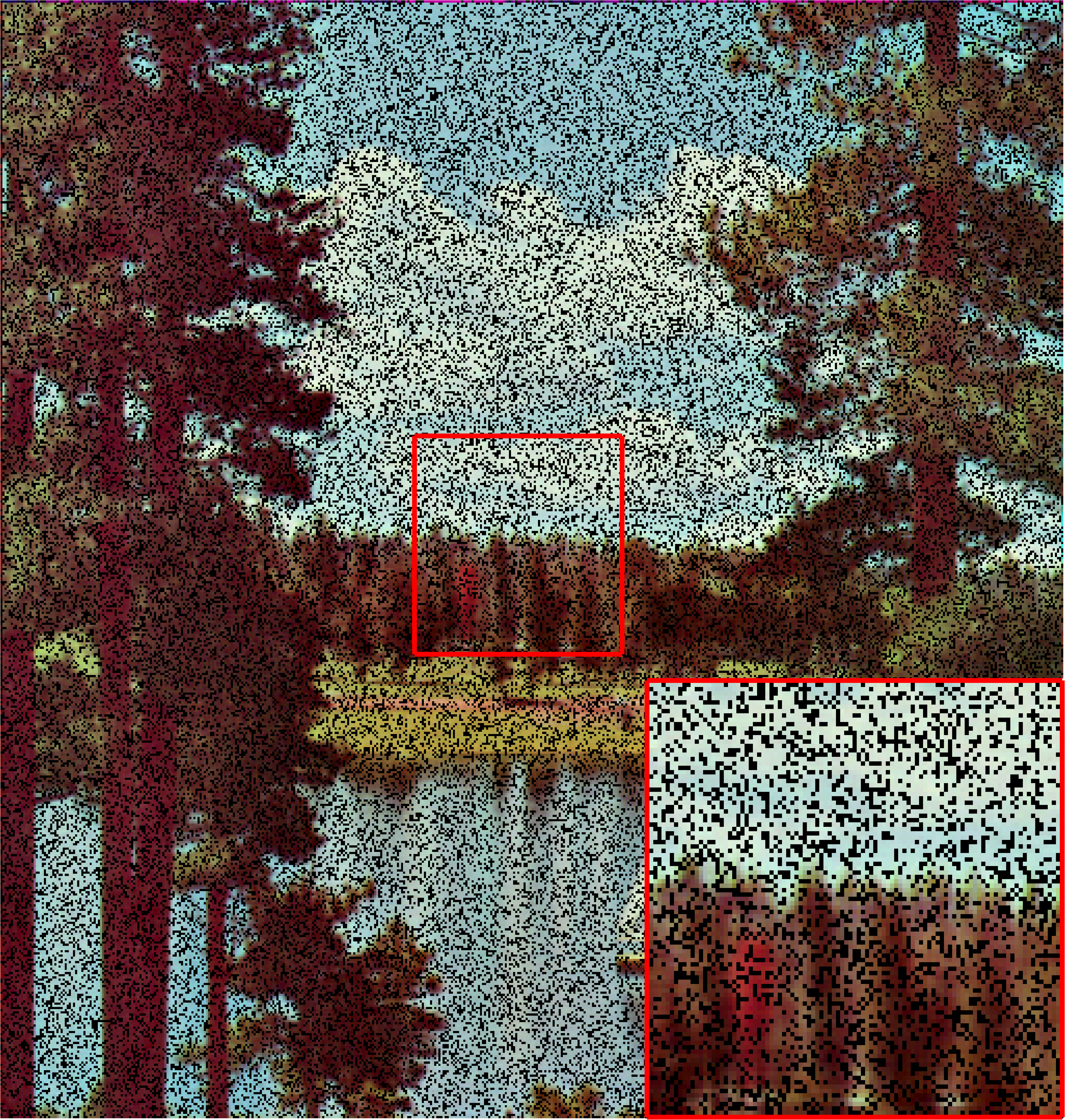}
		\centerline{(b) Corrupted}
	\end{minipage}
	\begin{minipage}{0.16\linewidth}
		\centering
		\includegraphics[width=0.9\linewidth]{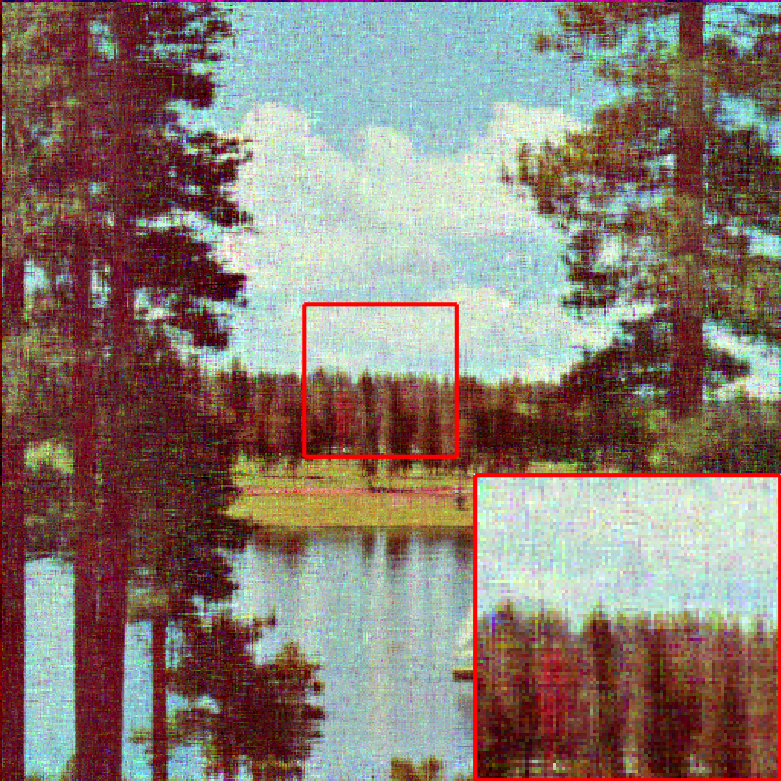}
                 \centerline{(c) NCARL}
	\end{minipage}
 \begin{minipage}{0.16\linewidth}
		\centering
		\includegraphics[width=0.9\linewidth]{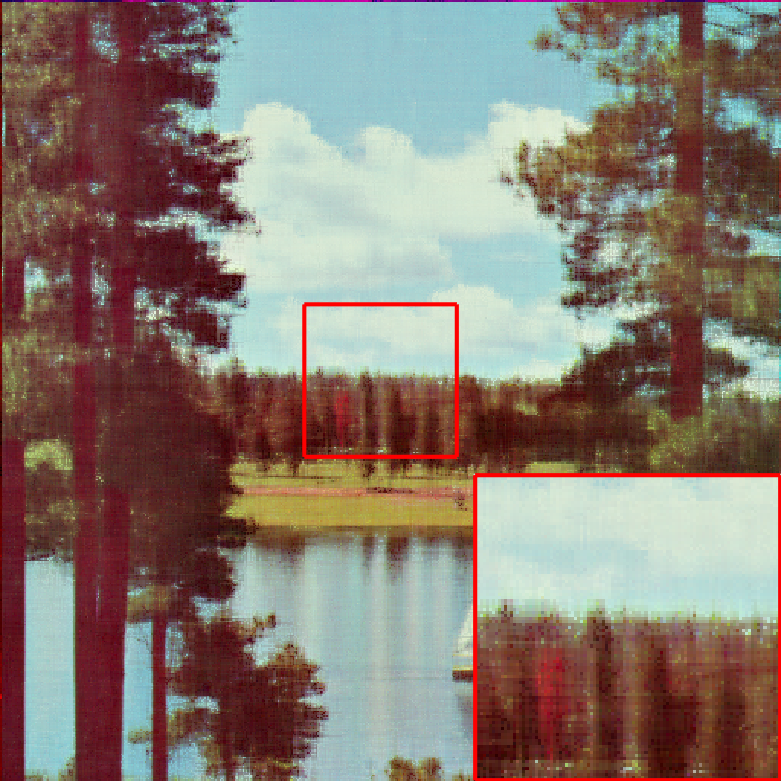}
        	   \centerline{(d) QWSNM}
	\end{minipage}
 \begin{minipage}{0.16\linewidth}
		\centering
		\includegraphics[width=0.9\linewidth]{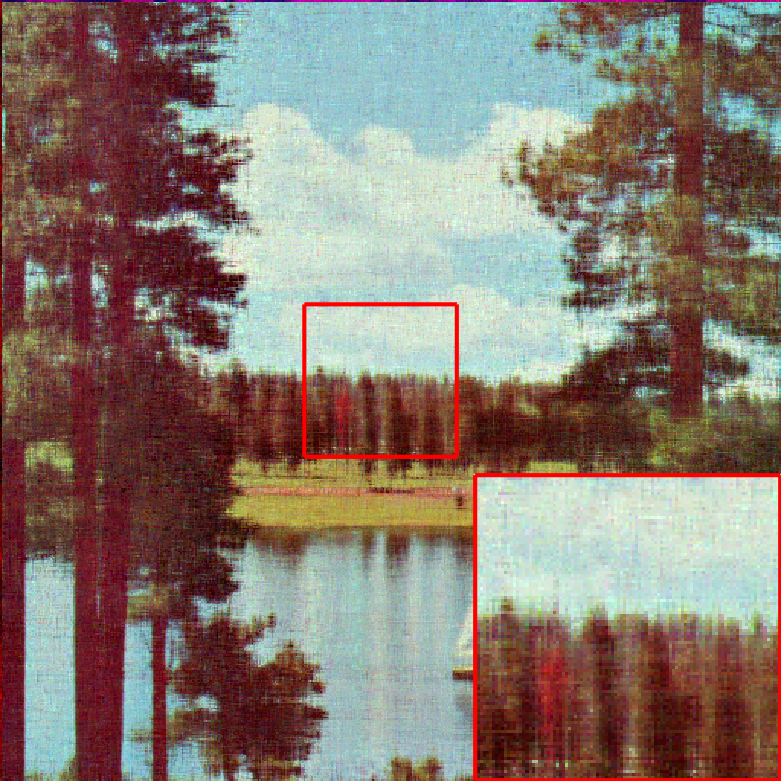}
		\centerline{(e) QWNNM}
	\end{minipage}
	\begin{minipage}{0.16\linewidth}
		\centering
		\includegraphics[width=0.9\linewidth]{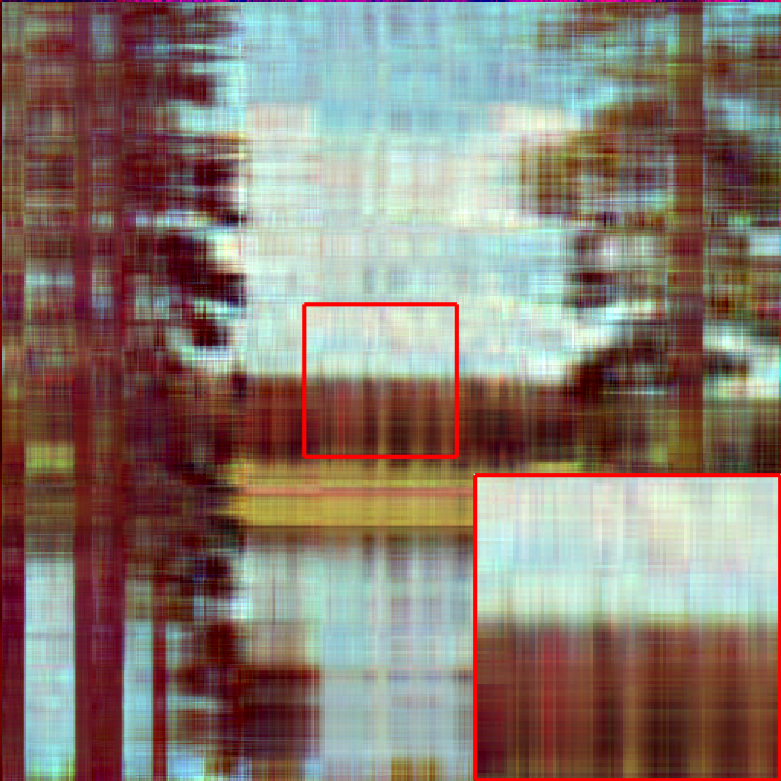}
     \centerline{(f) QSLRI}
	\end{minipage}
 \begin{minipage}{0.16\linewidth}
		\centering
		\includegraphics[width=0.9\linewidth]
        {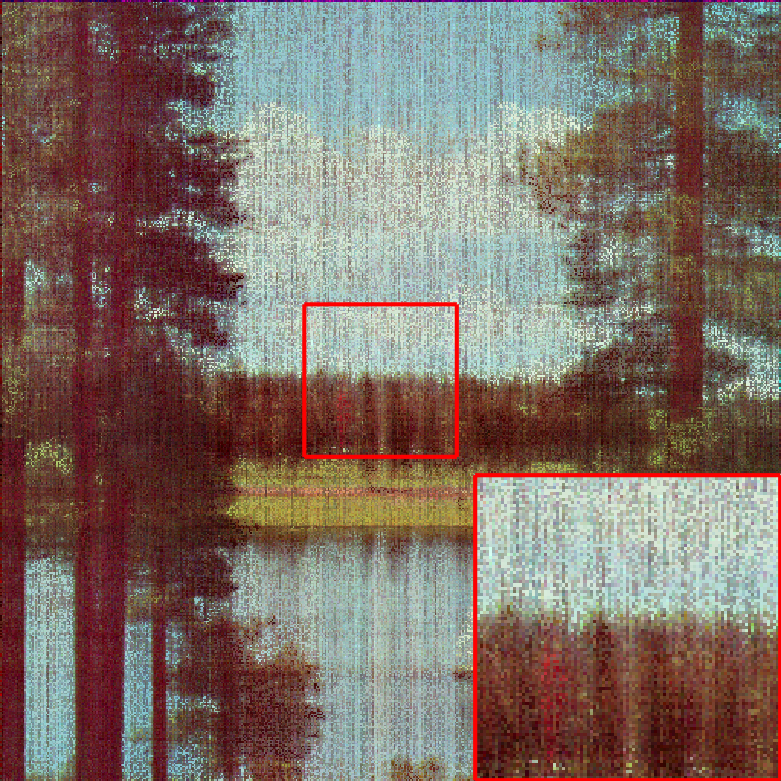}
	   \centerline{(g) LRQA-1}
	\end{minipage}
 \begin{minipage}{0.16\linewidth}
		\centering
		\includegraphics[width=0.9\linewidth]{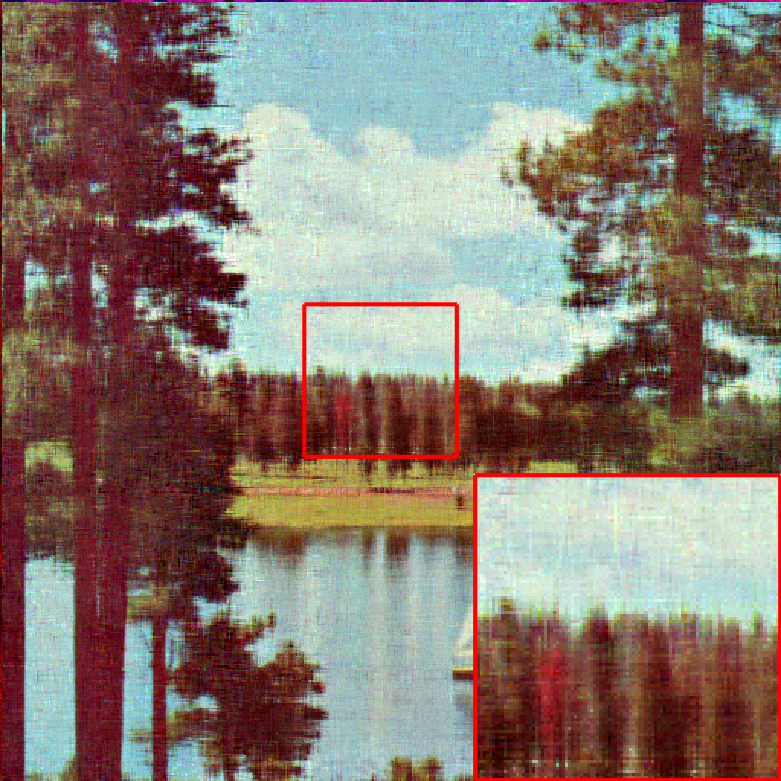}
	   \centerline{(h) LRQA-2}
	\end{minipage}
 \begin{minipage}{0.16\linewidth}
		\centering
		\includegraphics[width=0.9\linewidth]{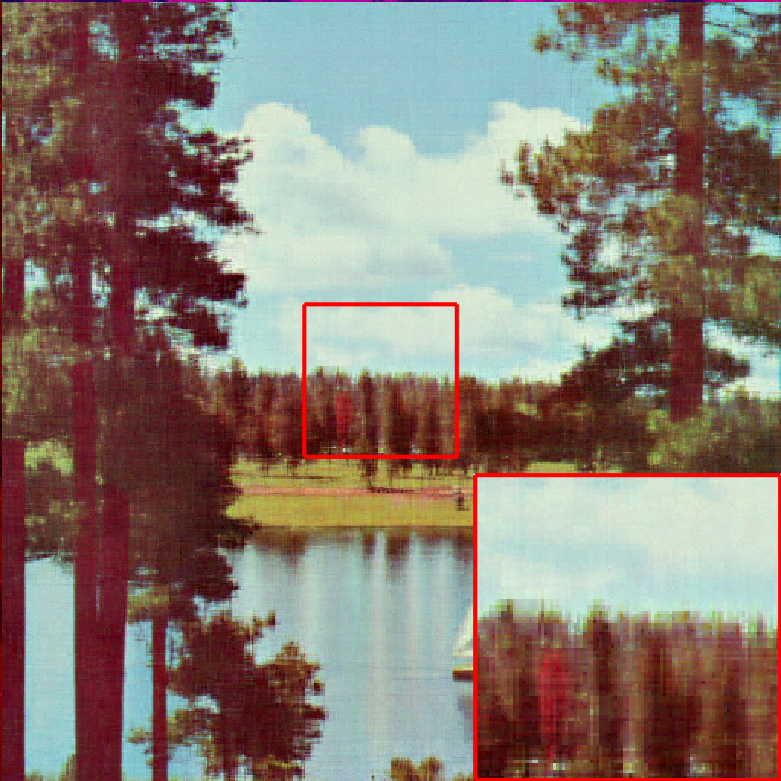}
	   \centerline{\small(i) SLRQA-NF1}
	\end{minipage}
 % \hspace{0.2cm}
  \begin{minipage}{0.16\linewidth}
		\centering
		\includegraphics[width=0.9\linewidth]{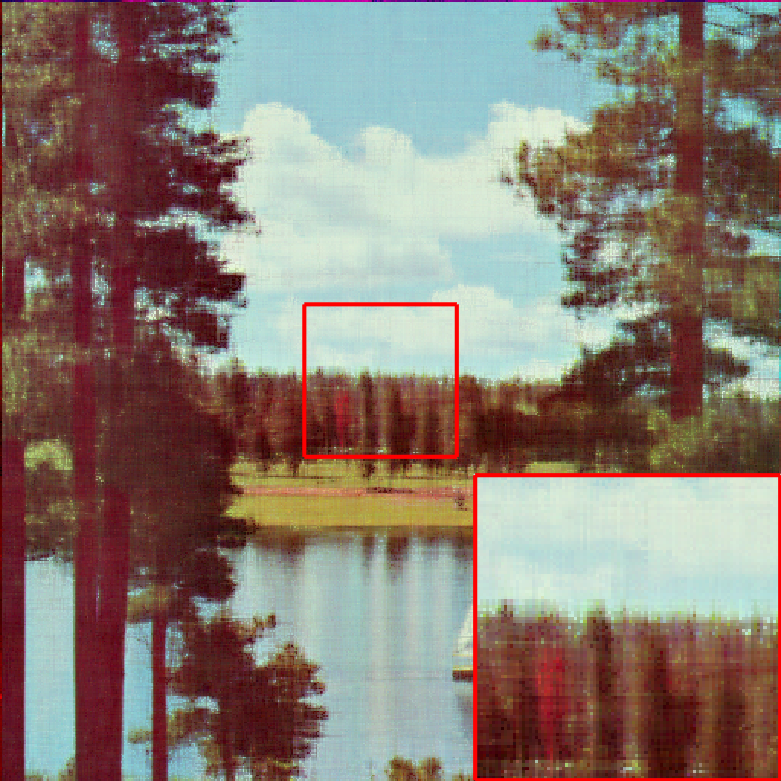}
	   \centerline{(j) SLRQA-NF2}
	\end{minipage}
\hspace{0.3cm}
  \begin{minipage}{0.16\linewidth}
		\centering
		\includegraphics[width=0.9\linewidth]{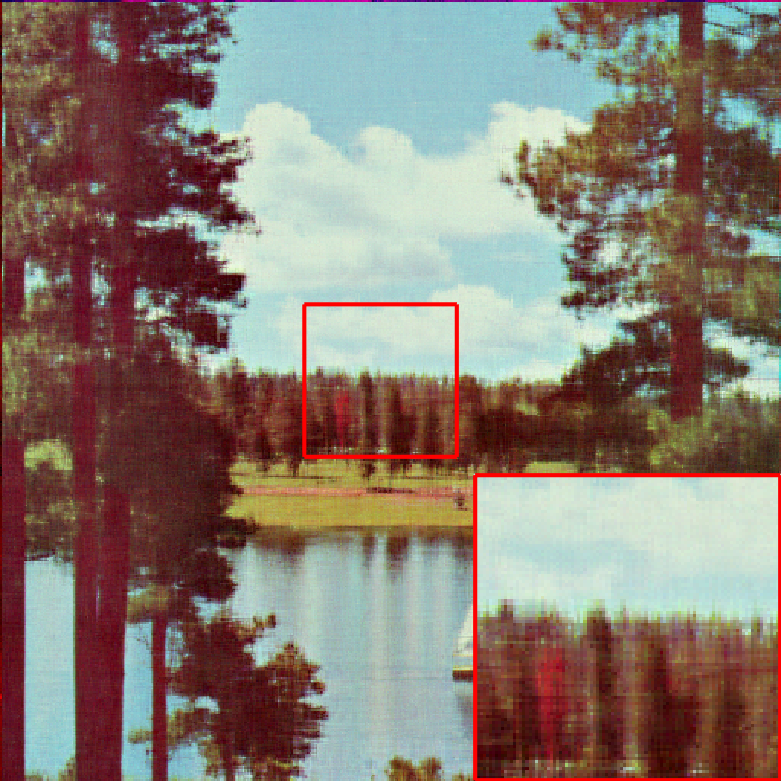}
	   \centerline{(k) SLRQA-NF3}

	\end{minipage}
	%\qquad
	%让图片换行，
 
	\caption{Color image inpainting result of tested methods on ``Image2'' with $\chi = 0.7$.  }
 \label{result:inpaint}
\end{figure}

% (a) the original image, (b) the corrupted image  $\chi = 70$, (c) image recovered by WNNM, (d)   image recovered by Lamafit, (e)  image recovered b MCNMF, (f) image recovered by LRQA-1, (g)  image recovered by LRQA-2, (h) image recovered by SLRQA-1, (i) image recovered by SLRQA-2, (j) image recovered by SLRQA-3.

% Furthermore, we also compare SLRQA with several representative
% LRMA-based image inpainting methods such as LMafit \cite{wen2012solving}.

To give a more detailed analysis, the errors of PL-ADMM-NF are also demonstrated for tested images. 
Figure \ref{fig:SLRQAcor} shows the empirical convergence results of SLRQA-NF-1, which indicate that even though the errors fluctuate at the early stage of iteration, they all converge in later iterations.
%where the x-axis represents the number of iterations and the y-axis represents the error $||\X_{k}-\X_{k-1}||_{\mathrm{F}}$, $||\W_{k}-\W_{k-1}||_\mathrm{F}$ and $||\mathcal{W}(\X_{k})-\W_k||_{\mathrm{F}}$, respectively. 
We can observe that $\|\Delta \X_{k+1} \|_{\mathrm{F}} + \|\Delta \W_{k+1} \|_{\mathrm{F}}   + \|\Delta \La_{k+1,1} \|_{\mathrm{F}} + \|\Delta \La_{k+1,2} \|_{\mathrm{F}} \rightarrow 0$. Hence, it follows from \eqref{eqn2:bound4} that $| \|\widetilde{\qd}_{k+1}\| | \rightarrow 0$.
 According to the update of $\La_{k+1}$ and Lemma \ref{crit2}, it follows that $(\X,\W,\La_1,\La_2)$ converges to a stationary point of $\mathcal{L}_{\beta_1,\beta_2}$ and hence a KKT point of the original problem in~\eqref{model:imageinpainting}.

\begin{table}[h]\scriptsize
\setlength{\abovecaptionskip}{0.cm}
\setlength{\belowcaptionskip}{-0.cm}
\setlength{\tabcolsep}{2pt}
\caption{PSNR/SSIM results on image inpainting problem of different methods.}
\label{TAB:inpaint}
\centering
\resizebox{\linewidth}{!}{
\begin{tabular}{|c|c|c|c|c|c|c|c|c|c|c|}
\hline
\multicolumn{2}{|c|}{\diagbox[innerwidth=2cm]{Image}{Algorithm}}& NCALR~\cite{NCARL} &QWSNM~\cite{QHZ2024QWSMN} &QWNNM ~\cite{huang2022quaternion} &QSLRI~\cite{liang2012repairing}&LRQA-1~ \cite{chen2019low} &LRQA-2~\cite{chen2019low}&SLRQA-NF-1&SLRQA-NF-2&SLRQA-NF-3\\
\hline
\multirow{10}*{$\chi = 0.5$}&Image1&26.48/0.85&28.16/0.86&25.25/0.60&31.16/0.95&30.87/0.89&30.25/0.87&32.32/0.94&31.26/0.94&\textbf{32.48/0.94}\\
\cline{2-11}
\multicolumn{1}{|c|}{}&Image2&25.55/0.90&24.56/0.87&23.55/0.81&30.89/0.85&26.63/0.91&26.20/0.90&28.04/0.94&27.54/0.94&\textbf{28.17/0.94}\\
\cline{2-11}
\multicolumn{1}{|c|}{}&Image3&26.75/0.91& 27.87/0.90&25.23/0.75&29.65/0.97&28.37/0.90&27.94/0.89&30.40/0.96&29.65/0.97&\textbf{30.61/0.97}\\
\cline{2-11}
\multicolumn{1}{|c|}{}&Image4&28.47/0.93&28.56/0.92&31.16/0.85&36.39/0.99&35.99/0.97&35.51/0.97&37.69/0.99&36.27/0.98&\textbf{37.91/0.99}\\
\cline{2-11}
\multicolumn{1}{|c|}{}&Image5&33.46/0.94&34.23/0.91&30.09/0.87&33.73/0.98&33.27/0.96&32.72/0.95&34.82/0.98&33.75/0.98&\textbf{34.98/0.98}\\
\cline{2-11}
\multicolumn{1}{|c|}{}&Image6&30.26/0.90&31.03/0.87&22.01/0.61&26.56/0.92&25.27/0.79&25.72/0.80&\textbf{28.25/0.93}&26.62/0.91&27.97/0.92\\
\cline{2-11}
\multicolumn{1}{|c|}{}&Image7&27.48/0.91&28.65/0.92&24.96/0.75&28.84/0.94&27.59/0.87&27.86/0.88&27.82/0.93&\textbf{28.92/0.94}&29.33/0.94\\
\cline{2-11}
\multicolumn{1}{|c|}{}&Image8&28.12/0.93&29.42/0.94&25.17/0.86&30.03/0.97&29.62/0.95&29.06/0.94&31.06/0.97&30.16/0.97&\textbf{31.22/0.97}\\
\cline{2-11}
\multicolumn{1}{|c|}{}&Image9&29.83/0.94&28.89/0.95&26.02/0.94&30.99/0.98&30.12/0.98&29.56/0.97&31.58/0.98&31.09/0.98&\textbf{31.71/0.98}\\
\cline{2-11}
\multicolumn{1}{|c|}{}&Image10&29.47/0.95&28.43/0.93&26.18/0.94&30.69/0.98&29.86/0.98&29.20/0.97&31.28/0.98&30.84/0.98&\textbf{31.42/0.98}\\
\hline
\hline
\multirow{10}*{$\chi = 0.7$}&Image1&27.39/0.84&27.41/0.85&23.05/0.52&27.29/0.89&25.97/0.74&25.47/0.72&28.40/0.88&27.30/0.88&\textbf{28.62/0.89}\\
\cline{2-11}
\multicolumn{1}{|c|}{}&Image2&25.66/0.87&24.11/0.85&20.13/0.67&24.19/0.89&22.51/0.79&22.44/0.79&25.05/0.90&24.30/0.89&\textbf{25.22/0.90}\\
\cline{2-11}
\multicolumn{1}{|c|}{}&Image3&26.44/0.93&24.12/0.92&21.50/0.63&26.09/0.93&23.69/0.77&23.67/0.77&26.87/0.93&26.00/0.92&\textbf{27.10/0.94}\\
\cline{2-11}
\multicolumn{1}{|c|}{}&Image4&31.32/0.96&30.47/0.95&28.34/0.80&31.89/0.97&31.07/0.93&30.44/0.92&33.37/0.97&31.80/0.97&\textbf{33.63/0.98}\\
\cline{2-11}
\multicolumn{1}{|c|}{}&Image5&26.55/0.91&27.46/0.92&25.75/0.77&29.58/0.95&28.53/0.88&27.85/0.86&30.95/0.96&29.68/0.95&\textbf{31.14/0.96}\\
\cline{2-11}
\multicolumn{1}{|c|}{}&Image6&23.77/0.74&20.57/0.75&18.13/0.44&22.78/0.82&20.28/0.57&19.86/0.55&23.78/0.81&22.70/0.80&\textbf{24.00/0.83}\\
\cline{2-11}
\multicolumn{1}{|c|}{}&Image7&24.01/0.83&25.43/0.85&22.40/0.67&26.24/0.90&24.62/0.78&24.58/0.78&\textbf{26.63/0.88}&26.19/0.89&26.57/0.88\\
\cline{2-11}
\multicolumn{1}{|c|}{}&Image8&26.08/0.93&25.45/0.92&22.34/0.79&26.34/0.94&24.87/0.87&24.54/0.87&27.41/0.94&26.42/0.94&\textbf{27.60/0.95}\\
\cline{2-11}
\multicolumn{1}{|c|}{}&Image9&22.36/0.87&24.15/0.93&23.33/0.90&27.83/0.97&26.13/0.94&25.69/0.94&28.56/0.97&27.81/0.97&\textbf{28.74/0.97}\\
\cline{2-11}
\multicolumn{1}{|c|}{}&Image10&23.14/0.91&23.76/0.92&23.22/0.90&27.54/0.96&25.95/0.95&25.34/0.94&28.46/0.97&27.58/0.96&\textbf{28.64/0.97}\\
\hline
\hline
\multirow{10}*{$\chi = 0.8$}&Image1&19.43/0.79&23.16/0.81&20.77/0.43&25.05/0.85&22.87/0.61&22.76/0.60&26.08/0.84&24.54/0.79&\textbf{26.30/0.85}\\
\cline{2-11}
\multicolumn{1}{|c|}{}&Image2&17.65/0.80&22.47/0.85&17.72/0.54&22.34/0.84&19.44/0.65&20.27/0.69&23.36/0.86&22.25/0.84&\textbf{23.53/0.86}\\
\cline{2-11}
\multicolumn{1}{|c|}{}&Image3&22.65/0.84&23.45/0.86&18.71/0.53&23.98/0.90&20.52/0.65&21.35/0.69&24.91/0.90&23.61/0.88&\textbf{25.13/0.91}\\
\cline{2-11}
\multicolumn{1}{|c|}{}&Image4&28.41/0.93&27.48/0.92&25.48/0.72&29.09/0.95&27.75/0.87&27.20/0.85&30.74/0.96&28.73/0.94&\textbf{31.04/0.96}\\
\cline{2-11}
\multicolumn{1}{|c|}{}&Image5&26.08/0.88&26.45/0.89&22.52/0.65&27.10/0.93&25.53/0.80&25.05/0.78&28.70/0.93&27.09/0.92&\textbf{28.97/0.94}\\
\cline{2-11}
\multicolumn{1}{|c|}{}&Image6&18.04/0.65&19.76/0.74&15.41/0.30&20.82/0.74&17.93/0.45&14.65/0.31&21.61/0.74&20.10/0.70&\textbf{21.82/0.76}\\
\cline{2-11}
\multicolumn{1}{|c|}{}&Image7&22.18/0.80&23.16/0.79&20.67/0.60&24.79/0.87&22.81/0.71&22.64/0.71&\textbf{25.60}/0.85&24.53/0.86&25.42/\textbf{0.86}\\
\cline{2-11}
\multicolumn{1}{|c|}{}&Image8&21.38/0.84&24.74/0.90&19.88/0.71&24.26/0.91&21.69/0.79&21.93/0.80&25.31/0.92&23.91/0.90&\textbf{25.50/0.92}\\
\cline{2-11}
\multicolumn{1}{|c|}{}&Image9&24.07/0.89&25.19/0.91&20.78/0.84&25.89/0.95&23.43/0.91&23.24/0.91&26.74/0.96&25.56/0.95&\textbf{26.97/0.96}\\
\cline{2-11}
\multicolumn{1}{|c|}{}&Image10&22.92/0.90&23.79/0.92&20.48/0.84&25.52/0.95&23.10/0.90&22.85/0.90&27.67/0.96&25.33/0.95&\textbf{26.91/0.96}\\
\hline
\end{tabular}
}
\end{table}

\begin{figure}[h]
	\subfigure{
		\includegraphics[width=0.45\textwidth]{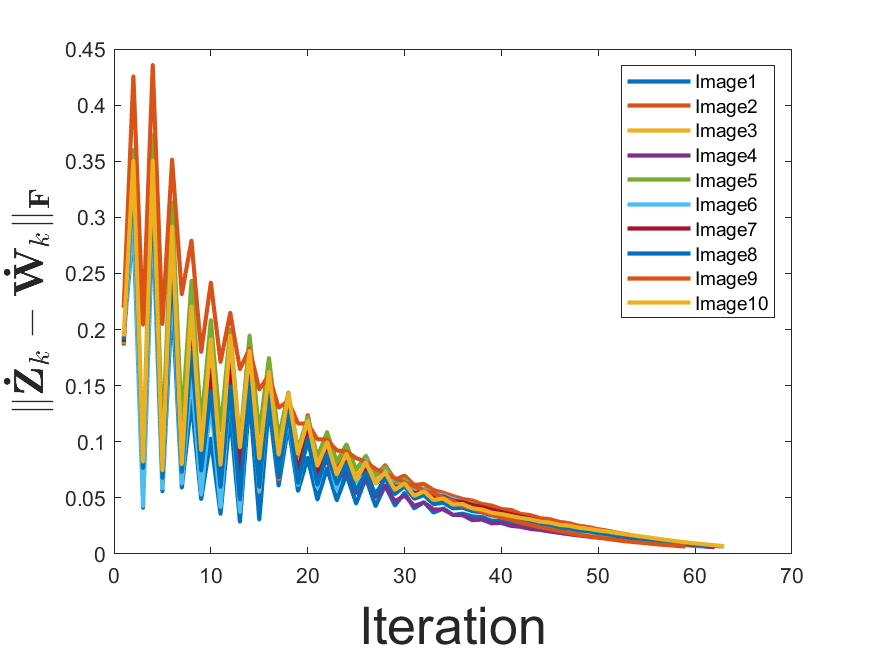}
	}
	\subfigure{
		\includegraphics[width=0.45\textwidth]{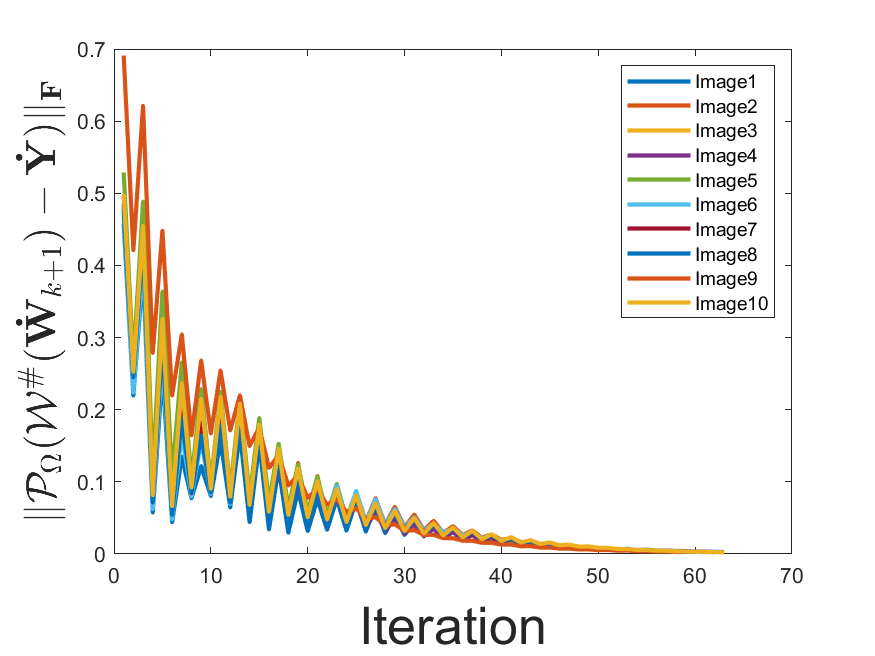}
  }
	\caption{The error convergence of SLRQA-NF for image inpainting problems.} \label{fig:SLRQAcor}
\end{figure}

% \begin{figure}[h]
% 	\subfigure[Average PSNR of different missing ratio]{
% 		\includegraphics[width=0.45\textwidth]{result/comppsnr.png}
% 	}
% 	\subfigure[Average SSIM of different missing ratio]{
% 		\includegraphics[width=0.45\textwidth]{result/compssim.png}
% 	}
% 	\caption{The average PSNR and SSIM with different missing rate} \label{fig:ssim}
% \end{figure}

% \begin{figure}[h]
% \centering %图片居中
% \includegraphics[width=0.7\textwidth]{result/comppsnr.png} %插入图片，[]中设置图片大小，{}中是图片文件名
% \caption{The average SSIM with the number of iterations} \label{fig:ssim}
% \end{figure}

% proximal linearized alternating direction method of multipliers

\section{Conclusion} \label{6}
In this paper, we propose a novel SLRQA for the color image process problems.  Different from most existing models which only consider one or two properties from low-rankness, sparsity, and quaternion representation, SLRQA uses these properties all. Furthermore, SLRQA does not need an initial rank estimate. A PL-ADMM algorithm is proposed to solve the SLRQA and its global convergence is guaranteed.
% %. Furthermore, %different from many papers that focus more on the performance of the proposed model, 
% we prove that any limit point of PL-ADMM for SLRQA is a stationary point under mild assumptions.
% In addition, the global convergence analysis of the sequence generated by PL-ADMM under the KL-inequality assumption is presented.
  When the observation is noise-free, an SLRQA-NF of the limiting case of SLRQA is proposed. Subsequently, a  PL-ADMM-NF algorithm is also proposed to solve the SLRQA-NF. Under a newly proposed assumption, the global convergence of PL-ADMM-NF is established. To the best of our knowledge, this is the first ADMM-type algorithm without the ``range assumption'' and still guarantees global convergence. Extensive experiments for color image denoising and inpainting problems have demonstrated the robustness and effectiveness of  SLRQA and SLRQA-NF.

% Since the convergence of the PL-ADMM-NF algorithm is unknown, we are going to research the convergence analysis of PL-ADMM-NF in the future.

\section*{Data availability statement}
The data that support the findings of this study are openly available at the following URL/DOI: \url{https://github.com/dengzhanwang/SLRQA/tree/main}.

\bibliography{WHlibrary}

\begin{thebibliography}{10}

\bibitem{qtfm}
{\em Quaternion {T}oolbox for {M}atlab\textregistered}.
\newblock [Online], 2005.

\bibitem{attouch2009convergence}
{\sc H.~Attouch and J.~Bolte}, {\em On the convergence of the proximal
  algorithm for nonsmooth functions involving analytic features}, Mathematical
  Programming, 116 (2009), pp.~5--16.

\bibitem{attouch2013convergence}
{\sc H.~Attouch, J.~Bolte, and B.~F. Svaiter}, {\em Convergence of descent
  methods for semi-algebraic and tame problems: proximal algorithms,
  forward--backward splitting, and regularized gauss--seidel methods},
  Mathematical Programming, 137 (2013), pp.~91--129.

\bibitem{beck2017first}
{\sc A.~Beck}, {\em First-order methods in optimization}, SIAM, 2017.

\bibitem{bolte2014proximal}
{\sc J.~Bolte, S.~Sabach, and M.~Teboulle}, {\em Proximal alternating
  linearized minimization for nonconvex and nonsmooth problems}, Mathematical
  Programming, 146 (2014), pp.~459--494.

\bibitem{cai2010singular}
{\sc J.-F. Cai, E.~J. Cand{\`e}s, and Z.~Shen}, {\em A singular value
  thresholding algorithm for matrix completion}, SIAM Journal on Optimization,
  20 (2010), pp.~1956--1982.

\bibitem{cai2016image}
{\sc J.-F. Cai, B.~Dong, and Z.~Shen}, {\em Image restoration: {A} wavelet
  frame based model for piecewise smooth functions and beyond}, Applied and
  Computational Harmonic Analysis, 41 (2016), pp.~94--138.

\bibitem{candes2009exact}
{\sc E.~J. Cand{\`e}s and B.~Recht}, {\em Exact matrix completion via convex
  optimization}, Foundations of Computational Mathematics, 9 (2009),
  pp.~717--772.

\bibitem{chen2014removing}
{\sc B.~Chen, Q.~Liu, X.~Sun, X.~Li, and H.~Shu}, {\em Removing {G}aussian
  noise for color images by quaternion representation and optimisation of
  weights in non-local means filter}, IET Image Processing, 8 (2014),
  pp.~591--600.

\bibitem{chen2017note}
{\sc L.~Chen, D.~Sun, and K.-C. Toh}, {\em A note on the convergence of
  {{ADMM}} for linearly constrained convex optimization problems},
  Computational Optimization and Applications, 66 (2017), pp.~327--343.

\bibitem{chen2017denoising}
{\sc Y.~Chen, Y.~Guo, Y.~Wang, D.~Wang, C.~Peng, and G.~He}, {\em Denoising of
  hyperspectral images using nonconvex low rank matrix approximation}, IEEE
  Transactions on Geoscience and Remote Sensing, 55 (2017), pp.~5366--5380.

\bibitem{chen2019low}
{\sc Y.~Chen, X.~Xiao, and Y.~Zhou}, {\em Low-rank quaternion approximation for
  color image processing}, IEEE Transactions on Image Processing, 29 (2019),
  pp.~1426--1439.

\bibitem{ding2020spectral}
{\sc C.~Ding, D.~Sun, J.~Sun, and K.-C. Toh}, {\em Spectral operators of
  matrices: semismoothness and characterizations of the generalized jacobian},
  SIAM Journal on Optimization, 30 (2020), pp.~630--659.

\bibitem{QDCT}
{\sc W.~Feng and B.~Hu}, {\em {Quaternion discrete cosine transform and its
  application in color template matching}}, in 2008 Congress on Image and
  Signal Processing, 2008, pp.~252--256.

\bibitem{gu2014weighted}
{\sc S.~Gu, L.~Zhang, W.~Zuo, and X.~Feng}, {\em Weighted nuclear norm
  minimization with application to image denoising}, in Proceedings of the IEEE
  Conference on Computer Vision and Pattern Recognition, 2014, pp.~2862--2869.

\bibitem{guo2017patch}
{\sc Q.~Guo, S.~Gao, X.~Zhang, Y.~Yin, and C.~Zhang}, {\em Patch-based image
  inpainting via two-stage low rank approximation}, IEEE Transactions on
  Visualization and Computer Graphics, 24 (2017), pp.~2023--2036.

\bibitem{hamilton1866elements}
{\sc W.~R. Hamilton}, {\em Elements of quaternions}, London: Longmans, Green,
  \& Company, 1866.

\bibitem{han2022low}
{\sc J.~Han, L.~Yang, K.~I. Kou, J.~Miao, and L.~Liu}, {\em Low rank quaternion
  matrix completion based on quaternion {QR} decomposition and sparse
  regularizer}, arXiv preprint arXiv:2211.12793,  (2022).

\bibitem{hong2016convergence}
{\sc M.~Hong, Z.-Q. Luo, and M.~Razaviyayn}, {\em Convergence analysis of
  alternating direction method of multipliers for a family of nonconvex
  problems}, SIAM Journal on Optimization, 26 (2016), pp.~337--364.

\bibitem{huang2022quaternion}
{\sc C.~Huang, Z.~Li, Y.~Liu, T.~Wu, and T.~Zeng}, {\em Quaternion-based
  weighted nuclear norm minimization for color image restoration}, Pattern
  Recognition, 128 (2022), p.~108665.

\bibitem{kang2015logdet}
{\sc Z.~Kang, C.~Peng, J.~Cheng, and Q.~Cheng}, {\em Logdet rank minimization
  with application to subspace clustering}, Computational Intelligence and
  Neuroscience, 2015 (2015).

\bibitem{lanckriet2009convergence}
{\sc G.~Lanckriet and B.~K. Sriperumbudur}, {\em On the convergence of the
  concave-convex procedure}, Advances in {N}eural {I}nformation {P}rocessing
  {S}ystems, 22 (2009).

\bibitem{lewis1995convex}
{\sc A.~S. Lewis}, {\em The convex analysis of unitarily invariant matrix
  functions}, Journal of Convex Analysis, 2 (1995), pp.~173--183.

\bibitem{li2015global}
{\sc G.~Li and T.~K. Pong}, {\em Global convergence of splitting methods for
  nonconvex composite optimization}, SIAM Journal on Optimization, 25 (2015),
  pp.~2434--2460.

\bibitem{NCARL}
{\sc X.~Li, H.~Zhang, and R.~Zhang}, {\em Matrix completion via non-convex
  relaxation and adaptive correlation learning}, IEEE Transactions on Pattern
  Analysis and Machine Intelligence,  (2022), pp.~1--1,
  \url{https://doi.org/10.1109/TPAMI.2022.3157083}.

\bibitem{liang2012repairing}
{\sc X.~Liang, X.~Ren, Z.~Zhang, and Y.~Ma}, {\em Repairing sparse low-rank
  texture}, in European Conference on Computer Vision, Springer, 2012,
  pp.~482--495.

\bibitem{liu2019linearized}
{\sc Q.~Liu, X.~Shen, and Y.~Gu}, {\em Linearized {{ADMM}} for nonconvex
  nonsmooth optimization with convergence analysis}, IEEE Access, 7 (2019),
  pp.~76131--76144.

\bibitem{lu2014generalized}
{\sc C.~Lu, J.~Tang, S.~Yan, and Z.~Lin}, {\em Generalized nonconvex nonsmooth
  low-rank minimization}, in Proceedings of the IEEE Conference on Computer
  Vision and Pattern Recognition, 2014, pp.~4130--4137.

\bibitem{nie2012low}
{\sc F.~Nie, H.~Huang, and C.~Ding}, {\em Low-rank matrix recovery via
  efficient {S}chatten p-norm minimization}, in Proceedings of the AAAI
  Conference on Artificial Intelligence, vol.~26, 2012, pp.~655--661.

\bibitem{recht2010guaranteed}
{\sc B.~Recht, M.~Fazel, and P.~A. Parrilo}, {\em Guaranteed minimum-rank
  solutions of linear matrix equations via nuclear norm minimization}, SIAM
  Review, 52 (2010), pp.~471--501.

\bibitem{ren2016image}
{\sc W.~Ren, X.~Cao, J.~Pan, X.~Guo, W.~Zuo, and M.-H. Yang}, {\em Image
  deblurring via enhanced low-rank prior}, IEEE Transactions on Image
  Processing, 25 (2016), pp.~3426--3437.

\bibitem{rudin1964principles}
{\sc W.~Rudin et~al.}, {\em Principles of mathematical analysis}, vol.~3,
  McGraw-hill New York, 1964.

\bibitem{YS2023MCNNFNM}
{\sc Y.~Shan, D.~Hu, Z.~Wang, and T.~Jia}, {\em Multi-channel nuclear norm
  minus frobenius norm minimization for color image denoising}, Signal
  Processing, 207 (2023).

\bibitem{shang2016scalable}
{\sc F.~Shang, Y.~Liu, and J.~Cheng}, {\em Scalable algorithms for tractable
  {S}chatten quasi-norm minimization}, in Proceedings of the AAAI Conference on
  Artificial Intelligence, vol.~30, 2016.

\bibitem{wang2019global}
{\sc Y.~Wang, W.~Yin, and J.~Zeng}, {\em Global convergence of {{ADMM}} in
  nonconvex nonsmooth optimization}, Journal of Scientific Computing, 78
  (2019), pp.~29--63.

\bibitem{wang2004image}
{\sc Z.~Wang, A.~C. Bovik, H.~R. Sheikh, and E.~P. Simoncelli}, {\em Image
  quality assessment: from error visibility to structural similarity}, IEEE
  Transactions on Image Processing, 13 (2004), pp.~600--612.

\bibitem{TTW2022pQSTV}
{\sc T.~Wu, C.~Huang, Z.~Jin, Z.~Jia, and M.~K.NG}, {\em Total variation based
  pure quaternion dictionary learning method for color image denoising},
  International Journal of Numerical Analysis and Modeling, 19 (2022),
  pp.~709--737.

\bibitem{xie2016weighted}
{\sc Y.~Xie, S.~Gu, Y.~Liu, W.~Zuo, W.~Zhang, and L.~Zhang}, {\em Weighted
  {S}chatten $ p $-norm minimization for image denoising and background
  subtraction}, IEEE {T}ransactions on Image Processing, 25 (2016),
  pp.~4842--4857.

\bibitem{xu2023deep}
{\sc T.~Xu, X.~Kong, Q.~Shen, Y.~Chen, and Y.~Zhou}, {\em Deep and low-rank
  quaternion priors for color image processing}, IEEE Transactions on Circuits
  and Systems for Video Technology, 33 (2023), pp.~3119--3132.

\bibitem{xu2015vector}
{\sc Y.~Xu, L.~Yu, H.~Xu, H.~Zhang, and T.~Nguyen}, {\em Vector sparse
  representation of color image using quaternion matrix analysis}, IEEE
  Transactions on Image Processing, 24 (2015), pp.~1315--1329.

\bibitem{yang2022quaternion}
{\sc L.~Yang, K.~I. Kou, J.~Miao, Y.~Liu, and M.~P.~M. Hoi}, {\em Quaternion
  tensor completion with sparseness for color video recovery}, arXiv preprint
  arXiv:2212.08361,  (2022).

\bibitem{yang2022quaternion2}
{\sc L.~Yang, Y.~Liu, and K.~I. Kou}, {\em Quaternion optimized model with
  sparse regularization for color image recovery}, arXiv preprint
  arXiv:2204.08629,  (2022).

\bibitem{yashtini2022convergence}
{\sc M.~Yashtini}, {\em Convergence and rate analysis of a proximal linearized
  {{ADMM}} for nonconvex nonsmooth optimization}, Journal of Global
  Optimization, 84 (2022), pp.~913--939.

\bibitem{MY2020CBMD}
{\sc M.~Ymir, A.~Lucio, and F.~Alessandro}, {\em Collaborative filtering of
  correlated noise: Exact transform-domain variance for improved shrinkage and
  patch matching}, IEEE Transactions on Image Processing, 29 (2020),
  pp.~8339--8354.

\bibitem{zhang1997quaternions}
{\sc F.~Zhang}, {\em Quaternions and matrices of quaternions}, Linear Algebra
  and its Applications, 251 (1997), pp.~21--57.

\bibitem{zhang2023linear}
{\sc H.~Zhang, J.~Gao, J.~Qian, J.~Yang, C.~Xu, and B.~Zhang}, {\em Linear
  regression problem relaxations solved by nonconvex {ADMM} with convergence
  analysis}, IEEE Transactions on Circuits and Systems for Video Technology,
  (2023).

\bibitem{zhang2019low}
{\sc H.~Zhang, J.~Qian, B.~Zhang, J.~Yang, C.~Gong, and Y.~Wei}, {\em Low-rank
  matrix recovery via modified schatten-$ p $ norm minimization with
  convergence guarantees}, IEEE Transactions on Image Processing, 29 (2019),
  pp.~3132--3142.

\bibitem{QHZ2024QWSMN}
{\sc Q.~Zhang, L.~He, Y.~Wang, L.-J. Deng, and J.~Liu}, {\em Quaternion
  weighted schatten p-norm minimization for color image restoration with
  convergence guarantee}, Signal Processing, 218 (2024).

\bibitem{zhang2023SRRC}
{\sc T.~Zhang, D.~Wu, and X.~Mo}, {\em The rank residual constraint model with
  weighted {S}chatten p-norm minimization for image denoising}, Circuits,
  Systems, and Signal Processing, 42 (2023), pp.~4740--4758.

\bibitem{zou2016quaternion}
{\sc C.~Zou, K.~I. Kou, and Y.~Wang}, {\em Quaternion collaborative and sparse
  representation with application to color face recognition}, IEEE Transactions
  on Image Processing, 25 (2016), pp.~3287--3302.

\end{thebibliography}

\end{document}